\date{\today}
\theoremstyle{definition}
\newtheorem{theorem}{Theorem}[section]
\newtheorem*{maintheorem}{Main Theorem}
\newtheorem{theorem*}{Theorem}
\newtheorem{remark}[theorem]{Remark}
\newtheorem{lemma}[theorem]{Lemma}
\newtheorem{corollary}[theorem]{Corollary}
\newtheorem{example}[theorem]{Example}
\newtheorem{proposition}[theorem]{Proposition}
\newcommand{\Herm}{\operatorname{Herm}}
\theoremstyle{definition}
\theoremstyle{definition}
\newtheorem{definition}[theorem]{Definition}
\newtheorem*{definition*}{Definition}
\begin{document}

\newcommand{\Pos}{Pos}
\renewcommand{\Im}{\operatorname{Im}}

\newcommand{\Hess}{\operatorname{Hess}}
\newcommand{\Id}{\operatorname{Id}}
\newcommand{\Sym}{\operatorname{Sym}}
\newcommand{\Hom}{\operatorname{Hom}}

\newcommand{\argmin}{\operatorname{argmin}}
\newcommand{\diag}{\operatorname{diag}}

\renewcommand{\Pos}{\operatorname{Pos}}
\newcommand{\calPos}{\mathcal P}
\newcommand{\Int}{\operatorname{Int}}
\setcounter{tocdepth}{1}

\title[The Minimum Principle for Convex Subequations]{The Minimum Principle for Convex Subequations} 
\author{Julius  Ross and David Witt Nystr\"om}

\begin{abstract}
A subequation, in the sense of Harvey-Lawson, on an open subset $X\subset \mathbb R^n$ is a subset $F$ of the space of $2$-jets on $X$ with certain properties.   A smooth function is said to be $F$-subharmonic if all of its $2$-jets lie in $F$, and using the viscosity technique  one can extend the notion of $F$-subharmonicity to any upper-semicontinuous function.    Let $\mathcal P$ denote the subequation consisting of those $2$-jets whose Hessian part is semipositive.   

We introduce a notion of  product subequation $F\#\calPos$ on $X\times \mathbb R^{m}$ and prove, under suitable hypotheses, that if $F$ is convex and $f(x,y)$ is $F\#\calPos$-subharmonic then the marginal function
$$ g(x):= \inf_y f(x,y)$$ is $F$-subharmonic.  

This generalises the classical statement that the marginal function of a convex function is again convex.  We also prove a complex version of this result that generalises the Kiselman minimum principle for the marginal function of a plurisubharmonic function.  
\end{abstract}
\maketitle

\tableofcontents

\section{Introduction}\label{sec:introduction}
Although the maximum of two convex functions is always convex, the same is not normally true for the minimum.  However there is a minimum principle for convex functions that states that if $f(x,y)$ is a function that is convex in two real variables then its marginal function
$$ g(x): = \inf_{y} f(x,y)$$
is again convex.   This fundamental property is used throughout the study of convex functions, in particular when considering convex optimisation problems.    In fact, since a function is convex if and only if its epigraph is a convex set, this minimum principle can be viewed as precisely the statement that the linear projection of a convex set is again convex.

In the complex case, convexity is replaced with the property of being plurisubhamonic, and then the statement becomes that if $f(z,w)$ is a plurisubharmonic function of two complex variables that is independent of the argument of $w$ then the marginal function $g(z) = \inf_{w} f(z,w)$ is again plurisubharmonic.  This minimum principle is due to Kiselman, and is a key tool in pluripotential theory.

\begin{center}
*
\end{center}

Darvas-Rubinstein \cite{RubinsteinDarvas} were the first to show that this minimum principle extends beyond the above classical setting asked if it holds even more generally.       The natural setting for this question is a huge generalization of convexity and plurisubharmonicity that uses the technique of viscosity subsolutions, as expounded by Harvey-Lawson  \cite{HL_Dirichletduality,HL_Dirichletdualitymanifolds,HL_equivalenceviscosity}.  

To explain this elegant idea, consider the real case of convex functions (the complex case being completely analogous).    If $g:X\to \mathbb R$ is a smooth function on an open $X\subset\mathbb R^n$, local convexity of $g$ is equivalent to the statement that for all $x\in X$ the Hessian $\Hess_{x}(g)$ is contained in the set of semipositive matrices.  If $g:X\to \mathbb R\cup \{-\infty\}$ is merely upper-semicontinuous, then being locally convex is equivalent to the statement  that for any smooth ``test-function" $\phi$ that touches $g$ from above at $x\in X$  (Figure \ref{fig:viscosity}) it holds that $\Hess_{x}\phi$ is semipositive.   

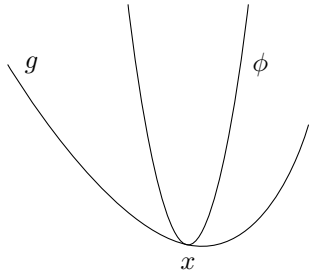
\begin{figure}[h]
\begin{tikzpicture}[scale=0.8]
\draw [xshift=4cm] plot [smooth, tension=1] coordinates { (1,1) (4,-2) (6,0)};
\draw [xshift=4cm] plot [smooth, tension=1] coordinates { (3,2) (4,-2) (5,2)};
\node  at (5.4,1.0) {$g$};
\node  at (9.2,1) {$\phi$};
\node  at (8,-2.3) {$x$};
\end{tikzpicture}
  \caption{The function $\phi$ touching $g$ from above at $x$}
\label{fig:viscosity}
\end{figure}

Now, we are free to replace the cone of semipositive matrices with any other subset $F$ of symmetric matrices, and in doing we can define what it means to be \emph{$F$-subharmonic} in precisely the same way.    In fact if instead of using the Hessian we instead use the full $2$-jet, we can take $F$ to be any subset of the space of $2$-jets.    To have a useful theory we need to make some mild assumptions on $F$.

\begin{definition*}
Let $X\subset \mathbb R^n$ be open and $F$ be a subset of the space $$J^2(X) = X\times \mathbb R\times \mathbb R^n\times \Sym_n^2$$ of $2$-jets on $X$. We say that $F$ is a \emph{primitive subequation} if 
\begin{enumerate}
\item $F$ is closed.
\item If $(x,r,p,A)\in F$ and $P$ is semipositive then $(x,r,p,A+P)\in F$.
\end{enumerate}
\end{definition*}

Somewhat surprisingly, even at this level of generality the space of $F$-subharmonic functions has many properties in common with convex and plurisubharmonic functions.  In this paper we define and prove a minimum principle in this setting.

\begin{center}
*
\end{center}

For a precise statement, suppose that $F\subset J^2(X)$ is a primitive subequation and let
 $$\mathcal P = \{ (x,r,p,A) \in J^2(\mathbb R^m) : A \text{ is semipositive} \}$$
 be the set of $2$-jets whose Hessian part is semipositive.    We will define a new primitive subequation 
 $$F\#\calPos\subset J^2(X\times \mathbb R^m)$$ 
 in such as way that
an $F\#\calPos$-subharmonic function is an upper-semicontinuous $$f:X\times \mathbb R^m\to\mathbb R\cup \{-\infty\}$$ whose restriction to any non-vertical slice is $F$-subharmonic, and whose restriction to any vertical slice is $\calPos$-subharmonic.  That is, $f$ is $F\#\calPos$-subharmonic if and only if
\begin{enumerate}[(i)]
\item For each linear $\Gamma:\mathbb R^{n}\to \mathbb R^{m}$ and $y_0\in \mathbb R^{m}$ the function
$$ x\mapsto f(x, y_0 +\Gamma x)$$
is $F$-subharmonic, and
\item For each fixed $x_0$ the function $y\mapsto f(x_0,y)$ is $\calPos$-subharmonic (i.e.\ locally convex).
\end{enumerate}

We say that a subset $\Omega\subset X$ is \emph{$F$-pseudoconvex} if $\Omega$ admits a continuous and exhaustive $F$-subharmonic function.

\begin{definition*}\label{def:minimumprinciple_intro} Let  $\pi:X\times \mathbb R^m\to X$ denote the natural projection.  
We say that a primitive subequation $F\subset J^2(X)$ satisfies the \emph{minimum principle} if the following holds:
 
Suppose $\Omega\subset X\times \mathbb R^{m}$ is an $F\#\calPos$-pseudoconvex domain such that that the slices 
$$\Omega_x : = \{ y \in \mathbb R^m : (x,y)\in \Omega\}$$ are connected for each $x\in X$.  Then $\pi(\Omega)$ is $F$-pseudoconvex, and for any $F\#\calPos$-subharmonic function $f$ on $\Omega$, the marginal function 
 $$g(x) : = \inf_{y\in \Omega_x} f(x,y)$$
is $F$-subharmonic on $\pi(\Omega)$.
\end{definition*}

In the complex case we take $X\subset \mathbb C^n$, and $F\subset J^{2,\mathbb C}(X)$ a complex primitive subequation (i.e.\ a subset of the space of complex $2$-jets on $X$ with the same properties as above).   Denote by $\mathcal P^{\mathbb C}\subset J^{2,\mathbb C}(\mathbb C^m)$ the set of complex $2$-jets whose complex Hessian is semipositive.    We then make an analogous definition of product $F\#_{\mathbb C}\calPos^{\mathbb C}$ in which we require the $\Gamma:\mathbb C^n
\to \mathbb C^m$ to be $\mathbb C$-linear.   The complex version of the minimum principle is similar, but  we require both  $\Omega\subset X\times \mathbb C^m$ and $f$  to be independent of the argument of the second variable.

\begin{definition*}\label{def:minimumprinciple_complex_intro}\ Let $\pi:X\times \mathbb C^m\to X$ denote the natural projection.  We say that a complex primitive subequation $F\subset J^{2,\mathbb C}(X)$ satisfies the \emph{minimum principle} if the following holds:
 
Suppose $\Omega\subset X\times \mathbb R^{m}$ is an $F\#\calPos^\mathbb C$-pseudoconvex domain such that that the slices $\Omega_z$ are connected for each $z\in X$ and are independent of the argument of the second variable.   Then $\pi(\Omega)$ is $F$-pseudoconvex, and for any $F\#\calPos^\mathbb C$-subharmonic function $f$ on $\Omega$ that is independent of the argument of the second variable, the marginal function 
 $$g(w) : = \inf_{w\in \Omega_z} f(z,w)$$
is $F$-subharmonic on $\pi(\Omega)$.
\end{definition*}

We conjecture that the minimum principle holds for a wide-class of primitive subequations $F$, and in this paper we prove this in complete generality for constant-coefficient convex primitive equations that have one additional (mild) property.

\begin{definition*}
We say that $F\subset J^2(X)$ has the \emph{Negativity Property} if 
$$(x,r,p,A)\in F\text{ and }r'<r \Rightarrow (x,r',p,A)\in F.$$
We say that $F$ is \emph{constant coefficient} if  for all $x,x'\in X$ we have
$$ (x,r,p,A)\in F \Leftrightarrow (x',r,p,A)\in F,$$
and $F$ is \emph{convex} if the fibre $F_x= \{ (r,p,A) :  (x,r,p,A)\in F\}$ is convex for each $x\in X$.
\end{definition*}

\begin{maintheorem}[Minimum Principle in the Convex Case]\label{thm:minimumprincipleconvexI:intro}
 Let $X\subset \mathbb R^n$ be open and $F\subset J^2(X)$ be a real or complex primitive subequation such that
 \begin{enumerate}
\item $F$ satisfies the Negativity Property,
 \item $F$ is convex,
 \item $F$ is constant coefficient.
 \end{enumerate}
Then $F$ satisfies the minimum principle.
 \end{maintheorem}
 
It is not hard to check that $\calPos_{\mathbb R^n}\#\calPos_{\mathbb R^m} = \calPos_{\mathbb R^{n+m}}$ (and similarly in the complex case).    So when $F=\calPos_{\mathbb R^n}$,  the above Theorem reduces to the classical statement that the marginal function of a convex function is again convex.   Similarly when $F=\calPos^{\mathbb C}_{\mathbb C^n}$ we get precisely the Kiselman minimum principle.\\


The strategy of proof is as follows.  Suppose first that $f$ is $F\#\calPos$-subharmonic and smooth, and also that for each $x$ the minimum of $\{ f(x,y) : y\in \Omega_x\}$ is attained at some point $y=\gamma(x)$.  Assume also that $\gamma$ is sufficiently smooth.  Then a direct calculation, using the Chain Rule, allows one to express the $2$-jet of the marginal function $g$ in terms of the $2$-jet of the function $f$.  And from this it becomes clear that $f$ being $F\#\calPos$-subharmonic implies that $g$ is $F$-subharmonic.   The minimum principle for general $f$ is then reduced to this case through a number of approximations, the most significant being that since $F$ is assumed to be convex one can perform an integral mollification to eventually reduce to the case that $f$ is smooth.

One difference between the minimum principle we prove here and the classical case is that being convex (resp.\ plurisubharmonic) can be tested by restricting to lines (resp.\ complex lines), so it is essentially enough to assume that $X$ is one dimensional (i.e.\ that $n=1$).    For $F$-subharmonic functions we do not have this tool at our disposal.  However we will see that we can reduce to the case that the second variable lies in a one dimensional space (i.e.\ that $m=1$) which turns out to be crucial for the argument we give.\\

Notice that the convexity of $F$ is used only in the approximation part of the above argument (namely that the mollification of an $F$-subharmonic function remains $F$-subharmonic)s.  For this reason it seems to the authors that this convexity is a facet of the proof rather than an essential feature.  In a sequel to this paper we will take up the minimum principle again for non-convex subequations.\\

\noindent {\bf Comparison with other work: }The viscosity technique arose in the study of fully non-linear degenerate second-order differential equations, as pioneered by the work of Caffarelli--Nirenberg--Spruck \cite{CaffarelliNirenbergSpruckIII} and Lions--Crandall--Ishii (see the User's guide \cite{CILUserGuide} and references therein).     We use here the point of view taken by Harvey-Lawson \cite{HL_Dirichletduality,HL_Dirichletdualitymanifolds,HL_equivalenceviscosity} that replaces non-linear operators with certain subsets of the space of $2$-jets, which is in the same spirit as earlier work of Krylov \cite{Krylov}.

The minimum principle for convex functions is such a basic property that its origin appears to be time immemorial.   The minimum principle for plurisubharmonic functions was discovered by Kiselman \cite{KiselmanInvent,Kiselman}, and is often referred to as the Kiselman minimum principle.  This minimum principle is a tool used throughout pluripotential theory, and is known to be intimately connected to singularities of plurisubharmonic functions \cite{Demaillybook,Hormander_introductiontocomplexanalysis,KiselmanInvent} as well as the complex Homogeneous Monge-Amp\`ere Equation (see for instance the work of Darvas-Rubinstein \cite{DarvasRubinstein_rooftop},  or previous work of the authors \cite{RossNystrom_harmonicdiscs}).

 A stronger form of minimum principle for convex functions was proved by Prekopa \cite{Prekopa_concave}, which has since been extended to the plurisubharmonic setting by Berndtsson \cite{Berndtsson_Prekopa} (see also \cite{Erausquin}).   Other form of minimum principle can be found in \cite{Zeriahiminimum,Poletsky_minimumprinciple}, and surveys in \cite{DengZhangZhou_minimumprinciple,Kiselman_plurisubharmonicseveral}.
 
In \cite{RubinsteinDarvas}, Darvas-Rubinstein prove the first ``non-classical" minimum principle using the viscosity technique.  Their interest was in a particular subequation (that is constant coefficient and depends only on the Hessian part) related to the study of Lagrangian graphs.    After this work was complete we became aware that a more general minimum principle, with some similarities to the work developed here, appeared in a lecture course of Rubinstein.   We expect, but have not proved, that the minimum principle in \cite{RubinsteinDarvas} can be derived from the statement in this paper as the techniques used are rather similar (in fact both have close similarities with Kiselman's proof in \cite{Kiselman}).

The reader may also note that the authors have as an application of the main result in \cite{RN_argmin} a minimum principle that does not require any convexity of the subequation $F$, but instead requires a certain concavity assumption on the function $f(x,y)$. \\ 


\noindent {\bf Possible Extensions and Future Directions:}.  The authors' interest in this minimum principle is an ongoing project that extends their work connecting the Hele-Shaw flow and the complex Homogeneous Monge-Amp\`ere Equation \cite{RossNystrom_harmonicdiscs}.  In fact it seems apparent that on can generalize the Hele-Shaw flow using F-subharmonic functions, and this flow is connected by a Legendre transformation to a Dirichlet problem involving the product subequation.  They key step in this is the minimum principle, and we plan to continue this in future work.

It is possible to extend the notion of $F$-subharmonicity to Riemannian manifolds  $X$ (see \cite{HL_Dirichletdualitymanifolds} and we observe that this requires some additional structure on $X$).     Being a local condition, it is clear that the work in this paper generalizes as well, and applies to products $X\times \mathbb R^m$.    We expect that one can also extend to the case that the $\mathbb R^m$ factor is replaced by a Riemannian manifold, but have not checked the details as this would go beyond our intended application.

We remark finally that the Harvey-Lawson theory also allows for quaternionic subequations.  It would be interesting to know if there is a minimum principle in this setting as well.\\

{\bf Organization:} In Section \ref{sec:productsofsubequations} we introduce the new notion of product subequation and make precise what we mean by a minimum principle for $F$-subharmonic functions.  Some examples are then given in Section \ref{sec:examples}.   In Section \ref{sec:approximations} we prove some general statements about approximating $F$-subharmonic functions by ones that are more regular (for instance the method of sup-convlution allowing approximation by semiconvex functions, and mollification that allows approximation by smooth ones).  

The main proofs appear in Sections \ref{sec:reductions} through Section \ref{sec:minimumcomplexconvex}, and proceed by a number ``reductions" that show that to prove our desired minimum principle for $f:X\times \mathbb R^m\to \mathbb R\cup \{-\infty\}$ we may, without loss of generality, make the following assumptions:
\begin{itemize}
\item we may assume $m=1$
\item $f$ may be assumed to be bounded from below (Proposition \ref{prop:mcanbeone})
\item $f$ may be assume to be relatively exhaustive. i.e. the minimum of $f$ on each fibre is obtained strictly away from the boundary (Proposition \ref{prop:reductionfibrewiseminimum})
\item $f$ may be assumed to be semiconvex (Proposition \ref{prop:reductionsemiconvex})
\item $f$ may be assumed to be smooth (Proposition \ref{prop:reductionsmooth})
\end{itemize}

After this we are left with the task of proving the minimum principle under all the above assumptions, which is done by direct calculation of the Hessian of $f$.  In the real case this can be found in Proposition \ref{prop:hessiancalc} and in the complex case Proposition \ref{thm:minimumprinciplecomplexconvexI}.\\

\noindent {\bf Acknowledgements: }The authors wish to thank Tristan Collins and Yanir Rubinstein for conversations that stimulated this work.

\section{Subequations}
In this section we recall the definition of subequations and $F$-subharmonic functions.   We start in the real case.  Let $M_{n\times m}(\mathbb R)=\Hom(\mathbb R^m,\mathbb R^n)$ denote the space of $n\times m$ real matrices,  $\Sym^2_{n}\subset M_{n\times n}(\mathbb R)$ denote the real symmetric $n\times n$ matrices and 
$$\Pos_{n} = \{ A\in \Sym^2_{n} :  v^tAv \ge 0 \text{ for all }v \}$$ denote the symmetric semipositive matrices. Assume $X\subset \mathbb R^{n}$ is open and let
$$J^2(X):=X\times \mathbb R\times \mathbb R^n \times \Sym^2_{n}$$
be the jet-bundle over $X$, which has fiber over $x\in X$
$$J^2_{n}: =  \mathbb R\times \mathbb R^n \times \Sym^2_{n}.$$
For $F\subset J^2(X)$ and $x\in X$ write 
$$ F_x= \{ (r,p,A)\in J^2_n  : (x,r,p,A)\in F\}.$$

\begin{definition}[Subequations]
We say that $F\subset J^2(X)$ is a \emph{primitive subequation} if 
\begin{enumerate}
\item (Closedness) $F$ is closed.
\item (Positivity) 
\begin{equation}\label{eq:positivity} 
(r,p,A)\in F_x \text{ and } P\in \Pos_{n} \Rightarrow (r,p,A+P)\in F_x.\end{equation}
\end{enumerate}

We say that $  F\subset J^2(X)$ is a \emph{subequation} if in addition
\begin{enumerate}\setcounter{enumi}{2}
\item (Negativity) \begin{equation}\label{eq:negativity} 
(r,p,A)\in F_x \text{ and } r'\le r \Rightarrow (r',p,A)\in F_x.\end{equation}
\item (Topological) \begin{equation}\label{eq:topological} F = \overline{\Int( F)} \text{ and } F_x = \overline{\Int F_x}.\text{ and } \Int F_x = (\Int F)_x\text{ for all }x\in X\end{equation}
\end{enumerate}
where the bar denotes topological closure.

We say that $ F$ is \emph{convex} if each $F_x$ is convex,  i.e. if $\alpha_1,\alpha_2\in  F_x$ and $t\in [0,1]$ then $t\alpha_1 + (1-t)\alpha_2\in F_x$.   
\end{definition}

\begin{remark}
The majority of the results of this paper hold for primitive subequations that satisfy the Negativity Property.    The extra assumption of being a subequation is important for the Comparison Principle proved in \cite{HL_Dirichletduality,HL_Dirichletdualitymanifolds}.
\end{remark}

\begin{example}
Set
$$\calPos_{X}:= X\times \mathbb R\times \mathbb R^n \times \Pos_{n}$$
which is a convex subequation. By abuse of notation we will simply write $\calPos$ for $\calPos_X$ when $X$ is clear from context.
 \end{example}

\subsection{$F$-subharmonic functions}
We again let $X$ be an open subset of $\mathbb R^n$.

\begin{definition}[Upper contact points, Upper contact jets]
Let $$f: X\to \mathbb R\cup\{-\infty\}.$$ We say that $x\in X$ is an \emph{upper contact point} of $f$ if $f(x)\neq -\infty$ and there exists $(p,A)\in \mathbb R^n\times \Sym^2_n$ such that
$$f(y)\le f(x) + p.(y-x) + \frac{1}{2} (y-x)^tA(y-x) \text{ for all } y \text{ sufficiently near } x.$$
When this holds we refer to $(p,A)$ as an \emph{upper contact jet} of $f$ at $x$.
\end{definition}

\begin{definition}[F-subharmonic function]
Suppose $F\subset J^2(X)$.   We say that an upper-semicontinuous function $f:X\to \mathbb R\cup \{-\infty\}$ is \emph{F-subharmonic} if $$ (f(x),p,A)\in F_x \text{ for all upper contact jets }  (p,A) \text{ of }f \text{ at } x.$$
We let $F(X)$ denote the set of $F$-subharmonic functions on $X$.
\end{definition}

Observe that by definition any $x$ such that $f(x)=-\infty$ is not an upper-contact point, and so the function $f\equiv -\infty$ is trivially $F$-subharmonic. Clearly being $F$-subharmonic is a local condition, by which we mean that if $\{X_{\alpha}\}_{\alpha\in \mathcal A}$ is an open cover of $X$ then $f\in F(X)$ if and only if $f\in F(X_{\alpha})$ for all $\alpha$.  

\begin{example}[Convex and Plurisubharmonic]\label{example:convexity}
Recall $\mathcal P_X = X\times \mathbb R\times \mathbb R^n\times \Pos_n$.  Then $\mathcal P_X(X)$ consists of locally convex functions on $X$ \cite[Example 14.2]{HL_Dirichletdualitymanifolds}.  
\end{example}

For the reader's convenience, we collect the properties subequations and F-subharmonic functions from the work of Harvey-Lawson that we will need in Appendix \ref{appendix:subequations}.      The most pertinent is that if $f$ is $\mathcal C^2$, and $F$ satisfies the Positivity assumption, then $f$ is $F$-subharmonic if and only if all its second order jets lie in $F$ (see Lemma \ref{lem:fsubharmonicc2}).  For much more depth, including many interesting examples, the reader is referred to the original papers \cite{HL_Dirichletduality,HL_Dirichletdualitymanifolds,HL_equivalenceviscosity}.  In particular in \ref{sec:complexsubequations}
we discuss how this extends to the complex case (with the upshot being that for $\mathcal C^2$ functions the second order jet is to be understood in the complex sense).

\section{Products of Primitive Subequations}\label{sec:productsofsubequations}

\subsection{Real Products}
For $\Gamma\in \Hom(\mathbb R^{n},\mathbb R^{m})=M_{m\times n}(\mathbb R)$ consider
\begin{align}i_\Gamma:\mathbb R^{n} &\to \mathbb R^{n+m} \quad i_\Gamma(x) = (x,\Gamma x)\\
j : \mathbb R^{m} &\to \mathbb R^{n + m} \quad j(y) = (0,y).\end{align}
These induce natural pullback-maps
\begin{align*}
i_\Gamma^*: J^2_{n+m}\to J^2_{n}\\
j^* : J^2_{n+m}\to J^2_{m}\end{align*}
with  the following property.  If $f:\mathbb R^{n+m}\to \mathbb R$ is twice differentiable at $(0,0)\in \mathbb R^{n+m}$ and we let \begin{align*}f_1(x) &:= f(x,\Gamma x) \text{ for } x\in \mathbb R^n\\f_2(y) &: = f(0,y)\text{ for } x\in \mathbb R^m\end{align*}
then $f_1,f_2$ are twice differentiable at $0\in \mathbb R^{n}$ and $0\in \mathbb R^{m}$ respectively, and
\begin{align}
 J^2_0 (f_1) &= i_\Gamma^* J^2_{(0,0)} (f)\\\
  J^2_{0} (f_2) &= j^* J^2_{(0,0)} (f).
 \end{align}
Explicitly suppose
$$ p := \left(\begin{array}{c} p_1 \\ p_2 \end{array} \right)\in \mathbb R^{n+ m}$$
and
$$ A := \left(\begin{array}{cc} B & C \\ C^t & D \end{array}\right)\in \Sym^2_{n+m}$$
where the latter is in block form, so $B\in \Sym^2_{n}$ and $D\in \Sym^2_{m}$.  Then
\begin{align}\label{eq:defofistar}
i_\Gamma^*(r,p,A) &= \left(r, p_1 + \Gamma^t p_2, B + C\Gamma + \Gamma^t C^t + \Gamma^tD\Gamma\right)\\
j^*(r,p,A)  &= (r,p_2,D).
\end{align}

\begin{definition}[Products]
Let $X\subset\mathbb R^n$ and $Y\subset \mathbb R^m$ be open, and $F\subset J^2(X)$ and $G\subset J^2(Y)$. Define $$F\#G \subset J^2(X\times Y)$$ by
$$(F\#G)_{(x,y)}  = \left \{ \alpha\in J^2_{n+m} : \begin{array}{l}   i_{\Gamma}^*\alpha \in F_{x}  \text{ and }  j^*\alpha \in G_y  \\ \text{for all }  \Gamma\in \Hom(\mathbb R^{n},\mathbb R^{m})\end{array}\right\}.$$
\end{definition}

\begin{lemma}[Basic Properties of Products]\label{lem:productsofsubequations}\
\begin{enumerate}
\item If $F$ and $G$ both satisfy any of the following properties then the same is true for $F\#  G$:
\begin{inparaenum}
\item Positivity 
\item Negativity 
\item Being constant coefficient
\item Being independent of the gradient part
\item Convexity
\item Being closed 
\item Being a primitive subequation.
\end{inparaenum}
\item If $F_1\subset F_2\subset J^2(X) \text{ and } G_1\subset G_2\subset J^2(Y)$ then  $F_1\#G_1\subset F_2\#G_2.$
\item For $i=1,2$ let $H_i$ be a subgroup of $GL_{n_i}(\mathbb R)$ and suppose that $F_i\subset J^2(X_i)$ is $H_i$-invariant.  Then $F_1\#F_2$ is $H_1\times H_2$-invariant.
\end{enumerate}
\end{lemma}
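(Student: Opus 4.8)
The plan is to verify each closure property for $F\#G$ by directly unwinding the definition
\[
(F\#G)_{(x,y)} = \{ \alpha \in J^2_{n+m} : i_\Gamma^*\alpha \in F_x \text{ and } j^*\alpha \in G_y \text{ for all } \Gamma \in \Hom(\mathbb R^n,\mathbb R^m)\},
\]
and observing that the pullback maps $i_\Gamma^*$ and $j^*$ are affine-linear in $(r,p,A)$ with the explicit formulas \eqref{eq:defofistar}. The key point is that $F\#G$ is, by construction, an intersection over $\Gamma$ of preimages of $F_x$ and $G_y$ under these linear maps, and each property on the list is preserved both by taking linear preimages and by arbitrary intersections. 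For item (2), monotonicity is immediate: enlarging $F$ and $G$ only enlarges the set of $\alpha$ satisfying the defining conditions. For item (3), $G$-invariance, one computes that the action of $(g_1,g_2) \in H_1 \times H_2$ on a jet over $X_1 \times X_2$ intertwines with $i_\Gamma^*$ and $j^*$ after suitably twisting $\Gamma$ (replacing $\Gamma$ by $g_2^t \Gamma g_1$ or similar), so the $H_1\times H_2$-orbit of an element of $F_1\#F_2$ stays inside $F_1\#F_2$; this is a short matrix calculation using $g^*(x,r,p,A) = (x,r,g^tp,g^tAg)$.

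Concretely, for item (1) I would proceed property by property. \emph{Positivity}: if $\alpha = (r,p,A) \in (F\#G)_{(x,y)}$ and $P \in \Pos_{n+m}$, then from \eqref{eq:defofistar} one checks $i_\Gamma^*(\alpha + (0,0,P)) = i_\Gamma^*\alpha + (0,0, \widetilde{P})$ where $\widetilde P = B_P + C_P\Gamma + \Gamma^t C_P^t + \Gamma^t D_P \Gamma$ is the corresponding Hessian part of the pullback; the point is that $\widetilde P$ is exactly the pullback of the quadratic form $P$ restricted to the graph of $\Gamma$, hence $\widetilde P \in \Pos_n$ since $P \ge 0$. Thus $i_\Gamma^*(\alpha + (0,0,P)) \in F_x$ by positivity of $F$, and similarly $j^*(\alpha + (0,0,P)) = j^*\alpha + (0,0,D_P) \in G_y$ with $D_P \ge 0$. \emph{Negativity} and \emph{constant coefficient} and \emph{gradient-independence} are even more direct since $i_\Gamma^*$ and $j^*$ act transparently on the $r$-component (identity) and there is no base-point interaction. \emph{Convexity}: both $i_\Gamma^*$ and $j^*$ are affine maps, so if $\alpha_0,\alpha_1 \in (F\#G)_{(x,y)}$ and $t\in[0,1]$ then $i_\Gamma^*(t\alpha_1 + (1-t)\alpha_0) = t\, i_\Gamma^*\alpha_1 + (1-t)\, i_\Gamma^*\alpha_0 \in F_x$ by convexity of $F_x$, and likewise for $j^*$. \emph{Closedness}: each $i_\Gamma^*$ and $j^*$ is continuous, so each condition $\{\alpha : i_\Gamma^*\alpha \in F_x\}$ is closed (preimage of a closed set), and $(F\#G)_{(x,y)}$ is an intersection of closed sets, hence closed; one also needs closedness of $F\#G$ as a subset of $J^2(X\times Y)$, which follows from closedness of $F$ and $G$ together with continuity of the pullbacks in all variables jointly. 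Finally, \emph{being a primitive subequation} is just the conjunction of Positivity and Closedness, so it follows from the two cases already handled.

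I expect the only mildly delicate step to be the Positivity verification, specifically the claim that pulling back a semipositive quadratic form $P$ along the affine inclusion $i_\Gamma$ (whose linear part is $x \mapsto (x,\Gamma x)$) yields a semipositive form: this is the identity $v^t \widetilde P v = (i_\Gamma' v)^t P (i_\Gamma' v) \ge 0$ where $i_\Gamma' = \begin{pmatrix} \Id_n \\ \Gamma \end{pmatrix}$, which in block coordinates is exactly $B_P + C_P\Gamma + \Gamma^t C_P^t + \Gamma^t D_P\Gamma$ as in \eqref{eq:defofistar}. Everything else is bookkeeping. For item (3) the main care is getting the twisting of $\Gamma$ correct: under $g = (g_1,g_2)$, the graph inclusion $i_\Gamma$ composed with the $GL$-action should be rewritten as $i_{\Gamma'}$ for $\Gamma' = g_2^{-1}\Gamma g_1$ (up to transpose conventions matching the stated action), after which $H_i$-invariance of $F_i$ closes the argument; I would state this as a one-line change of variables rather than belabour the indices.
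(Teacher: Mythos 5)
Your proof is correct and follows essentially the same route as the paper: observe that $i_\Gamma^*$ and $j^*$ are affine and continuous, verify that $i_\Gamma^*(0,0,P) = (0,0,\widetilde P)$ with $\widetilde P\ge 0$ (the paper notes this without writing the factorisation $\widetilde P = \begin{pmatrix}\Id_n\\\Gamma\end{pmatrix}^t P\begin{pmatrix}\Id_n\\\Gamma\end{pmatrix}$, which you helpfully make explicit), deduce the remaining properties of part (1) from linearity and continuity, and handle part (3) by twisting $\Gamma$ under the group action. Your hedged $\Gamma'=g_2^{-1}\Gamma g_1$ is off from the paper's $h_2\Gamma h_1^{-1}$, but since the point is only that the twist is a bijection on $\Hom(\mathbb R^{n_1},\mathbb R^{n_2})$, the argument still closes; only the labelling of which $\Gamma$ is which differs.
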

\begin{proof}
Suppose $P\in \Pos_{n+m}$ and $\Gamma\in \Hom(\mathbb R^n,\mathbb R^m)$.   Then $i_\Gamma^*(0,0,P) = (0,0,P')$  and $j^*(0,0,P) = (0,0,P'')$ where $P'\in \Pos_{n}$ and $P''\in \Pos_{m}$.  Thus (a) follows by linearity of $i_\Gamma^*$ and $j^*$.  Statement (b,c,d) are immediate from the definition.  Statement (e)  follow from linearity of $i_\Gamma^*$ and $j^*$, and (f) from continuity of $i_\Gamma^*$ and $j^*$, and (g) combines (a) and (f).    Statements (2) is immediate, and (3) follows from identities such as
$$ i^*_{\Gamma} (h^*\alpha) = h_1^* (i_{h_2\Gamma h_1^{-1}}^*\alpha)  \text{ for } h= (h_1,h_2)\in H_1\times H_2$$
the details of which are left to the reader.
\end{proof}

\begin{remark}
Taking $H_1$ and $H_2$ to be the orthogonal group in Lemma \ref{lem:productsofsubequations}(3) allows us to extend the definition of $F_1\#F_2$ to products of Riemannian manifolds.  See \cite{HL_Dirichletdualitymanifolds}.
\end{remark}

\begin{proposition}[Associativity of products]\label{prop:associativityofproducts}
Let $X_i\subset \mathbb R^{n_i}$ be open and $F_i\subset J^2(X_i)$ for $i=1,2,3$.  Then
$$ (F_1\#F_2)\#F_3 = F_1\#(F_2\#F_3).$$
\end{proposition}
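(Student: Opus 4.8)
The plan is to peel off both iterated products until each side is expressed as a single condition on the restrictions of a $2$-jet to three natural families of affine slices of $\mathbb R^{n_1}\times\mathbb R^{n_2}\times\mathbb R^{n_3}$, and then to observe that the two conditions obtained are literally the same.

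First I would record the one structural fact needed: the maps $i_\Gamma^*$ and $j^*$ on $2$-jets are the instances, for the linear maps $i_\Gamma$ and $j$, of the pullback $L^*\colon J^2_b\to J^2_a$, $(r,p,A)\mapsto (r,L^tp,L^tAL)$, attached to a linear map $L\colon\mathbb R^a\to\mathbb R^b$ (this is \eqref{eq:defofistar} written coordinate-free), and that such pullbacks are contravariantly functorial, $(L_1\circ L_2)^*=L_2^*\circ L_1^*$. Then, fixing a base point $(x_1,x_2,x_3)$ and, for $\Gamma_{21}\in\Hom(\mathbb R^{n_1},\mathbb R^{n_2})$, $\Gamma_{31}\in\Hom(\mathbb R^{n_1},\mathbb R^{n_3})$, $\Gamma_{32}\in\Hom(\mathbb R^{n_2},\mathbb R^{n_3})$, I would introduce the linear inclusions
\[
a_{\Gamma_{21},\Gamma_{31}}(x)=(x,\Gamma_{21}x,\Gamma_{31}x),\qquad
b_{\Gamma_{32}}(y)=(0,y,\Gamma_{32}y),\qquad
c(z)=(0,0,z)
\]
of $\mathbb R^{n_1}$, $\mathbb R^{n_2}$, $\mathbb R^{n_3}$ into $\mathbb R^{n_1+n_2+n_3}$. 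The claim to establish is that \emph{both} $((F_1\#F_2)\#F_3)_{(x_1,x_2,x_3)}$ and $(F_1\#(F_2\#F_3))_{(x_1,x_2,x_3)}$ coincide with the set of $\alpha\in J^2_{n_1+n_2+n_3}$ satisfying $a_{\Gamma_{21},\Gamma_{31}}^*\alpha\in (F_1)_{x_1}$ for all $\Gamma_{21},\Gamma_{31}$, $b_{\Gamma_{32}}^*\alpha\in(F_2)_{x_2}$ for all $\Gamma_{32}$, and $c^*\alpha\in(F_3)_{x_3}$; associativity then follows at once.

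For $(F_1\#F_2)\#F_3$ I would expand the outer product ($\Gamma\in\Hom(\mathbb R^{n_1+n_2},\mathbb R^{n_3})$, vertical slice $j(z)=(0,0,z)$) and then the inner product ($\Delta\in\Hom(\mathbb R^{n_1},\mathbb R^{n_2})$, vertical slice $j'(y)=(0,y)$ in $\mathbb R^{n_1+n_2}$), using functoriality to rewrite the nested pullbacks as pullbacks along the composites. Writing $\Gamma(u,v)=\Gamma_{31}u+\Gamma_{32}v$, a direct computation gives $i_\Gamma\circ i_\Delta=a_{\Delta,\,\Gamma_{31}+\Gamma_{32}\Delta}$, $i_\Gamma\circ j'=b_{\Gamma_{32}}$ and $j=c$; since for each fixed $\Gamma_{32}$ the map $(\Delta,\Gamma_{31})\mapsto(\Delta,\Gamma_{31}+\Gamma_{32}\Delta)$ is a bijection of $\Hom(\mathbb R^{n_1},\mathbb R^{n_2})\times\Hom(\mathbb R^{n_1},\mathbb R^{n_3})$, the slices $a_{\Delta,\,\Gamma_{31}+\Gamma_{32}\Delta}$ sweep out exactly the family $\{a_{\Gamma_{21},\Gamma_{31}}\}$ as $\Gamma=(\Gamma_{31},\Gamma_{32})$ and $\Delta$ vary, which yields the claimed description. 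The computation for $F_1\#(F_2\#F_3)$ is the mirror image and is even more direct: expanding the outer product along $\Gamma'=(\Gamma_{21},\Gamma_{31})\in\Hom(\mathbb R^{n_1},\mathbb R^{n_2+n_3})$ gives $i_{\Gamma'}=a_{\Gamma_{21},\Gamma_{31}}$, and expanding the inner product $(F_2\#F_3)$ along $\Sigma\in\Hom(\mathbb R^{n_2},\mathbb R^{n_3})$ together with the inclusion $(y,z)\mapsto(0,y,z)$ gives the composites $b_\Sigma$ and $c$, with no reparametrisation required.

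I do not anticipate any genuine difficulty here: the content is entirely contravariant functoriality of the $2$-jet pullback plus the single bijective reparametrisation $(\Delta,\Gamma_{31})\leftrightarrow(\Delta,\Gamma_{31}+\Gamma_{32}\Delta)$ occurring in the $(F_1\#F_2)\#F_3$ expansion. The only thing to be careful about is keeping the three index blocks and their $\Hom$-spaces straight; the argument then runs verbatim in the complex case with $\Hom(\mathbb C^{n_i},\mathbb C^{n_j})$ in place of the real Hom-spaces.
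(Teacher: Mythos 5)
Your proof is correct and follows essentially the same route as the paper's Appendix A: the key composition identity $i_\Gamma\circ i_\Delta=a_{\Delta,\,\Gamma_{31}+\Gamma_{32}\Delta}$ is exactly the paper's Lemma \ref{lem:factoriota} in different notation, and the reparametrisation $(\Delta,\Gamma_{31})\leftrightarrow(\Delta,\Gamma_{31}+\Gamma_{32}\Delta)$ plays the same role as the paper's observation that every $\Gamma$ arises from some pair $(\Phi,\Psi)$. Your presentation is slightly more symmetric in that it introduces the three-slice-family characterisation as an explicit middle term, but the underlying argument is the same.
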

\begin{proof}
This can be proved by elementary manipulations, the details of which can found in Appendix \ref{sec:associativityproductsubequations}.
\end{proof}

\subsection{Products of Subequations}\label{subsec:productsofsubequations}

 We stress that the product of subequations is not necessarily a subequation since the Topological property may not be inherited (see Example \ref{sec:productsofsubequationnotasubequation}).   The following two statements give conditions under which this does hold.

\begin{definition}
We say that $F\subset J^2(X)$ has Property (P\textsuperscript{++}) if the following holds.  For all $x\in X$ and all $\epsilon>0$ there exists a $\delta>0$ such that
\begin{equation}(x,r,p,A)\in F_x \Rightarrow (x',r-\epsilon,p, A+\epsilon \Id)\in F_{x'} \text{ for all } \|x'-x\|<\delta. \tag{P\textsuperscript{++}}\label{eq:propertyH_original}
\end{equation}
\end{definition}

Clearly if $F$ a constant coefficient primitive subequation that has the Negativitiy Property then \eqref{eq:propertyH_original} holds.  

\begin{lemma}[Products of gradient-independent subequations]\label{lem:productsofgradientindependent}
Assume  $F\subset J^2(X)$ and $G\subset J^2(Y)$ are subequations with property \eqref{eq:propertyH} that are independent of the gradient part.   Then $F\#G$ is a subequation.
\end{lemma}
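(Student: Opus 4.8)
The plan is to reduce everything to the Topological property~\eqref{eq:topological}. By Lemma~\ref{lem:productsofsubequations}(1) the product $F\#G$ is already a primitive subequation with the Negativity property~\eqref{eq:negativity}, so only~\eqref{eq:topological} needs proof, and I intend to derive all three of its clauses from the single ``smoothing'' statement $(\star)$: \emph{if $\alpha=(x_0,y_0,r,p,A)\in F\#G$ and $\epsilon>0$, then $(x_0,y_0,\,r-\epsilon,\,p,\,A+\epsilon\Id)\in\Int(F\#G)$.} Granting $(\star)$, letting $\epsilon\to0$ and using that $F\#G$ and its fibre $(F\#G)_{(x_0,y_0)}$ are closed yields $F\#G=\overline{\Int(F\#G)}$ and $(F\#G)_{(x_0,y_0)}=\overline{\Int(F\#G)_{(x_0,y_0)}}$; the remaining clause $\Int\bigl((F\#G)_{(x_0,y_0)}\bigr)=\bigl(\Int(F\#G)\bigr)_{(x_0,y_0)}$ has one inclusion for free because $J^2(X\times Y)$ is a product (a slice of an open set is open), and for the reverse I would apply $(\star)$ to $(x_0,y_0,\,r+\rho,\,p,\,A-\rho\Id)$, which lies in $F\#G$ once $(r,p,A)$ is interior to the fibre and $\rho$ is small.

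To prove $(\star)$ I must exhibit a neighbourhood $U$ of $(x_0,y_0)$ and an $\eta>0$ so that every jet $(x,y,r',p',A')$ with $(x,y)\in U$ and $\|(r',p',A')-(r-\epsilon,p,A+\epsilon\Id)\|<\eta$ lies in $F\#G$; unwinding the definition, this asks for $i_\Gamma^*(r',p',A')\in F_x$ for every $\Gamma\in\Hom(\mathbb R^n,\mathbb R^m)$ and $j^*(r',p',A')\in G_y$. The vertical-slice condition is easy: since $j^*\alpha\in G_{y_0}$, Property~\eqref{eq:propertyH_original} for $G$ with parameter $\epsilon/2$ produces a $\delta_G$ with the $\epsilon/2$-shifted jet in $G_y$ for $\|y-y_0\|<\delta_G$, after which Positivity~\eqref{eq:positivity}, Negativity~\eqref{eq:negativity} and gradient-independence of $G$ fill this out to a full $\eta$-ball. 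For the non-vertical-slice condition, gradient-independence of $F$ lets me discard the gradient part of $i_\Gamma^*$, so, writing $A$ and $A'$ in block form with lower-right blocks $D$ and $D'$, it is enough to secure $(x,r',\ast,M'(\Gamma))\in F_x$, where $M'(\Gamma)$ is the Hessian part of $i_\Gamma^*(r',p',A')$ as in~\eqref{eq:defofistar}, while I know $(x_0,r,\ast,M(\Gamma))\in F_{x_0}$ for all $\Gamma$, $M(\Gamma)$ being the Hessian part of $i_\Gamma^*\alpha$.

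The crux is a uniform estimate: for $\eta$ small enough, depending only on $\epsilon$, one has $M'(\Gamma)-M(\Gamma)\ge\tfrac\epsilon2\Id$ \emph{for every} $\Gamma$. Indeed $M'(\Gamma)-M(\Gamma)=\epsilon\Id+\epsilon\,\Gamma^t\Gamma$ plus error terms whose quadratic form on a unit vector $v$ is bounded in modulus by $\eta\,(1+\|\Gamma v\|)^2$, whereas the first two terms contribute $\epsilon\,(1+\|\Gamma v\|^2)$; since $(1+t)^2\le2(1+t^2)$, taking $\eta\le\epsilon/4$ makes the gain dominate. The decisive point is that the $\epsilon\Id$ added to the $D$-block---precisely the positivity inserted in the $y$-direction---is what produces the term $\epsilon\,\Gamma^t\Gamma$ absorbing the $\Gamma$-dependent errors uniformly. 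With the estimate in hand I apply Property~\eqref{eq:propertyH_original} for $F$ at $x_0$ with parameter $\epsilon/2$: crucially the resulting $\delta_F$ does not depend on the jet, hence not on $\Gamma$, and gives $(x,r-\tfrac\epsilon2,\ast,M(\Gamma)+\tfrac\epsilon2\Id)\in F_x$ for $\|x-x_0\|<\delta_F$. Then Positivity (add the semipositive $M'(\Gamma)-M(\Gamma)-\tfrac\epsilon2\Id$), Negativity (lower $r-\tfrac\epsilon2$ to $r'$) and gradient-independence yield $i_\Gamma^*(r',p',A')\in F_x$ for all $\Gamma$; with $U=\{\|x-x_0\|<\delta_F\}\times\{\|y-y_0\|<\delta_G\}$ and $\eta=\epsilon/4$ this establishes $(\star)$, and the lemma follows.

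I expect the one genuine obstacle to be the uniformity over the non-compact family of matrices $\Gamma$: it is exactly here that one needs both that Property~\eqref{eq:propertyH_original} delivers a $\delta$ independent of the jet being shifted, and that perturbing $A$ alters $M(\Gamma)$ by an amount quadratic in $\Gamma$ with coefficients $O(\eta)$, so that the fixed quadratic gain $\epsilon\,\Gamma^t\Gamma$ overwhelms it. Everything else---the $G$-direction, and the passage from $(\star)$ to~\eqref{eq:topological}---is routine manipulation of the Positivity, Negativity and gradient-independence hypotheses together with standard facts about interiors of slices of a product.
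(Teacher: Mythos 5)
Your proposal is correct and its analytic core coincides with the paper's: both arguments hinge on (P\textsuperscript{++}) giving a $\delta$ independent of the jet, together with the algebraic fact that bumping $A$ by $\epsilon\Id_{n+m}$ produces $\epsilon\Id_n + \epsilon\,\Gamma^t\Gamma$ in the Hessian part of $i_\Gamma^*$, which dominates the $\Gamma$-dependent error terms uniformly; and both invoke gradient-independence to discard the $\Gamma$-dependent gradient part. The organization, however, is genuinely different. You first prove the smoothing statement $(\star)$ --- every $\alpha\in F\#G$, after the $\epsilon$-shift $(r,p,A)\mapsto(r-\epsilon,p,A+\epsilon\Id)$ at the same base point, lands in $\Int(F\#G)$ --- and then derive all three clauses of the Topological property~\eqref{eq:topological} as quick corollaries. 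The paper instead cites Harvey--Lawson (\cite[Section 4.8]{HL_Dirichletdualitymanifolds}) to reduce the Topological property to the single clause $\Int(H_{(x,y)})=(\Int H)_{(x,y)}$, and proves that one inclusion by a near-dual move: start from $\alpha\in\Int(H_{(x,y)})$ with a $\delta_1$-ball in the fibre, shift a nearby $\alpha'$ (with a $\delta_1/2\Id$-Hessian correction) back into that ball to land in $H_{(x,y)}$, and then push forward with (P\textsuperscript{++}). Your formulation via $(\star)$ is slightly more self-contained (no appeal to HL Section 4.8) and, since $(\star)$ is stated at the level of $H=F\#G$ rather than at a point already known to be interior, it also gives clause (1) of \eqref{eq:topological} for free, whereas the paper treats that as known. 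Both are valid proofs of the lemma; yours trades the HL citation for a marginally longer derivation of the three topological clauses.
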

\begin{proof}
The proof is elementary point-set topology, and can be found in Appendix \ref{appendixB}.
\end{proof}

\begin{corollary}[Products of constant-coefficient gradient-independent subequations]
If $F,G$ are both constant-coefficient primitive subequations with the Negativity Property that are independent of the gradient part, then $F\#G$ is a subequation.
\end{corollary}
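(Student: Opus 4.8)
The plan is to obtain this directly from Lemma~\ref{lem:productsofgradientindependent}. That lemma applies to a pair $F,G$ of \emph{subequations} that are independent of the gradient part and satisfy Property~(P\textsuperscript{++}). Two of these three hypotheses are immediate here: gradient-independence is assumed, and the text has already observed that a constant coefficient primitive subequation with the Negativity Property satisfies Property~(P\textsuperscript{++}). So the only point that needs checking is that a constant coefficient, gradient-independent primitive subequation with the Negativity Property is in fact a subequation, i.e.\ that it satisfies the Topological property~\eqref{eq:topological}.

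First I would reduce the Topological property to a statement about the fibre. Writing $F_0:=F_x$ (independent of $x$ by constant-coefficientness) we have $F=X\times F_0$, and since $X$ is open $\Int F=X\times\Int F_0$; hence $(\Int F)_x=\Int F_x$ automatically, while $F=\overline{\Int F}$ and $F_x=\overline{\Int F_x}$ both reduce to the single assertion $F_0=\overline{\Int F_0}$. The inclusion $\overline{\Int F_0}\subseteq F_0$ is just closedness of $F$. For the reverse inclusion I would use the standard perturbation: given $(r,p,A)\in F_0$ and $\epsilon>0$, the Negativity Property followed by Positivity places $(r-\epsilon,\,p,\,A+\epsilon\Id)$ in $F_0$, and a short check shows this is an interior point of $F_0$ once $\epsilon$ is small. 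Indeed, for $(r',p',A')$ within $\delta<\epsilon$ of it (sup-norm on the $r,p$ components, operator norm on the Hessian component), one has $r'<r$ and $A'=A+P'$ with $P'=A'-A$ symmetric and $\geq 0$, so: gradient-independence gives $(r,p',A)\in F_0$, Positivity gives $(r,p',A')\in F_0$, and Negativity gives $(r',p',A')\in F_0$. Letting $\epsilon\to 0$ yields $(r,p,A)\in\overline{\Int F_0}$. At this point $F$ and $G$ satisfy all the hypotheses of Lemma~\ref{lem:productsofgradientindependent}, which then delivers that $F\#G$ is a subequation.

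I do not expect a genuine obstacle: the content is essentially an unwinding of definitions, plus the perturbation trick above. The one subtlety worth flagging is that gradient-independence is used \emph{essentially} in the verification of the Topological property — without it the fibre of a constant coefficient primitive subequation with the Negativity Property can have empty interior (for instance $\{(r,p,A):A\in\Pos_n,\ p=0\}$), so such an $F$ need not be a subequation. Thus the gradient-independence hypothesis in the corollary, which mirrors the one in Lemma~\ref{lem:productsofgradientindependent}, cannot be dropped.
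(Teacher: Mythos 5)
Your proof is correct and follows the route the paper clearly intends (apply Lemma~\ref{lem:productsofgradientindependent} together with the observation preceding it that constant coefficient plus Negativity gives Property~(P\textsuperscript{++})). What you add, and rightly so, is the verification that the Corollary's hypotheses actually meet the Lemma's: the Corollary assumes $F,G$ are only \emph{primitive} subequations, whereas the Lemma is stated for \emph{subequations}, i.e.\ including the Topological property~\eqref{eq:topological}. The paper never spells out why this mismatch is harmless; your perturbation argument (push $(r,p,A)$ to $(r-\epsilon,p,A+\epsilon\Id)$ and show this lies in the interior of the fibre using gradient-independence, Positivity, and Negativity, then let $\epsilon\to 0$) is exactly the standard way to close it, and your side remark that gradient-independence is genuinely needed here — otherwise the fibre can have empty interior — is accurate and clarifying. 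So: same approach as the paper, but with a small implicit step made explicit.
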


\begin{remark}
Lemma \ref{lem:productsofgradientindependent} is far from optimal.  In fact we will give in Section \ref{sec:examples} examples of products of subequations that depend non-trivially on the gradient part that remain a subequation.
\end{remark}

\subsection{Complex Products}
Essentially the same definition is made in the complex case.   Given $\Gamma\in \Hom(\mathbb C^n,\mathbb C^m)$ we let $i_{\Gamma}(z) = (z,\Gamma z)$ and $j(w) = (0,w)$, which induce pullbacks
\begin{align*}
i_\Gamma^*: J^{2,\mathbb C}_{n+m}\to J^{2,\mathbb C}_{n}\\
j^* : J^{2,\mathbb C}_{n+m}\to J^{2,\mathbb C}_{m}\end{align*}
given explicitly by
\begin{align}\label{eq:defofistar_complex}
i_\Gamma^*(r,p,A) &= \left(r, p_1 + \Gamma^* p_2, B + C\Gamma + \Gamma^* C^* + \Gamma^*D\Gamma\right)\\
j^*(r,p,A)  &= (r,p_2,D),
\end{align}
where
$$ p := \left(\begin{array}{c} p_1 \\ p_2 \end{array} \right)\in \mathbb C^{n+ m}$$
and
$$ A := \left(\begin{array}{cc} B & C \\ C^* & D \end{array}\right)\in \Herm(\mathbb C^{n+m})$$
is in block form, so $B\in \Herm(\mathbb C^{n})$ and $D\in \Herm(\mathbb C^{m})$.

\begin{definition}[Products, Complex case]
Suppose $X\subset \mathbb R^{2n}\simeq \mathbb C^{n}$ and $Y\subset \mathbb R^{2m} \simeq \mathbb C^{m}$ are open.  Let $F\subset J^{2, \mathbb C}(X)$ and $G\subset J^{2, \mathbb C}(Y)$. Define
$$F\#_{\mathbb C}G \subset J^{2,\mathbb C}(X\times Y)$$ by
$$(F\#_{\mathbb C}G)_{(x,y)}  = \left \{ \alpha\in J^{2,\mathbb C}_{n+m} : \begin{array}{l}   i_{\Gamma}^*\alpha \in F_{x}  \text{ and }  j^*\alpha \in G_y  \\ \text{for all }  \Gamma\in \Hom(\mathbb C^{n},\mathbb C^{m})\end{array}\right\}.$$
\end{definition}

This definition is compatible with our abuse of notation in thinking of a complex $F\subset J^2(X)$ as being a subset of $J^{2,\mathbb C}(X)$.  That is, if $F'$ denotes the subset of $J^{2,\mathbb C}(X)$  we are associating with a complex primitive subequation $F\subset J^2(X)$, then $(F'\#_{\mathbb C} G') = (F\#G)'$.  For this reason all the basic properties we prove about real products extend immediately to the complex case (for example the analogue of Lemma \ref{lem:productsofsubequations} and Proposition \ref{prop:associativityofproducts} in the complex case follow immediately from the real case). \\

Just as plurisubharmonic functions remain plurisubharmonic after composition with a biholomorphism, the same is true for $F\#_{\mathbb C}\calPos^{\mathbb C}$-subharmonic functions after composition with biholomorphism in the second variable.   The precise statement is as follows:

\begin{lemma}[Composition with biholomorphisms in the second variable]\label{lem:compositionbiholomorphism}
Let $F\subset J^{2,\mathbb C}(X)$ be a complex primitive subequation, $\Omega\subset X\times \mathbb C^m$ be open and $f$ be  $F\#_{\mathbb C}\calPos^{\mathbb C}$-subharmonic on $\Omega$.    Suppose that $W\subset \mathbb C^m$ is open and $$\zeta:W\to \zeta(W)\subset \mathbb C^m$$
is a biholomorphism.  Then the function
$$\tilde{f}(z,w) : = f(z,\zeta(w))$$
is  $(F\#_{\mathbb C}\calPos^{\mathbb C})$-subharmonic on $\{ (z,w)\in X\times W: (z,\zeta(w))\in\Omega\}$.
\end{lemma}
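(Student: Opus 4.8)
The plan is to verify the defining conditions (i) and (ii) of $F\#_{\mathbb C}\calPos^{\mathbb C}$-subharmonicity directly for $\tilde f$, reducing each to the corresponding property of $f$ via the change of variables $w\mapsto \zeta(w)$. Since being $F\#_{\mathbb C}\calPos^{\mathbb C}$-subharmonic is a local condition (as noted after the definition of $F$-subharmonicity), I may work near a fixed point $(z_0,w_0)$ with $(z_0,\zeta(w_0))\in\Omega$, and I am free to shrink $W$. The two slice conditions to check are: (i) for every $\Gamma\in\Hom(\mathbb C^n,\mathbb C^m)$ and $w_1$, the function $z\mapsto \tilde f(z,w_1+\Gamma z) = f(z,\zeta(w_1+\Gamma z))$ is $F$-subharmonic; and (ii) for each fixed $z_0$, the function $w\mapsto \tilde f(z_0,w) = f(z_0,\zeta(w))$ is $\calPos^{\mathbb C}$-subharmonic, i.e.\ plurisubharmonic.

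Condition (ii) is the easy half: it is exactly the classical statement that a plurisubharmonic function (here $w'\mapsto f(z_0,w')$, which is plurisubharmonic by condition (ii) for $f$ together with Example \ref{example:convexity}) remains plurisubharmonic after composition with the holomorphic map $\zeta$. This is standard pluripotential theory, and in the Harvey--Lawson language it also follows from the $GL_m(\mathbb C)$-invariance of $\calPos^{\mathbb C}$ combined with the chain rule for the complex Hessian under holomorphic maps. Condition (i) is the crux. The naive approach of composing with $\Gamma$ directly does not immediately land inside a slice-type composition for $f$, because $z\mapsto (z,\zeta(w_1+\Gamma z))$ is not of the form $z\mapsto(z,y_0+\Gamma' z)$. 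The key observation is that one should instead test $F$-subharmonicity of $z\mapsto f(z,\zeta(w_1+\Gamma z))$ by \emph{further} composing with an arbitrary line in the $z$-variable is not enough — rather, one uses that $F$-subharmonicity of a composition $z\mapsto f(z, h(z))$, where $h$ is holomorphic, can be deduced from the hypothesis that $f$ is $F$-subharmonic on \emph{every} non-vertical slice, by freezing $h$ to first order at each point: at a point $z_*$, write $h(z) = h(z_*) + Dh(z_*)(z-z_*) + O(|z-z_*|^2)$, and compare $f(z,h(z))$ with $f(z, h(z_*) + Dh(z_*)(z-z_*))$ using the $\calPos^{\mathbb C}$-subharmonicity in the $w$-direction to control the second-order error.

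Concretely, I would argue at the level of upper contact jets (equivalently test functions, via Lemma \ref{lem:viscositydefinition}). Let $g(z) := \tilde f(z, w_1+\Gamma z) = f(z, h(z))$ with $h(z) = \zeta(w_1+\Gamma z)$ holomorphic in $z$, and suppose $\phi$ is a $\mathcal C^2$ test function touching $g$ from above at $z_*$. Set $\Gamma_* := Dh(z_*)\in\Hom(\mathbb C^n,\mathbb C^m)$ and $w_* := h(z_*) - \Gamma_* z_*$, so that the affine map $L(z) := w_* + \Gamma_* z$ agrees with $h$ to first order at $z_*$ and $h(z) - L(z) = O(|z-z_*|^2)$. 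Consider the function $z\mapsto f(z, L(z))$, which is $F$-subharmonic by condition (i) for $f$ (it is exactly a non-vertical slice, with matrix $\Gamma_*$ and offset $w_*$). The point is to produce from $\phi$ a test function for $f(z,L(z))$ at $z_*$ with the same $2$-jet up to terms that can be absorbed; this requires estimating $f(z,h(z)) - f(z,L(z))$, which is where plurisubharmonicity of $f$ in $w$ enters. Here a cleaner route, and the one I would ultimately pursue, is to avoid pointwise jet bookkeeping and instead combine Lemma \ref{lem:compositionbiholomorphism}'s statement with a reduction: first prove the special case $\zeta$ affine (immediate from $GL_m(\mathbb C)\times\{\Id\}$-invariance of $F\#_{\mathbb C}\calPos^{\mathbb C}$, which follows from Lemma \ref{lem:productsofsubequations}(3) since $\calPos^{\mathbb C}$ is $GL_m(\mathbb C)$-invariant and $F$ is trivially $\{\Id\}$-invariant), and then handle general $\zeta$ by writing it locally as a uniform limit of compositions adapted to the non-vertical slice structure — or, most efficiently, by showing directly that for holomorphic $h$ the graph map $z\mapsto(z,h(z))$ pulls back $F\#_{\mathbb C}\calPos^{\mathbb C}$-subharmonic functions to $F$-subharmonic functions, using that $i_{\Gamma}^*$ applied to the $2$-jet of $f\circ(\Id,h)$ equals $i_{Dh}^*$ applied to the $2$-jet of $f$ plus a semipositive correction coming from the (holomorphic, hence $\calPos^{\mathbb C}$-controlled) second derivatives of $h$, so that Positivity of $F$ finishes it.

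I expect the main obstacle to be precisely this last point: controlling the contribution of the second-order part of $h=\zeta\circ(w_1+\Gamma\cdot)$ to the complex Hessian of $f\circ(\Id,h)$ and checking it is absorbed by the Positivity property of $F$ together with the $\calPos^{\mathbb C}$-direction hypothesis on $f$ — in other words, making rigorous the "chain rule in the viscosity sense" when $f$ is merely upper-semicontinuous rather than $\mathcal C^2$. The standard device is to reduce to the smooth case (where the computation $\Hess^{\mathbb C}_z(f\circ(\Id,h)) = i_{Dh(z)}^*\Hess^{\mathbb C}(f) + (\text{curvature term in } \partial^2 h)$ is a direct calculation, and the curvature term is semipositive because $f$ is plurisubharmonic in $w$ and $h$ is holomorphic) via the mollification/approximation machinery (Proposition \ref{prop:basicproperties}, Proposition \ref{prop:convexcombintation}), but here $F$ need not be convex, so I would instead argue at the level of upper contact jets as sketched above, carefully choosing the comparison affine slice at each contact point and using the plurisubharmonic second-variable estimate to bound the discrepancy. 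Once that estimate is in place, conditions (i) and (ii) both follow, and locality glues the conclusion over the stated open set.
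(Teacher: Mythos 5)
There is a genuine gap, and it is precisely at the point you flagged as the "main obstacle." You write that $i_{\Gamma}^*$ applied to the $2$-jet of $f\circ(\mathrm{Id},h)$ equals $i_{Dh}^*$ applied to the $2$-jet of $f$ ``plus a semipositive correction coming from the ... second derivatives of $h$.'' This is incorrect: for a \emph{complex} primitive subequation one works with the complex $2$-jet $J^{2,\mathbb C}$, and the \emph{complex} Hessian of $z\mapsto f(z,h(z))$ picks up no contribution from $\partial^2 h$ at all when $h$ is holomorphic. Concretely, the chain rule gives a term $\sum_k f_{w_k}\,\partial^2 h_k/\partial z_i\partial\bar z_j$, and this vanishes identically because $h$ is holomorphic. (The second derivatives of $h$ do appear in the full real Hessian, but they contribute a $\mathbb J$-anticommuting piece that is killed when passing to $A_{\mathbb C}=\tfrac12(A-\mathbb JA\mathbb J)$; and in any case such a piece is not semipositive, nor is it controlled by plurisubharmonicity of $f$ in $w$ --- it depends on the \emph{first} derivative $f_w$, not on $f_{w\bar w}$.) So the comparison between $f(z,h(z))$ and $f(z,L(z))$ that you set up, and the plan to ``absorb'' a discrepancy using Positivity, are solving a problem that does not exist; and the mechanism you propose (semipositivity from $\calPos^{\mathbb C}$) would not furnish the needed sign even if the term were nonzero.

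Once one sees that the chain rule is \emph{exact}, $i_{\Gamma}^* J^{2,\mathbb C}_{(z,w)}\tilde f = i_{\frac{\partial\zeta}{\partial w}\Gamma}^* J^{2,\mathbb C}_{(z,\zeta(w))}f$, the smooth case is a one-line verification: the right-hand side lies in $F_z$ because $f$ is $F\#_{\mathbb C}\calPos^{\mathbb C}$-subharmonic, and the $j^*$ part is the composition of a plurisubharmonic function with a holomorphic map. The passage to upper-semicontinuous $f$ is also much simpler than your sketch suggests: rather than doing jet bookkeeping on each slice separately, one pulls back test functions on the product. If $\tilde\phi$ touches $\tilde f$ from above at $(z_0,w_0)$, then $\phi(z,w):=\tilde\phi(z,\zeta^{-1}(w))$ touches $f$ from above at $(z_0,\zeta(w_0))$; applying the smooth computation to $\phi$ and then Lemma \ref{lem:viscositydefinition} finishes the argument with no approximation machinery, no convexity hypothesis, and no uniform-limit contortions. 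Your instinct to avoid mollification (since $F$ need not be convex) is right, but the correct replacement is the test-function pullback, not the affine-approximation scheme you propose.

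Two smaller remarks. First, the route of "combine Lemma \ref{lem:compositionbiholomorphism}'s statement with a reduction" as a way of proving the lemma is circular as phrased. Second, the observation that the affine case follows from $GL_m(\mathbb C)$-invariance of $\calPos^{\mathbb C}$ and Lemma \ref{lem:productsofsubequations}(3) is correct and could serve as a sanity check, but the suggestion to reach general $\zeta$ "as a uniform limit of compositions" is not obviously realizable and in any case is unnecessary once the exactness of the complex chain rule is in hand.
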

\begin{proof}
Suppose first that $f$ is $\mathcal C^2$, so $J^{2,\mathbb C}_{(z,\zeta(w))} f$ exists.   Then $j^*J^{2,\mathbb C}_{(z,\zeta(w))} f\in \calPos^{\mathbb C}$ which implies $j^* J^{2,\mathbb C}_{(z,w)} \tilde{f} \in \calPos^{\mathbb C}$.  On the other hand if $\Gamma\in \Hom(\mathbb C^n,\mathbb C^m)$ then using the Chain Rule,
$$ i_{\Gamma}^* J^{2,\mathbb C}_{(z,w)} \tilde{f} = i_{\frac{\partial\zeta}{\partial w} \Gamma}^* J^{2,\mathbb C}_{(z,\zeta(w))} f$$
which lies in $F_z$ since $f$ is assumed to be  $F\#_{\mathbb C}\calPos^{\mathbb C}$-subharmonic.  This proves the result for $\mathcal C^2$-functions $f$. 

The case of upper-semicontinuous $f$ follows, since if $\tilde{\phi}$ is a $\mathcal{C}^2$ test-function touching $\tilde{f}$ from above at $(z_0,w_0)$ then  ${\phi}(z,w): = \tilde{\phi}(z,\zeta^{-1}(w))$ is a $\mathcal{C}^2$ test function touching $f$ from above at $(z_0,\zeta(w_0))$.    Thus the result follows from the first paragraph, combined with Lemma \ref{lem:viscositydefinition}.
\end{proof}

\subsection{$F\#G$-subharmonicity by restriction to slices}

We now prove that $f$ being $F\#G$-subharmonic is equivalent to the restriction of $f$ to each non-horizontal slice being $F$-subharmonic and its restriction to each horizonal slice being $G$-subharmonic.    Again $X\subset \mathbb R^{n}$ and $Y\subset \mathbb R^{m}$ are assumed to be open.

\begin{definition}[Slices]
Given ${x_0}\in  X$ we call
$$ j_{x_0} :\mathbb R^{m}\to \mathbb R^{n}\times \mathbb R^{m}  \quad j_{x_0}(y) = (x_0,y)$$
a \emph{vertical slice}.
Given $y_0\in Y$ and $\Gamma\in \Hom(\mathbb R^{n},\mathbb R^{m})$ we call
$$ i_{y_0,\Gamma} : \mathbb R^{n}  \to \mathbb R^{n}\times \mathbb R^{m}  \quad i_{y_0,\Gamma}(x) = (x,y_0 + \Gamma x)$$
a \emph{non-vertical slice}.
\end{definition}

\begin{proposition}\label{prop:slices}
Assume that $F$ and $G$ are constant coefficient primitive subequations.  Let $\Omega\subset X\times Y$ be open and $f:\Omega\to \mathbb R\cup \{-\infty\}$ be upper-semicontinuous.  The following are equivalent
\begin{enumerate}
\item $f$ is $F\#G$-subharmonic
\item 
$$i_{y_0,\Gamma}^*f \text{ is } F\text{-subharmonic for all } y_0\in Y\text{ and }\Gamma\in \Hom(\mathbb R^{n},\mathbb R^{m}) \text{ and} $$
$$j_{x_0}^*f \text{ is } G\text{-subharmonic for all } x_0\in X.$$
\end{enumerate}
The analogous statement holds in the complex case.
\end{proposition}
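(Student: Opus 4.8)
The plan is to prove the equivalence of (1) and (2) by unwinding the definition of $F\#G$-subharmonicity and relating upper contact jets on $\Omega$ to upper contact jets on the slices, using the viscosity characterisation (Lemma \ref{lem:viscositydefinition}) to handle the merely upper-semicontinuous case. First I would observe that each slice map $i_{y_0,\Gamma}$ and $j_{x_0}$ is an affine isomorphism onto its image intersected with $\Omega$, and that an affine reparametrisation of the form $x\mapsto (x,y_0+\Gamma x)$ sends the point $x$ with its $2$-jet to a point of $\Omega$ with the pulled-back $2$-jet given precisely by the formula \eqref{eq:defofistar} (after a translation in the $y$-variable, which is harmless since $F$ and $G$ are constant coefficient, by Proposition \ref{prop:basicproperties}(5)); similarly for $j_{x_0}$. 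Thus the content of the statement is a purely local, pointwise comparison: at a point $(x_0,y_0)\in\Omega$, the upper contact jets of $f$ correspond, under $i_\Gamma^*$ and $j^*$, to upper contact jets of the slice restrictions.

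For the direction (1)$\Rightarrow$(2), suppose $f$ is $F\#G$-subharmonic. Fix a non-vertical slice $i_{y_0,\Gamma}$ and let $x_0$ be an upper contact point of the restriction $h(x):=f(x,y_0+\Gamma x)$ with upper contact jet $(p,A)$. I would build a $\mathcal C^2$ test function $\psi$ on a neighbourhood of $(x_0,y_0+\Gamma x_0)$ in $\Omega$ touching $f$ from above there, whose pullback by $i_\Gamma$ (up to translation) realises the quadratic $x\mapsto h(x_0)+p\cdot(x-x_0)+\tfrac12(x-x_0)^tA(x-x_0)$; the natural choice is to take that quadratic in the $x$-direction and add a large multiple of $\|y-(y_0+\Gamma x_0)\|^2$ (minus the linear term making the slice pass through correctly) in the transverse directions, large enough that $\psi\ge f$ on a neighbourhood — this is possible because $f$ is upper-semicontinuous, hence bounded above near the point, though some care is needed since $f$ may take the value $-\infty$; one handles this by the standard trick of first translating so that $\psi> f$ strictly away from the contact point on a compact neighbourhood. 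Applying Lemma \ref{lem:viscositydefinition} gives $J^2_{(x_0,y_0+\Gamma x_0)}(\psi)\in (F\#G)_{(x_0,\dots)}$, and then by definition of the product $i_\Gamma^* J^2(\psi)\in F$, which by the jet-compatibility identity is exactly $(h(x_0),p,A)\in F$, plus the Positivity of $F$ to absorb the freedom in the transverse Hessian term. The vertical slices are handled identically using $j^*$ and $G$.

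For the converse (2)$\Rightarrow$(1), suppose all slice restrictions are subharmonic, and let $(x_0,y_0)$ be an upper contact point of $f$ with upper contact jet $\alpha=(p,A)\in J^2_{n+m}$; I must show $\alpha\in(F\#G)_{(x_0,y_0)}$, i.e.\ $i_\Gamma^*\alpha\in F$ for every $\Gamma$ and $j^*\alpha\in G$. Given $\Gamma$, the restriction of the quadratic test function $q(x,y)=f(x_0,y_0)+p\cdot((x,y)-(x_0,y_0))+\tfrac12(\cdots)^tA(\cdots)$ to the slice through $(x_0,y_0)$ with slope $\Gamma$ is a $\mathcal C^2$ function touching $f\circ i_{y_0-\Gamma x_0,\Gamma}$ from above at $x_0$, so its $2$-jet $i_\Gamma^*\alpha$ lies in $F$ by Lemma \ref{lem:viscositydefinition} applied to the (constant-coefficient-translated) slice; and likewise $j^*\alpha\in G$ via the vertical slice. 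The main obstacle — and the place deserving the most care — is the bookkeeping around the value $-\infty$ and the construction of genuine test functions that are $\ge f$ on a full neighbourhood (not just along a slice) in the (1)$\Rightarrow$(2) direction: the transverse ``fattening'' by a large quadratic must be justified using only upper-semicontinuity, and one must check the constant-coefficient hypothesis is what legitimises passing between $i_\Gamma$ and the translated slice $i_{y_0,\Gamma}$. The complex case is verbatim the same argument with $\Hom(\mathbb C^n,\mathbb C^m)$, Hermitian jets, the complex formula \eqref{eq:defofistar_complex}, and $\Pos_n^{\mathbb C}$-Positivity in place of their real counterparts.
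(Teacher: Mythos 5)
Your direction (2)$\Rightarrow$(1) is correct and is essentially the paper's argument: restrict the quadratic defining the upper contact jet to each slice to produce upper contact jets of $i_{y_0,\Gamma}^*f$ and $j_{x_0}^*f$, then unwind the definition of $F\#G$.

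The gap is in (1)$\Rightarrow$(2). You propose to lift a test function $\phi$ touching the slice restriction $h=i_{y_0,\Gamma}^*f$ from above at $x_0$ to a test function $\psi(x,y)=\phi(x)+C\|u\|^2$ on the ambient space (writing $u$ for the transverse displacement from the slice), with $C$ ``large enough.'' This does not work for merely upper-semicontinuous $f$, and the obstruction is not bookkeeping around $-\infty$ but something more fundamental. To have $\psi\ge f$ on a full neighbourhood of $(x_0,y_0+\Gamma x_0)$, you need in particular $f(x,y)\le\phi(x)+C\|u\|^2$ at points with $\|u\|$ arbitrarily small relative to $\|x-x_0\|$, where the $C\|u\|^2$ term is negligible. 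Upper semicontinuity controls $\limsup_{u\to 0}f(x,y)\le f(x,y_0+\Gamma x)\le\phi(x)$ for each fixed $x$, but gives no uniform rate in $x$: there may be sequences $(x_j,y_j)\to(x_0,y_0+\Gamma x_0)$ with $\|u_j\|^2=o(\|x_j-x_0\|^2)$ along which $f(x_j,y_j)-\phi(x_j)$ decays to $\le 0$ slower than $C\|u_j\|^2$, for every $C$. In short, an upper contact jet of a slice restriction need not lift to an upper contact jet of $f$, and no amount of transverse quadratic fattening forces it to. This is precisely why the paper does not argue this way: it invokes the Harvey--Lawson Restriction Theorem (\cite[Theorem 5.1]{HL_Restriction}), which is a substantial standalone result, applicable here because $i_{y_0,\Gamma}^*(F\#G)=F$ and $j_{x_0}^*(F\#G)=G$ are closed and everything is constant coefficient. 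Your plan needs to replace the test-function lifting step by an appeal to that theorem (or reproduce its proof, which involves genuinely different techniques than adding a transverse quadratic).
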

\begin{proof}
We prove the real case only as the complex one is the same.\\

(2)$\Rightarrow$ (1)  This follows from the definitions.  Let 
$$ \beta:=\left(\left(\begin{array}{c} p_1\\ p_2 \end{array}\right), \left(\begin{array}{cc} B & C \\ C^t & D \end{array}\right)\right)$$
be an upper contact jet for $f$ at a point $(x_0,y_0)$.  Recall this means
\begin{equation}\label{eq:sliceuppercontact} f(x,y) \le f(x_0,y_0) + \left(\begin{array}{c} p_1\\ p_2 \end{array}\right) . 
\left(\begin{array}{c} x-x_0\\ y-y_0 \end{array}\right)
+ \frac{1}{2} \left(\begin{array}{c} x-x_0\\ y-y_0 \end{array}\right)  ^t\left(\begin{array}{cc} B & C \\ C^t & D \end{array}\right)\left(\begin{array}{c} x-x_0\\ y-y_0 \end{array}\right)\end{equation}
for $(x,y)$ near $(x_0,y_0)$.  
We wish to show
$$ \alpha: = ((x_0,y_0), f(x_0,y_0),\beta) \in F\#G.$$

Fixing $x=x_0$ and letting $y$ vary in \eqref{eq:sliceuppercontact} show that that $(p_2,D)$ is an upper contact jet for $j^*_{x_0}f$ at the point $y_0$.   Since $j^*_{x_0}(f)\in G(Y)$ this implies
$$j^*\alpha = (y_0,f(x_0,y_0),p_2,D)\in (F_2)_{y_0}.$$  
Similarly putting $y=y_0+\Gamma (x-x_0)$ into \eqref{eq:sliceuppercontact} and letting $x$ vary gives that $(p_1+\Gamma ^t p_2, B + \Gamma C + C^t\Gamma ^t + \Gamma ^t D\Gamma )$ is an upper-contact jet for $i_{y_0,\Gamma }^*f$ at the point $x_0$.  Since $i_{y_0,\Gamma }^*f\in F(X)$ this gives
$$i_\Gamma ^*\alpha = (x_0,f(x_0,y_0),p_1 + \Gamma ^tp_2,B + C\Gamma  + \Gamma ^t C^t + \Gamma ^tD\Gamma )\in F_{x_0}.$$
As this holds for all $\Gamma $ we deduce that $\alpha\in F\#G$, and hence $f$ is $F\#G$-subharmonic.\\

(1)$\Rightarrow$ (2) This direction is more substantial, but follows easily from the Restriction  Theorem of Harvey-Lawson.   (The reader may want to observe that what we are calling a primitive subequation is called a subequation in  \cite{HL_Restriction} -- see the note after \cite[Definition 2.4]{HL_Restriction}).  Working locally there is no loss in assuming that $\Omega=X\times Y$.  If we let $i_{y_0,\Gamma }^*$ and $j^*_{x_0}$ also denote the pullback on second-order jets then, essentially by definition,
$$ i_{y_0,\Gamma }^* (F\#G) = F \text{ and } j_{x_0}^*(F\#G) = G$$
which are both closed.    Thus as $F\#G$ is assumed constant coefficient, \cite[Theorem 5.1]{HL_Restriction} applies to give (2).
\end{proof}

\subsection{Boundary convexity and definition of the minimum principle}

Recall that a function $u:X\to \mathbb R$ defined on an open $X\subset \mathbb R^{n}$ is \emph{exhaustive} if for each $a\in \mathbb R$ the sublevel set
$$\{ x\in X : u(x)<a\}$$
is relatively compact in $X$.

\begin{definition}[$F$-pseudoconvexity]\label{def:Fpseudoconvexity}
Let $F\subset J^2(X)$.  We say that $\Omega\subset X$ is $F$-\emph{pseudoconvex} if there exists a
$$u:\Omega\to \mathbb R$$
that is continuous, exhaustive and $F$-subharmonic. 
\end{definition}

\begin{remark}
In \cite[Section 5]{HL_Dirichletduality} there is a notion of ``boundary convexity" defined for domains with smooth boundary, which is possibly different to being $F$-pseudoconvex.
\end{remark}

Let $\pi:\mathbb R^{n+m}\to \mathbb R^m$ be the projection.  
For a subset $\Omega\subset X\times \mathbb R^m$ and $x\in X$ we write
$$\Omega_x: = \pi^{-1}(x)  = \{ y\in \mathbb R^m : (x,y) \in \Omega\}.$$

\begin{definition}[Connected fibres]
We say that $\Omega$ has \emph{connected fibres} if $\Omega_x$ is connected for each $x\in \pi(\Omega)$.
\end{definition}

\begin{lemma}\label{lem:slicesareconvex}
Assume that $F\subset J^2(X)$  is a constant coefficient primitive subequation, that and $f$ is $F\#\calPos$-subharmonic on an open $\Omega\subset X\times \mathbb R^{m}$.   Then   
\begin{enumerate} 
\item The function $h:\Omega_x\to \mathbb R\cup \{-\infty\}$ given by $h(y) = f(x,y)$ is locally convex.  
\item If $\Omega_x$ is connected and $h$ is exhaustive then $\Omega_x$ is convex.
\end{enumerate}
\end{lemma}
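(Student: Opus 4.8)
The plan is to treat the two parts separately: part (1) is essentially a bookkeeping consequence of results already established, while part (2) is a self-contained point-set argument in $\mathbb R^m$ that exploits the exhaustion.

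For part (1), observe that $\calPos$ is a constant coefficient primitive subequation (its fibre $\mathbb R\times\mathbb R^m\times\Pos_m$ is independent of the base point), and $F$ is assumed to be one as well. Hence Proposition~\ref{prop:slices}, applied in the direction (1)$\Rightarrow$(2) to the $F\#\calPos$-subharmonic function $f$ on the open set $\Omega\subseteq X\times\mathbb R^m$, tells us that the restriction $j_x^*f$ of $f$ to each vertical slice is $\calPos$-subharmonic on the slice $\Omega_x$ (which is open, being the preimage of $\Omega$ under $y\mapsto(x,y)$). By Example~\ref{example:convexity} a $\calPos$-subharmonic function is exactly a locally convex one, so $h(y)=f(x,y)$ is locally convex on $\Omega_x$, proving (1).

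For part (2), first dispose of the value $-\infty$: since $h$ is exhaustive it is not identically $-\infty$, and a short argument with midpoints of small segments shows that a locally convex function on a connected open set is either identically $-\infty$ or finite everywhere; hence $h$ is finite, and being locally convex and upper semicontinuous it is continuous. We are thus reduced to the classical fact that a connected open $\Omega_x\subseteq\mathbb R^m$ carrying a continuous locally convex exhaustion $h$ is convex. To prove this, fix $p\in\Omega_x$ (if $\Omega_x=\emptyset$ there is nothing to do) and let $S_p:=\{q\in\Omega_x : [p,q]\subseteq\Omega_x\}$ be the star of $\Omega_x$ about $p$. Then $p\in S_p$, and $S_p$ is open because a compact segment $[p,q]\subseteq\Omega_x$ has a neighbourhood inside $\Omega_x$ and the segments $[p,q']$ vary Hausdorff-continuously with $q'$. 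The heart of the matter is that $S_p$ is closed in $\Omega_x$: if $q_j\to q$ with $q_j\in S_p$ and $q\in\Omega_x$, then $t\mapsto h\big((1-t)p+tq_j\big)$ is convex on $[0,1]$ (a locally convex function restricted to a segment in its domain is convex), hence is bounded above on $[p,q_j]$ by $M:=\max\{h(p),\sup_j h(q_j)\}$, which is finite since $h(q_j)\to h(q)\in\mathbb R$ by continuity; therefore every $[p,q_j]$ lies in $K:=\{y\in\Omega_x:h(y)\le M\}$, a compact (in particular closed) subset of $\mathbb R^m$ by exhaustiveness of $h$, and passing to the limit gives $[p,q]\subseteq K\subseteq\Omega_x$, i.e.\ $q\in S_p$. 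Since $S_p$ is a nonempty, relatively clopen subset of the connected set $\Omega_x$, we get $S_p=\Omega_x$; as $p$ was arbitrary, $[p,q]\subseteq\Omega_x$ for all $p,q$, so $\Omega_x$ is convex.

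The one place where the hypotheses must be used with care is the closedness of $S_p$: along a single segment convexity alone carries no information, since a convex function may blow up at a finite endpoint of its interval of definition; it is precisely the relative compactness of the sublevel sets of the exhaustion that allows the inclusions $[p,q_j]\subseteq\Omega_x$ to survive passage to the limit. I expect this to be the only genuine obstacle, the treatment of possible $-\infty$ values of $h$ being a routine preliminary.
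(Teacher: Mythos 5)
Part (1) of your proof coincides with the paper's: both apply Proposition~\ref{prop:slices} to conclude that the vertical slice $j_x^*f$ is $\calPos$-subharmonic, then invoke Example~\ref{example:convexity} to translate this into local convexity. For part (2), however, the paper simply cites a reference (H\"ormander, \emph{Notions of Convexity}, Theorem 2.1.25) for the fact that a connected open set carrying a locally convex exhaustion is convex, whereas you prove this from scratch. Your argument is correct: you first reduce to $h$ finite and continuous, then show that the star $S_p$ of $\Omega_x$ about any fixed $p$ is nonempty, open, and --- the essential step --- closed in $\Omega_x$, by trapping the approximating segments $[p,q_j]$ inside the compact sublevel set $\{h\le M\}$ and passing to the limit; connectedness then forces $S_p=\Omega_x$. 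The closedness step is exactly where the exhaustion does its work, and you correctly identify that convexity along individual segments alone would not give you this (a convex function can blow up at a finite endpoint). Your version makes the lemma self-contained at the cost of roughly a paragraph; the paper opts for brevity by outsourcing the classical fact. Both are valid.
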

\begin{proof}
Proposition \ref{prop:slices} says that $h$ is $\mathcal P$-sub\-harmonic, which means it is locally convex (Example \ref{example:convexity}) giving (1).  Statement (2) follows as a connected open set admitting an exhaustive locally convex function is convex \cite[Theorem 2.1.25]{Hormander_notionsofconvexity}.   
\end{proof}

\begin{corollary}\label{cor:slicesofpseudoconvexareconvex}
If $\Omega\subset X\times \mathbb R^m$ is a $F\#\calPos$-pseudoconvex domain and $\Omega_x$ is connected then $\Omega_x$ is convex.
\end{corollary}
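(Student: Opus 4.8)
This is now immediate from the preceding Lemma. Here is the plan.

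First I would unwind the definition of $F\#\calPos$-pseudoconvexity: since $\Omega\subset X\times\mathbb R^m$ is $F\#\calPos$-pseudoconvex, by Definition~\ref{def:Fpseudoconvexity} there exists a function $u:\Omega\to\mathbb R$ that is continuous, exhaustive and $F\#\calPos$-subharmonic on $\Omega$. Now fix $x\in\pi(\Omega)$ and consider the restriction $h(y):=u(x,y)$ to the slice $\Omega_x$; this is the function $j_x^*u$ in the notation of the Slices subsection.

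Next I would check that $h$ inherits the three properties we need on $\Omega_x$. Continuity is clear since $h$ is a restriction of the continuous function $u$. Local convexity of $h$ is exactly part~(1) of Lemma~\ref{lem:slicesareconvex} applied to $f=u$ (which uses that $u$ is $F\#\calPos$-subharmonic and that $F$ is constant coefficient; note that if $F$ is a primitive subequation satisfying the hypotheses we may assume constant coefficient as in the Corollary's statement — in fact the cleanest route is simply to invoke Proposition~\ref{prop:slices}, which gives that $j_x^*u$ is $\calPos$-subharmonic, hence locally convex by Example~\ref{example:convexity}). The one point requiring a word is exhaustiveness of $h$: I would argue that for $a\in\mathbb R$ the sublevel set $\{y\in\Omega_x : h(y)<a\}$ equals $(\{(x',y)\in\Omega : u(x',y)<a\})_x$, which is the fibre over $x$ of a relatively compact subset of $\Omega$; a closed subset of a set that is relatively compact in $\Omega$ is relatively compact in $\Omega$, and intersecting with the closed slice $\{x'=x\}$ and noting the slice map $y\mapsto(x,y)$ is a homeomorphism onto its (closed) image shows the sublevel set is relatively compact in $\Omega_x$. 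Hence $h$ is exhaustive on $\Omega_x$.

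Finally, I would combine these facts with the connectedness hypothesis: $\Omega_x$ is a connected open subset of $\mathbb R^m$ admitting a continuous, exhaustive, locally convex function $h$, so part~(2) of Lemma~\ref{lem:slicesareconvex} (equivalently \cite[Theorem 2.1.25]{Hormander_notionsofconvexity}) gives that $\Omega_x$ is convex, as desired. I do not expect any genuine obstacle here — the only mildly delicate point is the exhaustiveness bookkeeping for the restricted function, and even that is routine point-set topology; the real content was already carried by Lemma~\ref{lem:slicesareconvex}.
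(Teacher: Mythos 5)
Your proof is correct and follows the same route as the paper: restrict the continuous exhaustive $F\#\calPos$-subharmonic function to the slice, observe the restriction is exhaustive, and invoke Lemma~\ref{lem:slicesareconvex}(2). The paper is terser and leaves the exhaustiveness of the restriction to the reader, but the argument is the same.
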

\begin{proof}Let $f$ be $F\#\calPos$-subharmonic and exhaustive on $X$.  Then for each $x\in X$ the function $h(y)= f(x,y)$ is exhausing on $\Omega_x$ so Lemma \ref{lem:slicesareconvex}(2) applies.
\end{proof}

\begin{definition}[The minimum principle in the real case]\label{def:minimumprinciple}
Let $F\subset J^2(X)$ be a primitive subequation.  We say that $F$ satisfies the \emph{minimum principle} if the following holds:
 
Let $\Omega\subset X\times \mathbb R^{m}$ be a $(F\#\calPos)$-pseudoconvex domain with connected fibres.  Then $\pi(\Omega)$ is $F$-pseudoconvex, and for any $f$ that is $F\#\calPos$-pseudoconvex on $\Omega$, the marginal function 
 $$g(x) : = \inf_{y\in \Omega_x} f(x,y)$$
is $F$-subharmonic on $\pi(\Omega)$.
\end{definition}

In the complex case we make essentially the same definition, only requiring also that $\Omega$ and $f$ be independent of the argument of the second variable.  Consider the real torus
$$\mathbb T^m : = \{e^{i\theta}:=  (e^{i\theta_1},\cdots,e^{i\theta_m}): \theta = (\theta_1,\ldots, \theta_m)\}.$$
which acts on $\mathbb C^m$  by
$$e^{i\theta} w =e^{i\theta} \left(\begin{array}{c} w_1 \\ \vdots \\ w_m\end{array}\right)  =  \left(\begin{array}{c} e^{i\theta_1} w_1 \\ \vdots \\ e^{i\theta_m} w_m\end{array}\right).$$
\begin{definition}[$\mathbb T^m$-invariance]
We say that $\Omega\subset \mathbb C^n\times \mathbb C^m$ is \emph{$\mathbb T^m$-invariant} if
$$(z,w)\in \Omega   \Rightarrow (z,e^{i\theta} w)\in \Omega \text{ for all } e^{i\theta}\in \mathbb T^m.$$
When this holds we say a function $f:\Omega\to \mathbb R\cup \{-\infty\}$ is \emph{$\mathbb T^m$-invariant} if
$$ f(z,w) = f(z,e^{i\theta} w) \text{ for all } e^{i\theta}\in \mathbb T^m.$$
\end{definition}

\begin{definition}[Minimum Principle in the Complex Case]\label{def:minimumprinciple_complex}
Let $X\subset \mathbb C^n$ be open.  We say a complex primitive subequation $F\subset J^{2,\mathbb C}(X)$ satisfies the \emph{minimum principle} if the following holds:

Suppose $\Omega\subset X\times \mathbb C^{m}$ is $\mathbb T^m$-invariant $(F\#_{\mathbb C}\calPos^{\mathbb C})$-psuedoconvex domain with connected fibres.    Then  $\pi(\Omega)$ is $F$-pseudoconvex, and if $f$ is a $\mathbb T^m$-invariant $F\#_{\mathbb C}\calPos^{\mathbb C}$-subharmonic function on $\Omega$  then the marginal function
$$g(z) : = \inf_{w\in \Omega_z} f(z,w)$$ is $F$-subharmonic on $\pi(\Omega)$. 
\end{definition} 

In both the real and complex case we have expressed this minimum principle for the product $F\#\calPos$.  However it is easy to see that one can replace the factor $\calPos$ with any smaller primitive subequation, as in the following statement.

\begin{lemma}
Let $X\subset \mathbb R^{n}$ and $Y\subset \mathbb R^{m}$ be open.  Assume that $F\subset J^2(X)$ and $G\subset J^2(Y)$ are primitive subequations such that
\begin{enumerate}
\item $F$ satisfies the minimum principle.
\item $G\subset \calPos$
\end{enumerate}
Then for any $F\#G$-pseudoconvex domain $\Omega \subset X\times Y$ and any $F\#G$-subharmonic function $f$ on $\Omega$ the marginal function
$$g(x) = \inf_{y\in \Omega_x} f(x,y)$$
is $F$-subharmonic on the $F$-pseudoconvex domain $\pi(\Omega)$.  The analogous statement holds in the complex case.
\end{lemma}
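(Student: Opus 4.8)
The strategy is to reduce the statement for $F\#G$ to the already-established minimum principle for $F\#\calPos$. The key observation is the inclusion $G\subset\calPos$, which by Lemma~\ref{lem:productsofsubequations}(2) gives $F\#G\subset F\#\calPos$. Consequently every $F\#G$-subharmonic function is $F\#\calPos$-subharmonic, and every $F\#G$-pseudoconvex domain is $F\#\calPos$-pseudoconvex (the same continuous exhaustive function works, since being $F\#G$-subharmonic is stronger). The only structural gap to bridge is that Definition~\ref{def:minimumprinciple} (in the real case) requires the domain $\Omega$ to have \emph{connected fibres}, whereas the hypothesis here makes no such assumption.

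First I would handle the connectedness issue. Fix $x_0\in\pi(\Omega)$. By Proposition~\ref{prop:slices}, the slice function $h(y)=f(x_0,y)$ is $\calPos$-subharmonic on $\Omega_{x_0}$, hence locally convex (Example~\ref{example:convexity}); also, since $\Omega$ is $F\#\calPos$-pseudoconvex, $\Omega_{x_0}$ carries an exhaustive locally convex function, so each connected component of $\Omega_{x_0}$ is a convex open set by \cite[Theorem 2.1.25]{Hormander_notionsofconvexity}. The plan is to work component by component: replacing $\Omega$ by $\Omega'\subset\Omega$ obtained by keeping, over each $x$, a single component of the fibre in a locally consistent way does not immediately work because the chosen component must vary continuously in $x$; instead I would observe that $F$-subharmonicity of $g$ is a local condition on $\pi(\Omega)$, and near any point $x_0\in\pi(\Omega)$ one can restrict to a small ball $B\ni x_0$ and to an open subset $\Omega_B\subset\Omega$ over $B$ which is a union of finitely many ``tubes'', each with connected (convex) fibres — this uses that $\Omega$ is open, that convexity of fibres is stable under small perturbation of $x$, and possibly a shrinking of $B$. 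On each such tube the minimum principle for $F$ (applied to $F\#\calPos$, which dominates $F\#G$) gives that the partial marginal function is $F$-subharmonic; taking the minimum over the finitely many tubes and using the Maximum Property's dual (minimum is not automatic, but here the number of tubes is finite and one can instead argue that over a slightly smaller ball the relevant infimum is attained on a single tube, or invoke that $g$ equals such a partial marginal locally) yields $F$-subharmonicity of $g$ near $x_0$.

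Then the $F$-pseudoconvexity of $\pi(\Omega)$ follows from the $F\#\calPos$ version: if $u$ is continuous, exhaustive and $F\#G$-subharmonic on $\Omega$, it is in particular $F\#\calPos$-subharmonic, so the minimum principle for $F$ applied to $u$ already produces the required exhaustive $F$-subharmonic function on $\pi(\Omega)$. The complex case is entirely parallel: one uses $F\#_{\mathbb C}G\subset F\#_{\mathbb C}\calPos^{\mathbb C}$, notes that $\mathbb T^m$-invariance of $\Omega$ and $f$ is preserved, and applies Definition~\ref{def:minimumprinciple_complex} to the $F\#_{\mathbb C}\calPos^{\mathbb C}$-data. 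The main obstacle I anticipate is the fibre-connectedness reduction: making precise, uniformly over a neighbourhood of a base point, that the marginal function $g$ agrees locally with a marginal over a single connected (convex) component — this requires a careful local argument combining openness of $\Omega$, lower semicontinuity of the fibre components, and the convexity established via Lemma~\ref{lem:slicesareconvex}. Everything else is a direct invocation of the already-proven minimum principle together with the monotonicity $G\subset\calPos$.
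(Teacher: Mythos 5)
Your core observation — that $G\subset\calPos$ gives $F\#G\subset F\#\calPos$ by Lemma~\ref{lem:productsofsubequations}(2), so that any $F\#G$-subharmonic function is $F\#\calPos$-subharmonic and any continuous exhaustive $F\#G$-subharmonic function also witnesses $F\#\calPos$-pseudoconvexity, whence the conclusion follows by applying the minimum principle for $F$ to the $F\#\calPos$ data — is \emph{exactly} the paper's proof. That two-step reduction is the entire argument, and you identified it correctly in your opening paragraph.

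The trouble is the rest of the proposal, where you try to eliminate the connected-fibres hypothesis. You are right that Definition~\ref{def:minimumprinciple} requires connected fibres and the lemma as stated does not mention them, but the tube decomposition cannot be made to work, for several reasons. First, there is no reason $\Omega_{x_0}$ should have only finitely many components, nor that the decomposition stabilises over a small base ball. Second, and more fundamentally, if the fibre splits into several convex pieces then the global marginal $g$ is the pointwise \emph{infimum} of the per-tube marginals, and the infimum of $F$-subharmonic functions is not $F$-subharmonic (it is the maximum that is preserved, Proposition~\ref{prop:basicproperties}(1)). Your fallback — that locally the infimum is attained on a single tube — also fails: the tubes can alternate in which one realises the minimum arbitrarily close to $x_0$. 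In fact the statement is genuinely false without connected fibres. Take $F=G=\calPos_{\mathbb R}$, $\Omega=(-1,1)\times\bigl((-6,-4)\cup(4,6)\bigr)$ (which is $\calPos_{\mathbb R^2}$-pseudoconvex: $u(x,y)=-\log(1-x^2)-\log\bigl(1-(|y|-5)^2\bigr)$ is continuous, exhaustive and locally convex), and $f(x,y)=(y-ax)^2$ for small $a>0$, which is convex; then one computes $g(x)=(4-a|x|)^2$, which has a strict local maximum at $x=0$ and is therefore not convex. So the connected-fibres assumption is essential and is implicitly inherited from the definition of the minimum principle; the right move is simply to state it (or note that it carries over) rather than attempt to remove it.
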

\begin{proof}
We consider $\Omega \subset X\times Y \subset X\times \mathbb R^{m}$.  Since $G\subset \calPos$, if $\Omega$ is $F\#G$-psudoconvex then it is also $F\#\calPos$-pseudoconvex, and if $f$ is $F\#G$-subharmonic on $\Omega$ then it is also $F\#\calPos$-subharmonic.  Hence the statement that we want follows from the minimum principle for $F$.
\end{proof}

\begin{remark}[Functions independent of the imaginary part of the second variable]
The Kiselman minimum principle is often stated for pseudoconvex domains $\Omega\subset \mathbb C^n\times \mathbb C^m$ that are independent of the \emph{imaginary} part of the second variable (rather than, as we have done here, independent of the argument of the second variable).   But these are equivalent by considering the map $\phi(z,w) := (z,e^{w})$ and observing that $\Omega$ is pseudoconvex and independent of the imaginary part of the second variable if and only if $\phi(\Omega)$ is pseudoconvex and independent of the argument of the second variable.

When dealing with a general complex subequation $F$ it is not clear if $\Omega$ being $F\#_{\mathbb C}\calPos$-pseudoconvex implies that $\phi(\Omega)$ is also $F\#_{\mathbb C}\calPos^{\mathbb C}$-pseudoconvex (this would follow if $\Omega$ admitted an exhaustive $F\#_{\mathbb C}\calPos^{\mathbb C}$-subharmonic function that was independent of the imaginary part of the second variable, but the authors do not know whether this always holds).   However by an averaging argument we will later prove that a $\mathbb T$-invariant  $F\#_{\mathbb C}\calPos^{\mathbb C}$-pseudoconvex domain always admits a $\mathbb T$-invariant exhaustive $F\#_{\mathbb C}\calPos^{\mathbb C}$-subharmonic function (see Lemma \ref{lem:exhaustionindependent}).  For this reason we have stated our minimum principle in terms of the argument of the second variable, rather than its imaginary part (but see also Corollary \ref{cor:minimumimaginary}).
\end{remark}

\section{Examples}\label{sec:examples}

\subsection{Example I}\label{sec:exampleI}
It is not hard to check directly from the definitions that
\begin{align}
\calPos_{\mathbb R^{n}}\# \calPos_{\mathbb R^{m}}&= \calPos_{\mathbb R^{n+m}}\text{ and }\label{eq:productrealpositive}\\
\calPos^{\mathbb C}_{\mathbb C^{n}}\# \calPos^{\mathbb C}_{\mathbb C^{m}}&= \calPos^{\mathbb C}_{\mathbb C^{n+m}}. \label{eq:productcomplexpositive}
\end{align}
In the following we will give another example with a similar property, that need not be constant coefficient or independent of the gradient part.

\subsection{Example II}

Suppose $X\subset \mathbb R^n$ is open and let \begin{align*}
\lambda&:\mathbb R\to \mathbb R\\
\mu&:X\times \mathbb R\to \Sym^2_n(\mathbb R)\end{align*}
\begin{definition}
Define 
$$F_{\lambda,\mu} = \{ (x,r,p,A)\in  J^2(X) : A \ge  \lambda p p^t + \mu \text{ where } \lambda = \lambda(r) \text{ and } \mu= \mu(x,r)\}.$$
\end{definition}

\begin{lemma}\
\begin{enumerate}
\item $F_{\lambda,\mu}$ has the Positivity Property.
\item If $\lambda$ and $\mu$ are continuous then $F$ is closed and has the Topological Property.
\item If $\mu(x,r)$ and $\lambda(r)$ are monotonic increasing in $r$ then $F_{\lambda,\mu}$ has the Negativity Property.
\item If $\lambda(r)=\lambda_0\ge 0$ for all $r$, and $\mu(x,r)$ is convex in $r$ then $F_{\lambda,\mu}$ is convex.
\item If $\lambda(r)\ge 0$ and $\mu(x,r)\ge 0$ for all $x,r$ then $F_{\lambda,\mu}\subset\calPos$.
\end{enumerate}
In particular if both conditions (2) and (3) hold then $F_{\lambda,\mu}$ is a subequation.

\end{lemma}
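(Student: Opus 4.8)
The plan is to read each item directly off the defining inequality $A\ge\lambda(r)pp^t+\mu(x,r)$, where I write $B\ge C$ for $B-C\in\Pos_n$, using throughout that $\Pos_n$ is a closed convex cone stable under addition and that $pp^t\in\Pos_n$ for every $p$. Items (1), (3) and (5) are then immediate. For (1), if $(x,r,p,A)\in F_{\lambda,\mu}$ and $P\in\Pos_n$ then $A+P\ge A\ge\lambda(r)pp^t+\mu(x,r)$. For (3), when $r'\le r$ the monotonicity hypotheses give $\mu(x,r')\le\mu(x,r)$ and, since $\lambda(r')\le\lambda(r)$ and $pp^t\ge 0$, also $\lambda(r')pp^t\le\lambda(r)pp^t$, so $A\ge\lambda(r)pp^t+\mu(x,r)\ge\lambda(r')pp^t+\mu(x,r')$. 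For (5), any $(x,r,p,A)\in F_{\lambda,\mu}$ has $A\ge\lambda(r)pp^t+\mu(x,r)\ge 0$ once $\lambda(r)\ge 0$ and $\mu(x,r)\ge 0$, so $A\in\Pos_n$ and the jet lies in $\calPos$. The final ``in particular'' clause simply assembles the pieces: Positivity is (1), closedness and the Topological Property are contained in (2), and Negativity is (3), which together is the definition of a subequation.

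For (2), closedness holds because $(x,r,p,A)\mapsto A-\lambda(r)pp^t-\mu(x,r)$ is continuous into $\Sym^2_n$ (using continuity of $\lambda$ and $\mu$) while $\Pos_n$ is closed. The substance of the Topological Property lies in identifying the interiors: I would show that $\Int(F_{\lambda,\mu})_x$ and $(\Int F_{\lambda,\mu})_x$ both coincide with $\{(r,p,A):A-\lambda(r)pp^t-\mu(x,r)\text{ positive definite}\}$. That a positive-definite point is interior, even as a point of $J^2(X)$ (here one uses that $X$ is open and $\mu$ jointly continuous), follows from continuity together with openness of positive-definiteness; conversely, if $A-\lambda(r)pp^t-\mu(x,r)$ is only semidefinite with a nonzero kernel vector $v$, then $A-\epsilon vv^t$ lies outside $F_{\lambda,\mu}$ for all $\epsilon>0$, so the point is not interior. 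Given this description, $F_{\lambda,\mu}=\overline{\Int F_{\lambda,\mu}}$ and its fibrewise analogue follow by adding $\epsilon\Id$ to $A$ and letting $\epsilon\to0^+$, and $\Int F_x=(\Int F)_x$ is part of the stated coincidence. I expect this interior computation to be the main, if still modest, obstacle: it is the one place where explicit perturbations are needed rather than a pure rearrangement of the defining inequality.

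For (4), put $r=tr_1+(1-t)r_2$, $p=tp_1+(1-t)p_2$ and $A=tA_1+(1-t)A_2$. From $A_i\ge\lambda_0 p_ip_i^t+\mu(x,r_i)$ we obtain $A\ge t\lambda_0 p_1p_1^t+(1-t)\lambda_0 p_2p_2^t+t\mu(x,r_1)+(1-t)\mu(x,r_2)$. Convexity of $\mu$ in $r$ gives $t\mu(x,r_1)+(1-t)\mu(x,r_2)\ge\mu(x,r)$, and the identity $t\,p_1p_1^t+(1-t)\,p_2p_2^t-pp^t=t(1-t)(p_1-p_2)(p_1-p_2)^t\ge 0$ together with $\lambda_0\ge 0$ gives $t\lambda_0 p_1p_1^t+(1-t)\lambda_0 p_2p_2^t\ge\lambda_0 pp^t$; adding the two inequalities yields $A\ge\lambda_0 pp^t+\mu(x,r)$, so the convex combination lies in $(F_{\lambda,\mu})_x$. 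This is exactly where constancy of $\lambda$ enters — for non-constant $\lambda$ the term $\lambda(r)pp^t$ need not be convex in $(r,p)$ — and it completes the proof.
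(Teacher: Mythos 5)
Your proposal is correct and takes essentially the same approach as the paper, whose proof is a one-line assertion that (1), (3), (5) are immediate from the definition, (2) is simple point-set topology left to the reader, and (4) follows from convexity of $p\mapsto pp^t$. Your fill-ins — the Loewner-order rearrangements for (1), (3), (5), the interior characterisation as the strict-inequality set for (2), and the identity $t\,p_1p_1^t+(1-t)\,p_2p_2^t-pp^t=t(1-t)(p_1-p_2)(p_1-p_2)^t$ together with matrix convexity of $\mu$ for (4) — are precisely the details the paper elides, and they are correct; the remark that constancy of $\lambda$ is what makes $\lambda(r)pp^t$ jointly convex is a useful observation not made explicit in the paper.
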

\begin{proof}
(1,3,5) are immediate from the definition, and (2) is simple point-set topology that is left to the reader.  (4) follows as the function $p\mapsto pp^t$ is convex.
\end{proof}

\begin{proposition}\label{prop:characterizationexample}
Suppose that for $i=1,2$ we have open $X_i\subset \mathbb R^{n_i}$.  Let
\begin{align*}
 \lambda &: \mathbb R\to \mathbb R\\
\mu_1 &: X_1\times \mathbb R\to \Sym^2_n.
\end{align*}
Define
$\mu:  X_1\times X_2\times \mathbb R\to \Sym^2_{n_1+n_2}$ by
$$\mu(x_1,x_2,r) := \diag(\mu_1(x_1,r), 0)$$
(here $\diag$ is to be understood as the matrix in block diagonal form).
Then
$$F_{\lambda,\mu_1} \# F_{\lambda,0} = F_{\lambda,\mu}.$$
\end{proposition}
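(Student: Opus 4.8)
The plan is to unwind both sides of the claimed equality fibrewise, make a single rank-one change of variables, and thereby reduce the whole statement to one elementary characterization of positive semidefiniteness of a symmetric block matrix.

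Fix a point $(x_1,x_2)\in X_1\times X_2$ and a jet coordinate $\alpha=(r,p,A)$ written in block form, with $p=(p_1,p_2)^t$, $p_i\in\mathbb R^{n_i}$, and $A=\begin{pmatrix} B & C \\ C^t & D\end{pmatrix}$, $B\in\Sym^2_{n_1}$, $D\in\Sym^2_{n_2}$, $C\in M_{n_1\times n_2}(\mathbb R)$; abbreviate $\lambda=\lambda(r)$ and $\mu_1=\mu_1(x_1,r)$. Using the explicit pullback formulas \eqref{eq:defofistar}, membership $\alpha\in(F_{\lambda,\mu_1}\#F_{\lambda,0})_{(x_1,x_2)}$ is equivalent to the pair of conditions: (i) $D\ge\lambda p_2p_2^t$; and (ii) for every $\Gamma\in\Hom(\mathbb R^{n_1},\mathbb R^{n_2})$,
\[
B+C\Gamma+\Gamma^tC^t+\Gamma^tD\Gamma\ \ge\ \lambda\,(p_1+\Gamma^tp_2)(p_1+\Gamma^tp_2)^t+\mu_1 .
\]
On the other hand membership $\alpha\in(F_{\lambda,\mu})_{(x_1,x_2)}$ is simply $A\ge\lambda\,pp^t+\diag(\mu_1,0)$.

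The next step is the change of variables $\tilde B:=B-\lambda p_1p_1^t-\mu_1$, $\tilde C:=C-\lambda p_1p_2^t$, $\tilde D:=D-\lambda p_2p_2^t$. Expanding the rank-one term $\lambda(p_1+\Gamma^tp_2)(p_1+\Gamma^tp_2)^t$ and regrouping (a routine matrix computation), condition (ii) becomes precisely $\tilde B+\tilde C\Gamma+\Gamma^t\tilde C^t+\Gamma^t\tilde D\Gamma\ge0$ for all $\Gamma$, condition (i) becomes $\tilde D\ge0$, and the $F_{\lambda,\mu}$-condition becomes $\begin{pmatrix}\tilde B & \tilde C \\ \tilde C^t & \tilde D\end{pmatrix}\ge0$. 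Thus the proposition is equivalent to the following linear-algebra fact: a symmetric block matrix $M=\begin{pmatrix}\tilde B & \tilde C \\ \tilde C^t & \tilde D\end{pmatrix}$ is positive semidefinite if and only if $\tilde D\ge0$ and $\tilde B+\tilde C\Gamma+\Gamma^t\tilde C^t+\Gamma^t\tilde D\Gamma\ge0$ for all $\Gamma\in\Hom(\mathbb R^{n_1},\mathbb R^{n_2})$. For the forward direction one simply notes that $\tilde B+\tilde C\Gamma+\Gamma^t\tilde C^t+\Gamma^t\tilde D\Gamma=\begin{pmatrix}\Id\\\Gamma\end{pmatrix}^tM\begin{pmatrix}\Id\\\Gamma\end{pmatrix}$ and $\tilde D=\begin{pmatrix}0\\\Id\end{pmatrix}^tM\begin{pmatrix}0\\\Id\end{pmatrix}$. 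For the converse, given $v=(v_1,v_2)^t$: if $v_1=0$ then $v^tMv=v_2^t\tilde Dv_2\ge0$; if $v_1\ne0$, take $\Gamma:=v_2v_1^t/\|v_1\|^2$ so that $\Gamma v_1=v_2$ and hence $v^tMv=v_1^t\big(\tilde B+\tilde C\Gamma+\Gamma^t\tilde C^t+\Gamma^t\tilde D\Gamma\big)v_1\ge0$.

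I expect no real obstacle here: the only genuine content is the block-matrix lemma just sketched, and the remainder is bookkeeping with \eqref{eq:defofistar} together with the algebraic expansion of the rank-one correction. The one point deserving a remark is that the non-vertical-slice conditions alone cannot detect positivity of $M$ in directions with $v_1=0$, which is exactly why the $j^*$-condition $\tilde D\ge0$ must be carried separately — precisely the role it plays in the definition of $F\#G$.
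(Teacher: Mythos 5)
Your proof is correct, and it takes a genuinely simpler route than the paper's. Both proofs begin with the same bookkeeping: unwind the definitions at a fixed fibre, apply the pullback formulas \eqref{eq:defofistar}, and pass to the shifted variables $\tilde B,\tilde C,\tilde D$ to reduce everything to the purely linear-algebraic question of when the block matrix $M=\left(\begin{smallmatrix}\tilde B & \tilde C\\ \tilde C^t & \tilde D\end{smallmatrix}\right)$ is positive semidefinite. From there the paper takes a detour through the Moore--Penrose pseudo-inverse: it restates $M\ge 0$ via the generalized Schur-complement criterion (Proposition~\ref{prop:positiveblock}), and then verifies by a delicate case analysis (using Lemmas~\ref{lem:minquadratic} and~\ref{lem:minquadraticmatrix}, which concern boundedness below of the quadratic $q_{P,b}$) that these three pseudo-inverse conditions coincide exactly with the slice conditions defining $(F_1\#F_2)$. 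Your argument bypasses the Schur complement entirely by proving directly the elementary equivalence: $M\ge 0$ if and only if $\tilde D\ge 0$ and $\tilde B+\tilde C\Gamma+\Gamma^t\tilde C^t+\Gamma^t\tilde D\Gamma\ge 0$ for all $\Gamma$. The forward direction is the observation that both quantities are congruences of $M$, and the converse follows from the clean identity $v^tMv=v_1^t\bigl(\tilde B+\tilde C\Gamma+\Gamma^t\tilde C^t+\Gamma^t\tilde D\Gamma\bigr)v_1$ once you pick any $\Gamma$ with $\Gamma v_1=v_2$ (your rank-one $\Gamma=v_2v_1^t/\|v_1\|^2$ works), with the $v_1=0$ case handled by $\tilde D\ge 0$. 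This makes the pseudo-inverse machinery and the two auxiliary quadratic-boundedness lemmas unnecessary for this proposition, and it also makes transparent exactly why the vertical-slice condition $\tilde D\ge 0$ cannot be dropped (it controls precisely the directions $v_1=0$ invisible to the $i_\Gamma^*$ tests). The only thing the paper's route buys that yours does not is that Proposition~\ref{prop:positiveblock} and the two quadratic lemmas are packaged as independently reusable facts, but for the purposes of proving the proposition itself your argument is shorter and self-contained.
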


In particular, the previous proposition gives various examples of products that are subequations even though they are not gradient-independent (compare Section \ref{sec:productsofsubequations}).

\begin{proof}[Proof of Proposition \ref{prop:characterizationexample}]
Write $F_1: = F_{\lambda,\mu_1}$ and $F_2: = F_{\lambda,0}$.    Fix $x_i\in X_i$ and let
$$\alpha:=\left(r, p=\left( \begin{array}{c}p_1 \\ p_2\end{array}\right) ,\left( \begin{array}{cc}B & C \\ C^t & D\end{array}\right)\right) \in J^2_{n+m}.$$
To ease notation set $\lambda: = \lambda(r)$  $\mu_1:=\mu_1(x_1,r)$ and
$$\left( \begin{array}{cc}\hat{B} & \hat{C} \\ \hat{C}^t & \hat{D}\end{array}\right) : = \left( \begin{array}{cc}B & C \\ C^t & D\end{array}\right)-\lambda pp^t - \mu(x_1,x_2,r)$$
so
\begin{align*}
\hat{B} &= B - \lambda p_1p_1^t - \mu_1\\
 \hat{C} &= C - \lambda p_1p_2^t\\
\hat{D}&=D - \lambda p_2p_2^t.
\end{align*}

{\bf Claim: } $\alpha\in (F_1\#F_2)_{(x_1,x_2)}$ if and only if the following three conditions all hold:
  \begin{align}
   \left(\Id -    \hat{D}  \hat{D}^{\dagger}\right)\hat{C}^t &= 0 \label{ex1:condition1}\\
     \hat{B}  - \hat{C} \hat{D}^{\dagger} \hat{C}^t &\ge 0 \label{ex1:condition2}, \\
     \hat{D}&\ge 0 \label{ex1:condition3},
 \end{align}
 (where $\hat{D}^{\dagger}$ denotes the pseudo-inverse of $\hat{D}$).

The equivalence between these three conditions and the condition that
\begin{equation}\left(\begin{array}{cc} B  & C  \\ C^t & D \end{array}\right)\ge \lambda pp^t + \mu \label{lem:characterizationexamplepos:repeat}\end{equation}
is a standard piece of Linear Algebra (Proposition \ref{prop:positiveblock}), and this is precisely the condition that $\alpha\in (F_{\lambda,\mu})_{(x_1,x_2)}$.\\

To prove the claim, recall that by definition $\alpha\in (F_1\#F_2)_{(x_1,x_2)}$ if and only if 
\begin{enumerate}[(i)]
\item $j^*\alpha = (r,p_2,D)\in (F_2)|_{x_2}$ 
\item $\iota_{\Gamma}(\alpha)\in (F_1)_{x_1}$ for all $\Gamma$.
\end{enumerate}
Now by definition of $F_2$, (i) is equivalent is equivalent to $D\ge \lambda p_2p_2^t$ which in turn is equivalent to $\hat{D}\ge 0$.  To better understand (ii) write
\begin{align*}\zeta(\Gamma)&:=B + C\Gamma + \Gamma^tC^t + \Gamma^t D\Gamma- \lambda (p_1+\Gamma^tp_2) (p_1+\Gamma^t p_2))^t -\mu_1\\
&=\hat{B} +  \hat{C} \Gamma  + \Gamma^t \hat{C}^t + \Gamma^t \hat{D} \Gamma.
\end{align*}

Then 
$$\iota_\Gamma^*(\alpha)\in (F_1)_{x_1} \Leftrightarrow \zeta(\Gamma)\ge 0.$$

Suppose first that \eqref{ex1:condition1},\eqref{ex1:condition2},\eqref{ex1:condition3} all hold.    The first and third of these state that $(I-\hat{D}\hat{D}^\dagger) \hat{C}^t =0$ and $\hat{D}\ge 0$.   So by Lemma \ref{lem:minquadraticmatrix}
$$\zeta(\Gamma) \ge \hat{B} - \hat{C}^t \hat{D}^\dagger \hat{C}\ge 0.$$
As this holds for all $\Gamma$ we deduce $\alpha\in (F_1\#F_2)_{(x_1,x_2)}$.

In the other direction, suppose that one of \eqref{ex1:condition1},\eqref{ex1:condition2},\eqref{ex1:condition3} do not hold. If it if \eqref{ex1:condition3} that does not hold then $j^*\alpha \notin (F_2)_{x_2}$ which immediately implies that $\alpha\notin (F_1\#F_2)_{(x_1,x_2)}$.  So we may suppose that $\hat{D}\ge 0$. 

If \eqref{ex1:condition1} does not hold there is some $w\neq 0$ such that $(I- \hat{D}\hat{D}^{\dagger})\hat{C}^tw\neq 0$.  Set $b:=\hat{C}^tw$ and let
$$q_{\hat{D},b}:=2 x^tb+ x^t \hat{D} x$$
Then by Lemma \ref{lem:minquadratic} there is a sequence $x_j$ such that $q_{\hat{D},b}(x_j)\to -\infty$ as $j\to \infty$.  For each $j$ pick a $\Gamma_j\in M_{n_1\times n_2}(\mathbb R)$ so that $x_j = \Gamma_j w$.   Then
$$ w^t(\hat{C} \Gamma_j +\Gamma_j^t\hat{C}^t + \Gamma_j^t \hat{D} \Gamma_j) w = b^t x_j + x_j^tb + x_j^t\hat{D} x_j  = q_{\hat{D},b}(x_j)\to -\infty$$
as $j\to \infty$.  But this implies $\zeta(\Gamma_j)$ is not semipositive for $j$ sufficiently large,  and so $\alpha\notin (F_1\#F_2)_{(x_1,x_2)}$.

Finally assume \eqref{ex1:condition1} and \eqref{ex1:condition3} both hold, but \eqref{ex1:condition2} does not. Then there is a $w\neq 0$ such that 
$$ w^t(\hat{B} - \hat{C} \hat{D}^{\dagger} \hat{C}^t) w <0.$$
Again set $b:= \hat{C}^tw$.   Then Lemma \ref{lem:minquadratic} implies is an $x_0\in \mathbb R^n$ such that
$$ q_{\hat{D},b}(x_0)  = - b^t \hat{D}^{\dagger} b$$
Now choose $\Gamma\in M_{n\times m}(\mathbb R)$ so $x_0 = \Gamma w$.   Then
$$w^t\zeta(\Gamma)w = w^t\hat{B}w + q_{\hat{D},b}(x_0) = w^t(\hat{B} - \hat{C}\hat{D}^{\dagger}\hat{C}^t)w<0$$
Hence $\zeta(\Gamma)$ is not semipositive, and so $\alpha\notin (F_1\#F_2)_{(x_1,x_2)}$.  
\end{proof}

\begin{lemma}\label{lem:minquadratic}
Let $P\in \Sym^2(\mathbb R^n)$ and $b\in \mathbb R^n$.   Then the function
\begin{equation}\label{def:q}
q_{P,b}(x) := 2 x^tb+ x^t P x\end{equation}
is bounded from below if and only if $P\ge 0$ and $(I - P P ^{\dagger})b=0$, in which case the minimum value is attained and is equal to
$$p^*:=-b^tP^{\dagger} b.$$
\end{lemma}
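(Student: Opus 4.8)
The plan is to analyze the quadratic $q_{P,b}(x) = 2x^tb + x^tPx$ by decomposing $\mathbb R^n$ according to the symmetric matrix $P$. First I would diagonalize $P$ via an orthogonal change of coordinates, writing $\mathbb R^n = V_+ \oplus V_0 \oplus V_-$ where $V_+$, $V_0$, $V_-$ are the spans of eigenvectors with positive, zero, and negative eigenvalues respectively. In these coordinates, writing $x = (u, s, v)$ and $b = (b_+, b_0, b_-)$, the function splits as $q_{P,b}(x) = (u^t D_+ u + 2 b_+^t u) + 2 b_0^t s + (v^t D_- v + 2 b_-^t v)$ where $D_+ > 0$ and $D_- < 0$.

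Then I would argue each of the three necessary conditions in turn. If $P$ has a negative eigenvalue (so $V_- \neq 0$), moving along that eigendirection sends $q_{P,b} \to -\infty$ (the $v^t D_- v$ term dominates), so boundedness forces $P \ge 0$. Assuming $P \ge 0$, so $V_- = 0$: if $b_0 \neq 0$, i.e. $b$ has a nonzero component in $\ker P$, then moving $s$ along $-b_0$ sends $2b_0^t s \to -\infty$; hence boundedness forces $b \perp \ker P$, which is exactly the condition $(I - PP^\dagger)b = 0$ since $PP^\dagger$ is the orthogonal projection onto $(\ker P)^\perp = \operatorname{Im} P$. Conversely, when $P \ge 0$ and $b \in \operatorname{Im} P$, on $V_+$ the function $u^t D_+ u + 2 b_+^t u$ is a strictly convex quadratic with minimizer $u^* = -D_+^{-1} b_+$ and minimum value $-b_+^t D_+^{-1} b_+$, while the $V_0$ part vanishes identically (since $b_0 = 0$); so the infimum is attained at $x^* = (u^*, 0)$, i.e. at any $x^*$ with $Px^* = -b$, and equals $-b_+^t D_+^{-1} b_+ = -b^t P^\dagger b$, using that $P^\dagger$ restricted to $\operatorname{Im} P = V_+$ is $D_+^{-1}$ and annihilates $V_0$. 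This gives the claimed minimum value $p^* = -b^t P^\dagger b$.

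The routine part is just assembling these coordinate computations and checking that $PP^\dagger$ and $P^\dagger$ behave as stated on the eigenspace decomposition; the only place requiring a little care is the "in which case" clause, namely verifying that the minimum is genuinely attained (not merely an infimum) and that the value computed in block form agrees with the coordinate-free expression $-b^t P^\dagger b$. I do not anticipate a serious obstacle here — the statement is a standard fact about quadratic forms — so the main task is to present the three implications cleanly and to be careful with the pseudo-inverse bookkeeping in the degenerate directions.
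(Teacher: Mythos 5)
Your proof is correct, and it is worth noting that the paper does not prove this lemma at all---it simply cites Gallier \cite{Gallier_Geometricmethods,Gallier} and moves on, so your self-contained argument is filling a genuine gap rather than reproducing the text. The spectral route you take (orthogonal diagonalization, splitting $\mathbb R^n = V_+\oplus V_0\oplus V_-$, reading off everything eigenvalue-by-eigenvalue) is fully rigorous: the three necessary conditions are handled cleanly, and the identification of $PP^\dagger$ with orthogonal projection onto $\operatorname{Im}P$ and of $P^\dagger|_{V_+}$ with $D_+^{-1}$ is exactly the bookkeeping needed. One small point of presentation: the minimum is attained not at the single point $(u^*,0)$ but on the whole affine slice $\{-P^\dagger b + z : z\in\ker P\}$; you in fact say this implicitly (``any $x^*$ with $Px^*=-b$''), but it would be cleaner to state it outright so the reader does not wonder whether the $V_0$-coordinate must vanish. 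The referenced source argues instead by completing the square in a coordinate-free way: assuming $P\ge 0$ and $PP^\dagger b=b$ one writes
$$q_{P,b}(x) = (x+P^\dagger b)^t P (x+P^\dagger b) - b^t P^\dagger b,$$
using the pseudo-inverse identities $P^\dagger P P^\dagger = P^\dagger$ and $PP^\dagger b = b$; boundedness and the minimum value are then immediate, and the ``only if'' direction is proved by the same two limiting arguments you give. Both routes are elementary and of comparable length; completing the square avoids choosing a basis, whereas your diagonalization makes the role of each eigenspace explicit, which is arguably more transparent for verifying the pseudo-inverse formulas.
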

\begin{proof}
See \cite[p 420]{Gallier_Geometricmethods} or \cite[Proposition 4.2]{Gallier}.
\end{proof}

\begin{lemma}\label{lem:minquadraticmatrix}
Let $D\in \Pos_{m}$ and $C\in M_{n\times m}$ with $(I - D D ^{\dagger})C^t=0$.  For each $\Gamma\in M_{m\times n}$ let
$$Q(\Gamma)  : = C\Gamma + \Gamma^tC^t + \Gamma^t D \Gamma.$$
Then
\begin{equation}\label{lowerboundQ}Q(\Gamma) \ge - C^t D^{\dagger} C \text{ for all } \Gamma.\end{equation}
\end{lemma}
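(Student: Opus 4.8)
The plan is to deduce this matrix inequality from the scalar minimisation already recorded in Lemma \ref{lem:minquadratic}, by evaluating both sides on an arbitrary vector.

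Concretely, I would fix $w\in\mathbb R^n$ and an arbitrary $\Gamma\in M_{m\times n}$, and put $x:=\Gamma w\in\mathbb R^m$ together with $b:=C^tw\in\mathbb R^m$. Expanding the quadratic form,
$$w^tQ(\Gamma)w \;=\; w^tC\Gamma w+w^t\Gamma^tC^tw+w^t\Gamma^tD\Gamma w \;=\; 2x^tb+x^tDx \;=\; q_{D,b}(x),$$
where $q_{D,b}$ is the quadratic of Lemma \ref{lem:minquadratic}. Next I would check the hypotheses of that lemma for the pair $(D,b)$: we have $D\ge 0$ by assumption, and
$$(I-DD^\dagger)b=(I-DD^\dagger)C^tw=0$$
since $(I-DD^\dagger)C^t=0$ by hypothesis. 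Hence Lemma \ref{lem:minquadratic} applies and gives $q_{D,b}(x)\ge -b^tD^\dagger b$ for every $x\in\mathbb R^m$, and in particular for $x=\Gamma w$. As $b^tD^\dagger b=w^tCD^\dagger C^tw$, this says $w^tQ(\Gamma)w\ge -w^tCD^\dagger C^tw$. Since $w$ was arbitrary and both $Q(\Gamma)$ and $CD^\dagger C^t$ are symmetric (the pseudo-inverse of a symmetric matrix being symmetric), this is exactly the desired inequality $Q(\Gamma)\ge -CD^\dagger C^t$.

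There is no real obstacle here: the proof is a single substitution, and the only things that need saying are the identity $w^tQ(\Gamma)w=q_{D,b}(\Gamma w)$ and the observation that the standing hypothesis $(I-DD^\dagger)C^t=0$ is precisely what makes $b=C^tw$ satisfy the orthogonality condition required by Lemma \ref{lem:minquadratic}. It is worth noting that, unlike in the converse direction of the proof of Proposition \ref{prop:characterizationexample}, one does not need surjectivity of $\Gamma\mapsto\Gamma w$ here---only that $\Gamma w$ is some element of $\mathbb R^m$, over all of which $q_{D,b}$ is bounded below.
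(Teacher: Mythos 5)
Your proof is correct and is essentially identical to the paper's: both fix a vector, set $x=\Gamma w$ and $b=C^tw$, observe that $w^tQ(\Gamma)w = q_{D,b}(x)$, verify the hypothesis $(I-DD^\dagger)b=0$ from $(I-DD^\dagger)C^t=0$, and invoke Lemma~\ref{lem:minquadratic}. (You also wrote the bound in the dimensionally correct form $-CD^\dagger C^t$, which silently corrects a transposition typo in the stated inequality.)
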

\begin{proof}
Fix a vector $v$ and consider
$$ v^t Q(\Gamma) v = v^t C \Gamma v + v^t \Gamma^t C^t v + v^t \Gamma^t D \Gamma v.$$
Defining $x:=\Gamma v$ and $b:=C^t v$ this becomes
$$ v^t Q(\Gamma) v = b^t x + x^t b + x^t D x = q_{D,b}(x)$$
where $q_{D,b}$ is as in \eqref{def:q}.   Note that $(I-DD^{\dagger}) b = (I-DD^{\dagger})C^t v =0$.  Hence Lemma \ref{lem:minquadratic} applies giving
$$ v^t Q(\Gamma) v \ge - b^t D^{\dagger} b = -v^t C D^{\dagger} C^t v.$$
As this holds for all $v$ we conclude \eqref{lowerboundQ}.
\end{proof}

\begin{proposition}\label{prop:positiveblock}
Consider a symmetric block matrix
$$ M := \left(\begin{array}{cc} \hat{B} & \hat{C} \\ \hat{C}^t & \hat{D}\end{array}\right).$$ 
Then $M\ge 0$ if and only if (i) $\hat{D}\ge 0$  (ii) $(I- \hat{D} \hat{D}^{\dagger}) \hat{C}^t=0$ and  (iii) $\hat{B} - \hat{C} \hat{D}^{\dagger} \hat{C}^t\ge 0$.
\end{proposition}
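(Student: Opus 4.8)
The plan is to prove the standard Schur complement criterion for positive semidefiniteness of a symmetric block matrix $M = \left(\begin{smallmatrix} \hat B & \hat C \\ \hat C^t & \hat D\end{smallmatrix}\right)$. The cleanest route is to reduce the quadratic form $v^t M v$, written with $v = (x,y)$, to a sum of two quadratic forms by ``completing the square'' in $x$, being careful that $\hat D$ may be singular so that its inverse must be replaced by the Moore--Penrose pseudoinverse $\hat D^\dagger$. Concretely I would write
$$ v^t M v = x^t \hat B x + 2 x^t \hat C y + y^t \hat D y,$$
and for fixed $x$ minimise over $y$: this is exactly the quadratic $q_{\hat D, \hat C^t x}(y)$ in the notation of Lemma \ref{lem:minquadratic} (with roles of the linear term adjusted). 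By that lemma the infimum over $y$ is finite for every $x$ precisely when $\hat D \ge 0$ and $(I - \hat D\hat D^\dagger)\hat C^t x = 0$ for all $x$, i.e. conditions (i) and (ii); and when these hold the minimum over $y$ equals $-x^t \hat C \hat D^\dagger \hat C^t x$, so
$$ \inf_y v^t M v = x^t\big(\hat B - \hat C \hat D^\dagger \hat C^t\big) x.$$

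From here both directions follow. If (i), (ii), (iii) hold, then for every $v=(x,y)$ we have $v^t M v \ge \inf_y v^t M v = x^t(\hat B - \hat C\hat D^\dagger \hat C^t)x \ge 0$ by (iii), so $M \ge 0$. Conversely, suppose $M \ge 0$. Taking $x = 0$ gives $y^t \hat D y \ge 0$ for all $y$, which is (i). For (ii): if $(I-\hat D\hat D^\dagger)\hat C^t x_0 \ne 0$ for some $x_0$, then $b := \hat C^t x_0$ has $(I-\hat D\hat D^\dagger)b \ne 0$, so by Lemma \ref{lem:minquadratic} the quadratic $q_{\hat D,b}$ is unbounded below, i.e. there are $y_j$ with $2 y_j^t b + y_j^t \hat D y_j \to -\infty$; then $v_j := (x_0, y_j)$ gives $v_j^t M v_j = x_0^t\hat B x_0 + (2 y_j^t b + y_j^t\hat D y_j) \to -\infty$, contradicting $M\ge 0$. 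Hence (ii) holds. Finally, with (i) and (ii) in hand the displayed identity for $\inf_y v^t M v$ is valid, and since $v^t M v \ge 0$ for all $v$ we may take the infimum over $y$ to get $x^t(\hat B - \hat C\hat D^\dagger \hat C^t)x \ge 0$ for all $x$, which is (iii).

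The only slightly delicate point — and the place I would be most careful — is the bookkeeping around the pseudoinverse when $\hat D$ is singular: one must verify that ``completing the square'' really does produce the pseudoinverse-based residual and that the condition $(I-\hat D\hat D^\dagger)\hat C^t = 0$ is exactly what guarantees the cross term can be absorbed. But this is precisely the content of Lemma \ref{lem:minquadratic} applied pointwise in $x$, so no genuinely new computation is needed; the proof is essentially an application of that lemma together with the elementary observation that $M \ge 0$ is equivalent to $v^t M v \ge 0$ for all $v = (x,y)$, which in turn is equivalent to $\inf_y v^t M v \ge 0$ for all $x$. (Alternatively one could cite \cite{Gallier} directly, but giving the two-line reduction keeps the argument self-contained.)
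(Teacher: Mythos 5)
Your proof is correct. The paper itself does not prove this proposition; it simply cites Gallier (\cite[Theorem 16.1]{Gallier_Geometricmethods} or \cite[Theorem 4.3]{Gallier}), so there is no proof in the paper to compare against line by line. What you have done is supply the standard self-contained argument, and you have done it in a way that fits the paper's toolkit nicely: you observe $v^tMv = x^t\hat B x + q_{\hat D,\hat C^t x}(y)$ and then invoke Lemma \ref{lem:minquadratic} pointwise in $x$, which is exactly the same pair of lemmas (Lemma \ref{lem:minquadratic} and, implicitly, the mechanism behind Lemma \ref{lem:minquadraticmatrix}) that the paper already deploys in the proof of Proposition \ref{prop:characterizationexample}. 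So your argument is not merely correct, it is stylistically consistent with how the authors use these facts elsewhere; the only difference is that they outsource this particular statement to a reference while you prove it. One small presentational note: in the forward direction you should say explicitly that conditions (i) and (ii) are what license writing $\inf_y v^tMv$ via the pseudoinverse formula before you apply (iii) — you do say this in the converse, and the same caveat applies in the forward direction, though it is obviously satisfied since you are assuming (i) and (ii).
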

\begin{proof}
See \cite[Theorem 16.1]{Gallier_Geometricmethods} or \cite[Theorem 4.3]{Gallier}.
\end{proof}

\begin{remark}\
\begin{enumerate}
\item An analogous statement holds in the complex case; details are left to the reader.
\item Putting $\lambda\equiv 0$ and $\mu\equiv 0$ recovers \eqref{eq:productrealpositive} and \eqref{eq:productcomplexpositive}.
\end{enumerate}
\end{remark}

\subsection{Example 3}\label{sec:productsofsubequationnotasubequation}

Let $F\subset J^2(\mathbb R^n)$ be given by
$$F_x = \{ (r,p,A) : \|p\|\le 1\}$$
which is easily checked to be a constant-coefficient subequation.   If $(r,\left(\begin{array}{c} p_1 \\ p_2\end{array}\right),A')\in (F\#\calPos)_{(x,y)}$ then $\|p_1 + \Gamma^t p_2 \|\le 1$ for all $\Gamma$, which happens if and only if $p_2=0$ and $\|p_1\|\le 1$.    Thus $F\#\calPos$ is not the closure of its interior (so does not satisfy the Topological Property) and thus is not a subequation.

\section{Approximations}\label{sec:approximations}

In this section we describe three approximation techniques.  The first gives an approximation of $F\#\calPos$-subharmonic functions by $F\#\calPos$-sub\-harmonic functions that are bounded from below.  The second describes the sup-convolution that approximates bounded $F$-subharmonic functions by semiconvex ones.   The third uses smooth mollification to approximate continuous $F$-subharmonic functions by smooth ones.  We remark that it is only the third of these that requires $F$ to be convex.

\subsection{Approximation by subharmonic functions bounded from below}

If $F$ is such that sufficiently negative constant functions are $F$-subharmonic  then any $F\#\calPos$-subharmonic function $f$ can be approximated by the $F\#\calPos$-sub\-harmonic functions
$$ f_j = \max\{ f, -j\}$$
which are bounded from below and decrease to $f$ pointwise as $j$ tends to infinity.

In the following three statements we will show that essentially the same can be arranged if $F$ is constant-coefficient, without assuming that sufficiently negative constant functions are $F$-subharmonic.

\begin{lemma}\label{lem:boundedbelow}
Let $F\subset J^2(X)$ be a primitive constant-coefficient subequation.  Assume there exists an $F$-subharmonic function on $X$ that is not identically $-\infty$.  Then any $x_0\in X$ is contained in a neighbourhood $x_0\in U\subset X$ on which there exists an $f\in F(U)$ that is bounded from below on $U$.
\end{lemma}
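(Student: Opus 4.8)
The plan is to build a lower-bounded $F$-subharmonic function on a small ball around $x_0$ by combining a given nontrivial $F$-subharmonic function with a suitable correction. Let $v$ be an $F$-subharmonic function on $X$ not identically $-\infty$. First I would use that $v$ is upper-semicontinuous and $\not\equiv -\infty$ to find a point $x_1$ where $v(x_1)>-\infty$; by translating (using that $F$ is constant-coefficient, via Proposition \ref{prop:basicproperties}(5)) there is no loss in assuming $x_1 = x_0$, so that $v(x_0)$ is finite. Upper-semicontinuity then gives a ball $B=B(x_0,\rho)\subset X$ on which $v$ is bounded \emph{above}, say $v\le M$ on $B$, but of course $v$ need not be bounded below.

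The key idea is to replace $v$ by $\max\{v, w\}$ where $w$ is a smooth $F$-subharmonic function on a slightly smaller ball that dominates $v$ near the boundary. To produce such a $w$, I would exploit the Positivity property: fix any $(r,p,A)\in F_{x_0}$ (which is nonempty since $v$ has a finite value), and consider the quadratic $q(x) = r + p\cdot(x-x_0) + \tfrac12 (x-x_0)^t A (x-x_0)$, modified by adding $t\,\|x-x_0\|^2$ for a large constant $t>0$, i.e.\ $w_t(x) := q(x) + t\|x-x_0\|^2$. By Positivity and Lemma \ref{lem:fsubharmonicc2}, $w_t$ is $F$-subharmonic on all of $\mathbb R^n$ (here I use constant-coefficiency so that $(x,r,p,A+2t\,\Id)\in F$ for every $x$). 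Now on the annulus $\tfrac{\rho}{2}\le \|x-x_0\|\le \rho$ we have $w_t(x)\ge q(x) + t(\rho/2)^2$, and since $q$ is bounded below on the ball $B$ (it is continuous) while $v\le M$ on $B$, choosing $t$ large enough forces $w_t > M \ge v$ throughout that annulus. Then set
$$ f := \begin{cases} \max\{v, w_t\} & \text{on } B(x_0,\rho), \\ w_t & \text{on } B\setminus \overline{B(x_0,\rho/2)}, \end{cases} $$
which is well-defined and $F$-subharmonic on $U := B(x_0,\rho)$ because the two definitions agree (both equal $w_t$) on the overlapping annulus where $v < w_t$, and $F$-subharmonicity is a local property closed under taking maxima (Proposition \ref{prop:basicproperties}(1)). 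Finally $f \ge w_t$, which is bounded below on the bounded set $U$, so $f$ is bounded below on $U$, as required.

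The main obstacle I anticipate is making the gluing precise — one must ensure the "patch'' $w_t$ strictly dominates $v$ on a genuine open annulus (not merely on the sphere $\|x-x_0\|=\rho/2$) so that $\max\{v,w_t\}$ and $w_t$ literally coincide on an open overlap, which is what lets the two local definitions assemble into a single $F$-subharmonic function; this is where the strict inequality $w_t > M$ on the closed annulus, obtained by taking $t$ slightly larger than the threshold, is used. A secondary subtlety is the harmless-looking reduction $x_1 = x_0$: strictly one does not need it, since it suffices to work near \emph{some} point where $v$ is finite and then translate the resulting neighbourhood, but invoking Proposition \ref{prop:basicproperties}(5) makes the bookkeeping cleanest. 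Everything else is routine: finiteness of a value of $v$ gives a nonempty fibre $F_{x_0}$, Positivity upgrades a single jet to a family of $F$-subharmonic quadratics, and the max construction is standard.
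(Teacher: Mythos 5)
The central step of your proposal—that $w_t(x) := q(x) + t\|x-x_0\|^2$ is $F$-subharmonic on all of $\mathbb{R}^n$—is incorrect, and this gap is fatal. At a point $x \neq x_0$ the $2$-jet of $w_t$ is
$$ J^2_x(w_t) = \bigl(\, w_t(x),\; p + A(x-x_0) + 2t(x-x_0),\; A + 2t\,\Id \,\bigr), $$
whose constant and gradient entries both differ from $(r,p)$. Positivity lets you add a semipositive matrix to the \emph{Hessian} slot, and constant-coefficiency says $F_x = F_{x_0}$, but neither hypothesis gives you any control over the $r$- and $p$-slots. So knowing $(r,p,A+2t\Id)\in F_{x_0}$ does \emph{not} imply $J^2_x(w_t)\in F_x$ for $x\neq x_0$. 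The paper's Example 3 (Section \ref{sec:productsofsubequationnotasubequation}) makes this concrete: take $F_x = \{(r,p,A): \|p\|\le 1\}$, a constant-coefficient primitive subequation, and $(0,0,0)\in F_{x_0}$. Then $w_t(x) = t\|x-x_0\|^2$ has $\|\nabla w_t(x)\| = 2t\|x-x_0\|$, which exceeds $1$ as soon as $\|x-x_0\| > 1/(2t)$, so $w_t$ fails to be $F$-subharmonic precisely on the region you need it (near the boundary of a fixed ball, after taking $t$ large). Even assuming gradient-independence, the constant part $w_t(x) > r$ away from $x_0$, and Negativity (if you had it) goes the wrong way. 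A secondary, smaller issue: your justification that $F_{x_0}\neq\emptyset$ ("since $v$ has a finite value") is too quick — you need to actually produce an upper contact jet of $v$ somewhere, e.g.\ by showing $v - t\|\cdot\|^2$ attains an interior maximum on a small ball for $t$ large, and then invoke constant-coefficiency.

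The paper sidesteps this entirely by never leaving the class of translates of $v$ itself: for $\|\zeta\|<\delta/2$ the functions $u_\zeta(x) := u(x-\zeta)$ are automatically $F$-subharmonic by constant-coefficiency (Proposition \ref{prop:basicproperties}(5)), they are uniformly bounded above on a small ball by upper-semicontinuity, and the upper-semicontinuous regularization $v := \sup^*_\zeta u_\zeta$ is then $F$-subharmonic by Proposition \ref{prop:basicproperties}(4) and bounded below by $u(x_0)$ because $\zeta' := x - x_0$ is an admissible shift. That construction requires no quadratic barrier at all, and is the natural way to exploit constant-coefficiency without assuming anything about how $F$ depends on the $(r,p)$-parts of the jet.
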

\begin{proof}
Let $u\in F(X)$ be not identically $-\infty$ and choose some $x_0\in X$ such that $a:=u(x_0)\neq -\infty$.  As $u$ is upper-semicontinuous the set $U_1:=\{ x\in X: f(x)<a+1\}$ is an open neighbourhood of $x_0$.  Choose some $\delta>0$ so that $B_{\delta}(x_0)\subset U_1$ and let $U: =B_{\delta/2}(x_0)$.

For $\|\zeta\|<\delta/2$ define
$$ u_{\zeta}(x) : = u(x-\zeta) \text{ for } x\in U.$$
Observe that if $x\in U$ then $\|x-\zeta-x_0\|\le \|x-x_0\| + \|\zeta\|<\delta$, so $u_{\zeta}$ is well defined and bounded above by $a+1$.    Moreover, as $F$ is constant-coefficient, $u_{\zeta}$ is $F$-subharmonic on $U$.    Thus
$$ v : = {\sup}^*\{ u_{\zeta} : \|\zeta\|<\delta/2 \}$$
is $F$-subharmonic on $U$.  And if $x\in U$ then setting $\zeta':= x-x_0$ we have $\|\zeta'\|<\delta/2$ and so $v(x) \ge u_{\zeta'}(x) = u(x-\zeta') = u(x_0)=a$ and so $v$ is bounded from below by $a$ on $U$.

Thus provides an $F$-subharmonic function bounded from below on a neighbourhood $U$ of the point $x_0$.  But as $F$ is constant coefficient, we can translate $U$ to cover any given point of $X$, completing the proof.
\end{proof}

\begin{lemma}\label{lem:pullbacksubharmonic}
Let $F\subset J^2(X)$ be a constant-coefficient primitive subequation.  Then for any $F$-subharmonic function $f$ on $X$,  the function $$\pi^*f(x,y) := f(x) \text{ for } (x,y)\in X\times \mathbb R^{m}$$
is $F\#\calPos$-subharmonic.
\end{lemma}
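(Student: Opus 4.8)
The plan is to reduce the statement immediately to Proposition~\ref{prop:slices}, which characterises $F\#\calPos$-subharmonicity by restriction to slices. First I would observe that $\pi^*f$ is upper-semicontinuous, being the composition of the upper-semicontinuous $f$ with the continuous projection $\pi$, and that $\calPos$ is a constant coefficient primitive subequation (Example following the definition of $\calPos_X$), so that together with the hypothesis on $F$ the proposition applies on the open set $\Omega = X\times\mathbb R^m$.

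Next I would check the two families of slice restrictions. For a non-vertical slice $i_{y_0,\Gamma}:\mathbb R^n\to \mathbb R^n\times\mathbb R^m$, $i_{y_0,\Gamma}(x)=(x,y_0+\Gamma x)$, we have
$$ i_{y_0,\Gamma}^*(\pi^*f)(x) = \pi^*f(x,y_0+\Gamma x) = f(x),$$
which is $F$-subharmonic on $X$ by assumption, for every $y_0$ and every $\Gamma\in\Hom(\mathbb R^n,\mathbb R^m)$. For a vertical slice $j_{x_0}:\mathbb R^m\to\mathbb R^n\times\mathbb R^m$, $j_{x_0}(y)=(x_0,y)$, we have
$$ j_{x_0}^*(\pi^*f)(y) = \pi^*f(x_0,y) = f(x_0),$$
a constant function of $y$ (possibly the constant $-\infty$). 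A constant function is locally convex, hence $\calPos$-subharmonic by Example~\ref{example:convexity}; if the constant is $-\infty$ the function is trivially $\calPos$-subharmonic. Thus both conditions in part (2) of Proposition~\ref{prop:slices} hold, and part (1) gives that $\pi^*f$ is $F\#\calPos$-subharmonic.

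I do not expect any real obstacle here: the only points needing care are that the constant-coefficient hypothesis on $F$ (and the corresponding property of $\calPos$) is exactly what licenses the use of Proposition~\ref{prop:slices}, and that the degenerate case $f(x_0)=-\infty$ is handled by the convention that $f\equiv-\infty$ is $\calPos$-subharmonic. If one preferred to avoid invoking the Restriction Theorem buried in Proposition~\ref{prop:slices}, an alternative is a direct check from the definition of the product: given an upper contact jet of $\pi^*f$ at $(x_0,y_0)$, freezing $y=y_0$ shows the Hessian block $D$ in the $y$-directions is semipositive (so $j^*\alpha\in\calPos$), while restricting along $y=y_0+\Gamma(x-x_0)$ produces an upper contact jet of $f$ at $x_0$, so $i_\Gamma^*\alpha\in F_{x_0}$ for every $\Gamma$; this is the same computation that appears in the proof of the (2)$\Rightarrow$(1) direction of Proposition~\ref{prop:slices}.
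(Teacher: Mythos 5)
Your proof is correct and takes essentially the same route as the paper: both invoke Proposition~\ref{prop:slices} and verify that vertical slices of $\pi^*f$ are constant (hence $\calPos$-subharmonic) while non-vertical slices agree with $f$ (hence $F$-subharmonic). The extra care you take over upper-semicontinuity and the $-\infty$ case, and the sketched direct alternative, are fine but not needed beyond what the paper already records.
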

\begin{proof}
The vertical slices of $\pi^*f$ are constant (so certainly $\mathcal P$-subharmonic) and the restriction of $\pi^*f$ to any non-vertical slice is equal to $f$ and hence $F$-subharmonic.  Thus $\pi^*f$ is $F\#\calPos$-subharmonic from Proposition \ref{prop:slices}.

\end{proof}

\begin{proposition}\label{prop:approxboundedbelow}
Let $F\subset J^2(X)$  be a constant coefficient primitive subequation that has the Negativity Property.   Let $\Omega\subset X\times \mathbb R^{m}$ be open and $f$ be $F\#\calPos$-subharmonic on $\Omega$ that is not identically $-\infty$.

Then given any $x_0\in \pi(\Omega)$ there exists a neighbourhood $x_0\subset U \subset \pi(\Omega)$ and a $v\in F(U)$ that is bounded from below.  Moreover setting
$$ \Omega_U := \{ (x,y)\in \Omega : x\in U\}$$
the functions
$$ f_j = \max\{ f, \pi^*v -j\} \text{ on } \Omega_U$$
are $F\#\calPos$-subharmonic, are bounded below, and decrease pointwise to $f$ on $\Omega_U$ as $j\to\infty$.
\end{proposition}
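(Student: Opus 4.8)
The plan is to first produce the pair $(U,v)$ and then check the three properties of the $f_j$, each of which reduces to a result already established. For the first step, since $f$ is not identically $-\infty$ we may fix $(x_1,y_1)\in\Omega$ with $f(x_1,y_1)\neq-\infty$. The set $V:=\{x\in X:(x,y_1)\in\Omega\}$ is open and contains $x_1$, and by Proposition \ref{prop:slices} (restriction to the non-vertical slice $i_{y_1,0}$) the function $u(x):=f(x,y_1)$ is $F$-subharmonic on $V$ and not identically $-\infty$. Running the proof of Lemma \ref{lem:boundedbelow} with the ambient open set there replaced by $V$ --- that is, choosing $\delta>0$ with $B_\delta(x_1)\subset\{u<u(x_1)+1\}$ and forming ${\sup}^*\{x\mapsto u(x-\zeta):\|\zeta\|<\delta/2\}$ on $B_{\delta/2}(x_1)$, which is $F$-subharmonic by Proposition \ref{prop:basicproperties}(4),(5) and bounded below by $u(x_1)$ --- yields an $F$-subharmonic function bounded from below on the ball $B_{\delta/2}(x_1)$. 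Because $F$ is constant coefficient, translating by $x_0-x_1$ gives such a function on $B_{\delta/2}(x_0)$; and since $\pi(\Omega)$ is open (the projection is an open map), we may shrink to obtain $U:=B_r(x_0)\subset\pi(\Omega)$ with $r\le\delta/2$ and take $v$ to be the restriction of the translated function to $U$.

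For the second step, $\pi^*v$ is $F\#\calPos$-subharmonic on $U\times\mathbb R^m$, hence on $\Omega_U$, by Lemma \ref{lem:pullbacksubharmonic}. Since $F$ and $\calPos$ both have the Negativity Property, so does $F\#\calPos$ by Lemma \ref{lem:productsofsubequations}(1); consequently $\pi^*v-j$ is again $F\#\calPos$-subharmonic, because an upper contact jet of $\pi^*v-j$ is an upper contact jet of $\pi^*v$ and Negativity permits lowering the value entry by $j$. By the Maximum Property (Proposition \ref{prop:basicproperties}(1)), applicable because $F\#\calPos$ is closed (Lemma \ref{lem:productsofsubequations}(1)), the function $f_j=\max\{f,\pi^*v-j\}$ is $F\#\calPos$-subharmonic on $\Omega_U$. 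If $v\ge c$ on $U$ then $f_j\ge\pi^*v-j\ge c-j$, so each $f_j$ is bounded from below; and for fixed $(x,y)\in\Omega_U$ the numbers $f_j(x,y)=\max\{f(x,y),v(x)-j\}$ decrease in $j$ to $\max\{f(x,y),-\infty\}=f(x,y)$, giving $f_j\downarrow f$ pointwise.

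The one point requiring care is that $f$ is assumed only to be not identically $-\infty$, so $f(x_0,\cdot)$ may well be $\equiv-\infty$ on $\Omega_{x_0}$ and one cannot read off an $F$-subharmonic function near $x_0$ from $f$ directly; the remedy is to slice through an arbitrary finite point of $f$, build a bounded-below $F$-subharmonic function there, and transport it to a neighbourhood of $x_0$ using the constant-coefficient hypothesis, shrinking so as to stay inside $\pi(\Omega)$. The remaining verifications are a routine assembly of the Maximum Property, the Negativity Property of the product subequation, Lemma \ref{lem:pullbacksubharmonic}, and elementary facts about the maximum of two functions.
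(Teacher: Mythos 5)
Your proof is correct and follows the same overall strategy as the paper's: construct a bounded-from-below $F$-subharmonic function $v$ on a neighbourhood $U$ of $x_0$ using the sup-star construction over translates (the content of Lemma~\ref{lem:boundedbelow}), pull it back by $\pi$ (Lemma~\ref{lem:pullbacksubharmonic}), and apply the Negativity and Maximum properties to the $f_j$. Where you differ, and in fact improve on the paper, is the first step. The paper's proof reads ``If the only $F$-subharmonic functions on $X$ are identically $-\infty$ then $f$ must also be identically $-\infty$,'' and in the remaining case invokes Lemma~\ref{lem:boundedbelow}, whose hypothesis is the existence of a nontrivial $F$-subharmonic function \emph{on all of $X$}. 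But a nonvertical slice of $f$ is $F$-subharmonic only on the (generally proper) open slice $\{x:(x,y_1)\in\Omega\}\subset X$, so it is not immediate that $f\not\equiv-\infty$ produces such a global function. You avoid this entirely by taking the slice $u(x)=f(x,y_1)$ through a finite point $(x_1,y_1)$, observing via Proposition~\ref{prop:slices} that $u\in F(V)$ for the open slice $V$, and noting that the actual argument inside Lemma~\ref{lem:boundedbelow} is purely local near $x_1$ — it only needs $u$ on a small ball, not on all of $X$. Re-running it on $V$, translating by $x_0-x_1$ (legitimate since $F$ is constant coefficient), and shrinking to keep $U\subset\pi(\Omega)$ gives exactly what is needed. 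The remaining verifications (Negativity of $F\#\calPos$ from Lemma~\ref{lem:productsofsubequations}(1), the Maximum Property, the lower bound $c-j$, and the pointwise decrease to $f$) are routine and correctly executed. In short, same route, with a cleaner handling of the one subtle point.
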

\begin{proof}
If the only $F$-subharmonic functions on $X$ are identically $-\infty$ then $f$ must also be identically $-\infty$ so there is nothing to prove.  Otherwise given $x_0\in \pi(\Omega)$ Lemma \ref{lem:boundedbelow} provides a neighbourhood $x_0\subset U\subset \pi(\Omega)$ and an $F$-subharmonic function $v$ on $U$ that is bounded from below.  Furthermore Lemma \ref{lem:pullbacksubharmonic} says that $\pi^*v$ is $F\#\calPos$-subharmonic on $U\times \mathbb R^{m}$, and so in particular is $F\#\calPos$-subharmonic on $\Omega_U$.  Thus the $f_j$ are $F\#\calPos$-subharmonic, bounded from below, and decrease pointwise to $f$ on $\Omega_U$.
\end{proof}

\subsection{Sup-convolutions}\label{sec:supconvolution}
 
The Sup-convolutions is a well-known construction in the theory of viscosity subsolutions,  that allows approximation of certain $F$-subharmonic functions by semiconvex functions.     

\begin{definition}[Semiconvexity]
Let $\kappa\ge 0$ and $X\subset \mathbb R^n$ be open.  We say $f:X\to \mathbb R$ is \emph{$\kappa$-semiconvex} if $f(x) + \frac{\kappa}{2} \|x\|^2$ is convex.  If $f$ is $\kappa$-semiconvex for some $\kappa \ge 0$ then we say simply $f$ is \emph{semiconvex}.
\end{definition}

\begin{lemma}\label{lem:supconvolutionsemiconvex}
Let $X\subset \mathbb R^n$ be open and $f:X\to \mathbb R$.    Given $\epsilon>0$ and non-empty $X'\subset X$ the function
$$g(x):= \sup_{\zeta\in X'}\{ f(\zeta) - \frac{1}{2\epsilon} \|x-\zeta\|^2\} \text{ for } x\in X.$$
is $\frac{1}{2\epsilon}$-semiconvex on $X$.
\end{lemma}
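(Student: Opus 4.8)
The plan is to show that the function $h(x) := g(x) + \frac{1}{2\epsilon}\|x\|^2$ is convex on $X$, which is exactly the assertion that $g$ is $\frac{1}{2\epsilon}$-semiconvex. The key algebraic manipulation is to expand the square $\|x-\zeta\|^2 = \|x\|^2 - 2x\cdot\zeta + \|\zeta\|^2$ inside the supremum, so that the $\|x\|^2$ term cancels against the added $\frac{1}{2\epsilon}\|x\|^2$. Concretely,
$$ h(x) = \frac{1}{2\epsilon}\|x\|^2 + \sup_{\zeta\in X'}\left\{ f(\zeta) - \frac{1}{2\epsilon}\|x\|^2 + \frac{1}{\epsilon} x\cdot\zeta - \frac{1}{2\epsilon}\|\zeta\|^2\right\} = \sup_{\zeta\in X'}\left\{ \frac{1}{\epsilon} x\cdot\zeta + f(\zeta) - \frac{1}{2\epsilon}\|\zeta\|^2\right\}. $$
For each fixed $\zeta\in X'$ the map $x\mapsto \frac{1}{\epsilon} x\cdot\zeta + \left(f(\zeta) - \frac{1}{2\epsilon}\|\zeta\|^2\right)$ is affine in $x$ (the bracketed term is a constant depending only on $\zeta$). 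Hence $h$ is a pointwise supremum of affine functions, and therefore convex — recalling that a supremum of convex functions is convex, the affine case being the relevant one here.

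One small caveat to address: a priori the supremum defining $g$, and hence $h$, could take the value $+\infty$ at some points (if $f$ is unbounded above on $X'$). Strictly, semiconvexity as defined in the paper refers to $\mathbb{R}$-valued functions, so I would either note that the statement is understood under the implicit hypothesis that $g$ is finite (which is how it will be applied, since $f$ there will be $F$-subharmonic and locally bounded above on a suitable compact set), or simply observe that the supremum-of-affine-functions argument gives convexity of $h$ as an $[-\infty,+\infty]$-valued function regardless, and finiteness is an orthogonal issue. I do not expect any genuine obstacle here: the entire content is the completion-of-the-square identity followed by the standard fact that a sup of affine functions is convex. The only thing requiring a line of care is making sure the constant term $f(\zeta) - \frac{1}{2\epsilon}\|\zeta\|^2$ is correctly isolated as not depending on $x$, which the expansion above makes transparent.
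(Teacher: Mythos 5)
Your proof is correct and follows exactly the same approach as the paper: complete the square so that $g(x)+\frac{1}{2\epsilon}\|x\|^2$ is exhibited as a supremum of affine functions of $x$, hence convex. The caveat about finiteness is a reasonable aside but not something the paper dwells on either.
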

\begin{proof}
Write
$$ g(x) + \frac{1}{2\epsilon} \|x\|^2 = \sup_{\zeta\in X'} \{ f(\zeta) - \frac{1}{2\epsilon} \|\zeta\|^2 + \frac{1}{\epsilon} x\cdot \zeta\}$$
which is a supremum of affine functions in $x$, and thus is convex.
\end{proof}

\begin{lemma}[Sup-convolution]\label{lem:supconvolution}
Let $X\subset \mathbb R^n$ be open.   Let $F\subset J^2(X)$ be a constant coefficient primitive subequation that has the Negativity property.  Suppose $f$ is $F$-subharmonic on $X$ and bounded, say $|f(x)|<M$ for $x\in X$.  

Define the \emph{sup-convolution}
$$f^{\epsilon}(x) = \sup_{\zeta\in X}\{ f(\zeta) - \frac{1}{2\epsilon} \|x-\zeta\|^2\} \text{ for } x\in X.$$
Let $\delta = \sqrt{4\epsilon M}$ and 
$$ X_{\delta} = \{ x : B_{\delta}(x)\subset X\}$$
Then
\begin{enumerate}
\item $f^{\epsilon}$ is $\frac{1}{2\epsilon}$-semiconvex
\item $f^{\epsilon}\searrow f$ pointwise on $X$ as $\epsilon\to 0$.  In fact
\begin{equation} f^\epsilon(x) \le \sup_{\|\zeta-x\|<\delta} \{f(\zeta) - \frac{1}{2\epsilon} \|x-\zeta\|^2 \} \text{ for } x\in X_{\delta}.\label{eq:supconvolutionupperbound} \end{equation}
\item $f^{\epsilon}$ is $F$-subharmonic on $X_{\delta}$.
\end{enumerate}
\end{lemma}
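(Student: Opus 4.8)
The plan is to dispatch (1) and (2) quickly and then concentrate on (3), the ``magic property'' of the sup-convolution. Part (1) is nothing but Lemma~\ref{lem:supconvolutionsemiconvex} applied with $X'=X$. For (2) I would first note that $f^\epsilon$ is monotone in $\epsilon$ because shrinking $\epsilon$ only strengthens the penalty $\frac{1}{2\epsilon}\|x-\zeta\|^2$; then, to obtain \eqref{eq:supconvolutionupperbound}, observe that for $x\in X_\delta$ and $\zeta\in X$ with $\|x-\zeta\|\ge\delta$ one has, using $\delta^2=4\epsilon M$ and $|f|<M$,
$$ f(\zeta)-\frac{1}{2\epsilon}\|x-\zeta\|^2 \;<\; M-\frac{\delta^2}{2\epsilon} \;=\; -M \;<\; f(x), $$
so such $\zeta$ cannot realise the supremum (already beaten by $\zeta=x$), which proves \eqref{eq:supconvolutionupperbound}; combining $f(x)\le f^\epsilon(x)\le\sup_{\|\zeta-x\|<\delta}f(\zeta)$ with upper semicontinuity of $f$ then forces $f^\epsilon\searrow f$ pointwise as $\epsilon\to 0$.

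For (3), I would fix $x_0\in X_\delta$ and an upper contact jet $(p,A)$ of $f^\epsilon$ at $x_0$, with the aim of showing $(f^\epsilon(x_0),p,A)\in F_{x_0}$. The first step is to produce a maximiser: since $f$ is bounded, the penalty makes the defining supremum effectively one over a compact subset of $X$, so upper semicontinuity of $f$ yields a point $\zeta_0$ with $f^\epsilon(x_0)=f(\zeta_0)-\frac{1}{2\epsilon}\|x_0-\zeta_0\|^2$, and the estimate from (2) shows $\|\zeta_0-x_0\|<\delta$, so $\zeta_0$ lies in the \emph{interior} ball $B_\delta(x_0)\subset X$. The second step is the translation trick: from $f^\epsilon(x)\ge f(\zeta)-\frac{1}{2\epsilon}\|x-\zeta\|^2$, valid for all $x$ and $\zeta$, I substitute $x=x_0+(\zeta-\zeta_0)$ to get
$$ f^\epsilon\bigl(x_0+\zeta-\zeta_0\bigr) \;\ge\; f(\zeta)-\frac{1}{2\epsilon}\|x_0-\zeta_0\|^2 , $$
with equality at $\zeta=\zeta_0$ by the choice of $\zeta_0$. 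Feeding this into the upper contact inequality for $f^\epsilon$ at $x_0$ (valid for $x$ near $x_0$, hence for $\zeta$ near $\zeta_0$) and cancelling the common constant $\frac{1}{2\epsilon}\|x_0-\zeta_0\|^2$ shows that $(p,A)$ is an upper contact jet of $f$ at $\zeta_0$. Then $F$-subharmonicity of $f$ gives $(f(\zeta_0),p,A)\in F_{\zeta_0}$; the constant-coefficient hypothesis moves this to $F_{x_0}$; and since $f^\epsilon(x_0)\le f(\zeta_0)$, the Negativity property yields $(f^\epsilon(x_0),p,A)\in F_{x_0}$, which---together with continuity of $f^\epsilon$, coming from (1)---is exactly the statement that $f^\epsilon$ is $F$-subharmonic on $X_\delta$.

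The hard part, or at least the part needing care, is the bookkeeping inside (3): one must be sure the maximiser $\zeta_0$ exists and, above all, lands in the interior of $X$---which is exactly why the statement restricts to the collar $X_\delta$---and one must correctly combine the translation trick with the two hypotheses on $F$, using the constant-coefficient property to transport the jet's base point from $\zeta_0$ to $x_0$ and the Negativity property to absorb the nonpositive drop $-\frac{1}{2\epsilon}\|x_0-\zeta_0\|^2$ in the value. Everything else reduces to elementary point-set topology together with the quadratic estimate displayed above.
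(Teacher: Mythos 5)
Your proof is correct, but takes a genuinely different route from the paper's for part (3). The paper rewrites $f^\epsilon$ as a supremum over translates, $f^\epsilon(x)=\sup_{\|\tau\|<\delta}\{f(x-\tau)-\tfrac{1}{2\epsilon}\|\tau\|^2\}$ on $X_\delta$, observes that each function $x\mapsto f(x-\tau)-\tfrac{1}{2\epsilon}\|\tau\|^2$ is $F$-subharmonic (translation invariance from the constant-coefficient hypothesis, then Negativity to subtract the constant), and concludes via the locally-bounded-families property of Proposition~\ref{prop:basicproperties}(4), using (1) to know $f^\epsilon$ is continuous and hence equal to its upper-semicontinuous regularisation. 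You instead argue jet by jet: take an upper contact jet $(p,A)$ of $f^\epsilon$ at $x_0$, produce a maximiser $\zeta_0$ for the defining supremum, show by the translation substitution that $(p,A)$ is also an upper contact jet of $f$ at $\zeta_0$, and then transport the resulting membership in $F_{\zeta_0}$ to $F_{x_0}$ using the constant-coefficient hypothesis and the Negativity property to absorb the drop in value. This is the classical ``magic property'' argument from the viscosity literature. The paper's route is shorter and sidesteps the need to establish existence of the maximiser; your route requires that compactness step (which you correctly flag as the delicate point, and which works because the estimate in (2) confines the maximising sequence to a compact subset of $B_\delta(x_0)\subset X$), but in exchange it makes transparent exactly how the jet of $f^\epsilon$ at $x_0$ is inherited from the jet of $f$ at a nearby point. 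Both proofs use the same two hypotheses on $F$ in essentially the same roles; they differ only in whether the argument is organised at the level of functions and suprema, or at the level of individual contact jets.
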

\begin{proof}
This is \cite[Thm 8.2]{HL_Dirichletduality} (we observe that in that paper $F$ is assumed to depend only on the Hessian part, but the proof works for any constant-coefficient primitive subequation $F$ that has the Negativity property.)  For convenience we give the details. (1) follows from Lemma \ref{lem:supconvolutionsemiconvex}.    For (2) observe that clearly we always have $f^\epsilon\ge f$, and if $x\in X_{\delta}$ and $\|x-\zeta\|>\delta$ then $f(\zeta) - f(x) - \frac{1}{2\epsilon} \|x-\zeta\|^2 \le 2M -\frac{\delta^2}{2\epsilon} =0$, so $f(\zeta)-  \frac{1}{2\epsilon} \|x-\zeta\|^2 \le f(x)$, which proves \eqref{eq:supconvolutionupperbound}.    By upper-semicontinuity of $f$, this implies $f^{\epsilon}$ decreases pointwise to $f$ as $\epsilon$ tends to zero.

For (3) make a change of variables $\tau = \zeta -x$ to get
$$f^{\epsilon}(x) = \sup_{\|\tau\|<\delta} \{ f(x+\tau) - \frac{1}{2\epsilon} \|\tau\|^2\} \text{ for } x\in X_{\delta}.$$
The translation $x\mapsto f(x-\tau)$ is $F$-subharmonic (Proposition \ref{prop:basicproperties}(5)), and by the Negativity property of $F$ so is the function $x\mapsto f(x-\tau) - \frac{1}{2\epsilon} \|\tau\|^2$.  Now $f^{\epsilon}$ is semiconvex, so continuous, so we may replace the supremum with its upper-semicontinuous regularisation.  Thus (Proposition \ref{prop:basicproperties}(4)) $f^{\epsilon}$ is $F$-subharmonic on $X_\delta$ as claimed.
\end{proof}

Thus the sup-convolution allows one to approximate bounded $F$-subharmonic functions by semiconvex ones (albeit on a slightly smaller subset).  For this reason the following terminology will be useful:

\begin{definition}[Eventually on compact sets]
Suppose $P$ is a property of functions defined on open subsets of $\Omega$, and let $f_j$ be a sequence of functions $f_j$ defined on $\Omega$.
\begin{enumerate}
\item We say that $P$ holds \emph{eventually on relatively compact subsets} if given any open $K\Subset \Omega$ there is a $j_0$ such that $f_j|_{K}$ has property $P$ for all $j\ge j_0$.
\item We say $f_j\searrow f$ \emph{pointwise eventually on relatively compact subsets} as $j\to \infty$ if for any open $K\Subset \Omega$ the sequence $f_j|_{K}$ decreases pointwise to $f|_{K}$ once $j$ is sufficiently large.
\end{enumerate}
\end{definition}

\begin{proposition}[Exhaustive approximation by semiconvex functions]\label{prop:approxsemi}
Let $F\subset J^2(X)$ be a constant-coefficient primitive subequation with the Negativity Property.

Suppose that $f:X\to\mathbb R$ is $F$-subharmonic and bounded from below.    Then there exists a sequence of functions 
$$ f_j : X\to \mathbb R$$ such that
\begin{enumerate}
\item $f_j$ is semiconvex on $X$.
\item $f_j \searrow f$ pointwise eventually on relatively compact subsets as $j\to \infty$.
\item $f_j$ is $F$-subharmonic eventually on relatively compact subsets.
\end{enumerate}
\end{proposition}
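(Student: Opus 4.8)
The plan is to realise the $f_j$ as sup-convolutions of $f$ over an exhausting family of relatively compact subsets, with the window parameters shrinking fast relative to the exhaustion. Fix open sets $X_1\Subset X_2\Subset\cdots$ with $\bigcup_k X_k=X$, and an $M_0\ge 0$ with $f\ge -M_0$ on $X$. Since $f$ is upper-semicontinuous it is bounded above on each compact $\overline{X_k}$, so $N_k:=\max\{M_0,\sup_{\overline{X_k}}f\}<\infty$ and $|f|\le N_k$ on $X_k$; also set $d_k:=\operatorname{dist}(\overline{X_k},X\setminus X_{k+1})>0$ (read as $d_k=1$ if $X_{k+1}=X$). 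Define $f_j(x):=\sup_{\zeta\in X_j}\{f(\zeta)-\tfrac1{2\epsilon_j}\|x-\zeta\|^2\}$ for $x\in X$, and choose $\epsilon_j\downarrow 0$ recursively so small that, writing $\delta_j:=\sqrt{4\epsilon_j N_j}$, one has $\epsilon_{j+1}\le\epsilon_j$, $\delta_{j+1}\le\delta_j<d_{j-1}$, and $\delta_j\to 0$; all of these are conditions of the form ``$\epsilon_j$ small compared to the fixed numbers $N_j,d_{j-1}$'', so a suitable sequence exists.

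Property (1) is then immediate: each $f_j$ is real-valued (it is bounded above by $N_j$ and bounded below at every point) and $\tfrac1{2\epsilon_j}$-semiconvex on $X$ by Lemma~\ref{lem:supconvolutionsemiconvex} applied with $X'=X_j$. For (2) and (3) the key observation is that on $X_j$ the function $f_j$ is exactly the sup-convolution of $f|_{X_j}$, which is $F$-subharmonic on $X_j$ (being $F$-subharmonic is local) and bounded by $N_j$. So Lemma~\ref{lem:supconvolution} applies with $(X,f,M)=(X_j,f|_{X_j},N_j)$: it gives that $f_j$ is $F$-subharmonic on $(X_j)_{\delta_j}=\{x:B_{\delta_j}(x)\subset X_j\}$, that $f\le f_j$ there, and the window estimate \eqref{eq:supconvolutionupperbound}, which combined with the trivial reverse inequality gives $f_j(x)=\sup_{\|\tau\|<\delta_j}\{f(x-\tau)-\tfrac1{2\epsilon_j}\|\tau\|^2\}$ for $x\in(X_j)_{\delta_j}$.

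Now fix $K\Subset X$ and pick $j_0$ with $\overline K\subset X_{j_0-1}$. For $j\ge j_0$ we have $\operatorname{dist}(\overline K,X\setminus X_j)\ge d_{j-1}>\delta_j\ge\delta_{j+1}$, so $K\subset(X_j)_{\delta_j}$ and, a fortiori, $K\subset(X_{j+1})_{\delta_{j+1}}$. Hence $f_j$ is $F$-subharmonic on $K$, giving (3). On $K$ the window formula gives $f(x)\le f_j(x)=\sup_{\|\tau\|<\delta_j}\{f(x-\tau)-\tfrac1{2\epsilon_j}\|\tau\|^2\}\le\sup_{\|\tau\|<\delta_j}f(x-\tau)$, so $f_j(x)\to f(x)$ by upper-semicontinuity of $f$ since $\delta_j\to 0$; and comparing this formula at consecutive indices, using $\delta_{j+1}\le\delta_j$ and $\epsilon_{j+1}\le\epsilon_j$, gives $f_{j+1}\le f_j$ on $K$. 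Thus $f_j|_K\searrow f|_K$ once $j\ge j_0$, which is (2).

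The one genuinely delicate point, and the reason for the care with the exhaustion, is that $f$ need not be bounded above near $\partial X$, so the sup-convolution taken over all of $X$ can be identically $+\infty$; this is why the domain of the sup must grow with $j$, and the recursive smallness of $\epsilon_j$ is exactly what keeps these moving-domain sup-convolutions decreasing (via the window formula) while letting the sets $(X_j)_{\delta_j}$ on which they are $F$-subharmonic exhaust $X$. Everything else is a direct appeal to Lemmas~\ref{lem:supconvolutionsemiconvex} and~\ref{lem:supconvolution}.
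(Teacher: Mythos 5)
Your proof is correct and takes essentially the same approach as the paper: sup-convolutions over an exhausting family of relatively compact sets, with the convolution parameters $\epsilon_j$ chosen recursively small. The only difference is a fine point of bookkeeping — you obtain the eventual monotonicity $f_{j+1}\le f_j$ by upgrading the window estimate \eqref{eq:supconvolutionupperbound} to an exact formula $f_j(x)=\sup_{\|\tau\|<\delta_j}\{f(x-\tau)-\tfrac1{2\epsilon_j}\|\tau\|^2\}$ on $(X_j)_{\delta_j}$ and comparing at consecutive indices, whereas the paper argues directly by splitting $\zeta\in K_{j+1}$ into the near part $K_j$ and the far part $K_{j+1}\setminus K_j$; both come down to the same observation that distant $\zeta$ cannot contribute to the supremum once $\epsilon$ is small.
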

\begin{proof}
Let $K_1\Subset K_2\Subset K_3\Subset \cdots$ be open and each relatively compact in the next, with $\Omega = \cup_j K_j$.    By assumption $f$ is bounded from below on $X$, and since it is also upper-semicontinuous
$$ M_j : = \sup_{x\in K_j} |f(x)|$$
is finite.

Now for $\epsilon>0$ consider
$$ f_{j,\epsilon}(x) : = \sup_{\zeta \in K_j} \{ f(\zeta) - \frac{1}{2\epsilon} \|x-\zeta\|^2\}.$$
Lemma \ref{lem:supconvolutionsemiconvex} implies that $f_{j,\epsilon}$ is semiconvex on $X$. \\

{\bf Claim: } Given $j\ge 2$ and $\epsilon_j>0$ there exists an $\epsilon_{j+1}<\epsilon_j$ such that
\begin{enumerate}[(a)]
\item $f_{j+1,\epsilon_{j+1}} < f_{j,\epsilon_j} \text{ on } K_{j-1}$.
\item $f_{j+1,\epsilon_{j+1}}(x)\le \sup_{|x-\zeta|<1/j} f(\zeta) \text{ for all } x\in K_{j}.$
\item $f_{j+1,\epsilon_{j+1}} \text{ is }F\text{-subharmonic on } K_j$.
\end{enumerate}
To see this, let $M=\inf_{\zeta\in K_{j-1}} |f_{j,\epsilon_j}(\zeta)|$ (which is finite as $f_{j,\epsilon_j}$ is semiconvex, and so in particular continuous, on $\overline{K_{j-1}}$).   Any $\zeta\in K_{j+1}\setminus K_{j}$ is a bounded distance away from any $x\in K_{j-1}$, say $\|\zeta-x\|>\delta'>0$.    So for such $\zeta$ and sufficiently small $\epsilon_{j+1}<\epsilon_j$,
$$ f(\zeta) - \frac{1}{2\epsilon_{j+1}} \|x-\zeta\|^2 \le M_{j+1} - \frac{1}{2\epsilon_{j+1}} \delta'^2 < M\le f_{j,\epsilon_j}(x).$$
  On the other hand for $\zeta\in {K}_j$ and $x\in K_{j-1}$, clearly
$$f(\zeta) - \frac{1}{2\epsilon_{j+1}} \|x-\zeta\|^2\le  f(\zeta) - \frac{1}{2\epsilon_{j}}\|x-\zeta\|^2 \le f_{j,\epsilon_j}(x).$$
Putting these together,
$$ f(\zeta) - \frac{1}{2\epsilon_{j+1}} \|x-\zeta\|^2\le   f_{j,\epsilon_j}(x) \text{ for all } \zeta\in {K}_{j+1}, x\in K_{j-1}$$
and taking the supremum over all $\zeta\in K_{j+1}$ proves (a).  Furthermore there is no loss in assuming that $\epsilon_{j+1}$ is chosen small enough so that setting $\delta:=\sqrt{4M_{j+1}\epsilon_{j+1}}$ we have $K_j\subset \{ x : B_{\delta}(x) \subset K_{j+1}\}$ and $\delta<1/j$.  Thus Lemma \ref{lem:supconvolution}(2,3) imply (b,c) respectively.\\

Now let $\epsilon_1=\epsilon_2:=1$ and $\epsilon_3>\epsilon_4>\cdots$ be defined inductively so the claim holds for all $j$ and set $f_j:=f_{j,\epsilon_j}$.  Condition (1) in the statement of the theorem holds by construction, and since any open $K\Subset X$ is eventually contained in $K_j$ for $j$ sufficiently large, conditions (a,b,c) imply (2) and (3). 

For the final statement in the complex case, we may take each of the $K_j$ to be $G$-invariant, at which point it is clear that $f_{j,\epsilon}$ is also $G$-invariant.
\end{proof}

\begin{corollary}[Invariance] \label{cor:prop:approxsemiTinvariant}
In the setting of Proposition \ref{prop:approxsemi} assume that $X\subset \mathbb R^{2n}\simeq \mathbb C^n$ and that $G$ is a subgroup of the group of $n\times n$ unitary matrices.  Assume also that both $X$ and $f$ are $G$-invariant.  Then the functions $f_j$ can be taken to be $G$-invariant as well.
\end{corollary}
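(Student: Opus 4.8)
The plan is to observe that the entire construction in Proposition~\ref{prop:approxsemi} is built only out of the Euclidean norm, the function $f$, and the exhaustion $\{K_j\}$; so once the $K_j$ are chosen $G$-invariant, $G$-invariance of the output is automatic. (This is exactly what the last paragraph of the proof of Proposition~\ref{prop:approxsemi} asserts; here I spell out the details.)

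First I would record why the $K_j$ may be taken $G$-invariant. A unitary $n\times n$ matrix, regarded as a real linear map on $\mathbb R^{2n}\simeq\mathbb C^n$, is orthogonal, so each $g\in G$ acts as a Euclidean isometry and in particular as a homeomorphism of $\mathbb R^{2n}$. Hence $\mathbb R^{2n}\setminus X$ is $G$-invariant, and the $1$-Lipschitz function $\rho(x):=\operatorname{dist}(x,\mathbb R^{2n}\setminus X)$ is continuous with $\rho(gx)=\rho(x)$ for all $g\in G$. Setting
\[
K_j:=\{\,x\in X:\|x\|<j \ \text{ and } \ \rho(x)>1/j\,\}
\]
produces an exhaustion $K_1\Subset K_2\Subset\cdots$ of $X$ by $G$-invariant open sets with each $\overline{K_j}$ compact and contained in $K_{j+1}$: both defining inequalities are preserved by $g$ since $\|gx\|=\|x\|$ and $\rho(gx)=\rho(x)$. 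I would then run the proof of Proposition~\ref{prop:approxsemi} using precisely this exhaustion.

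Next I would verify that each sup-convolution
\[
f_{j,\epsilon}(x)=\sup_{\zeta\in K_j}\Bigl\{\,f(\zeta)-\tfrac{1}{2\epsilon}\|x-\zeta\|^2\,\Bigr\}
\]
is $G$-invariant. Fixing $g\in G$, the map $\zeta\mapsto g\zeta$ is a bijection of $K_j$ because $K_j$ is $G$-invariant; combining $\|gx-g\zeta\|=\|x-\zeta\|$ with $f(g\zeta)=f(\zeta)$ gives $f_{j,\epsilon}(gx)=f_{j,\epsilon}(x)$. The parameters $\epsilon_2>\epsilon_3>\cdots$ chosen inductively in the proof of Proposition~\ref{prop:approxsemi} depend only on the sets $K_j$, the bounds $M_j=\sup_{K_j}|f|$, and $f$ — all $G$-invariant data — so the functions $f_j:=f_{j,\epsilon_j}$ are $G$-invariant, while properties (1)--(3) of Proposition~\ref{prop:approxsemi} hold by that same proof. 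This gives the corollary.

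There is no genuine obstacle here: it is the routine principle that a construction depending only on $G$-invariant input is $G$-invariant. The sole point deserving a little care is the first step, since a naive symmetrisation $K_j\mapsto\bigcup_{g\in G}gK_j$ of an arbitrary exhaustion need not stay relatively compact in $X$ when $G$ fails to be closed in the unitary group; using the distance-to-complement function sidesteps this by producing a $G$-invariant exhaustion directly.
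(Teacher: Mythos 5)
Your proof is correct and follows the same approach as the paper, which simply asserts that the $K_j$ can be chosen $G$-invariant, whereupon $G$-invariance of $f_{j,\epsilon}$ follows from the computation you perform. Your explicit construction of a $G$-invariant exhaustion via $\rho(x)=\operatorname{dist}(x,\mathbb R^{2n}\setminus X)$ (and the remark that a naive $G$-symmetrisation could fail to be relatively compact) is a useful elaboration of a detail the paper leaves implicit.
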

\begin{proof}
In the above proof we can take each of the $K_j$ to be $G$-invariant, and then the sup-convolutions $f_{j,\epsilon}$ will also be $G$-invariant.
\end{proof}

\subsection{Smooth Mollification}\label{sec:mollification}

We next show that the smooth mollification of a continuous $F$-subharmonic function is again $F$-subharmonic, as long as we assume that $F$ is convex.  Fix a mollifying function $\phi$ on $\mathbb R^n$, so $\phi$ is smooth, non-negative, compactly supported with $\int_{\mathbb R^n} \phi(t) dt =1$ and so $\phi_{\epsilon}(t): = \epsilon^{-n} \phi(\epsilon^{-1}t)$ tends to a dirac delta as $\epsilon\to 0$.

\begin{proposition}\label{prop:convolution}[Smooth Mollifications Remain $F$-subharmonic]
Let $F$ be a constant coefficient convex primitive subequation, and $f$ be continuous and $F$-subharmonic on $X$.    Let $K\Subset \Omega$ be open, and for sufficiently small $\epsilon$ define
$$f_{\epsilon}(x) : = f* \phi_{\epsilon}(x) = \int_{\mathbb R^n} \phi_{\epsilon}(t) f(x-t) dt.$$
Then $f_{\epsilon}$ is $F$-subharmonic on $K$ for all $\epsilon$ sufficiently small.
\end{proposition}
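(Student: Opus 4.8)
The plan is to realise $f_\epsilon$ as a locally uniform limit of finite convex combinations of translates of $f$, and then to quote the two facts already available: that a finite convex combination of $F$-subharmonic functions is $F$-subharmonic when $F$ is constant coefficient and convex (Proposition~\ref{prop:convexcombintation}), and that a locally uniform limit of $F$-subharmonic functions is $F$-subharmonic (Proposition~\ref{prop:basicproperties}(3)). This is the only place in the whole argument where convexity of $F$ is used, and it enters solely through Proposition~\ref{prop:convexcombintation}.

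First I would do the domain bookkeeping. Choosing $R$ with $\operatorname{supp}\phi\subset B_R(0)$, we have $\operatorname{supp}\phi_\epsilon\subset B_{R\epsilon}(0)$, and for $\epsilon$ small enough the compact set $K':=\{x-t:\ x\in\overline K,\ \|t\|\le R\epsilon\}$ lies in $X$; fix such an $\epsilon$. Then $f_\epsilon$ is well defined and continuous near $\overline K$, and for each fixed $t$ with $\|t\|\le R\epsilon$ the translate $x\mapsto f(x-t)$ is $F$-subharmonic on one common neighbourhood of $\overline K$ by Proposition~\ref{prop:basicproperties}(5). Now for each $N$ I would partition a cube containing $B_{R\epsilon}(0)$ into finitely many measurable cells $Q_1^N,\dots,Q_{k_N}^N$ of diameter at most $1/N$, pick $t_i^N\in Q_i^N$, and set $c_i^N:=\int_{Q_i^N}\phi_\epsilon(t)\,dt$. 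Since $\phi_\epsilon\ge 0$ and $\int\phi_\epsilon=1$, the $c_i^N$ are nonnegative and sum to $1$, so
$$S_N(x):=\sum_{i=1}^{k_N}c_i^N\,f(x-t_i^N)$$
is a finite convex combination of functions $F$-subharmonic near $\overline K$, hence $F$-subharmonic on $K$ by Proposition~\ref{prop:convexcombintation}. Writing $f_\epsilon(x)=\sum_i\int_{Q_i^N}\phi_\epsilon(t)f(x-t)\,dt$ and comparing termwise, the difference $S_N(x)-f_\epsilon(x)$ is controlled, uniformly in $x\in K$, by the modulus of continuity of $f$ on the fixed compact set $K'$ at scale $1/N$; since $f$ is uniformly continuous on $K'$, this gives $S_N\to f_\epsilon$ uniformly on $K$, and Proposition~\ref{prop:basicproperties}(3) then yields that $f_\epsilon$ is $F$-subharmonic on $K$.

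I do not expect a genuine obstacle here: the conceptual content is entirely carried by the two cited propositions, and the only point requiring care is ensuring $\epsilon$ is small enough that all translates $x\mapsto f(x-t_i^N)$ are simultaneously defined and $F$-subharmonic on one fixed neighbourhood of $\overline K$, and that the uniform-continuity estimate is taken on the fixed compact $K'$ rather than on a moving domain. If one prefers to avoid Riemann sums, a mild variant is to view $f_\epsilon$ directly as an average over the probability measure $\phi_\epsilon(t)\,dt$ of the $F$-subharmonic family $\{x\mapsto f(x-t)\}$ and invoke a measurable version of the convex-combination property; but the Riemann-sum route keeps the argument strictly within the finite-combination statement of Proposition~\ref{prop:convexcombintation}.
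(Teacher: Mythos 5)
Your proof is correct and follows essentially the same route as the paper: approximate $f_\epsilon$ by Riemann sums, recognize each sum as a convex combination of translates of $f$, apply Proposition~\ref{prop:convexcombintation} and then the locally uniform limit property. Your variant is marginally tidier in choosing weights $c_i^N=\int_{Q_i^N}\phi_\epsilon$ that sum to $1$ exactly, whereas the paper uses weights $\phi_\epsilon(t_i)/N$ and normalises by their sum $S_N$; this is a cosmetic difference and the argument is otherwise the same.
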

\begin{proof}
Note that as $\phi$ is compactly supported, $f_{\epsilon}$ is well-defined for $\epsilon$ sufficiently small.  We may think of the integral in the convolution as a limit of Riemann sums, each of the form
$$ \sum_{i=1}^N \frac{\phi_\epsilon(t_i)}{N} f(x-t_i)$$
for some points $\{ t_i\}$ (where the $t_i$ of course also depend on $N$ which is dropped from notation).  Note each function $x\mapsto  f(x-t_i)$ is a translation of $f$, and hence is $F$-subharmonic on $K$.   Let
$$ S_N : = \sum_{i=1}^N \frac{\phi_\epsilon(t_i)}{N}$$
Then $S_N\to \int_{\mathbb R^n} \phi_\epsilon(dt) =1$ as $N\to \infty$.  Now as $F$ is convex, the convex combination

$$ h_N(x):= \sum_{i=1}^N \frac{\phi_\epsilon(t_i)}{N S_N} f(x-t_i)$$
is $F$-subharmonic on $K$ (Proposition \ref{prop:convexcombintation}).  And $h_N$ tends to $f_\epsilon$ locally uniformly as $N$ tends to infinity, so $f_{\epsilon}$ is $F$-subharmonic as well.
\end{proof}

\begin{proposition}[Approximation by smooth functions]\label{prop:smoothapproximation}
Let $F\subset J^2(X)$ be a constant-coefficient convex primitive subequation with the Negativity Property.
Suppose that $f$ is $F$-subharmonic on $X$ and continuous.    Then there exists a sequence of functions 
$$ h_j : X\to \mathbb R$$ with the following properties
\begin{enumerate}
\item $h_j$ is smooth.
\item $h_j\to f$ locally uniformly.
\item $h_j$ is $F$-subharmonic eventually on relatively compact subsets.
\end{enumerate}
\end{proposition}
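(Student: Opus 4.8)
The plan is to produce the $h_j$ as cut-off smooth mollifications $\chi_j\,(f\ast\phi_{\epsilon_j})$ with $\epsilon_j\to 0$, relying on Proposition \ref{prop:convolution} for the fact that the mollification stays $F$-subharmonic (this is the only place convexity of $F$ is used) and on the uniform continuity of $f$ on compacta for the approximation. First I would fix an exhaustion $K_1\Subset K_2\Subset\cdots$ of $X$ by relatively compact open sets with $\bigcup_j K_j=X$ and $\overline{K_j}\subset K_{j+1}$. For each $j$, since $\overline{K_{j+1}}$ is a compact subset of $X$, the mollification $f_\epsilon:=f\ast\phi_\epsilon$ is defined and smooth on a neighbourhood of $\overline{K_{j+1}}$ once $\epsilon$ is small enough; Proposition \ref{prop:convolution} (with $K=K_{j+1}$) then gives some $\epsilon_0(j)>0$ such that $f_\epsilon$ is moreover $F$-subharmonic on $K_{j+1}$ for all $\epsilon<\epsilon_0(j)$; and uniform continuity of $f$ on a compact neighbourhood of $\overline{K_{j+1}}$ lets us shrink $\epsilon$ further so as to choose $\epsilon_j<\epsilon_0(j)$, with $\epsilon_j\to 0$, satisfying
\[
\sup_{x\in \overline{K_{j+1}}}\bigl|f_{\epsilon_j}(x)-f(x)\bigr|<\tfrac1j .
\]

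Next I would make these globally defined. Choose smooth cut-offs $\chi_j\colon X\to[0,1]$ with $\chi_j\equiv 1$ on $\overline{K_j}$ and $\operatorname{supp}\chi_j\subset K_{j+1}$, and set $h_j:=\chi_j\,f_{\epsilon_j}$, extended by $0$ on $X\setminus K_{j+1}$. Each $h_j$ is then smooth on $X$, which is (1). For (2): given $K\Subset X$, fix $j_0$ with $K\subset K_{j_0}$; for $j\ge j_0$ one has $K\subset\overline{K_j}$, so $\chi_j\equiv 1$ on $K$ and $h_j=f_{\epsilon_j}$ there, whence $\sup_K|h_j-f|<1/j\to 0$, which is exactly local uniform convergence. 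For (3): given $K\Subset X$, fix $j_0$ with $K\subset K_{j_0}$; for $j\ge j_0$ one has $h_j=f_{\epsilon_j}$ on the open set $K_{j_0}$, and $f_{\epsilon_j}$ is $F$-subharmonic on the open set $K_{j+1}\supset K_{j_0}$, so by locality of $F$-subharmonicity $h_j|_{K}$ is $F$-subharmonic; thus $h_j$ is $F$-subharmonic eventually on relatively compact subsets.

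The one point needing care — and essentially the only obstacle — is precisely this passage from the locally defined mollifications $f_{\epsilon_j}$ (which make sense only at points of $X$ farther than $\epsilon_j$ from $\partial X$) to honest functions $h_j\colon X\to\mathbb R$; the cut-off handles it, the key observation being that on any fixed relatively compact subset $\chi_j$ is eventually identically $1$, so it spoils neither the uniform convergence nor the $F$-subharmonicity. Everything else is a direct appeal to Proposition \ref{prop:convolution} together with uniform continuity of $f$ on compacta; note that the Negativity Property is not actually needed in this argument. (If, as in Corollary \ref{cor:prop:approxsemiTinvariant}, $X$ and $f$ are invariant under a subgroup $G$ of the unitary group, one takes $\phi$ radial and the $K_j$ and $\chi_j$ to be $G$-invariant, and then the resulting $h_j$ are $G$-invariant as well.)
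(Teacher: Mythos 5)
Your proof is correct and takes essentially the same route as the paper: exhaust $X$ by relatively compact opens $K_1\Subset K_2\Subset\cdots$, apply Proposition \ref{prop:convolution} to choose $\epsilon_j\to 0$ so that $f\ast\phi_{\epsilon_j}$ is $F$-subharmonic on $K_j$ (or $K_{j+1}$), and then extend these locally defined mollifications to globally defined smooth functions — the paper says ``extend it arbitrarily to a smooth function on all of $X$'' where you make this explicit via the cut-off $\chi_j$, which amounts to the same thing. Your side observation that the Negativity Property is in fact unused here is also correct.
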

\begin{proof}
Let $K_1\Subset K_2\Subset \cdots \Subset X$ be an exhausting family of open subsets, each relatively compact in the next. Using Proposition \ref{prop:convolution} may choose $\epsilon_j>0$ such that the function $h_j:= f* \phi_{\epsilon_j}$ is $F$-subharmonic on $K_j$.  We can even arrange so that $h_j$ is smooth on $\overline{K}_j$ and then extend it arbitrarily to a smooth function on all of $X$.  Further we may as well assume that $\epsilon_j\to 0$ as $j\to \infty$.  Properties (1,2,3) then follow as an open $K\Subset X$ is contained in $K_i$ for $i$ sufficiently large.
\end{proof}

In the complex case we will want to apply this in a way that keeps the property of being $\mathbb T$-invariant.   This can be done with the same kind of mollification argument using polar coordinates in the second variable, as long as we work away from the second variable being zero.  The following statement is sufficient for our needs.

\begin{proposition}[Invariance]\label{prop:smoothapproximation_complex}
Suppose $X\subset \mathbb C^n$ is open and that $F\subset J^{2,\mathbb C}(X)$ is a complex constant-coefficient convex primitive subequation with the Negativity Property.   Suppose also
$$\Omega\subset X\times \mathbb C^*$$
is open, $\mathbb T$-invariant and that $f:\Omega\to \mathbb R$ is continuous, $F\#_{\mathbb C}\calPos^{\mathbb C}$-subharmonic and $\mathbb T$-invariant.

Then there exists a sequence of smooth $\mathbb T$-invariant functions $h_j:\Omega\to \mathbb R$ that are eventually 
$F\#_{\mathbb C}\calPos^{\mathbb C}$-subharmonic on compact sets, and converge locally uniformly to $f$ on $\Omega$.
\end{proposition}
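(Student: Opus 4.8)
The idea is to reduce to the real smooth-mollification result (Proposition~\ref{prop:smoothapproximation}) by passing to logarithmic coordinates in the second variable, where the $\mathbb T$-action becomes a translation that we simply decline to mollify. Recall first that $F\#_{\mathbb C}\calPos^{\mathbb C}$ is again a constant-coefficient convex primitive subequation with the Negativity Property, by Lemma~\ref{lem:productsofsubequations}(1) (the complex case following from the real one as noted after its complex analogue). Set
$$\tilde\Omega := \{(z,\zeta)\in X\times\mathbb C : (z,e^\zeta)\in\Omega\},\qquad \tilde f(z,\zeta):=f(z,e^\zeta).$$
Since $\Omega$ is $\mathbb T$-invariant, $\tilde\Omega$ is invariant under $\zeta\mapsto\zeta+ic$ for every $c\in\mathbb R$; since $f$ is $\mathbb T$-invariant, $\tilde f$ is continuous and independent of $\operatorname{Im}\zeta$. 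The exponential is only locally biholomorphic, so I would cover $\tilde\Omega$ by open sets on which $\exp$ is injective and apply Lemma~\ref{lem:compositionbiholomorphism} on each, using that $F\#_{\mathbb C}\calPos^{\mathbb C}$-subharmonicity is a local property; this gives that $\tilde f\in (F\#_{\mathbb C}\calPos^{\mathbb C})(\tilde\Omega)$.

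Next I would mollify transversally to the $\mathbb T$-orbits. Fix a mollifier $\phi$ on $\mathbb R^{2n+1}$ and, for small $\epsilon>0$, put
$$\tilde f_\epsilon(z,\zeta):=\int_{\mathbb R^{2n+1}}\phi_\epsilon(t,s)\,\tilde f(z-t,\zeta-s)\,dt\,ds,$$
where $(t,s)\in\mathbb R^{2n}\times\mathbb R$ and $\zeta-s$ denotes translation of $\zeta$ by the real number $s$, so that the $\operatorname{Im}\zeta$-direction is never mollified. Each translate $\tilde f(\,\cdot-(t,s)\,)$ is $F\#_{\mathbb C}\calPos^{\mathbb C}$-subharmonic, since this subequation is constant coefficient (Proposition~\ref{prop:basicproperties}(5)), so rerunning the Riemann-sum argument of the proof of Proposition~\ref{prop:convolution} — with convexity supplied by Proposition~\ref{prop:convexcombintation} and stability under locally uniform limits by Proposition~\ref{prop:basicproperties}(3) — shows that $\tilde f_\epsilon$ is $F\#_{\mathbb C}\calPos^{\mathbb C}$-subharmonic on any $K\Subset\tilde\Omega$ once $\epsilon$ is small enough. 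Moreover $\tilde f_\epsilon$ is smooth in $(z,\operatorname{Re}\zeta)$ by construction and constant in $\operatorname{Im}\zeta$ (inherited from $\tilde f$), hence jointly smooth; it is $2\pi i$-periodic in $\zeta$; and $\tilde f_\epsilon\to\tilde f$ locally uniformly as $\epsilon\to0$ by continuity of $\tilde f$.

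Finally I would push forward. Since $\tilde f_\epsilon$ is $2\pi i$-periodic in $\zeta$, the function $h_\epsilon(z,w):=\tilde f_\epsilon(z,\log w)$ is well defined and smooth on $X\times\mathbb C^*$ (using local branches of $\log$), is $\mathbb T$-invariant, and converges locally uniformly to $f$; applying Lemma~\ref{lem:compositionbiholomorphism} locally, now with the biholomorphism $w\mapsto\log w$, shows $h_\epsilon$ is $F\#_{\mathbb C}\calPos^{\mathbb C}$-subharmonic wherever $\tilde f_\epsilon$ is. To conclude, choose an exhausting sequence $K_1\Subset K_2\Subset\cdots$ of open $\mathbb T$-invariant subsets of $\Omega$ with $\Omega=\bigcup_jK_j$, and $\epsilon_j\to0$ so that $h_j:=h_{\epsilon_j}$ is $F\#_{\mathbb C}\calPos^{\mathbb C}$-subharmonic on $K_j$; then $(h_j)$ has all the asserted properties.

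\textbf{Main obstacle.} The only content beyond bookkeeping is verifying that mollifying in just $2n+1$ of the $2n+2$ real directions still produces a genuinely smooth function and still preserves $F\#_{\mathbb C}\calPos^{\mathbb C}$-subharmonicity: the former because $\tilde f$ is already constant in the omitted direction, and the latter because the proof of Proposition~\ref{prop:convolution} uses only that translates of $F$-subharmonic functions are $F$-subharmonic and that convex combinations and locally uniform limits stay $F$-subharmonic — none of which is affected by using a lower-dimensional mollifier. One must also take care that $\exp$ is only a covering of $\mathbb C^*$, so every appeal to Lemma~\ref{lem:compositionbiholomorphism} is made on a small set where the relevant map is a genuine biholomorphism and then patched using locality of $F$-subharmonicity.
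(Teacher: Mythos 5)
Your proposal follows the paper's own proof essentially verbatim: pass to logarithmic coordinates $(z,w)\mapsto(z,e^w)$ so that the $\mathbb T$-action becomes translation in $\operatorname{Im}w$, mollify only in the remaining $2n+1$ real directions so that the mollification stays constant in $\operatorname{Im}w$, and descend back to $\Omega$. The one point you handle more explicitly than the paper does is that $\exp:\mathbb C\to\mathbb C^*$ is a covering rather than a biholomorphism, so every appeal to Lemma~\ref{lem:compositionbiholomorphism} must be made on sets where the exponential (or a branch of $\log$) is injective and then patched by locality of subharmonicity; this care is correct and worth having, though it does not change the substance of the argument.
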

\begin{proof}
Exhaust $\Omega$ by open $K_1\Subset K_2\Subset \cdots$ that are each $\mathbb T$-invariant.  Consider $\phi(z,w) = (z,e^w)$ and let $g(z,w) = f(z,e^w)$.  As $f$ is $\mathbb T$-invariant we have that $g$ is independent of the imaginary part of $w$ and is $F\#_{\mathbb C}\calPos^{\mathbb C}$-subharmonic by Lemma \ref{lem:compositionbiholomorphism}.     For each $j$ let $\epsilon$ be sufficiently small so that the mollification
$$g_j(z,w):= \int_{\mathbb C^n} \int_{\mathbb R} g(z,w-t) \phi_{\epsilon}(z) \phi_\epsilon(t) dt dz$$
is well defined, smooth and  $F\#_{\mathbb C}\calPos^{\mathbb C}$-subharmonic on $\phi^{-1}(K_j)$.    Notice that $g_j$ is independent of the imaginary part of the second variable, and just as in the previous proof we may assume $g_j$ is defined, smooth and independent of the imaginary part on all of $\phi^{-1}(\Omega)$.  

Thus $g_j$ decends to a function $h_j$ on $\Omega$ (i.e. $g_j(z,w) = h_j(z,e^w)$) and the $h_j$ have the desired properties.
\end{proof}

\section{Reductions}\label{sec:reductions}

We now consider four reductions concerning the minimum principle that work in both the real and complex case.  The first allows us to reduce the dimension $m$ of the second variable to $1$  (that is, in the real case we can assume $\Omega\subset X\times \mathbb R$ and in the complex case $\Omega\subset X\times \mathbb C$).    In the second reduction we show that if one shows that marginal functions of $F\#\calPos$-subharmonic functions are $F$-subharmonic, then the minimum principle holds for $F$ (i.e.\ the fact that  the projection of an  $F\#\calPos$-pseudoconvex set is $F$-pseudoconvex is then automatic).

Our third reduction shows that it is sufficient to consider $F\#\calPos$-subharmonic functions $f$ that are both bounded below, and whose fibrewise minimum is attained strictly in the interior of its domain (a property that is strictly weaker than the function being exhaustive).   In the fourth we show that we can even assume the function $f$ is semiconvex.

\subsection{Reduction in dimension of the second variable}

\begin{proposition}\label{prop:mcanbeone}
Suppose $F\subset J^2(X)$ is closed, and the minimum principle holds for $F\#\calPos$-subharmonic functions $f$ on $F\#\calPos$-pseudoconvex domains $\Omega\subset X\times \mathbb R$ with connected fibres.  Then the minimum principle holds for $F$ in general.    

The analogous statement holds in the complex case in which we assume $\Omega\subset X\times \mathbb C$ and that both $f$ and $\Omega$ are $\mathbb T$-invariant.
\end{proposition}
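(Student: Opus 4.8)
The plan is to induct on the dimension $m$ of the second variable, the base case $m=1$ being precisely the hypothesis. For the inductive step assume the minimum principle for $F$ is known when the second variable is $(m-1)$-dimensional, and let $\Omega\subset X\times\mathbb R^m$ be an $F\#\calPos_{\mathbb R^m}$-pseudoconvex domain with connected fibres carrying an $F\#\calPos_{\mathbb R^m}$-subharmonic $f$. Writing $\mathbb R^m=\mathbb R^{m-1}\times\mathbb R$ with coordinates $(x,y',t)$ and letting $\pi_1$ forget the last coordinate $t$, Proposition \ref{prop:associativityofproducts} together with $\calPos_{\mathbb R^{m-1}}\#\calPos_{\mathbb R}=\calPos_{\mathbb R^m}$ from \eqref{eq:productrealpositive} gives $F\#\calPos_{\mathbb R^m}=(F\#\calPos_{\mathbb R^{m-1}})\#\calPos_{\mathbb R}$. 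The heart of the argument is to show that the \emph{partial marginal}
$$\tilde g(x,y'):=\inf_{\{t\,:\,(x,y',t)\in\Omega\}}f(x,y',t)$$
is $F\#\calPos_{\mathbb R^{m-1}}$-subharmonic on $\pi_1(\Omega)$ and that $\pi_1(\Omega)$ is an $F\#\calPos_{\mathbb R^{m-1}}$-pseudoconvex domain with connected fibres; granting this, the induction hypothesis applied to $(\pi_1(\Omega),\tilde g)$ gives that $\pi(\pi_1(\Omega))=\pi(\Omega)$ is $F$-pseudoconvex and that $x\mapsto\inf_{y'}\tilde g(x,y')=\inf_{y\in\Omega_x}f(x,y)=g(x)$ is $F$-subharmonic on $\pi(\Omega)$, as required.

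For the subharmonicity of $\tilde g$ I would first record, via Corollary \ref{cor:slicesofpseudoconvexareconvex}, that each fibre $\Omega_x$ is convex, so that $\pi_1(\Omega)$ is a domain with interval (hence connected) fibres over $X\times\mathbb R^{m-1}$, and that $\tilde g$ is upper-semicontinuous (from upper-semicontinuity of $f$ and openness of $\Omega$). Then I would apply the elementary implication $(2)\Rightarrow(1)$ of Proposition \ref{prop:slices} (whose proof uses only the definitions, not the constant-coefficient hypothesis), reducing matters to two slice conditions. On a vertical slice $x=x_0$, the function $y'\mapsto\tilde g(x_0,y')$ is the marginal over $t$ of $y\mapsto f(x_0,y)$, which is convex on the convex fibre $\Omega_{x_0}$ by Lemma \ref{lem:slicesareconvex}, hence is locally convex by the classical minimum principle for convex functions, i.e.\ $\calPos_{\mathbb R^{m-1}}$-subharmonic. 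On a non-vertical slice, fix $\Gamma\in\Hom(\mathbb R^n,\mathbb R^{m-1})$ and $y_0'$, set $F_\Gamma(x,t):=f(x,y_0'+\Gamma x,t)$ on $\Omega_\Gamma:=\{(x,t):(x,y_0'+\Gamma x,t)\in\Omega\}\subset X\times\mathbb R$, and note $x\mapsto\tilde g(x,y_0'+\Gamma x)=\inf_tF_\Gamma(x,t)$. Using $(2)\Rightarrow(1)$ of Proposition \ref{prop:slices} again, $F_\Gamma$ is $F\#\calPos_{\mathbb R}$-subharmonic: its vertical slices are restrictions to lines of the convex functions $f(x_0,\cdot)$, and its non-vertical slices over $X$ have the shape $x\mapsto f(x,(y_0',t_0)+\hat\Gamma x)$ with $\hat\Gamma=(\Gamma,\gamma)\in\Hom(\mathbb R^n,\mathbb R^m)$, hence are $F$-subharmonic. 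Moreover $\Omega_\Gamma$ has connected fibres (they are slices of the convex sets $\Omega_x$) and inherits a continuous exhaustive $F\#\calPos_{\mathbb R}$-subharmonic function by restricting that of $\Omega$ to this affine slice, so it is $F\#\calPos_{\mathbb R}$-pseudoconvex; the $m=1$ hypothesis, applied on each connected component of $\Omega_\Gamma$, then gives that $x\mapsto\inf_tF_\Gamma(x,t)$ is $F$-subharmonic, establishing the non-vertical slice condition and hence $\tilde g\in(F\#\calPos_{\mathbb R^{m-1}})(\pi_1(\Omega))$.

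The pseudoconvexity of $\pi_1(\Omega)$ is handled by the same construction applied to an exhaustion function $u$ of $\Omega$: its partial marginal $\tilde u(x,y'):=\inf_{\{t:(x,y',t)\in\Omega\}}u(x,y',t)$ is $F\#\calPos_{\mathbb R^{m-1}}$-subharmonic by the slice argument just given, it is exhaustive since $\{\tilde u<a\}=\pi_1(\{u<a\}\cap\Omega)$ lies in the compact set $\pi_1(\overline{\{u<a\}\cap\Omega})\subset\pi_1(\Omega)$, and it is continuous, using that $u$ (being exhaustive) is proper along each fibre, so the defining infimum is attained, which yields lower-semicontinuity. This closes the induction in the real case. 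The complex case is proved identically, splitting $\mathbb C^m=\mathbb C^{m-1}\times\mathbb C$, using complex associativity with $\calPos^{\mathbb C}_{\mathbb C^{m-1}}\#_{\mathbb C}\calPos^{\mathbb C}_{\mathbb C}=\calPos^{\mathbb C}_{\mathbb C^m}$ from \eqref{eq:productcomplexpositive}, and invoking Kiselman's minimum principle for $\mathbb T$-invariant plurisubharmonic functions in place of the classical convex one; one only checks that restriction to an affine slice $w'=w_0'+\Gamma z$ and passage to a marginal over the last variable preserve the relevant $\mathbb T$-invariance (the latter producing a $\mathbb T^{m-1}$-invariant function), so the invariance hypotheses propagate through the induction. (Alternatively one could first reduce, via Lemma \ref{lem:compositionbiholomorphism} and the biholomorphism $w\mapsto e^w$, to functions independent of the imaginary part of the last variable, treating the locus where that variable vanishes separately.)

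The step I expect to be the main obstacle is exactly the claim that the partial marginals $\tilde g,\tilde u$ lie in $F\#\calPos_{\mathbb R^{m-1}}$: this forces one to manipulate the slicing descriptions of $F\#\calPos_{\mathbb R^m}$ and of $F\#\calPos_{\mathbb R}$ simultaneously, and in particular to identify correctly the restriction of a non-vertical slice of $\tilde g$ with the $m=1$ marginal of an $F\#\calPos_{\mathbb R}$-subharmonic function on an $F\#\calPos_{\mathbb R}$-pseudoconvex domain — and it is precisely here that convexity of the fibres $\Omega_x$ (Corollary \ref{cor:slicesofpseudoconvexareconvex}) is indispensable, since it is what makes $\Omega_\Gamma$ have connected fibres — all while keeping enough control of the upper-semicontinuity of $\tilde g$ and the continuity of $\tilde u$ to legitimately invoke Proposition \ref{prop:slices} and Definition \ref{def:Fpseudoconvexity}.
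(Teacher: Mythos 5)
Your proposal is correct and reaches the same conclusion, but the inductive step is organized quite differently from the paper's. Both you and the paper run the induction by splitting off one coordinate from the second variable and considering the partial marginal over it; the paper asserts that the two required properties of the partial marginal ``follow by the inductive hypothesis (with $m=1$)'', which in effect is the $m=1$ case of the minimum principle applied to the primitive subequation $F\#\calPos_{\mathbb R^m}$ on $X\times\mathbb R^m$ rather than to $F$ itself (together with a second application of the inductive hypothesis at level $m$). You instead decompose the verification into vertical and non-vertical slice conditions via Proposition~\ref{prop:slices}: on vertical slices you reduce to the classical convex minimum principle, and on non-vertical slices you exhibit the restriction of the partial marginal as the $m=1$ marginal of a genuine $F\#\calPos_{\mathbb R}$-subharmonic function on an $F\#\calPos_{\mathbb R}$-pseudoconvex domain $\Omega_\Gamma\subset X\times\mathbb R$, and then invoke the $m=1$ hypothesis for $F$ directly. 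Your version is more self-contained — it uses only the $m=1$ case for the given $F$, rather than tacitly assuming it for the product $F\#\calPos_m$ as the paper's terse wording does — at the cost of invoking both implications of Proposition~\ref{prop:slices} (the $(1)\Rightarrow(2)$ direction rests on the Harvey--Lawson Restriction Theorem and the constant-coefficient hypothesis) and Corollary~\ref{cor:slicesofpseudoconvexareconvex}. Two minor points worth flagging: (i) both your argument and the paper's implicitly use that $F$ is constant coefficient (via Corollary~\ref{cor:slicesofpseudoconvexareconvex} and, in your case, also Proposition~\ref{prop:slices}$(1)\Rightarrow(2)$), even though the Proposition only states that $F$ is closed; and (ii) your pseudoconvexity step for $\pi_1(\Omega)$ is essentially Lemma~\ref{lem:marginalsareexhausting} applied with $X$ replaced by $X\times\mathbb R^{m-1}$, so you could cite it rather than re-derive it.
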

\begin{proof}
We need to show the minimum principle holds  for $F\#\calPos$-subharmonic functions on $F\#\calPos$-pseudoconvex domains $\Omega\subset X\times \mathbb R^m$ with connected fibres for any $m\ge 1$.   We will use induction on $m$, the hypothesis of the Proposition being the case $m=1$.

So assume the statement we want holds for all integers up to $m$.    For ease of notation set $$\calPos_n:= \calPos_{\mathbb R^n}.$$ 
From Proposition \ref{prop:associativityofproducts} and Example \ref{sec:exampleI},
$$ F\#\calPos_{m+1} = F\# (\calPos_{m} \#\calPos_1) = (F\# \calPos_{m})\#\calPos_1.$$

Now let $$\Omega\subset X\times \mathbb R^{m+1} = X\times \mathbb R^{m}\times \mathbb R$$ be an $F\#\calPos_{m+1}$-pseudoconvex domain with connected fibres, and let $f$ be $F\#\calPos_{m+1}$-subharmonic on $\Omega$.     We will use $(x,\zeta,y)$ as coordinates on $X\times \mathbb R^{m}\times \mathbb R$ and apply the inductive hypothesis twice to the function $f$, first taking the infimum over $y$ and then the infimum over $\zeta$. 

To this end let $\pi_X:X\times \mathbb R^{m+1}\to X$ and $\pi_1: X\times \mathbb R^m \times \mathbb R \to X\times \mathbb R^{m}$ and $\pi_2: X\times \mathbb R^{m}\to X$ be the natural projections, so $\pi_X = \pi_2\pi_1$.  Set
\begin{align*}
 \Omega_{(x,\zeta)} &:= \{ y \in \mathbb R : (x,\zeta,y) \in \Omega\}\\
 g_1(x,\zeta) &:= \inf_{y\in \Omega_{(x,\zeta)}}  f(x,\zeta, y) \text{ for } (x,\zeta)\in \pi_1(\Omega).
 \end{align*}

{\bf Claim: } $\pi_1(\Omega)\subset X\times \mathbb R^m$ is $F\#\calPos_m$-pseudoconvex and $g_1$ is $F\#\calPos_m$-subharmonic. \\

In fact this follows by the inductive hypothesis (with $m=1$).   First we check that for each $(x,\zeta)\in X\times \mathbb R^{m}$ the set $\Omega_{(x,\zeta)}$ is connected.    But this is clear since
$$\Omega_{(x,\zeta)} = \{ y\in \mathbb R : (\zeta, y) \in \Omega_x\}.$$
Now $\Omega_x$ is convex (Corollary \ref{cor:slicesofpseudoconvexareconvex}), which implies that $\Omega_{(x,\zeta)}$ is convex, so certainly connected.  Second, our hypothesis is that $\Omega$ is $F\#\calPos_{m+1} = (F\#\calPos_m)\#\calPos_1$-psuedoconvex and $f$ is  $F\#\calPos_{m+1} = (F\#\calPos_m)\#\calPos_1$-subharmonic.   Thus the inductive hypothesis applies to give the claim.\\

Now let 
$$ g_2(x) := \inf_{\zeta\in \pi_1(\Omega)_x} g_1(x,\zeta) \text{ for } x\in \pi_X(\Omega).$$

We claim the inductive hypothesis (for $m$) implies $\pi_X(\Omega) =\pi_2(\pi_1(\Omega))$ is $F$-pseudoconvex and $g_2$ is $F$-subharmonic.   For this we need only verify that $\pi_1(\Omega)_x$ is connected.  But this follows easily as
$$\pi_1(\Omega)_x = \{ \zeta : (x,\zeta)\in \pi_1(\Omega) \} = \{ \zeta : (\zeta,y)\in \Omega_x \text{ for some } y\}$$
which is convex as $\Omega_x$ is convex.  

Now elementary considerations show that
$$ g_2(x) =  \inf_{\zeta\in \pi_1(\Omega)_x}  \inf_{y\in \Omega_{(x,\zeta)}}  f(x,\zeta, y) = \inf_{(\zeta,y) \in \Omega_x} f(x,\zeta, y) \text{ for } x\in \pi_X(\Omega)$$
which is the marginal function of $f$.    Thus the statement we want also holds for integers up to $m+1$, and the induction is complete.
\end{proof}

\subsection{Projections are automatically pseudoconvex}

\begin{proposition}\label{prop:functiononly}
Let $F\subset J^2(X)$ be a primitive subequation.    Suppose that for any $F\#\calPos$-pseudoconvex domain $\Omega\subset X\times \mathbb R$ with connected fibres and any $f:\Omega\to \mathbb R\cup \{-\infty\}$ that is not identically $-\infty$ it holds that
$$ f \text{ is } F\#\calPos\text{-subharmonic on }\Omega \Rightarrow g(x): = \inf_{y\in \Omega_x} f(x,y) \text{ is } F\text{-subharmonic on }\pi(\Omega).$$
 Then $\pi(\Omega)$ is automatically $F$-pseudoconvex, and so the minimum principle holds for $F$.

The analogous statement holds in the complex case, if we make the additional hypothesis that $\Omega\subset X\times \mathbb C$ and $f$ are $\mathbb T$-invariant.
\end{proposition}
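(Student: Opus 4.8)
The plan is to manufacture, from any continuous exhaustive $F\#\calPos$-subharmonic exhaustion function on $\Omega$, a continuous exhaustive $F$-subharmonic function on $\pi(\Omega)$, thereby certifying that $\pi(\Omega)$ is $F$-pseudoconvex. Once this is known, the minimum principle for domains in $X\times\mathbb R$ holds (its ``function half'' being the standing hypothesis, and its ``projection half'' being exactly what we will have proved), and since a primitive subequation is closed, Proposition \ref{prop:mcanbeone} promotes this to the full minimum principle for $F$. The only case of the function half not covered by the hypothesis is $f\equiv-\infty$, for which $g\equiv-\infty$ is trivially $F$-subharmonic.

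So let $u:\Omega\to\mathbb R$ be continuous, exhaustive and $F\#\calPos$-subharmonic, which exists because $\Omega$ is $F\#\calPos$-pseudoconvex. In the complex case I would first replace $u$ by the symmetrisation $\tilde u(z,w):=\max_{e^{i\theta}\in\mathbb T}u(z,e^{i\theta}w)$: each function $(z,w)\mapsto u(z,e^{i\theta}w)$ is $F\#_{\mathbb C}\calPos^{\mathbb C}$-subharmonic by Lemma \ref{lem:compositionbiholomorphism} (composition with the biholomorphism $w\mapsto e^{i\theta}w$ in the second variable, using that $\Omega$ is $\mathbb T$-invariant), so by Proposition \ref{prop:basicproperties}(4) the upper envelope $\tilde u$ is too; moreover $\tilde u$ is continuous since $\mathbb T$ is compact, exhaustive since $\tilde u\ge u$ forces $\{\tilde u<a\}\subseteq\{u<a\}$, and manifestly $\mathbb T$-invariant. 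Now set $v(x):=\inf_{y\in\Omega_x}u(x,y)$ (respectively the analogous infimum of $\tilde u$), a function on the open set $\pi(\Omega)$. Since $u$ (resp. $\tilde u$) is continuous, and in particular not identically $-\infty$, the standing hypothesis applies with this function in the role of $f$ on the given $\Omega$ (which has connected fibres), and tells us that $v$ is $F$-subharmonic on $\pi(\Omega)$; in particular $v$ is upper-semicontinuous.

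It remains to check that $v$ is real-valued, continuous and exhaustive. Fix $x_0\in\pi(\Omega)$: since $\{u<v(x_0)+1\}$ is relatively compact in $\Omega$, a minimising sequence for $u(x_0,\cdot)$ eventually lies in this set and hence subconverges to a point $y_0$ with $(x_0,y_0)\in\Omega$, so by continuity $v(x_0)=u(x_0,y_0)\in\mathbb R$ and the infimum is attained. For lower-semicontinuity take $x_j\to x_0$ in $\pi(\Omega)$, choose $y_j$ with $u(x_j,y_j)=v(x_j)$, and suppose $c:=\liminf_j v(x_j)<\infty$; along a subsequence $(x_j,y_j)$ lies in the relatively compact set $\{u<c+1\}$, so it subconverges to some $(x_0,y_*)\in\Omega$ with $u(x_0,y_*)=c$, whence $v(x_0)\le c$. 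Thus $v$ is continuous. Finally $\{x\in\pi(\Omega):v(x)<a\}\subseteq\pi(\{u<a\})$, and since the closure of $\{u<a\}$ is a compact subset of $\Omega$, its image under $\pi$ is a compact subset of $\pi(\Omega)$ containing $\{v<a\}$; hence $v$ is exhaustive. This produces the required exhaustion function on $\pi(\Omega)$ and, via the first paragraph, completes the proof.

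I expect the continuity of $v$ — concretely the lower-semicontinuity step, where one must use exhaustiveness to extract a convergent subsequence of near-minimisers whose limit does not escape $\Omega$ — to be the only point requiring genuine care; the remaining ingredients (the symmetrisation in the complex case, the exhaustiveness bookkeeping, and the passage from $m=1$ to general $m$) are either formal or already packaged in the cited results.
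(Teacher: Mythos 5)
Your proof is correct and follows essentially the same route as the paper: apply the standing hypothesis to a continuous exhaustive $F\#\calPos$-subharmonic function to get an $F$-subharmonic marginal, check directly that this marginal is real-valued, continuous and exhaustive (which the paper isolates as Lemma~\ref{lem:marginalsareexhausting}), symmetrise over $\mathbb T$ in the complex case (which the paper isolates as Lemma~\ref{lem:exhaustionindependent}), and finally invoke Proposition~\ref{prop:mcanbeone} to pass from $m=1$ to general $m$. The only difference is one of packaging: you inline the proofs of the two auxiliary lemmas rather than citing them.
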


\begin{proof}
We start with the real case.  Suppose that $\Omega\subset X\times \mathbb R$ is $F\#\calPos$-pseudoconvex with connected fibres, and we aim to show $\pi(\Omega)$ is $F$-subharmonic.    By hypothesis, there is a continuous exhaustive  $F\#\calPos$-subharmonic function $f:\Omega\to \mathbb R$.  So by hypothesis its marginal function $g(x): = \inf_{y\in \Omega_x} f(x,y)$ is $F$-subharmonic on $\pi(\Omega)$.  But $g$ is both continuous and exhaustive (Lemma \ref{lem:marginalsareexhausting}) from which we conclude that $\pi(\Omega)$ is $F$-pseudoconvex.  This proves the minimum principle holds when $m=1$.  But this is sufficient to prove the minimum principle for all $m$ by Proposition \ref{prop:mcanbeone}, thereby completing the proof in the real case.

In the complex case, the fact that $\pi(\Omega)$ is again $F$-subharmonic follows in the same way once it is established that any $\mathbb T$-invariant $F\#_{\mathbb C}\calPos^{\mathbb C}$-pseudoconvex domain admits a continuous exhaustive  $F\#_{\mathbb C}\calPos^{\mathbb C}$-subharmonic function that is $\mathbb T$-invariant, which we do in Lemma \ref{lem:exhaustionindependent}.
\end{proof}

\begin{lemma}[Marginals functions preserving continuity and being exhaustive]\label{lem:marginalsareexhausting}
Let $\Omega\subset X\times \mathbb R$ and $f:\Omega\to \mathbb R$ be continuous (resp.\ exhaustive).  Then $g(x): = \inf_{y\in \Omega_x} f(x,y)$ is continuous (resp.\ exhaustive).
\end{lemma}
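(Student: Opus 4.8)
The plan is to establish the two conclusions separately, writing $P:=\pi(\Omega)\subset X$ for the projected domain (open, since $\Omega$ is). The exhaustive assertion is the quicker of the two. For each $a\in\mathbb R$ one has $g(x)<a$ if and only if $f(x,y)<a$ for some $y\in\Omega_x$, so that $\{x\in P:g(x)<a\}=\pi(\{f<a\})$. When $f$ is exhaustive the set $\{f<a\}$ is relatively compact in $\Omega$, hence $\overline{\{f<a\}}$ is compact and contained in $\Omega$; applying the continuous projection $\pi$ shows $\pi(\overline{\{f<a\}})$ is compact and contained in $P$, and since it contains $\{g<a\}$ we conclude that $g$ is exhaustive. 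The same compactness also shows $g$ is real-valued: were $g(x_0)=-\infty$, a sequence $y_j\in\Omega_{x_0}$ with $f(x_0,y_j)\to-\infty$ would eventually lie in some $\{f<a\}$, hence subconverge inside $\Omega$ to a point $(x_0,y_\infty)$ at which continuity of $f$ would be violated.

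For the continuity assertion I would show $g$ is both upper- and lower-semicontinuous on $P$. Upper semicontinuity is formal: given $x_0\in P$ and $\varepsilon>0$, choose $y_0\in\Omega_{x_0}$ with $f(x_0,y_0)<g(x_0)+\varepsilon$; since $\Omega$ is open and $f$ continuous, $(x,y_0)\in\Omega$ and $f(x,y_0)<g(x_0)+\varepsilon$ for all $x$ near $x_0$, whence $g(x)\le f(x,y_0)<g(x_0)+\varepsilon$ there. For lower semicontinuity, suppose it fails: there are $x_0\in P$, a sequence $x_j\to x_0$ in $P$, and $L<g(x_0)$ (possibly $-\infty$) with $g(x_j)\to L$. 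Pick $y_j\in\Omega_{x_j}$ with $f(x_j,y_j)<g(x_j)+1/j$ and fix $a$ with $L<a<g(x_0)$. Then $(x_j,y_j)\in\{f<a\}$ for all large $j$, and relative compactness of $\{f<a\}$ in $\Omega$ yields a subsequence converging to some $(x_0,y_\infty)\in\Omega$, i.e.\ $y_\infty\in\Omega_{x_0}$. Continuity of $f$ then forces $g(x_0)\le f(x_0,y_\infty)=\lim f(x_j,y_j)\le a<g(x_0)$, a contradiction (the case $L=-\infty$ is immediate, as it would make $f$ discontinuous at $(x_0,y_\infty)$). Hence $g$ is lower semicontinuous, and so continuous.

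The step I expect to be the main obstacle is the lower-semicontinuity direction. Upper semicontinuity of the marginal function, and exhaustiveness of the marginal function, are each soft consequences of the corresponding hypothesis on $f$ together with openness of $\Omega$; but lower semicontinuity is not, because the fibre $\Omega_x$ can shrink abruptly as $x$ varies while $f$ stays bounded, causing $g$ to jump downward. It is precisely the exhaustiveness of $f$ that supplies the compactness confining a fibrewise minimizing sequence $(x_j,y_j)$ to a compact subset of $\Omega$, and hence produces the limiting minimizer $(x_0,y_\infty)$ used in the argument above — which is why, in the situation in which the lemma is invoked, namely $f$ continuous and exhaustive, one does obtain that $g$ is continuous. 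Everything else is routine point-set topology.
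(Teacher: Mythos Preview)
Your proof is correct and follows essentially the same route as the paper: exhaustiveness of $g$ via projecting the relatively compact sublevel set $\{f<a\}$, upper semicontinuity of $g$ from continuity of $f$ and openness of $\Omega$, and lower semicontinuity of $g$ via a subsequence argument using relative compactness of $\{f<a\}$. Your observation that the lower-semicontinuity step genuinely requires exhaustiveness (not just continuity) is well taken; the paper's proof likewise uses both hypotheses at that point, despite the ``resp.'' phrasing of the statement, and the lemma is only ever invoked when $f$ is both continuous and exhaustive.
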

\begin{proof}
For any real number $a$ we have $\{ x\in \pi_X(\Omega) : g(x) <a\} \subset \pi_X ( \{ (x,y)\in \Omega : f(x,y)<a\}$.    If $f$ is exhaustive then $\{ (x,y)\in \Omega : f(x,y)<a\}$ is relatively compact, and hence so is $\{ x\in \pi_X(\Omega) : g(x) <a\}$, proving that $g$ is exhaustive.

Now assume $f$ is continuous. If $g(x)<a$ for some $x$ then there is a $y\in \Omega_{x}$ such that $f(x,y)<a$.  Then $f<a$ on some neighbourhood $U_0\times U_1$ of $(x_0,y_0)$, and hence $g<a$ on $U_0$.     Thus $g$ is upper-semicontinuous.

Next suppose $g(x_n)<a$ for a sequence of points $x_n\in \pi_X(\Omega)$ that converge to some $x\in \pi_X(\Omega)$ as $n$ tends to infinity.  For each $n$ there is a $y_n$ with $f(x_n,y_n)<a$.   Thus $(x_n,y_n)$ lie in the relatively compact set $\{(x,y) : f(x,y)<a\}$, and so after taking a subsequence we may assume it converges to some point $(x,y)\in \Omega$.  Continuity of $f$ yields $f(x,y)\le a$, so $g(x)\le a$.  Thus $g$ is also lower-semicontinuous, and hence continuous.
\end{proof}

\begin{lemma}[Existence of $\mathbb T^m$-invariant exhaustive functions]\label{lem:exhaustionindependent}
Let $F\subset J^{2,\mathbb C}(X)$ be a complex primitive subequation and $\Omega\subset X\times \mathbb C^m$ be a $F\#_{\mathbb C}\calPos^{\mathbb C}$-pseudoconvex and $\mathbb T^m$-invariant domain.    Then there exists a continuous exhaustive $F\#_{\mathbb C}\calPos^{\mathbb C}$-function on $\Omega$ that is $\mathbb T^m$-invariant.
\end{lemma}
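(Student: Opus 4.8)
The plan is to build the required function by taking the pointwise maximum of a given exhaustion over its $\mathbb T^m$-orbit; this avoids any convexity hypothesis on $F$. Since $\Omega$ is $F\#_{\mathbb C}\calPos^{\mathbb C}$-pseudoconvex, Definition \ref{def:Fpseudoconvexity} provides a continuous, exhaustive, $F\#_{\mathbb C}\calPos^{\mathbb C}$-subharmonic function $u:\Omega\to\mathbb R$. For each $e^{i\theta}\in\mathbb T^m$ the rotation $w\mapsto e^{i\theta}w$ is a biholomorphism of $\mathbb C^m$, and because $\Omega$ is $\mathbb T^m$-invariant the set $\{(z,w):(z,e^{i\theta}w)\in\Omega\}$ is just $\Omega$ itself; hence Lemma \ref{lem:compositionbiholomorphism} (applied with $W=\mathbb C^m$ and $\zeta(w)=e^{i\theta}w$) shows that $u_\theta(z,w):=u(z,e^{i\theta}w)$ is $F\#_{\mathbb C}\calPos^{\mathbb C}$-subharmonic on $\Omega$. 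I then set
$$ \hat u(z,w) := \max_{e^{i\theta}\in\mathbb T^m} u_\theta(z,w), $$
the maximum being attained since $u$ is continuous and $\mathbb T^m$ is compact.

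The elementary properties of $\hat u$ come first. It is $\mathbb T^m$-invariant, since replacing $w$ by $e^{i\phi}w$ only reparametrises the orbit over which the maximum is taken. It dominates $u$ (take $\theta=0$), so $\{\hat u<a\}\subseteq\{u<a\}$ is relatively compact in $\Omega$ for every $a\in\mathbb R$, and therefore $\hat u$ is exhaustive. Continuity of $\hat u$ is a routine compactness argument: the map $(z,w,e^{i\theta})\mapsto u(z,e^{i\theta}w)$ is continuous on $\Omega\times\mathbb T^m$, so $\hat u$ is lower-semicontinuous as a supremum of continuous functions, and it is upper-semicontinuous because along any sequence $(z_n,w_n)\to(z,w)$ in $\Omega$ one may pass to a subsequence along which the maximisers $e^{i\theta_n}$ converge, say to $e^{i\theta_*}$, giving $\limsup_n\hat u(z_n,w_n)=u(z,e^{i\theta_*}w)\le\hat u(z,w)$.

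Finally one must verify that $\hat u$ is $F\#_{\mathbb C}\calPos^{\mathbb C}$-subharmonic, and this is the only point requiring any care. The product $F\#_{\mathbb C}\calPos^{\mathbb C}$ is a primitive subequation by Lemma \ref{lem:productsofsubequations}, hence closed, so Proposition \ref{prop:basicproperties}(4) applies to the family $\{u_\theta\}_{e^{i\theta}\in\mathbb T^m}$ provided it is locally uniformly bounded above. This is where the $\mathbb T^m$-invariance of $\Omega$ is essential: for $K\Subset\Omega$ the saturated set $\{(z,e^{i\theta}w):(z,w)\in K,\ e^{i\theta}\in\mathbb T^m\}$ is the continuous image of the compact set $K\times\mathbb T^m$, hence compact and, by invariance, contained in $\Omega$; continuity of $u$ then bounds every $u_\theta$ uniformly on $K$. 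Thus the upper-semicontinuous regularisation ${\sup}^*_{\theta}u_\theta$ is $F\#_{\mathbb C}\calPos^{\mathbb C}$-subharmonic; by the continuity established above this regularisation equals $\hat u$, so $\hat u$ is the desired continuous, exhaustive, $\mathbb T^m$-invariant $F\#_{\mathbb C}\calPos^{\mathbb C}$-subharmonic function. The main (mild) obstacle is therefore purely point-set topological — checking that the orbit-maximum is continuous and locally bounded above — and it is precisely the $\mathbb T^m$-invariance of $\Omega$ that makes both checks go through.
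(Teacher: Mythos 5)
Your proof is correct and takes essentially the same route as the paper: construct $\hat u$ as the supremum over the $\mathbb T^m$-orbit of a given continuous exhaustion, use Lemma \ref{lem:compositionbiholomorphism} to see each $u_\theta$ is $F\#_{\mathbb C}\calPos^{\mathbb C}$-subharmonic, verify continuity and exhaustiveness, and then invoke Proposition \ref{prop:basicproperties}(4). The only cosmetic difference is the continuity argument (you argue via lower semicontinuity of the sup plus a subsequence argument on maximisers, whereas the paper uses uniform continuity of $u$ over the compact torus), but both are equivalent and elementary.
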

\begin{proof}
As $\Omega$ is  $F\#_{\mathbb C}\calPos_{\mathbb C}$-pseudoconvex there exists an exhaustive continuous $F\#_{\mathbb C}\calPos^{\mathbb C}$-subharmonic function $u$ on $\Omega$.   For $e^{i\theta}\in \mathbb T^m$ set
$$ u_{\theta}(z,w): = u(z,e^{i\theta}w).$$
which by Lemma \ref{lem:twisttheta} is $F\#_{\mathbb C}\calPos^{\mathbb C}$-subharmonic on $\Omega$.

Now set
$$ v(x,y) := \sup_{e^{i\theta}\in \mathbb T^m} u_{\theta}(x,y).$$
Clearly $v\ge u$, and so is exhaustive as $u$ is.  We claim that $v$ is continuous.  Let $\epsilon>0$.   By continuity of $u$ and compactness of $\mathbb T^m$ there is a $\delta>0$ such that if $\|(z,w)-(z',w')\|<\delta$ then $|u_{\theta}(z,w) - u_{\theta}(z',w')|<\epsilon$ for all $e^{i\theta}\in \mathbb T^m$.  Let  $\|(z,w)-(z',w')\|<\delta$ .  There is a $e^{i\theta_0}\in \mathbb T^m$ such that $v(z,w) < u_{\theta_0}(z,w)  + \epsilon < u_{\theta_0}(z',w') + 2\epsilon < v(z',w') + 2\epsilon$.  Swapping the role of $(z,w)$ and $(z',w')$ gives $|v(z,w)-v(z',w')|<2\epsilon$, proving continuity of $v$.

Hence $v$ is locally bounded above and equal to its upper semicontinuous regularisation.  So Proposition \ref{prop:basicproperties}(4) tells us that $v$ is $F\#_{\mathbb C}\calPos_{\mathbb C}$-subharmonic on $\Omega$.  Note that $v$ is $\mathbb T^m$-invariant by construction.

\end{proof}

\begin{lemma}\label{lem:twisttheta}
Let $f$ be an $(F\#_{\mathbb C}\calPos^{\mathbb C})$-subharmonic on a $\mathbb T^m$-invariant open $\Omega\subset X\times \mathbb C^m$.  Then for any fixed $e^{i\theta}\in \mathbb T^m$ the function $f_{\theta}(z,w):= f(z,e^{i\theta} w)$ is $(F\#_{\mathbb C}\calPos^{\mathbb C})$-subharmonic on $\Omega$
\end{lemma}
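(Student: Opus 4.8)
The plan is to deduce this as an immediate special case of Lemma~\ref{lem:compositionbiholomorphism}. The rotation $\zeta:\mathbb C^m\to\mathbb C^m$ defined by $\zeta(w)=e^{i\theta}w$ is linear and invertible, hence a biholomorphism of the open set $W:=\mathbb C^m$ onto $\zeta(W)=\mathbb C^m$. Applying Lemma~\ref{lem:compositionbiholomorphism} with this $\zeta$ then produces an $(F\#_{\mathbb C}\calPos^{\mathbb C})$-subharmonic function $\tilde f(z,w):=f(z,\zeta(w))=f(z,e^{i\theta}w)$, which is precisely $f_\theta$, on the set $\{(z,w)\in X\times W:(z,\zeta(w))\in\Omega\}$.

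The only thing left to check is that this last set equals $\Omega$. Indeed $(z,\zeta(w))=(z,e^{i\theta}w)\in\Omega$ is equivalent to $(z,w)\in\Omega$: one direction is $\mathbb T^m$-invariance of $\Omega$ applied with the rotation $e^{-i\theta}$, and the converse is $\mathbb T^m$-invariance applied with $e^{i\theta}$. Hence the domain of $\tilde f$ is exactly $\Omega$, and $f_\theta=\tilde f$ is $(F\#_{\mathbb C}\calPos^{\mathbb C})$-subharmonic on $\Omega$, which is the claim.

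If one prefers a self-contained argument, one can instead mimic the proof of Lemma~\ref{lem:compositionbiholomorphism}: for $f\in\mathcal C^2$ the Chain Rule gives $i_\Gamma^*J^{2,\mathbb C}_{(z,w)}\tilde f=i_{e^{i\theta}\Gamma}^*J^{2,\mathbb C}_{(z,e^{i\theta}w)}f\in F_z$ for all $\Gamma\in\Hom(\mathbb C^n,\mathbb C^m)$, while $j^*J^{2,\mathbb C}_{(z,w)}\tilde f$ differs from $j^*J^{2,\mathbb C}_{(z,e^{i\theta}w)}f\in\calPos^{\mathbb C}$ only by a unitary conjugation of the complex Hessian part and so still lies in $\calPos^{\mathbb C}$; the upper-semicontinuous case then follows by pulling back test functions and invoking Lemma~\ref{lem:viscositydefinition}. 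Either way there is essentially no obstacle: the substance is entirely in Lemma~\ref{lem:compositionbiholomorphism}, and the present statement only records the harmless point that $\mathbb T^m$-invariance of $\Omega$ keeps the composed function defined on the same domain.
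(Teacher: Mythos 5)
Your proof is correct and matches the paper's argument, which likewise deduces the lemma by applying Lemma~\ref{lem:compositionbiholomorphism} to the biholomorphism $\zeta(w)=e^{i\theta}w$. You also spell out the (easy) domain check via $\mathbb T^m$-invariance, which the paper leaves implicit.
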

\begin{proof}
Apply Lemma \ref{lem:compositionbiholomorphism} to the biholomorphism $\zeta(w) := e^{i\theta} w$.
\end{proof}

\subsection{Reduction to functions that are relatively exhaustive}

We next show how to reduce to functions that are bounded from below, and have the property that on each fibre the minimum is attained strictly away from the boundary.  In fact we will need two versions of this, and the following terminology is useful.   Let $\Omega'\subset U\times \mathbb R^m$ where $U\subset \mathbb R^n$ is open and  $f:\Omega\to \mathbb R\cup \{-\infty\}$.

\begin{definition}[Relatively exhaustive]
We say $f$ is \emph{relatively exhaustive} if for all open $V\Subset U$ and all real $a$ the set
$$ \{ (x,y)\in \Omega : f(x,y)<a \text{ and } x\in V\}$$
is relatively compact in $\Omega'$.
\end{definition}

\begin{definition}[Functions with fibrewise minimum strictly in the interior]\label{def:fibrewisemin}
 We say $f$  \emph{attains its fiberwise minimum strictly in the interior} if for any $x_0\in U$ there exists a real number $a$ and neighbourhood $x_0\in V\subset U$ such that
$$ K_V:= \{ (x,y)\in \Omega' : f(x,y)<a \text{ and } x\in V\}$$
is relatively compact in $\Omega'$ and $\pi(K_V) = V$.

(In other words, the marginal function of $f$ is bounded from above by $a$ on $V$, and the set of points $(x,y)$ over $V$ for which $f$ is less than $a$ is contained strictly away from the boundary of $\Omega'$.)
\end{definition}

\begin{lemma}\label{lem:fibrewise}\label{we may be able to get rid of this lemma}
If $f$ is upper-semicontinuous and relatively exhaustive then is attains its fiberwise minimum strictly in the interior.
\end{lemma}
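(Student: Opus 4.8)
The plan is to produce the required data $(a,V)$ of Definition \ref{def:fibrewisemin} by a single use of upper-semicontinuity followed by a single use of relative exhaustiveness; each hypothesis enters exactly once. First I would fix $x_0\in U$ with non-empty fibre $\Omega'_{x_0}$ (there is nothing to verify otherwise, the condition being understood to concern base points of $\pi(\Omega')$), pick any point $(x_0,y_0)\in\Omega'$, and choose a real number $a$ with $a>f(x_0,y_0)$ — which is possible even when $f(x_0,y_0)=-\infty$. Since $\Omega'$ is open in $U\times\mathbb R^m$ and $f$ is upper-semicontinuous, there is an open neighbourhood $W$ of $(x_0,y_0)$ with $W\subset\Omega'$ and $f<a$ throughout $W$. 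Then $\pi(W)$ is an open neighbourhood of $x_0$ contained in $U$, with the key feature that every base point $x\in\pi(W)$ is witnessed by some $(x,y)\in W\subset\Omega'$ satisfying $f(x,y)<a$.

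Next I would shrink the base: using local compactness of $\mathbb R^n$, choose an open $V$ with $x_0\in V$ and $\overline V$ a compact subset of $\pi(W)$, so in particular $V\Subset U$. Applying the definition of relative exhaustiveness with this $V$ and this $a$, the set
$$ K_V=\{(x,y)\in\Omega' : f(x,y)<a\text{ and }x\in V\}$$
is relatively compact in $\Omega'$, which is the first requirement in Definition \ref{def:fibrewisemin}.

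Finally I would check that $\pi(K_V)=V$. The inclusion $\pi(K_V)\subseteq V$ is immediate from the definition of $K_V$. For the reverse, given $x\in V\subseteq\pi(W)$, the feature of $\pi(W)$ noted above supplies a $y$ with $(x,y)\in W\subseteq\Omega'$ and $f(x,y)<a$; since also $x\in V$, this gives $(x,y)\in K_V$ and hence $x\in\pi(K_V)$. Thus $\pi(K_V)=V$, and together with the preceding paragraph this is exactly the statement that $f$ attains its fibrewise minimum strictly in the interior at $x_0$; since $x_0$ was arbitrary the lemma follows.

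I do not expect a genuine obstacle here — the argument is a routine unwinding of the two definitions. The one point deserving a moment's care is that the single neighbourhood $\pi(W)$ must do double duty: it has to be cut down to something relatively compact inside $U$ so that relative exhaustiveness is even applicable (yielding compactness of $K_V$), while it is simultaneously already ``wide'' enough that each of its base points is hit by $W$ (yielding $\pi(K_V)=V$). It is worth observing that upper-semicontinuity of $f$ is used in an essential way precisely at the step where the single fibrewise inequality $f(x_0,y_0)<a$ is propagated to a whole neighbourhood of base points — this is the heart of why relative exhaustiveness forces the fibrewise minimum to be attained in the interior.
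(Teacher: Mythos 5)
Your proposal is correct and takes essentially the same route as the paper: the paper's three-line proof introduces the marginal function $g(x)=\inf_{y}f(x,y)$, observes (implicitly, from upper-semicontinuity of $f$) that $g<a$ on a neighbourhood $V$ of $x_0$ whenever $a>g(x_0)$, and then invokes relative exhaustiveness; your argument unwinds exactly this via the open set $W$, and is in fact a little more careful, since you explicitly shrink $V$ so that $V\Subset U$, a step the paper's wording glosses over but which is needed for the definition of relative exhaustiveness to apply.
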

\begin{proof}
Let $g(x) = \inf_{y\in \Omega_x} f(x,y)$.  For any $x_0\in \pi(\Omega)$ pick any $a$ with $a> g(x_0)$.  Then there is a neighbourhood $V$ such that $g<a$ on $V$.  The set $K_V$ is relatively compact in $\Omega'$ as $f$ is exhaustive, and $\pi(K_V) = V$ by construction.
\end{proof}  

We will prove the next two statements simultaneously.

\begin{proposition}\label{prop:reductionfibrewiseminimum}
Let $F\subset J^2(X)$ be a primitive subequation that is constant coefficient and has the Negativity Property.     Suppose for any $$ f:\Omega'\to \mathbb R$$
such that
\begin{enumerate}[(a)]
\item $\Omega'\subset X\times \mathbb R$ is open and has connected fibres,
\item There exists a bounded $F$-subharmonic function on $\pi(\Omega')$,
\item $f$ is $F\#\calPos$-subharmonic,
\item $f$ is bounded from below,
\item $f$ attains its fiberwise minimum strictly in the interior,
\end{enumerate}
the marginal function $$g(x) = \inf_{y\in \Omega_x} f(x,y)$$ is $F$-subharmonic on $\pi(\Omega')$.   (Note that in the hypothesis we are not assuming that $\Omega'$ is $F\#\calPos$-pseudoconvex).

Then the minimum principle holds for $F$.   The analogous statement holds in the complex case under the additional assumption that $f$ and $\Omega'\subset X\times \mathbb C$ are $\mathbb T$-invariant.
\end{proposition}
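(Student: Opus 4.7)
The plan is a localisation–approximation argument: reduce the general minimum principle to a domain $\Omega'$ satisfying conditions (a)--(e) via a local construction of $f_j$ whose marginals decrease to the original marginal.  Proposition \ref{prop:mcanbeone} reduces to $m=1$ and Proposition \ref{prop:functiononly} reduces the task to showing, for any $F\#\calPos$-pseudoconvex $\Omega\subset X\times \mathbb R$ with connected fibres and any $F\#\calPos$-subharmonic $\tilde f$ on $\Omega$, that the marginal $\tilde g(x):=\inf_{y\in \Omega_x}\tilde f(x,y)$ is $F$-subharmonic on $\pi(\Omega)$.  Since $F$-subharmonicity is a local property I would fix $x_0\in \pi(\Omega)$ and work near it.

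Choose $(x_0,y_0)\in \Omega$ with $\tilde f(x_0,y_0)>-\infty$ (if no such $(x_0,y_0)$ exists over any neighbourhood of $x_0$, upper-semicontinuity forces $\tilde g\equiv-\infty$ there and the assertion is trivial).  The non-vertical slice $x\mapsto \tilde f(x,y_0)$ is $F$-subharmonic on the open set $\{x:(x,y_0)\in \Omega\}\ni x_0$ and not identically $-\infty$, so Lemma \ref{lem:boundedbelow} yields an open $x_0\in U\subset \pi(\Omega)$ carrying a bounded-below $F$-subharmonic function; shrinking $U$ to a relatively compact subneighbourhood and using upper-semicontinuity produces a bounded $F$-subharmonic $v:U\to \mathbb R$, verifying (b).  Let $u$ be the continuous exhaustive $F\#\calPos$-subharmonic function on $\Omega$ supplied by pseudoconvexity, set $\Omega':=\Omega\cap \pi^{-1}(U)$, and define
$$f_j:=\max\{\tilde f,\ u-j,\ \pi^*v-j\}.$$
Then (a), (c), (d) are immediate: $\Omega'_x=\Omega_x$ is connected; $f_j$ is $F\#\calPos$-subharmonic by the Maximum Property together with Lemma \ref{lem:pullbacksubharmonic}; and $f_j\ge -\|v\|_\infty -j$ on $\Omega'$.

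The main work is verifying (e).  Given $x_1\in U$ pick $y_1\in \Omega_{x_1}$ and set $a:=f_j(x_1,y_1)+1$; by upper-semicontinuity of $f_j$ and openness of $\Omega$, any sufficiently small $V\Subset U$ containing $x_1$ satisfies $(x,y_1)\in \Omega$ and $f_j(x,y_1)<a$ for all $x\in V$, giving $\pi(K_V)=V$; and since $f_j<a$ forces $u<a+j$, the set $K_V$ is contained in $\{u<a+j\}\cap \pi^{-1}(\overline V)$, whose closure is compact in $\Omega$ and lies in $\pi^{-1}(U)$, hence is compact in $\Omega'$.  The hypothesis of the Proposition now gives that each marginal $g_j(x):=\inf_{y\in \Omega'_x}f_j(x,y)$ is $F$-subharmonic on $U$, and since $u-j,\ \pi^*v-j\to -\infty$ pointwise, a direct check gives $g_j\searrow \tilde g|_U$ pointwise, so Proposition \ref{prop:basicproperties}(2) yields $F$-subharmonicity of $\tilde g$ on $U$, and hence on all of $\pi(\Omega)$.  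In the complex case the argument is identical once $u$ is taken $\mathbb T$-invariant via Lemma \ref{lem:exhaustionindependent}, which ensures each $f_j$ is $\mathbb T$-invariant ($\pi^*v$ depends only on the base variable and so is automatically invariant).  The main obstacle is orchestrating the three terms in the max so that (a)--(e) hold simultaneously: $\tilde f$ carries the information of interest, $u$ supplies the exhaustive growth needed for relative compactness in (e), and $\pi^*v$ is indispensable for the uniform lower bound (d) since $u$ itself is generally unbounded below.
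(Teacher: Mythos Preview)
Your argument is correct and follows essentially the same approach as the paper: construct $f_j=\max\{\tilde f,\,u-j,\,\pi^*v-j\}$ on $\Omega'=\Omega\cap\pi^{-1}(U)$, verify conditions (a)--(e), and pass to the limit.  The only organisational difference is that the paper actually proves the formally stronger Proposition~\ref{prop:reductionlemmarelativelyexhausting} (with condition (e$'$): $f$ relatively exhaustive) and deduces Proposition~\ref{prop:reductionfibrewiseminimum} via Lemma~\ref{lem:fibrewise}, whereas you verify (e) directly---but your verification of (e) is precisely the observation that $f_j\ge u-j$ makes $f_j$ relatively exhaustive, which is the content of that lemma applied to this specific $f_j$.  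One small point: your route to the bounded $v$ via a non-vertical slice through $(x_0,y_0)$ requires $x_0$ to lie in the slice domain $\{x:(x,y_0)\in\Omega\}$, which need not hold; it is cleaner to invoke Proposition~\ref{prop:approxboundedbelow} directly (as the paper does), which already packages the translation argument needed to cover an arbitrary $x_0\in\pi(\Omega)$.
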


\begin{proposition}\label{prop:reductionlemmarelativelyexhausting}
Let $F\subset J^2(X)$ be a constant-coefficient primitive subequation that has the Negativity Property.    Then the conclusion of Proposition \ref{prop:reductionfibrewiseminimum} continues to hold if condition (e) is replaced with
\begin{enumerate}[(e')]
\item $f$ is relatively exhaustive.
\end{enumerate}
\end{proposition}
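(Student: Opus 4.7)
The plan is to prove Propositions~\ref{prop:reductionfibrewiseminimum} and~\ref{prop:reductionlemmarelativelyexhausting} simultaneously.  Since Lemma~\ref{lem:fibrewise} shows (e') implies (e) for upper-semicontinuous $f$, the hypothesis of Proposition~\ref{prop:reductionlemmarelativelyexhausting} is the stronger one, and it suffices to derive the minimum principle from it.  I would first apply Proposition~\ref{prop:mcanbeone} to reduce to the case $m=1$, and then Proposition~\ref{prop:functiononly} to further reduce the problem to the following claim: for every $F\#\calPos$-pseudoconvex $\Omega \subset X\times \mathbb R$ with connected fibres and every $F\#\calPos$-subharmonic $h:\Omega\to \mathbb R\cup \{-\infty\}$ not identically $-\infty$, the marginal $g(x) := \inf_{y\in \Omega_x} h(x,y)$ is $F$-subharmonic on $\pi(\Omega)$.

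The main idea is to approximate $h$ from above by a sequence of auxiliary functions tailored so that the hypothesis of Proposition~\ref{prop:reductionlemmarelativelyexhausting} can be applied.  Let $u$ be a continuous exhaustive $F\#\calPos$-subharmonic function on $\Omega$, provided by pseudoconvexity; note that $u$ is automatically bounded below on $\Omega$ since its sublevel sets are relatively compact.  For each positive integer $t$ set
$$ h_t := \max\{h,\ u - t\}.$$
Then $h_t$ is $F\#\calPos$-subharmonic by the maximum property (the shift $u-t$ is $F\#\calPos$-subharmonic by the Negativity Property), bounded below by $\inf_{\Omega} u - t$, and decreases pointwise to $h$ as $t\to\infty$.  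Crucially, $h_t$ is relatively exhaustive: for any $V'\Subset \pi(\Omega)$ and any real $b$, the inclusion $\{h_t<b\}\cap \pi^{-1}(V') \subseteq \{u<b+t\}$ shows that this sublevel set lies inside a relatively compact subset of $\Omega$.

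It remains to supply condition (b) of Proposition~\ref{prop:reductionlemmarelativelyexhausting}.  Working locally at $x_0\in \pi(\Omega)$, I would pick any $y_0$ with $(x_0,y_0)\in \Omega$ and use Proposition~\ref{prop:slices} (with $\Gamma = 0$) to see that $v(x):=u(x,y_0)$ is $F$-subharmonic on the open set $\{x:(x,y_0)\in \Omega\}$; by continuity of $u$, $v$ is bounded on a sufficiently small neighbourhood $U\ni x_0$ with $U\subseteq \pi(\Omega)$.  Restricting to $\Omega_U := \Omega \cap \pi^{-1}(U)$ (which has connected fibres $\Omega_x$ and satisfies $\pi(\Omega_U)=U$), the function $h_t|_{\Omega_U}$ satisfies all five conditions (a)--(d) and (e').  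By hypothesis its marginal $g_t$ is $F$-subharmonic on $U$.

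To conclude I would let $t\to\infty$.  Since $h_t\searrow h$ pointwise on $\Omega_U$, a routine check shows $g_t\searrow g|_U$ pointwise (for any $x\in U$ and any $y^*$ nearly attaining $g(x)$ one has $h_t(x,y^*) = h(x,y^*)$ for $t$ sufficiently large), and Proposition~\ref{prop:basicproperties}(2) then yields that $g|_U$ is $F$-subharmonic.  Varying $x_0$ gives $g$ is $F$-subharmonic on $\pi(\Omega)$.  The complex case is identical after replacing $u$ by a $\mathbb T$-invariant exhaustion, which exists by Lemma~\ref{lem:exhaustionindependent}; note that $v(z)=u(z,y_0)$ is trivially $\mathbb T$-invariant, since it does not depend on the second variable.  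I expect the main delicate step to be the verification of relative exhaustion for $h_t|_{\Omega_U}$: one must trace carefully that the closure of $\{u<b+t\}\cap \pi^{-1}(V')$ remains inside $\Omega_U$ (rather than merely $\Omega$), which uses both the compact containment $\overline{V'}\Subset U$ and the exhaustion property of $u$ on $\Omega$.
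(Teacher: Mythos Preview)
Your proof is correct and follows essentially the same architecture as the paper's: reduce via Propositions~\ref{prop:mcanbeone} and~\ref{prop:functiononly}, then approximate the given $F\#\calPos$-subharmonic function from above by a decreasing sequence satisfying (a)--(d) and (e'), apply the hypothesis, and pass to the limit.

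There are two points where you take a slightly different (and cleaner) route. First, the paper approximates by
\[
f_j := \max\{\tilde f,\ \pi^* v - j,\ u - j\}
\]
with three terms, obtaining $v$ from Lemma~\ref{lem:boundedbelow}/Proposition~\ref{prop:approxboundedbelow}. You use only $h_t := \max\{h,\ u-t\}$, observing that a continuous exhaustive function is automatically bounded below; this makes the extra $\pi^* v - j$ term unnecessary for condition (d). Second, for condition (b) the paper invokes the general Lemma~\ref{lem:boundedbelow}, while you instead take $v(x) = u(x,y_0)$, a horizontal slice of the exhaustion, and use Proposition~\ref{prop:slices} with $\Gamma=0$ to see it is $F$-subharmonic. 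Both shortcuts are valid and make the argument a bit more self-contained.

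One small wording slip: since (e') $\Rightarrow$ (e), the class of functions satisfying (a)--(d), (e') is \emph{smaller}, so the hypothesis of Proposition~\ref{prop:reductionlemmarelativelyexhausting} (a universal statement over that class) is actually the \emph{weaker} one, not the stronger one. Your conclusion ``it suffices to derive the minimum principle from it'' is nonetheless correct, precisely because deriving the conclusion from the weaker hypothesis proves both propositions at once; just fix the word ``stronger''.
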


\begin{proof}
Using Lemma \ref{lem:fibrewise} it is clear that Proposition \ref{prop:reductionlemmarelativelyexhausting} implies Proposition \ref{prop:reductionfibrewiseminimum}, so it is sufficient to prove only the former. 

Let $\Omega\subset X\times \mathbb R$ be an $F\#\calPos$-pseudoconvex domain with connected fibres and $\tilde{f}:\Omega\to \mathbb R\cup \{-\infty\}$ be $F\#\calPos$-subharmonic and not identically $-\infty$ with marginal function $$\tilde{g}(x):= \inf_{y\in \Omega_x} f(x,y).$$     We claim that $\tilde{g}$ is $F$-subharmonic on $\pi(\Omega)$.  By Proposition \ref{prop:functiononly} this implies the minimum principle holds for $F$.

By the hypothesis that $\Omega$ is $F\#\calPos$-pseudoconvex, there exists an exhaustive $F\#\calPos$-subharmonic  function $u$ on $\Omega$.    By Proposition \ref{prop:approxboundedbelow} there is an open cover of $\pi(\Omega)$ by open subsets $U\subset \pi(\Omega)$ on which there exist an $v\in F(U)$ that is bounded from below.   In such a case consider for each $j\in \mathbb N$ the function
$$ f_j := \max\{ \tilde{f}, \pi^*v -j , u-j\} \text{ on } \Omega' := \Omega\cap \pi^{-1}(U).$$
Observe that both $\pi^*v - j$ and $u-j$ are $F\#\calPos$-subharmonic (the first of these is Proposition \ref{prop:approxboundedbelow}, and the second follows as $F\#\calPos$ has the Negativity Property) and hence so is $f_j$.  Thus $(\Omega',f_j)$ and satisfy properties (a--d,e'), so by hypothesis $$g_j(x):= \inf_{y\in \Omega_x} f_j(x,y)$$ is in $F(U)$.  But $f_j$ decreases to $\tilde{f}|_{\Omega'}$ as $j\to \infty$, and so $g_j$ decreases pointwise to $\tilde{g}|_U$ as $j\to \infty$.  Thus $\tilde{g}|_U$ is $F$-subharmonic, and since this holds for all such $U$ we conclude that $\tilde{g}$ is $F$-subharmonic on $\pi(\Omega)$ as claimed.

The complex case is the same since we can pick $u$ to be $\mathbb T$-invariant by Lemma \ref{lem:exhaustionindependent}, so both $\Omega'$ and $f_j$ are $\mathbb T$-invariant if $\tilde{f}$ and $\Omega$ are.
\end{proof}

\begin{remark}
\begin{enumerate}
\item It is possible to prove the minimum principle in the real case only using Proposition \ref{prop:reductionlemmarelativelyexhausting}.  However we will instead use Proposition \ref{prop:reductionfibrewiseminimum} so that we can more easily reuse our statements when proving the complex case.
\item
 The only place so far we have used that $F$ is constant-coefficient is in ensuring that $X$ is covered by open sets that admit $F$-subharmonic functions bounded from below.     There is a a version of Proposition \ref{prop:reductionlemmarelativelyexhausting} that does not require $F$ be constant-coefficient if one removes hypotheses (b) and (d).  The proof is essentially the same, and is left to the reader. 
\end{enumerate}
\end{remark}

\section{The Minimum Principle in the Real Convex Case}\label{sec:minimumprincipleconvex}

We now use approximation arguments to reduce the minimum principle to considering first only semiconvex funtions, and then only smooth functions.  Note that we are restricting attention for the moment to the real case;   the complex case is slightly more involved due to the requirement that all quantities involved be $\mathbb T$-invariant, and will be taken up in the next section.

\subsection{Reduction to semiconvex functions}

\begin{proposition} \label{prop:reductionsemiconvex} Let $F\subset J^2(X)$ be a constant-coefficient primitive subequation that has the Negativity Property.   Then the real case of Proposition \ref{prop:reductionfibrewiseminimum} continues to hold we assume in addition that
\begin{enumerate}[(f)]
\item $f$ is semiconvex.
\end{enumerate}
\end{proposition}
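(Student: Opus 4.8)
The plan is to deduce the hypothesis of Proposition \ref{prop:reductionfibrewiseminimum} (in the real case) from the \emph{a priori} weaker hypothesis in which $f$ is also assumed semiconvex; Proposition \ref{prop:reductionfibrewiseminimum} then yields the minimum principle for $F$. So suppose the marginal function of every $f$ satisfying (a)--(f) is $F$-subharmonic, and let $f:\Omega'\to\mathbb R$ satisfy (a)--(e); we must show $g(x):=\inf_{y\in\Omega'_x}f(x,y)$ is $F$-subharmonic on $\pi(\Omega')$. As this is a local property, fix $x_0\in\pi(\Omega')$. By (e) there is a real number $a$ and an open neighbourhood $x_0\in V\subset\pi(\Omega')$, which we take with $\overline V$ compact in $\pi(\Omega')$, such that $K:=\{(x,y)\in\Omega':f(x,y)<a,\ x\in V\}$ is relatively compact in $\Omega'$ and $\pi(K)=V$; here $K$ is open since $f$ is upper-semicontinuous. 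Because the second factor is one-dimensional, each $\Omega'_x$ is an interval, and by Proposition \ref{prop:slices} together with Lemma \ref{lem:slicesareconvex}(1) the function $f(x,\cdot)$ is convex on it; hence each $K_x=\{y\in\Omega'_x:f(x,y)<a\}$ is an interval, so $K$ has connected fibres. A short argument using convexity of $f(x,\cdot)$ and compactness of $\overline K$ shows that for $x\in V$ the infimum $g(x)=\inf_{\Omega'_x}f(x,\cdot)=\inf_{K_x}f(x,\cdot)$ is attained at some $y^*(x)\in K_x$, so $g(x)=f(x,y^*(x))$ with $(x,y^*(x))\in K$; in particular $g<a$ on $V$ and $g$ is upper-semicontinuous.

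Next I apply Proposition \ref{prop:approxsemi} to $f$ with the constant-coefficient primitive subequation $F\#\calPos$ (which has the Negativity Property by Lemma \ref{lem:productsofsubequations}) on the open set $\Omega'$: this gives functions $f_j:\Omega'\to\mathbb R$ that are semiconvex, hence continuous, with $f_j$ decreasing pointwise to $f$ and $F\#\calPos$-subharmonic, both eventually on relatively compact subsets of $\Omega'$; in particular $f_j\ge f$ eventually on such subsets. Fixing an open $W_0$ with $\overline K\subset W_0\Subset\Omega'$, all three properties hold on $W_0$ for $j$ large. Fix also an open $V'$ with $x_0\in V'$, $\overline{V'}\subset V$, and set
$$\Omega_j:=\{(x,y)\in K:f_j(x,y)<a\text{ and }x\in V'\}.$$
I will check that for all large $j$ the pair $(\Omega_j,f_j|_{\Omega_j})$ satisfies (a)--(f). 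It is open, relatively compact in $\Omega'$ (its closure lies in $\overline K$), and for $j$ large its fibres $(\Omega_j)_x=K_x\cap\{f_j(x,\cdot)<a\}$ are intersections of intervals ($f_j(x,\cdot)$ is convex on $K_x$ by Proposition \ref{prop:slices}), giving (a); conditions (b)--(d),(f) are immediate (for (b), restrict the bounded $F$-subharmonic function on $\pi(\Omega')$ to $\pi(\Omega_j)\subset V'$). The substantive point is (e): given $x_1\in\pi(\Omega_j)$, the marginal $g_j$ of $f_j$ over $\Omega_j$ is upper-semicontinuous with $g_j(x_1)<a$; choosing $a'\in(g_j(x_1),a)$ and a neighbourhood $V_1$ of $x_1$ with $\overline{V_1}\subset\pi(\Omega_j)$ on which $g_j<a'$, the set $L:=\{(x,y)\in\Omega_j:f_j(x,y)<a',\ x\in V_1\}$ satisfies $\pi(L)=V_1$, and $\overline L$ avoids $\partial K$: on the part of $\partial K$ where $f\ge a$ we have $f_j\ge f\ge a>a'$ while $\overline L\subset\{f_j\le a'\}$, and on the part where $x\in\partial V$ we have $\pi(\overline L)\subset\overline{V_1}\subset V$. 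Hence $\overline L\subset K$, and with $\overline L\subset\{f_j\le a'\}\subset\{f_j<a\}$ and $\pi(\overline L)\subset V'$ we get $\overline L\subset\Omega_j$, so $L$ is relatively compact in $\Omega_j$; this is precisely (e).

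By the assumed hypothesis, $g_j$ is $F$-subharmonic on $\pi(\Omega_j)$ for each large $j$. Since $f_j$ is decreasing and $\Omega_j\subset\Omega_{j+1}$, the sequence $g_j$ is decreasing; it is bounded below by $g$ (as $f_j\ge f$ and $(\Omega_j)_x\subset\Omega'_x$); and since $(x,y^*(x))\in\Omega_j$ for $j$ large, $g_j(x)\le f_j(x,y^*(x))\searrow f(x,y^*(x))=g(x)$. A compactness argument (using that $g\le a''<a$ on $\overline{V''}$ for a smaller neighbourhood $V''\ni x_0$, so the compact set $\{(x,y)\in\overline K:x\in\overline{V''},\ f(x,y)\le a''\}$, which contains $(x,y^*(x))$ for $x\in\overline{V''}$ and hence projects onto $\overline{V''}$, is eventually contained in $\Omega_j$) shows $V''\subset\pi(\Omega_j)$ for all large $j$, so $g_j\searrow g$ on $V''$. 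By Proposition \ref{prop:basicproperties}(2), $g$ is $F$-subharmonic on $V''$, and since $x_0$ was arbitrary, $g$ is $F$-subharmonic on $\pi(\Omega')$. Thus $F$ satisfies the hypothesis of Proposition \ref{prop:reductionfibrewiseminimum}, whence the minimum principle holds for $F$.

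The main obstacle is verifying (e) for the approximating domains: the continuous bounded functions $f_j$ need not blow up at the boundary, so one cannot take $\Omega_j$ to be merely a neighbourhood of $\overline K$ — the role of cutting by the sublevel set $\{f_j<a\}$ is that the monotonicity $f_j\ge f$, combined with $f\ge a$ on $\partial\{f<a\}$ (which holds simply because $\{f<a\}$ is open, so the upper-semicontinuous jumps of $f$ cause no trouble), forces the sets $\{f_j<a'\}$ with $a'<a$ away from that part of the boundary. The reduction to $m=1$ is also used throughout, to ensure that $K$ and the $\Omega_j$ have connected (interval) fibres.
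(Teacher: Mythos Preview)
Your argument is correct, but it takes a noticeably more elaborate route than the paper's.  The essential difference is in the choice of domain on which the semiconvex approximants $f_j$ are analysed.  You let the domain vary with $j$, setting $\Omega_j=\{(x,y)\in K:f_j(x,y)<a,\ x\in V'\}$; the paper instead fixes, once and for all, an open $K'$ with connected fibres satisfying $K_V\Subset K'\Subset\Omega'$, restricts $f_j$ to $K'_{U_0}:=K'\cap\pi^{-1}(U_0)$ for a small fixed neighbourhood $U_0$ of $x_0$, and verifies (a)--(f) for $\tilde f_j:=f_j|_{K'_{U_0}}$ on this \emph{fixed} domain.  With the domain fixed, condition (e) follows in one line from $f_j\ge f$ and $\{f<a,\ x\in\tilde V\}=K_V\cap\pi^{-1}(\tilde V)\Subset K'_{U_0}$, and the convergence $\inf_{K'_x}\tilde f_j\searrow g$ is immediate since the infimum is over a set independent of $j$.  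Your moving-domain version forces you into the extra compactness argument (the set $C$ and the Dini-type covering) to secure a fixed neighbourhood $V''\subset\pi(\Omega_j)$ for all large $j$; this is sound, but it is precisely the work the paper sidesteps by enlarging $K_V$ to $K'$ at the outset.  Both approaches exploit the same two facts --- that $f_j\ge f$ forces the sublevel sets of $f_j$ inside those of $f$, and that $m=1$ together with fibrewise convexity gives connected fibres for free --- so neither is more general; the paper's is simply shorter.
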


\begin{proof}
Let $f:\Omega'\to \mathbb R$ be such that (a--e) hold, and as usual set $g(x) = \inf_{y\in \Omega_x'} f(x,y)$.  We will show $g$ is $F$-subharmonic on $\pi(\Omega')$, so Proposition \ref{prop:reductionfibrewiseminimum} applies.

Since $f$ satisfies condition (d) (namely $f$ is bounded from below) we can use the sup-convolution to approximate $f$ by $F\#\calPos$-subharmonic functions that are semiconvex.   In detail, let $$ {f}_j : \Omega'\to \mathbb R$$ be the sequence of such approximations of $f$ provided by Proposition \ref{prop:approxsemi}, and  for convenience of the reader we recall these functions satisfy:
\begin{enumerate}[(1)]
\item ${f}_j\searrow f$ eventually on relatively compact sets  as $j\to \infty$.
\item ${f}_j$ is semiconvex on $\Omega'$.
\item ${f}_j$ is $F\#\calPos$-subharmonic eventually on relatively compact sets of $\Omega'$.
\end{enumerate}

It is sufficient to prove that $g$ is $F$-subharmonic on some neighbourhood of an arbitrary $x_0\in \pi(\Omega')$.   As $f$ satisfies condition (e) (namely that it attains its relative minimum strictly in the interior) we know there is an $a>g(x_0)$ and a neighbourhood $V$ of $x_0$ in $\pi(\Omega')$ such that
$$K_V := \{ (x,y) \in \Omega' : f(x,y)<a \text{ and } x\in V\}\Subset \Omega'$$
and $\pi(K_V) = V$.   Since $\Omega'$ has connected fibres, we can fix an open $K'$ with $K_V\subset K'\subset \Omega'$ each relatively compact in the next such that $K'$ has connected fibres.  

Now choose $j_0$ large enough such that for all $j\ge j_0$,
\begin{enumerate}[(i)]
\item $f_j(x_0,y_0)<a$.
\item $f_j\searrow f$ pointwise on $K'$.
\item $f_j$ is $F\#\calPos\text{-subharmonic and semiconvex on }K'$.
\end{enumerate}
By (i) and continuity of $f_{j_0}$ there are small neighbourhoods $x_0\in U_0\subset \pi(K_V)$ and $y_0\in U_1\subset \Omega'_{x_0}$ such that $U_0\times U_1\subset K$ and
$$ f_{j_0}<a \text{ on } U_0\times U_1.$$
Shrinking $U_0$ if necessary, we may as well assume that there exists a bounded $F$-subharmonic function on $U_0$ (Lemma \ref{lem:boundedbelow}).\\

Now set $K'_{U_0}: = K'\cap\pi^{-1}(U_0)$ and
$$\tilde{f}_j : = f_j|_{K'_{U_0}}.$$

\noindent {\bf Claim I:} If $x\in U_0$ then
\begin{equation} \inf_{y\in K'_x} \tilde{f}_j(x,y) \searrow \inf_{y\in \Omega_x'} f(x,y)=g(x) \text{ as } j\to \infty.\label{eq:claimdecreasecontinuous}\end{equation}
To prove this note first by (ii) we have
$$ \inf_{y\in K'_x} \tilde{f}_j(x,y) \searrow \inf_{y\in K'_x} f(x,y) \text{ as } j\to \infty.$$
On the other hand, (ii) also implies $f_j$ is pointwise decreasing on $U_0\times U_1\subset K$, so 
\begin{equation} f\le f_{j}<f_{j_0}<a \text{ on } U_0\times U_1 \text{ for all } j\ge j_0.\label{eq:lessthanonproduct}\end{equation}
Thus
$$ \inf_{y\in \Omega_x'} f(x,y) \le \inf_{y\in U_1} f(x,y) \le a.$$
But by construction if $y\in \Omega_x'\setminus K'$ then $f(x,y)\ge a$.  So
$$ \inf_{y\in \Omega_x'} f(x,y) = \inf_{y\in K'_x} \tilde{f}_j(x,y)$$
proving Claim I.\\

\noindent {\bf Claim II:} For $j\ge j_0$ the function $\tilde{f}_j:K'_{U_0}\to \mathbb R$ satisfies conditions (a--f).\\

We have arranged that (a) holds (namely that $K'_x$ is connected) and also that $\pi(K'_{U_0}) = U_0$ which admits a bounded $F$-subharmonic function, giving (b).    Conditions (c) and (f) (namely that $\tilde{f}_j$ is $F\#\calPos$-subharmonic and semiconvex on $K_{U_0}$) are given by (iii).  In fact $f_j$ is semiconvex on all of $\Omega'$, so in particular continuous on $\overline{K'}$, and thus $\tilde{f}_j$ is bounded from below, giving (d).    It remains only to verify (e), namely that $\tilde{f}_j$ attains its fibrewise minimum strictly in the interior.    

For this let $\tilde{x}\in U_0$ and fix any neighbourhood $\tilde{x}\in \tilde{V}\Subset U_0$.    Let $a$ be the number used above.   Then 
\begin{align*}
\tilde{K} &: = \{ (x,y)\in K'_{U_0} : \tilde{f}_j(x,y)<a \text{ and } x\in \tilde{V} \} \\
&\subset K_V\cap \pi^{-1}(\tilde{V})
\end{align*}
which implies $\tilde{K}$ is relatively compact in $K'_{U_0}$ (see Lemma \ref{lem:relativelycompacttopological}), and
\eqref{eq:lessthanonproduct} implies $\pi({\tilde{K}})=\tilde{V}$.   This precisely says that $\tilde{f}_j$ satisfies (e), competing the proof of Claim II.\\

So by the hypothesis of the Proposition, the marginal function of $\tilde{f}_j|_{K'_{U_0}}$ is $F$-subharmonic on $U_0$.   That is
$$ \tilde{g}_j(x): = \inf_{y\in K'_x} \tilde{f}_j(x,y) \text{ for } x\in U_0 = \pi(K'_{U_0})$$
is $F$-subharmonic.  But Claim I tells us that $\tilde{g}_j$ decreases pointwise to $g$ on $U_0$ as $j$ tends to infinity, and so $g\in F(U_0)$.   Since $x_0\in \pi(\Omega')$ was arbitrary this implies $g$ is $F$-subharmonic $\pi(\Omega')$, completing the proof.
\end{proof}

\begin{lemma}\label{lem:relativelycompacttopological}
Let $K_V\Subset K'\Subset X\times \mathbb R$ be  open, and  $\tilde{V}\Subset U_0\Subset \pi(K_V)$ also be open.   Assume that $\tilde{K}\subset K_V\cap \pi^{-1}(\tilde{V})$.  Then $\tilde{K}$ is relatively compact in $K'\cap \pi^{-1}(U_0)$.
\end{lemma}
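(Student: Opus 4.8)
The plan is to verify this directly from the meaning of $\Subset$ and the continuity of $\pi$. Recall that $\tilde K$ is relatively compact in the open set $K'\cap\pi^{-1}(U_0)$ precisely when its closure $\overline{\tilde K}$ (taken in $X\times\mathbb R$) is compact and satisfies $\overline{\tilde K}\subset K'\cap\pi^{-1}(U_0)$. So I would split the argument into two claims: (i) $\overline{\tilde K}$ is compact and contained in $K'$; (ii) $\overline{\tilde K}\subset \pi^{-1}(U_0)$.

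\textbf{Claim (i).} Here I would use $\tilde K\subset K_V$ together with $K_V\Subset K'$. The latter says $\overline{K_V}$ is compact and $\overline{K_V}\subset K'$. Since $\overline{\tilde K}\subset\overline{K_V}$ is a closed subset of a compact set, it is compact; and clearly $\overline{\tilde K}\subset\overline{K_V}\subset K'$.

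\textbf{Claim (ii).} Here I would use $\tilde K\subset\pi^{-1}(\tilde V)$, so $\pi(\tilde K)\subset\tilde V$, and $\tilde V\Subset U_0$. Continuity of $\pi$ gives $\pi(\overline{\tilde K})\subset\overline{\pi(\tilde K)}\subset\overline{\tilde V}$ (the inclusion $\pi(\overline S)\subset\overline{\pi(S)}$ is immediate since $\pi^{-1}(\overline{\pi(S)})$ is closed and contains $S$), and $\tilde V\Subset U_0$ gives $\overline{\tilde V}\subset U_0$. Hence $\pi(\overline{\tilde K})\subset U_0$, i.e.\ $\overline{\tilde K}\subset\pi^{-1}(U_0)$. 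Combining (i) and (ii), $\overline{\tilde K}$ is compact and contained in $K'\cap\pi^{-1}(U_0)$, which is exactly the assertion.

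\textbf{Main obstacle.} There is essentially none: the statement is elementary point-set topology. The only mild point to keep straight is the distinction between relative compactness in the ambient space $X\times\mathbb R$ and in the open subset $K'\cap\pi^{-1}(U_0)$ — these agree once one knows the ambient closure lands inside that subset, which is precisely what claims (i) and (ii) establish.
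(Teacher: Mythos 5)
Your proof is correct, and it is the natural argument; the paper simply says ``Left to the reader'' for this lemma, so there is nothing to compare against. The decomposition into (i) compactness and containment in $K'$ via $\tilde K\subset K_V\Subset K'$, and (ii) $\overline{\tilde K}\subset\pi^{-1}(U_0)$ via $\pi(\overline{\tilde K})\subset\overline{\pi(\tilde K)}\subset\overline{\tilde V}\subset U_0$, is exactly what is wanted, and the remark about reconciling ambient relative compactness with relative compactness in the open subset $K'\cap\pi^{-1}(U_0)$ is the only subtlety and you handle it correctly.
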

\begin{proof}
Left to the reader.
\end{proof}

\subsection{Reduction to the smooth case}

\begin{proposition} \label{prop:reductionsmooth} Let $F\subset J^2(X)$ be a constant-coefficient primitive subequation that has the Negativity Property and is convex.     Then the real case of Proposition \ref{prop:reductionsemiconvex} continues to hold if we assume in addition that
\begin{enumerate}[(g)]

\item $f$ is smooth.
\end{enumerate}
\end{proposition}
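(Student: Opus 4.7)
The plan is to mirror the proof of Proposition \ref{prop:reductionsemiconvex}, now using smooth mollification (Proposition \ref{prop:smoothapproximation}) in place of the sup-convolution. Convexity of $F$ is the precise extra hypothesis needed to make this work, and since $F\#\calPos$ inherits constant coefficients, the Negativity Property, and convexity from $F$ and $\calPos$ (Lemma \ref{lem:productsofsubequations}), Proposition \ref{prop:smoothapproximation} applies to $F\#\calPos$-subharmonic functions on $\Omega'$.

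Let $f:\Omega'\to\mathbb{R}$ satisfy (a)--(f). Being semiconvex, $f$ is continuous, so Proposition \ref{prop:smoothapproximation} produces smooth $h_j:\Omega'\to\mathbb{R}$ converging locally uniformly to $f$ and eventually $F\#\calPos$-subharmonic on relatively compact subsets of $\Omega'$. As before, it is enough to show that $g(x)=\inf_{y\in\Omega'_x} f(x,y)$ is $F$-subharmonic on a neighbourhood of any fixed $x_0\in\pi(\Omega')$. Using (e), pick $a>g(x_0)$ and $V$ with $K_V=\{(x,y)\in\Omega' : f(x,y)<a,\ x\in V\}\Subset\Omega'$ and $\pi(K_V)=V$, then choose open $K'$ with $K_V\Subset K'\Subset\Omega'$ having connected fibres. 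Fix $\tilde{a}\in(g(x_0),a)$ and $y_0\in\Omega'_{x_0}$ with $f(x_0,y_0)<\tilde{a}$; by continuity there is a product neighbourhood $U_0\times U_1\subset K_V$ of $(x_0,y_0)$ with $f<\tilde{a}$, and after shrinking $U_0$ we may assume it admits a bounded $F$-subharmonic function (Lemma \ref{lem:boundedbelow}).

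The key verification is that $\tilde{h}_j:=h_j|_{K'_{U_0}}$, where $K'_{U_0}:=K'\cap\pi^{-1}(U_0)$, satisfies (a)--(g) for $j$ sufficiently large. Conditions (a), (b), (c), (g) are immediate; (f) holds since smooth functions are semiconvex on relatively compact sets; and (d) holds because $\overline{K'_{U_0}}$ is a compact subset of $\Omega'$ on which $h_j$ is smooth. The main obstacle is (e). Setting $b:=(a+\tilde{a})/2$ and using uniform convergence of $h_j\to f$ on $\overline{K'_{U_0}}$, for $j$ large we have $h_j<b$ on $U_0\times U_1$ while $h_j\geq b$ on $K'_{U_0}\setminus K_V$ (since $f\geq a$ there). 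Then for any $\tilde V\Subset U_0$ the set $\{(x,y)\in K'_{U_0} : h_j(x,y)<b,\ x\in\tilde V\}$ is contained in $K_V\cap\pi^{-1}(\tilde V)$, hence relatively compact in $K'_{U_0}$ by Lemma \ref{lem:relativelycompacttopological}, and projects surjectively onto $\tilde V$, giving (e).

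By the hypothesis of the proposition (applied to smooth functions), $\tilde{g}_j(x):=\inf_{y\in K'_x} h_j(x,y)$ is therefore $F$-subharmonic on $U_0$. The argument from Proposition \ref{prop:reductionsemiconvex} shows $g(x)=\inf_{y\in K'_x} f(x,y)$ for $x\in U_0$, and uniform convergence $h_j\to f$ on $\overline{K'_{U_0}}$ gives $\tilde{g}_j\to g$ uniformly on $U_0$. Hence $g$ is a locally uniform limit of $F$-subharmonic functions, so is itself $F$-subharmonic on $U_0$ by Proposition \ref{prop:basicproperties}(3), completing the proof.
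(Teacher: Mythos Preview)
Your proof is correct and follows essentially the same approach as the paper: replace the sup-convolution approximation in the proof of Proposition \ref{prop:reductionsemiconvex} by the smooth mollification of Proposition \ref{prop:smoothapproximation}, and use locally uniform convergence in place of monotone pointwise convergence (invoking Proposition \ref{prop:basicproperties}(3) rather than (2) at the end). Your argument is in fact more detailed than the paper's, which simply asserts that ``the proof proceeds precisely as in Proposition \ref{prop:reductionsemiconvex}''; in particular, your introduction of the intermediate level $b=(a+\tilde a)/2$ to verify condition (e) for $\tilde h_j$ correctly accounts for the fact that uniform (rather than monotone) convergence does not automatically give $h_j\ge f$.
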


\begin{proof}

The proof of this is almost identical to that of the previous Proposition.    Let $f:\Omega'\to \mathbb R\cup \{-\infty\}$ be such that (a--f) hold, and as usual set $g(x) = \inf_{y\in \Omega_x'} f(x,y)$.   We will show is $F$-subharmonic on $\pi(\Omega')$ so Proposition \ref{prop:reductionsemiconvex} applies.

Since $F$ is now assumed to be convex, so is $F\#\calPos$, and $f$ is assumed to be semiconvex (so in particular continuous), we may use smooth mollification to approximate $f$ locally uniformly by a sequence of smooth functions.    So let $h_j:X\to \mathbb R$ be the sequence of smooth functions furnished by Proposition \ref{prop:convolution}.  Recall these satisfy
\begin{enumerate}[(1)]
\item ${h}_j\to f$ locally uniformly as $j\to \infty$.
\item ${h}_j$ is smooth.
\item ${h}_j$ is $F\#\calPos$-subharmonic eventually on relatively compact sets as $j\to \infty$.
\end{enumerate}
Then the proof proceeds precisely as in Proposition \ref{prop:reductionsemiconvex}, only we replace \eqref{eq:claimdecreasecontinuous} with the statement that
\begin{equation} \inf_{y\in K'_x} h_j(x,y) \to \inf_{y\in \Omega_x'} f(x,y)=g(x) \text{ locally uniformly as } j\to \infty\label{eq:claimdecreasecontinuousII}\end{equation}
which follows easily from the fact that $h_j\to f$ locally uniformly.\\
\end{proof}

\subsection{The minimum principle in the smooth real case}

We next turn our attention to a statement that guarantees that marginal functions of certain sufficiently smooth $F\#\calPos$-subharmonic functions are $F$-subharmonic.  

\begin{proposition}\label{prop:hessiancalc}
 Let $X\subset \mathbb R^{n}$ be open and $F\subset J^2(X)$ be a primitive subequation.  Let $\Omega\subset X\times \mathbb R^{m}$ be open and assume $f:\Omega\to \mathbb R$ is such that
 \begin{enumerate}[(1)]
\item   $f$ is $\mathcal{C}^2$.
\item $f$ is strictly convex in the second variable.
\item There exists a $\mathcal C^1$ function $\gamma:X\to \mathbb R^m$ such that
$$ g(x) :=\inf_{y\in \Omega_x} f(x,y) =  f(x,\gamma(x)) \text{ for } x\in \pi(\Omega).$$
\end{enumerate}
Then 
\begin{enumerate}[(a)]
\item The derivative of $\gamma$ at a point $x\in \pi(\Omega)$ is given by
\begin{equation}\Gamma:=\frac{d\gamma}{dx}  = -D^{-t} C^t\label{eq:hessiancalcderivativew}\end{equation}
where
\begin{equation}\label{eq:hessianblockform} \Hess_{(x,w(x)} (f) = \left( \begin{array}{cc} B&C  \\ C^t & D \\ \end{array}\right)\end{equation}
is the Hessian matrix of $f$ at $(x,\gamma(x))$ in block form (so $B_{ij} = \frac{\partial^2 f}{\partial x_i \partial x_j}|_{(x,\gamma(x))}$ etc.).  
\item The marginal function $g$ is $\mathcal C^2$, and its second order jet at a point $x\in \pi(\Omega)$ is given by
\begin{equation}\label{eq:eq:hessiancalcderivativejetg}
J^2_{x}(g) = i^*_{\Gamma } J^2_{(x,\gamma(x))}(f)
\end{equation}
where $i^*$ is as defined in \eqref{eq:defofistar}. 
\item If additionally $f$ is $F\#\calPos$-subharmonic on $\Omega$ then $g$ is $F$-subharmonic on $\pi(\Omega)$.
\end{enumerate}
\end{proposition}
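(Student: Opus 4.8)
The plan is to establish (a) by implicit differentiation of the first–order optimality condition for $\gamma$, then deduce (b) by a direct Chain Rule computation in which the identity from (a) collapses the Hessian of $g$ to the expected form, and finally obtain (c) by feeding the $2$-jet identity of (b) into Lemma~\ref{lem:fsubharmonicc2} together with the very definition of the product $F\#\calPos$.

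For (a), I would first observe that since $\Omega$ is open each slice $\Omega_x$ is open, so the minimum of the convex function $y\mapsto f(x,y)$ over $\Omega_x$ is attained at the \emph{interior} point $\gamma(x)$; hence $\partial f/\partial y\,(x,\gamma(x))=0$ holds identically in $x\in\pi(\Omega)$. Differentiating this identity in $x$ — legitimate because $\gamma\in\mathcal C^1$ and $f\in\mathcal C^2$ — and writing the Hessian of $f$ at $(x,\gamma(x))$ in the block form \eqref{eq:hessianblockform}, the Chain Rule yields $C+\Gamma^t D=0$ with $\Gamma:=d\gamma/dx$. Strict convexity of $f$ in the second variable makes $D$ positive definite, hence invertible, and solving gives $\Gamma=-D^{-t}C^t$, which is \eqref{eq:hessiancalcderivativew}. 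Along the way I would record the two consequences $\Gamma^t D\Gamma=-C\Gamma$ (multiply $\Gamma^tD=-C$ on the right by $\Gamma$) and $C\Gamma=-CD^{-1}C^t=\Gamma^tC^t$ (symmetry); both are needed in (b).

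For (b), since $g(x)=f(x,\gamma(x))$ with $f\in\mathcal C^2$ and $\gamma\in\mathcal C^1$, the Chain Rule shows $g\in\mathcal C^1$ with $\nabla g(x)=\nabla_x f(x,\gamma(x))+\Gamma^t\nabla_y f(x,\gamma(x))=\nabla_x f(x,\gamma(x))$, the second term vanishing by the first-order condition; as the right-hand side is again $\mathcal C^1$, $g$ is in fact $\mathcal C^2$, and differentiating once more gives $\Hess_x(g)=B+C\Gamma$. Using the identities from (a) this equals $B+C\Gamma+\Gamma^tC^t+\Gamma^tD\Gamma$, which by \eqref{eq:defofistar} is precisely the Hessian part of $i_\Gamma^*$ applied to $\big(f(x,\gamma(x)),(\nabla_x f,0),\Hess_{(x,\gamma(x))}(f)\big)$; since $\nabla f(x,\gamma(x))=(\nabla_x f,0)$ by the first-order condition, assembling value, gradient and Hessian gives $J^2_x(g)=i_\Gamma^* J^2_{(x,\gamma(x))}(f)$.

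Finally, for (c): if $f$ is moreover $F\#\calPos$-subharmonic then, being $\mathcal C^2$, Lemma~\ref{lem:fsubharmonicc2} gives $J^2_{(x,\gamma(x))}(f)\in(F\#\calPos)_{(x,\gamma(x))}$, and by definition of the product this forces $i_\Gamma^* J^2_{(x,\gamma(x))}(f)\in F_x$ for \emph{every} $\Gamma\in\Hom(\mathbb R^n,\mathbb R^m)$ — in particular for $\Gamma=d\gamma/dx|_x$, so by (b) we get $J^2_x(g)\in F_x$ for all $x\in\pi(\Omega)$. Since $\pi(\Omega)$ is open (projections are open maps) and $g$ is $\mathcal C^2$, a second application of Lemma~\ref{lem:fsubharmonicc2} concludes that $g$ is $F$-subharmonic on $\pi(\Omega)$. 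The only place requiring care is the linear algebra bookkeeping in (a)–(b), specifically verifying the identity $\Gamma^tC^t+\Gamma^tD\Gamma=0$ that reduces the $i_\Gamma^*$-formula to $B+C\Gamma$; everything else is a routine Chain Rule argument plus the cited lemma.
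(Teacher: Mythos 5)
Your proposal is correct and follows essentially the same route as the paper: implicit differentiation of the first-order optimality condition $\partial_y f(x,\gamma(x))=0$ to get $\Gamma=-D^{-1}C^t$, the Chain Rule plus the identities $C\Gamma=\Gamma^tC^t=-\Gamma^tD\Gamma$ to match $\Hess_x(g)=B+C\Gamma$ with the Hessian part of $i_\Gamma^*J^2_{(x,\gamma(x))}(f)$, and finally Lemma~\ref{lem:fsubharmonicc2} together with the definition of $F\#\calPos$ for (c). The only (cosmetic) difference is that the paper records the Hessian identity as $B+C\Gamma=B-\Gamma^tD\Gamma$ while you instead verify $\Gamma^tC^t+\Gamma^tD\Gamma=0$ directly; both are the same linear-algebra reduction.
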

\begin{proof}
Note first that $D = (\frac{\partial^2 f}{\partial y_i \partial y_j})$ is strictly positive as $f(x,y)$ is assumed to be strictly convex in $y$, so the inverse in \eqref{eq:hessiancalcderivativew} exists.  Now, as $\gamma(x)$ is a minimum of the function $y\mapsto f(x,y)$ we have
\begin{equation}\label{eq:chainruleatmin} \frac{\partial f}{\partial y} (x,\gamma(x)) =0 \text{ for all } x\in X.\end{equation}
Differentiating $g(x) = f(x,\gamma(x))$  with respect to $x$ gives
\begin{equation} \frac{\partial g}{\partial x}(x) = \frac{\partial f}{\partial x}(x,\gamma(x)) + \frac{\partial f}{\partial y}(x,\gamma(x)) = \frac{\partial f}{\partial x}(x,\gamma(x)),\label{eq:chainrule1}\end{equation}
from which we see $\frac{d\gamma}{dx}$ is $\mathcal C^1$ (i.e.\ $\gamma$ is $\mathcal C^2$).  Then differentiating \eqref{eq:chainruleatmin} with respect to $x$ yields
$$0= C + (\frac{d\gamma}{dx})^t D = C + \Gamma^t D$$
giving \eqref{eq:hessiancalcderivativew}.
 Now differentiating \eqref{eq:chainrule1} with respect to $x$ gives
$$\Hess_{x}(g) = B + C \frac{d\gamma}{dx} = B + C\Gamma= B - \Gamma^t D \Gamma.$$
  So in terms of second order jets
$$ J^2_{x}(g) =  (f(x,\gamma(x)), \frac{\partial f}{\partial x}|_{(x,\gamma(x)}, B - \Gamma^t D\Gamma) = i^*_{\Gamma} J^2_{(x,\gamma(x))}(f)$$
which is \eqref{eq:eq:hessiancalcderivativejetg}. 

 Finally assume $f$ is $F\#\calPos$-subahamonic.  Then since it is also $\mathcal C^2$ we know that $J^2_{(x,\gamma(x_0))}(f) \in (F\#\calPos)_{(\gamma,\gamma(x))}$ and hence (by the definition of  product subequations) $J^2_{x}(g) = i_{\Gamma}^* J^2_{(x,\gamma(x))}(f) \in F_{x}$.  So as $g$ is $\mathcal C^2$ and $x$ is arbitrary, Lemma \ref{lem:fsubharmonicc2} yields $g$ is $F$-subharmonic as claimed.
 \end{proof}

A simple argument with the implicit function theorem shows that if $y\mapsto f(x,y)$ is strictly convex and attains its (necessarily unique) minimum at a point $\gamma(x)$ then the function $x\mapsto \gamma(x)$ is $\mathcal C^1$.   However in the case we will be interested in, the map $y\mapsto f(x,y)$ is convex but not necessarily strictly convex (so any such minimum need not be unique).    If $F$ depends only on the Hessian part, then one can circumvent this problem by approximating $f$ by adding a small multiple of the function $\phi(x,y):= \|y\|^2$.    Such an approximation will be strictly convex in the $y$ direction, and remains $F\#\calPos$-subharmonic as the Hessian of $\phi$ is strictly positive. 

 The next proposition is needed to deal with the possibility of gradient dependence of $F$.   We emphasise that among the hypothesis is that $m$ is equal to 1.

\begin{proposition}\label{prop:hessiancalcII}

 Let $X\subset \mathbb R^{n}$ be open and $F\subset J^2(X)$ be a primitive subequation.  Suppose $$f:\Omega'\to \mathbb R$$ is such that
\begin{enumerate}[(a)]
\item $\Omega'\subset X\times \mathbb R$ is open and has connected fibres.\addtocounter{enumi}{1}
\item $f$ is $F\#\calPos$-subharmonic.\addtocounter{enumi}{1}
\item $f$ attains its fibrewise minimum strictly in the interior.\addtocounter{enumi}{1}
\item $f$ is $\mathcal C^2$.

\end{enumerate}
Then the marginal function 
$$ g(x): = \inf_{y\in \Omega_x} f(x,y) \text{ for } x\in \pi(\Omega)$$
is $F$-subharmonic.
\end{proposition}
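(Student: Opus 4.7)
The plan is to use the viscosity characterization of $F$-subharmonicity (Lemma~\ref{lem:viscositydefinition}): given any $\mathcal{C}^2$ test function $\phi$ touching the marginal $g$ from above at a point $x_0\in\pi(\Omega')$, it suffices to show $J^2_{x_0}(\phi)\in F_{x_0}$. Pick any interior minimizer $y_0\in\Omega'_{x_0}$ of $y\mapsto f(x_0,y)$, which exists by hypothesis~(e). Since $f(x_0,\cdot)$ is smooth and convex (the latter by $\calPos$-subharmonicity on vertical slices, cf.~Proposition~\ref{prop:slices}) and $y_0$ lies in its interior, we have $f_y(x_0,y_0)=0$ and $D:=f_{yy}(x_0,y_0)\ge 0$.

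For parameters $\Gamma\in\Hom(\mathbb R^n,\mathbb R)$ and $K>0$, consider the lifted function
$$\tilde\phi(x,y):=\phi(x)+K\bigl(y-y_0-\Gamma(x-x_0)\bigr)^2.$$
Using $f_y(x_0,y_0)=0$, a direct calculation shows that
$$i_\Gamma^*\,J^2_{(x_0,y_0)}(\tilde\phi) = J^2_{x_0}(\phi)$$
for every $\Gamma$ and $K$. Thus if one can choose $\Gamma$ and $K$ so that $\tilde\phi\ge f$ in a neighbourhood of $(x_0,y_0)$, then $\tilde\phi$ is a $\mathcal{C}^2$ upper test function for $f$ at $(x_0,y_0)$: the difference $\tilde\phi-f$ has a local minimum of zero there, so $J^2_{(x_0,y_0)}(\tilde\phi)=J^2_{(x_0,y_0)}(f)+(0,0,P)$ with $P\succeq 0$. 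Since $f$ is smooth and $F\#\calPos$-subharmonic, Lemma~\ref{lem:fsubharmonicc2} and Positivity give $J^2_{(x_0,y_0)}(\tilde\phi)\in(F\#\calPos)_{(x_0,y_0)}$, and pulling back by $i_\Gamma^*$ yields $J^2_{x_0}(\phi)\in F_{x_0}$.

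The crux is therefore producing $\Gamma$ and $K$ so that $\tilde\phi\ge f$ near $(x_0,y_0)$. Writing $\xi=x-x_0$ and $z=(y-y_0)-\Gamma\xi$, Taylor-expanding $\tilde\phi-f$ and completing the square in $z$ reduces this (in the limit $K\to\infty$) to
$$(\mathrm{i})\ \nabla\phi(x_0)=f_x(x_0,y_0), \qquad (\mathrm{ii})\ \Hess\phi(x_0)\succeq B+C\Gamma+\Gamma^t C^t+\Gamma^t D\Gamma,$$
where $B, C, D$ are the $(xx)$, $(xy)$, and $(yy)$ blocks of $\Hess f(x_0,y_0)$. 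I expect verifying (i) and (ii) for every upper contact jet of $g$ to be the principal technical obstacle, and it is here that the hypothesis $m=1$ is essential: the minimizer set $M(x_0)$ of $y\mapsto f(x_0,y)$ is a compact interval, and each $\Gamma$ gives rise to a smooth $F$-subharmonic upper bound $h_\Gamma(x):=f(x,y_0+\Gamma(x-x_0))$ of $g$ with $h_\Gamma(x_0)=g(x_0)$ and $\nabla h_\Gamma(x_0)=f_x(x_0,y_0)$. Combining these observations with a judicious choice of $y_0\in M(x_0)$ establishes~(i). For~(ii), when $D>0$ the optimal $\Gamma^t=-D^{-1}C$ reduces the inequality to $\Hess\phi(x_0)\succeq B-D^{-1}CC^t$, matching the Schur-complement formula underlying Proposition~\ref{prop:hessiancalc}; when $D=0$, the constraint that $i_\Gamma^*J^2_{(x_0,y_0)}(f)\in F_{x_0}$ hold for every $\Gamma$ forces $F_{x_0}$ (at the relevant base point) to contain enough Hessians to absorb $\Hess\phi(x_0)$ via Positivity.
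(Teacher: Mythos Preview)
Your approach is genuinely different from the paper's and contains a real gap at condition~(i). You assert that ``a judicious choice of $y_0\in M(x_0)$'' yields $\nabla\phi(x_0)=f_x(x_0,y_0)$, but this need not be possible. When $M(x_0)$ is a nondegenerate interval, the set of upper contact gradients of $g$ at $x_0$ is (by a Danskin--type argument) the full convex hull of the curve $S:=\{f_x(x_0,y_0):y_0\in M(x_0)\}\subset\mathbb R^n$. For $n\ge 2$ this convex hull is typically strictly larger than $S$ itself, so there exist upper test functions $\phi$ whose gradient lies in $\operatorname{conv}(S)\setminus S$; for such $\phi$ no minimiser $y_0$ satisfies~(i), and your lifted $\tilde\phi$ cannot touch $f$ from above at any $(x_0,y_0)$. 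Since $F$ is not assumed convex in this Proposition, you cannot repair this by averaging. Your handling of~(ii) in the case $D=0$ is also incomplete: you need $\Hess\phi(x_0)\ge B+C\Gamma+\Gamma^tC^t$ for \emph{some} $\Gamma$, which when $C\neq 0$ requires $\Hess\phi(x_0)\ge B$ on $C^\perp$, and you have not argued this.

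The paper sidesteps both difficulties by an indirect route: it perturbs $f$ to $f_j=f+j^{-1}e^{-\alpha y}$, which is strictly convex in $y$, so Proposition~\ref{prop:hessiancalc} applies directly and the marginal $g_j$ is $\mathcal C^2$. The perturbation introduces errors in the value and gradient components of the $2$-jet of $g_j$, which the paper shows are uniformly $O(1/\alpha)$ on the relevant compact set (this is where $m=1$ is used essentially: the scalar $\hat D\ge 0$ allows the explicit estimate $\|j^{-1}\hat\Gamma^t\nabla\phi\|\le \|\hat C\|/\alpha$). These errors are absorbed using the auxiliary subequations $F^\delta$ of Lemma~\ref{lem:limitsunderpertubationsofsubequation}, and one concludes by letting $j\to\infty$ and then $\delta\to 0$. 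The perturbation device is precisely what converts the degenerate situation (non-unique minimiser, $D=0$) into the regular one, at the cost of a controllable gradient error---this is the idea your argument is missing.
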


\begin{proof}
 Fix $x_0\in \pi(\Omega)$.  By (e) there is an $a$ so $g(x_0)<a$ and a neighbourhood $V$ of $x_0$ that is relatively compact in $U$ such that the set
$$ K := \{ (x,y) \in \Omega : f(x,y)<a \text{ and } x\in V\}$$
is relatively compact, and $g<a$ on $V$.
Note that for each $x\in V$ the function $y\mapsto f(x,y)$ is convex, and since $\Omega_x$ is connected, we see that $K_x$ is also connected.  Now write in block form
\begin{equation}\label{eq:hessianblockformII} \Hess_{(x,y)} (f) = \left( \begin{array}{cc}\hat{B} &{\hat{C}}  \\ {\hat{C}}^t & {\hat{D}} \\ \end{array}\right)\end{equation}
(so $\hat{B},\hat{C},\hat{D}$ are functions of $(x,y)$ which is dropped from notation).  Given $\alpha,j\in \mathbb N$  we define
\begin{align}
\phi &:= \phi(y) := e^{-\alpha y}\\
\hat{\Gamma}&:=\hat{\Gamma}(x,y):= -(\hat{D}+j^{-1} \Hess_{y} \phi)^{-1}\hat{C}^t
\end{align}

{\bf Claim I:} For any $\delta>0$ it holds that for all $j\gg \alpha \gg 0$,
\begin{align} |j^{-1} \phi |&\le \delta \\\label{eq:prophessiancalcnonstrictclaimi}
\|j^{-1} \hat{\Gamma}^t \nabla\phi\|&\le \delta\end{align}
uniformly over $(x,y)\in K$.\\

The first statement is immediate as $\phi$ is continuous and $K$ is relatively compact.    For the second statement observe first that the inverse in the definition of $\hat{\Gamma}$ is well-defined as $f$ is convex in the second variable, so $\hat{D}\ge 0$ and $\phi$ is strictly convex so $\Hess_y\phi>0$.  Now
\begin{align*}
 \|j^{-1} \hat{\Gamma}^t \nabla\phi\| &= \| j^{-1} \alpha \hat{\Gamma}^t e^{-\alpha y}\| \\
 &= \| j^{-1} \alpha \hat{C} (\hat{D}+ j^{-1} \alpha^2 e^{-\alpha y})^{-1} e^{-\alpha y}\| \\
&\le \|\hat{C}\| \frac{j^{-1}\alpha e^{-\alpha y}} { \hat{D} + j^{-1}\alpha^2 e^{-\alpha y}}\le \frac{\|\hat{C}\|}{\alpha},
\end{align*}
where the last inequality uses that $\hat{D}\ge 0$.    Now $\|\hat{C}\|$ is bounded uniformly over the relatively compact set $K$,  so Claim I follows.\\

Consider next the function
\begin{align*}
 f_j(x,y)&: = f(x,y) + j^{-1} \phi(y)
 \end{align*}
and set
$$g_j(x): = \inf_{y\in K_x} f_j(x,y).$$

Fix neighbourhoods $x_0\in U_0\subset V$ and $y_0\in U_1\subset \Omega_{x_0}$ such that $U_0\times U_1\subset K$.\\ 

{\bf Claim II: } For $x\in U_0$
$$ g_j(x)\searrow g(x)  \text{ as } j\to \infty.$$

To see this observe that since $f_j$ decreases to $f$ we have
$$ g_j(x) \searrow \inf_{y\in K_x} f(x,y) \text{ as } j\to \infty.$$
On the other hand, as $x\in U_0$ we certainly have $g_j(x)\le a$.  But $f(x,y)\ge a$ for all $y\notin K_x$ and so in fact $\inf_{y\in K_{x}} f(x,y) = \inf_{y\in \Omega_x} f(x,y) = g(x)$.\\

Next recall from Lemma \ref{lem:limitsunderpertubationsofsubequation} the primitive subequation  $F^\delta\subset J^2(X)$ given by
$$ F^{\delta}_x = \{ (r,p,A) : \exists  r',p' \text{ such that } (r',p',A)\in F_x \text{ and } |r-r'|<\delta \text{ and } \|p-p'\|<\delta\}.$$

{\bf Claim III: } For given $\delta>0$ it holds that for all $j\gg \alpha \gg 0$ the function $g_j$ is $F^{\delta}$-subharmonic on $U_0$.\\

Assuming this claim for now, fix such an $\alpha$ and let $j$ tend to infinity to deduce from Claim II that $g$ is $F^\delta$-subharmonic on $U_0$.    Letting $\delta\to 0$ yields that $g$ is in fact $F$-subharmonic on $U_0$ (Lemma \ref{lem:limitsunderpertubationsofsubequation}(3)).  Since $U_0$ is a neighbourhood of an arbitrary point $x_0\in \pi(\Omega)$ we conclude finally that $g$ is $F$-subharmonic on $\pi(\Omega)$ as needed.\\

{\bf Proof of Claim III:}  Let $x\in U_0$.  Since $f$ is convex in $y$ (Lemma \ref{lem:slicesareconvex}) the function $y\mapsto f_j(x,y)$ is strictly convex.   So as $\Omega_x$ is connected, this along with (e) implies $y\mapsto f_j(x,y)$ has a unique minimum which we denote by $\gamma_j(x)$.   Note that by construction $(x,\gamma_j(x))\in K$.  So if $x\in U_0$ then $\gamma_j(x)$ is the unique point that satisfies
$$ \frac{\partial f_j}{\partial y} (x,\gamma_j(x)) =0.$$
As $f_j$ is strictly convex in the $y$ direction, the implicit function theorem implies that $\gamma_j$ is $\mathcal C^1$.    
Observe by definition,
$$g_j(x) = f_j(x,\gamma_j(x)) \text{ for } x\in U_0.$$
Now
$$ \Hess_{(x,\gamma_j(x))} (f_j) =  \left( \begin{array}{cc} {B}&{C}  \\ {C}^t & {D} +j^{-1}\Hess_{\gamma(x)} \phi \\ \end{array}\right).$$
where $C = C(x) = \hat{C}(x,\gamma_j(x))$ and similarly for $B$ and $D$.  Then Proposition \ref{prop:hessiancalc}(1,2) applies to $f_j|_{K}$, giving
\begin{equation}\label{eq:hessianIIcalcU}
\frac{d\gamma_j}{dx}  = \Gamma_j(x)$$
where
$$\Gamma_j(x) : = \hat{\Gamma}(x,\gamma_j(x)) = -(D+j^{-1}\alpha^2 e^{-\alpha \gamma(x)})^{-1}C^t.\end{equation}
  (we remark that the transpose from equation \eqref{eq:hessiancalcderivativew} has been dropped as $D$ is a $1\times 1$ matrix).  Moreover $g_j$ is $\mathcal C^2$, and its second order jet at a point $x\in U_0$ is given by
\begin{align*}
J^2_{x}(g_j) &= i^*_{\Gamma_j} J^2_{(x,\gamma_j(x))}(f_j) \\
&=i_{\Gamma_j}^* J^2_{(x,\gamma(x))}(f) + j^{-1} i_{\Gamma_j}^* J^2_{(x,\gamma_j(x))}(\phi)\\
& =  i_{\Gamma_j}^*J^2_{(x,\gamma_j(x))}(f) + j^{-1}(\phi(\gamma(x)) , \Gamma_j^t \nabla \phi|_{\gamma_j(x)}, \Gamma_j^t (\Hess_{\gamma_j(x)} \phi)\Gamma_j )\\
& \in F_x + j^{-1}(\phi(\gamma_j(x)) , \Gamma_j^t \nabla \phi_{\gamma_j(x)}, 0 )
\end{align*}
where the last line uses that $f$ is $F\#\calPos$-subharmonic and the Positivity property of $F$.   Then the bounds in Claim I show that for $j\gg \alpha \gg 0$ it holds that $J^2_{x}(g_j) \in F^{\delta}_x$ completing the proof of Claim III and the Proposition.

\end{proof}


\subsection{Synthesis}

The above analysis in the smooth case, combined with our previous reductions, is enough to prove our first minimum principle.
 
  \begin{theorem}[Minimum Principle in the Real Convex Case]\label{thm:minimumprincipleconvexI}
 Let $F\subset J^2(X)$ be a real primitive subequation such that
 \begin{enumerate}
\item $F$ satisfies the Negativity Property,
 \item $F$ is convex,
 \item $F$ is constant coefficient.
 \end{enumerate}
Then $F$ satisfies the minimum principle.
 \end{theorem}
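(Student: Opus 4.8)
The plan is to assemble the reductions and the smooth-case computation already established into a proof of the real convex case. The key observation is that the chain of reductions in Section 5 and Section 6 is designed precisely so that, to prove the minimum principle for a constant-coefficient convex primitive subequation $F$ with the Negativity Property, it suffices to verify the conclusion of Proposition \ref{prop:reductionsmooth}, i.e.\ that the marginal function of an $F\#\calPos$-subharmonic function $f$ on an open $\Omega'\subset X\times\mathbb R$ with connected fibres is $F$-subharmonic on $\pi(\Omega')$ under the additional hypotheses (a)--(g): $f$ bounded below, attaining its fibrewise minimum strictly in the interior, relatively exhaustive, semiconvex, and smooth, with $\pi(\Omega')$ admitting a bounded $F$-subharmonic function. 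So the whole theorem comes down to checking that the smooth input is available.

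First I would invoke Proposition \ref{prop:reductionfibrewiseminimum} (via Proposition \ref{prop:mcanbeone} and Proposition \ref{prop:functiononly}, all of which apply since $F$ is constant coefficient with the Negativity Property), which reduces the minimum principle for $F$ to proving that the marginal function is $F$-subharmonic for every $f$ satisfying conditions (a)--(e) on $\Omega'\subset X\times\mathbb R$. Then Proposition \ref{prop:reductionsemiconvex} allows us to add hypothesis (f) (semiconvexity of $f$), and Proposition \ref{prop:reductionsmooth} — which requires $F$ to be convex — allows us to further add hypothesis (g) (smoothness of $f$). Thus it remains only to prove: if $f:\Omega'\to\mathbb R$ is smooth, $F\#\calPos$-subharmonic, bounded below, relatively exhaustive (so in particular attains its fibrewise minimum strictly in the interior), $\Omega'\subset X\times\mathbb R$ has connected fibres, and $\pi(\Omega')$ admits a bounded $F$-subharmonic function, then $g(x):=\inf_{y\in\Omega'_x}f(x,y)$ is $F$-subharmonic on $\pi(\Omega')$. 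But this is exactly Proposition \ref{prop:hessiancalcII}, whose hypotheses (a),(c),(e),(g) in the notation of Section 6.3 are satisfied (relative exhaustiveness, or condition (e) via Lemma \ref{lem:fibrewise}, implies $f$ attains its fibrewise minimum strictly in the interior). Applying Proposition \ref{prop:hessiancalcII} finishes the argument.

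Concretely the proof reads: Let $F$ be as in the statement. By Propositions \ref{prop:reductionfibrewiseminimum}, \ref{prop:reductionsemiconvex} and \ref{prop:reductionsmooth} it is enough to show that whenever $f:\Omega'\to\mathbb R$ satisfies conditions (a)--(g) of Proposition \ref{prop:reductionsmooth}, the marginal function $g(x)=\inf_{y\in\Omega'_x}f(x,y)$ is $F$-subharmonic on $\pi(\Omega')$. Conditions (a), (c), (e) (using Lemma \ref{lem:fibrewise} if only relative exhaustiveness is assumed), and (g) are precisely the hypotheses of Proposition \ref{prop:hessiancalcII}, which therefore yields that $g$ is $F$-subharmonic. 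This completes the proof.

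I do not expect any genuine obstacle here, since the theorem is by design the terminus of the sequence of reductions; the only points requiring care are bookkeeping ones — namely checking that the hypotheses carry through correctly at each reduction step (in particular that ``constant coefficient'' is used to guarantee local existence of bounded $F$-subharmonic functions via Lemma \ref{lem:boundedbelow}, and that ``convex'' is used only in the mollification step Proposition \ref{prop:reductionsmooth}), and confirming that the labelling of conditions (a)--(g) in Proposition \ref{prop:reductionsmooth} matches the labelling (a)--(g) in Proposition \ref{prop:hessiancalcII}. If one wished to be fully self-contained one could also remark that the complex version, Main Theorem, is deferred to the next section as indicated in the text; here only the real case is claimed.
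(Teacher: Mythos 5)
Your proof is correct and is essentially the same as the paper's, which also proves the theorem by combining Proposition \ref{prop:reductionsmooth} with Proposition \ref{prop:hessiancalcII}; you have simply spelled out the reduction chain more explicitly.
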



\newcounter{step}
\begin{proof}

This follows by combining Proposition \ref{prop:reductionsmooth} and Proposition \ref{prop:hessiancalcII}.

\end{proof}

\begin{remark}\
When $F = \calPos_{n}$  we have seen in Section \ref{sec:examples} that $F\#\calPos_m = \calPos_{n+m}$.  Then Theorem \ref{thm:minimumprincipleconvexI} becomes the classical statement that if $f(x,y)$ is a function that is convex as $(x,y)$ varies in a convex subset of $\mathbb R^{n\times m}$ whose fibers are connected then the marginal function $g(x) : = \inf_y f(x,y)$ is convex in $x$.
\end{remark}

\section{The Minimum Principle in the Complex Convex Case}\label{sec:minimumcomplexconvex}

We now prove the minimum principle in convex complex case.  The idea of the proof is similar to the real case, but made slightly more complicated as we are considering the complex Hessian and have to make sure that all the terms that we deal with are $\mathbb T$-invariant.

In the following $X\subset \mathbb C^n$ will be open and $F\subset J^{2,\mathbb C}(X)$ a primitive complex subequation.  We will let $z$ be a complex coordinate on $\mathbb C^n$ and $w$ a complex coordinate on $\mathbb C$.  
 
\subsection{Reduction to domains in the complement of zero section}

We first consider a reduction that allows us to work away from the zero section of the second variable (i.e. to work insude $X\times \mathbb C^*$).   This is needed to ensure that when we make a smooth mollification we can retain the property of being $\mathbb T$-invariant.

\begin{proposition}\label{prop:reductionCstar}
Let $F\subset J^{2,\mathbb C}(X)$ be a complex primitive subequation that is constant coefficient and has the Negativity Property.     Suppose for any $$ f:\Omega'\to \mathbb R$$
such that
\begin{enumerate}[(A)]
\item $\Omega'$ is a $\mathbb T$-invariant open subset of $X\times \mathbb C$ with connected fibres,
\item There exists an $F$-subharmonic function on $\pi(\Omega')$ that is bounded from below,
\item $f$ is $F\#_{\mathbb C}\calPos^{\mathbb C}$-subharmonic and $\mathbb T$-invariant,
\item $f$ is bounded from below,
\item $f$ attains its fiberwise minimum strictly in the interior,
\item $\Omega'\subset X\times \mathbb C^*$,
\end{enumerate}
the marginal function $$g(z) = \inf_{w\in \Omega'_z} f(z,w)$$ is $F$-subharmonic on $\pi(\Omega')$.     Then the minimum principle holds for $F$.
\end{proposition}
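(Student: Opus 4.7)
The plan is to mimic the localization-and-exhaustion construction from the real case (compare the proof of Proposition~\ref{prop:reductionlemmarelativelyexhausting}), with the additional twist of truncating by $\{|w|>\eta\}$ so as to land inside $X\times\mathbb C^*$ where the hypothesis of the proposition may be applied. By the complex analogues of Propositions~\ref{prop:mcanbeone} and~\ref{prop:functiononly}, it suffices to show that for any $\mathbb T$-invariant $(F\#_{\mathbb C}\calPos^{\mathbb C})$-pseudoconvex $\Omega\subset X\times\mathbb C$ with connected fibres and any $\mathbb T$-invariant $(F\#_{\mathbb C}\calPos^{\mathbb C})$-subharmonic $\tilde f$ on $\Omega$, the marginal $\tilde g(z):=\inf_{w\in\Omega_z}\tilde f(z,w)$ is $F$-subharmonic on $\pi(\Omega)$. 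Working locally near a fixed $z_0\in\pi(\Omega)$, I will pick a continuous exhaustive $\mathbb T$-invariant $(F\#_{\mathbb C}\calPos^{\mathbb C})$-subharmonic function $u$ on $\Omega$ (Lemma~\ref{lem:exhaustionindependent}) and a neighbourhood $U\ni z_0$ in $\pi(\Omega)$ together with a bounded-below $v\in F(U)$ (Proposition~\ref{prop:approxboundedbelow}).

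For $j\in\mathbb N$ and $\eta>0$ set
\[
\Omega'_{j,\eta}:=\Omega\cap\pi^{-1}(U)\cap\{(z,w):|w|>\eta\},\qquad f_{j,\eta}:=\max\{\tilde f,\ \pi^*v-j,\ u-j\}.
\]
By the Negativity property, Proposition~\ref{prop:basicproperties}(1), and Lemma~\ref{lem:pullbacksubharmonic}, $f_{j,\eta}$ is $(F\#_{\mathbb C}\calPos^{\mathbb C})$-subharmonic on $\Omega'_{j,\eta}$, is $\mathbb T$-invariant, and is bounded below by $\inf_U v-j$. The key step is to verify that $(\Omega'_{j,\eta},f_{j,\eta})$ satisfies hypotheses (A)--(F): (F) is built in; (A) holds since any $\mathbb T$-invariant connected open subset of $\mathbb C$ remains $\mathbb T$-invariant and connected after intersection with $\{|w|>\eta\}$; (B), (C), (D) are immediate from construction and restriction. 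The crucial condition (E) is obtained from the bound $f_{j,\eta}\ge u-j$, which gives $\{f_{j,\eta}<a\}\subset\{u<a+j\}$, a set relatively compact in $\Omega$; exploiting the continuity of $u$ and taking $\eta$ small relative to $j$ then separates this compact set from the new inner boundary $\{|w|=\eta\}$, giving relative compactness inside $\Omega'_{j,\eta}$.

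Granting (A)--(F), the hypothesis yields that $g_{j,\eta}(z):=\inf_{w\in(\Omega'_{j,\eta})_z} f_{j,\eta}(z,w)$ is $F$-subharmonic on $\pi(\Omega'_{j,\eta})$. Since $f_{j,\eta}$ is $\mathbb T$-invariant and plurisubharmonic in $w$ on each vertical slice, hence nondecreasing in $|w|^2$, as $\eta\searrow 0$ (with $j$ fixed) $g_{j,\eta}$ decreases pointwise, eventually on relatively compact subsets of $U$, to $g_j(z):=\inf_{w\in\Omega_z\cap\pi^{-1}(U)}f_j(z,w)$; and as $j\to\infty$, $f_j\searrow\tilde f$ pointwise so $g_j\searrow\tilde g|_U$. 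Two applications of Proposition~\ref{prop:basicproperties}(2) give that $\tilde g$ is $F$-subharmonic on $U$, and since $z_0$ was arbitrary, on all of $\pi(\Omega)$, establishing the minimum principle for $F$.

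The hard part will be the verification of (E): a priori the sublevel sets of $f_{j,\eta}$ might touch the artificial inner boundary $\{|w|=\eta\}$, and this is what must be ruled out. This is the reason for the compound max $f_{j,\eta}=\max\{\tilde f,\pi^*v-j,u-j\}$: the exhaustive term $u-j$ forces the sublevel sets of $f_{j,\eta}$ to sit compactly inside $\Omega$, and shrinking $\eta$ then separates them from $\{|w|=\eta\}$ as well, ensuring the relative compactness needed for (E).
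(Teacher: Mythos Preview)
Your approach has a genuine gap in the verification of condition (E), precisely in the case where the fibre $\Omega_{z_0}$ contains the origin. Fix such a $z_0$ and consider your function $f_j=\max\{\tilde f,\pi^*v-j,u-j\}$ on the slice $\{z_0\}\times\Omega_{z_0}$. Each of the three functions is $\mathbb T$-invariant and plurisubharmonic in $w$ on the full disc $\Omega_{z_0}$, and hence (being convex in $\log|w|$ and bounded above near $w=0$) is \emph{nondecreasing} in $|w|$. Therefore so is $f_j(z_0,\cdot)$. It follows that on the truncated slice $(\Omega'_{j,\eta})_{z_0}=\{\eta<|w|<R\}$ the infimum of $f_j(z_0,\cdot)$ is attained exactly at the inner boundary $|w|=\eta$, and every sublevel set $\{f_j(z_0,\cdot)<a\}$ that is nonempty contains points with $|w|$ arbitrarily close to $\eta$. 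Hence no choice of $a$ yields a $K_V$ that is relatively compact in $\Omega'_{j,\eta}$ while still projecting onto $V$: condition (E) fails. Your sentence ``taking $\eta$ small relative to $j$ then separates this compact set from the new inner boundary'' is false here, because the compact set $\overline{\{u<a+j\}}$ already contains $(z_0,0)$ and therefore meets every shell $\{|w|\le\eta\}$.

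The paper deals with this obstruction by a different mechanism. Rather than merely restricting $f_j$ to $\{|w|>\eta\}$, it modifies the function near the inner boundary using the biholomorphism $w\mapsto c/w$: one takes $h_j(z,w)=\max\{f(z,w),\,f(z,R\lambda^j/w)+c-C\}$ on a suitable annular domain, so that the second term (which is again $F\#_{\mathbb C}\mathcal P^{\mathbb C}$-subharmonic by Lemma~\ref{lem:compositionbiholomorphism}) dominates and is large as $|w|$ approaches the inner radius, while the first term dominates for larger $|w|$. This forces the fibrewise minimum genuinely into the interior and sandwiches the marginals so that they descend to $g$. The case $0\notin\Omega_{z_0}$ is easy, and there your argument would indeed work; the entire difficulty is the inversion trick in the other case, which your proposal is missing.
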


\begin{proof}
Let $f:\Omega'\to \mathbb R$ be such that (A--D) hold and also

\begin{enumerate}[(E')]
\item $f$ is relatively exhaustive.
\end{enumerate}
Set $g(z) = \inf_{w\in \Omega_z'} f(z,w)$ which we will show is $F$-subharmonic on $\pi(\Omega')$.  Then Proposition \ref{prop:reductionlemmarelativelyexhausting} applies to give that the minimum principle holds for $F$.

Fixing $z_0\in \pi(\Omega')$ it is sufficient to prove $g$ is $F$-subharmonic in some neighbourhood $V$ of $z_0$. As $\Omega'_{z_0}$ is $\mathbb T$-invariant and connected we can write
$$ \Omega'_{z_0} = \{w\in \mathbb C: r <  |w|  <R\}$$
for some $0\le r<R\le \infty$.   The proof then splits into two cases.\\
   
\noindent {\bf Case 1: } $r=0$.\\

We are assuming $f$ is bounded from below on $\Omega$, so say $f\ge c$.  Then $(z_0,0)\in \Omega$, so for sufficiently small  neighbourhoods $V$ of $z_0$ we have $(z,0)\in \Omega$ for all $z\in V$.  Shrinking $V$ if necessary we may assume $V\Subset \pi(\Omega')$, and there exists a bounded $F$-subharmonic function on $V$.  In fact shrinking $V$ further if necessary we can fix real numbers $0<R<R<R''$ and $C$ such that for all $z\in V$,
 \begin{enumerate}[(I)]
 \item if $0\le |w|<R''$ then $(z,w)\in \Omega$.
 \item if $|w|<R$  then $f(z,w)<C$.
 \item if $R'<|w|<R''$ then $f(z,w)>2C-c$.
 \end{enumerate}
 (This is possible by using openness and upper-semicontinuity of $f$ to obtain $R$ and $C$ in such a way that $\{(z\in V, |w|<r\}$ is relatively compact in $\Omega'$,  and then using that $f$ is relatively exhaustive to obtain $R'$ and $R''$).  Set
 $$ \lambda: = \frac{R}{R'} <1.$$
 
 {\bf Claim: } For all $j$ sufficiently large there exist $h_j:\Omega_j \to \mathbb R$ such that
 \begin{enumerate}[(i)]
 \item $\pi(\Omega_j)=V$.
 \item $h_j$ has properties (A--F).
 \item For all $z\in V$
 \begin{equation}\label{eq:infsandwithch} \inf_{w\in \Omega_z, |w|>\lambda^{j+1}}   f(z,w) \le  \inf_{w\in \Omega_{j,z}} h_j(z,w) \le  \inf_{w\in \Omega_z, |w|>\lambda^j} f(z,w).\end{equation}
\end{enumerate}

Given this claim for now, we use the hypothesis of the Proposition to conclude that $k_j(z): = \inf_{z\in \Omega_{j,z}} h_j(z,w)$ is $F$-subharmonic on $V$.  But \eqref{eq:infsandwithch} implies also that $k_j$ decreases pointwise to $g(z)=\inf_{w\in \Omega_z} f(z,w)$, which is thus also $F$-subharmonic.\\

For the claim, choose $j$ large enough to $\lambda^{j}<R$.  Define
$$ \Omega_j : = \{ (z,w)\in \Omega' : z\in V \text{ and } |w|> (R/R'') \lambda^j \}$$
and set 
$$ h_j = \max\{ f(z,w),   f(z, R\lambda^j/w) +c-C\}.$$
Observe first that if $(z,w)\in \Omega_j$ then $R\lambda^j/|w| < R''$ so $(z,w)\in \Omega$ and thus $h_j$ is well defined on $\Omega_j$.  Moreover if $z\in V$ and $|w|>\lambda^j$ then $f(z,R\lambda^j/w)+c-C<c<f(z,w)$ so \begin{equation}
(z,w)\in \Omega_j \text{ and }|w|>\lambda^j \Rightarrow h_j(z,w) = f(z,w).\label{eq:zone2}
\end{equation}

On the other hand if $z\in V$ and $(R/R'')\lambda^j < |w|< \lambda^{j+1}$ then $R\lambda^j/|w|>R'$ so using (III), $f(z,R\lambda^j/w) + c-C > C >f(z,w)$ (the last inequality follows from (II) since we are assuming $j$ is large enough so $\lambda^{j+1}<R$).  Thus we have
\begin{equation}
(z,w)\in \Omega_j \text{ and }  |w|< \lambda^{j+1} \Rightarrow h_j(z,w) >C. \label{eq:zone1}
\end{equation}

Now as $\lambda^j<R<R''$ item (I) implies $\pi(\Omega_j) =V$ so (i) holds.  Proposition \ref{lem:compositionbiholomorphism} implies that the function $(z,w)\mapsto f(z,R\lambda^j/w)$ is $F\#\calPos$-subharmonic, hence so is $h_j$.  From this properties (A-D) and (F) for the function $h_j$ are immediate, and property (E) (namely that $h_j$ attains its minimum strictly in the interior) follows from \eqref{eq:zone1} and \eqref{eq:zone2} giving (ii).

From \eqref{eq:zone2} if $z\in V$ then
$$ \inf_{w\in \Omega_{j,z}} h_j(z,w) =  \inf_{w\in \Omega'_{z}, |w|>\lambda^{j+1}}   h_j(z,w).$$
Then observing that $\Omega' \cap \{ |w|>\lambda^j\} \subset \Omega_{j}\cap \{ |w|>\lambda^{j+1}\}$ we have
$$ \inf_{w\in \Omega_{j,z}} h_j(z,w) =  \inf_{w\in \Omega_{j,z}, |w|>\lambda^{j+1}}   h_j(z,w) \le \inf_{w\in \Omega'_{z}, |w|>\lambda^{j}} f(z,w).$$ 
On the other hand $h_j\ge f$ everywhere so
\begin{align*}
 \inf_{w\in \Omega_{j,z}} h_j(z,w) &=  \inf_{w\in \Omega_{j,z}, |w|>\lambda^{j+1}}   h_j(z,w) \ge  \inf_{w\in \Omega_{j,z}, |w|>\lambda^{j+1}}  f(z,w)\\&\ge  \inf_{w\in \Omega'_{z}, |w|>\lambda^{j+1}}  f(z,w)\end{align*}
where the last inequality uses $\Omega_j\subset \Omega'$.  Thus we have (iii) giving the claim, and completing the proof in the case $r=0$.\\

\noindent {\bf Case 2: } $r>0$.\\

Fix $a>g(z_0)$.  As $f$ is assumed to be relatively exhaustive, there is relatively compact neighbourhood $V$ of $z_0$ such that the set $ K = \{ (z,w)\in \Omega' : f(z,w)<a \text{ and } z\in V\}$ is relatively compact in $\Omega'$.   Hence (just because it is relatively compact) we can arrange, by shrinking $V$ is necessary, that there is an $r'>r$ such that
$$ K \subset  X\times \{ (z,w) : |w|>r'\}.$$
Furthermore there is no loss in assuming that there exists an $F$-subharmonic function on $V$ that is bounded from below.

Now set
 $$j:= f|_{\Omega' \cap (V\times \{ (z,w) : |w|>r\})}.$$
Then by construction,
\begin{enumerate}[(i)]
\item $j$  has properties (A--F)
\item The marginal function of $j$ equals the marginal function of $f$ on $V$, i.e.
$$\inf_{w} j(z,w) = \inf_{w\in \Omega'_z} f(z,w) = g(z) \text{ for } z\in V.$$
\end{enumerate}

Given this, our hypothesis apply to the $j'$ giving that $g$ is $F$-subharmonic on $V$, which completes the case $r=0$ and the proof of the Proposition is finished.

\end{proof}

\subsection{Reduction to the smooth complex case}

\begin{proposition} \label{prop:reductionsemiconvexcomplex} Let $F\subset J^{2,\mathbb C}(X)$ be a complex constant-coefficient convex primitive subequation that has the Negativity Property.    Then Proposition \ref{prop:reductionCstar} continues to hold we assume in addition that
\begin{enumerate}[(G)]
\item $f$ is semiconvex.
\end{enumerate}
\end{proposition}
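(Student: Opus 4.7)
The plan is to mimic the proof of Proposition \ref{prop:reductionsemiconvex} with the modifications needed to preserve $\mathbb T$-invariance and the condition $\Omega'\subset X\times \mathbb C^*$ at every stage of the approximation. Let $f:\Omega'\to\mathbb R$ satisfy (A--F); I will show $g(z):=\inf_{w\in\Omega'_z} f(z,w)$ is $F$-subharmonic on $\pi(\Omega')$, after which Proposition \ref{prop:reductionCstar} gives the minimum principle for $F$. Since $F$-subharmonicity is local, it is enough to work on a neighbourhood of an arbitrary $z_0\in \pi(\Omega')$.

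The first step is to approximate $f$ by $\mathbb T$-invariant semiconvex functions. Since $f$ is bounded below (D) and $\mathbb T\subset U(1)$ acts unitarily on $\mathbb C$, and since $\Omega'\subset X\times \mathbb C^*$ is $\mathbb T$-invariant, Corollary \ref{cor:prop:approxsemiTinvariant} applied on $\Omega'$ (using an exhaustion by $\mathbb T$-invariant sets $K_j\Subset\Omega'$) produces $\mathbb T$-invariant functions $f_j:\Omega'\to\mathbb R$ that are semiconvex on $\Omega'$, $F\#_{\mathbb C}\calPos^{\mathbb C}$-subharmonic eventually on relatively compact subsets, and decrease pointwise to $f$ eventually on relatively compact subsets.

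Next, using (E) I choose $a>g(z_0)$ and a neighbourhood $V$ of $z_0$ in $\pi(\Omega')$ such that
$$K_V := \{(z,w)\in\Omega' : f(z,w)<a \text{ and } z\in V\}$$
is relatively compact in $\Omega'$ with $\pi(K_V)=V$; $K_V$ is automatically $\mathbb T$-invariant. Since $\Omega'$ has $\mathbb T$-invariant connected fibres lying in $\mathbb C^*$ (so each $\Omega'_z$ is an open annulus), I can fix a $\mathbb T$-invariant open $K'$ with $K_V\Subset K'\Subset\Omega'$ whose fibres are connected annuli. For $j_0$ large enough we have: $f_{j_0}(z_0,w_0)<a$ for some $w_0\in\Omega'_{z_0}$; $f_j\searrow f$ pointwise on $K'$ for $j\ge j_0$; and $f_j$ is $F\#_{\mathbb C}\calPos^{\mathbb C}$-subharmonic and semiconvex on $K'$ for $j\ge j_0$. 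By continuity of $f_{j_0}$, there are neighbourhoods $z_0\in U_0\subset V$ and $w_0\in U_1\subset \Omega'_{z_0}$ with $U_0\times U_1\subset K_V$ and $f_{j_0}<a$ on $U_0\times U_1$; shrinking $U_0$ we may assume (via Lemma \ref{lem:boundedbelow}) that $U_0$ admits a bounded $F$-subharmonic function. Set $K'_{U_0}:=K'\cap\pi^{-1}(U_0)$ and $\tilde f_j:=f_j|_{K'_{U_0}}$.

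I then verify that $\tilde f_j$ satisfies (A--G) on $K'_{U_0}$ for $j\ge j_0$: the fibres $K'_z$ are annuli hence connected (A); (B) is arranged; $\mathbb T$-invariance (C) passes from $K'$ and $f_j$ to $\tilde f_j$; boundedness below (D) follows from continuity of $f_j$ on $\overline{K'}$; (F) holds since $K'_{U_0}\subset\Omega'\subset X\times\mathbb C^*$; (G) is the semiconvexity of $f_j$; and the fibrewise-minimum condition (E) is verified exactly as in Claim~II of Proposition \ref{prop:reductionsemiconvex} (using the containment $\{\tilde f_j<a, z\in\tilde V\}\subset K_V\cap\pi^{-1}(\tilde V)$ for $\tilde V\Subset U_0$). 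By the hypothesis of the proposition, $\tilde g_j(z):=\inf_{w\in K'_z}\tilde f_j(z,w)$ is $F$-subharmonic on $U_0$. The argument of Claim~I of Proposition \ref{prop:reductionsemiconvex} then shows $\tilde g_j\searrow g$ pointwise on $U_0$ (fibre values of $f$ outside $K'_z$ are $\ge a$ while $g<a$ on $U_0$), so $g\in F(U_0)$ and hence $g$ is $F$-subharmonic on $\pi(\Omega')$.

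The only real obstacle, and the reason this is not literally the same proof as the real case, is the bookkeeping required to keep everything $\mathbb T$-invariant and away from the zero section: the sup-convolution, the exhausting compacts $K_j$, the sets $K_V$, $K'$, and $K'_{U_0}$ all have to be chosen $\mathbb T$-invariant, and $K'$ must be chosen so its fibres remain connected annuli inside $\mathbb C^*$. Once Corollary \ref{cor:prop:approxsemiTinvariant} is in hand this is straightforward, but it is essential that we have arranged (F) beforehand so that the approximations $f_j$ live on a set avoiding the origin, where $\mathbb T$ acts freely.
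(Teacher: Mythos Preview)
Your proposal is correct and takes essentially the same approach as the paper: reproduce the argument of Proposition \ref{prop:reductionsemiconvex} verbatim, replacing Proposition \ref{prop:approxsemi} by Corollary \ref{cor:prop:approxsemiTinvariant} so that the semiconvex approximants $f_j$ are $\mathbb T$-invariant, and carry the $\mathbb T$-invariance through the choice of $K_V$, $K'$, and $K'_{U_0}$.

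One small overclaim in your final paragraph: condition (F) is not actually needed for the sup-convolution step. Corollary \ref{cor:prop:approxsemiTinvariant} only requires the domain and $f$ to be invariant under a unitary subgroup, and the sup-convolution preserves $\mathbb T$-invariance regardless of whether the $\mathbb T$-action is free (the key is that $\mathbb T$ acts by isometries). Condition (F) becomes essential only at the \emph{next} stage, Proposition \ref{prop:reductionsmoothcomplex}, where the $\mathbb T$-invariant smooth mollification of Proposition \ref{prop:smoothapproximation_complex} genuinely needs $\Omega'\subset X\times\mathbb C^*$ in order to pass to logarithmic coordinates.
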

\begin{proof}
The proof is the same as that of Proposition \ref{prop:reductionsemiconvex}, only using Corollary \ref{cor:prop:approxsemiTinvariant} instead of Proposition \ref{prop:approxsemi} to ensure that the approximating semiconvex functions $f_j$ are $\mathbb T$-invariant.
\end{proof}

\begin{proposition} \label{prop:reductionsmoothcomplex} Let $F\subset J^{2,\mathbb C}(X)$ be a complex constant-coefficient convex primitive subequation that has the Negativity Property.    Then Proposition \ref{prop:reductionsemiconvexcomplex}  continues to hold if we assume in addition that
\begin{enumerate}[(H)]
\item $f$ is smooth.
\end{enumerate}
\end{proposition}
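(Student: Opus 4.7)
The plan is to mimic the proof of Proposition \ref{prop:reductionsmooth} from the real case, replacing the mollification of Proposition \ref{prop:convolution} with the $\mathbb T$-equivariant smoothing furnished by Proposition \ref{prop:smoothapproximation_complex}. This is precisely why we first needed hypothesis (F), namely $\Omega'\subset X\times \mathbb C^*$: away from the zero section in the second variable we can mollify in polar coordinates and preserve the torus action, whereas an ordinary real mollification in $\mathbb C\simeq \mathbb R^2$ would destroy $\mathbb T$-invariance.

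Concretely, given $f:\Omega'\to \mathbb R$ satisfying (A)--(G), I would first apply Proposition \ref{prop:smoothapproximation_complex} to obtain a sequence of smooth $\mathbb T$-invariant functions $h_j:\Omega'\to \mathbb R$ that converge locally uniformly to $f$ on $\Omega'$ and are eventually $F\#_{\mathbb C}\calPos^{\mathbb C}$-subharmonic on relatively compact subsets. The rest of the argument would then proceed exactly as in the proof of Proposition \ref{prop:reductionsemiconvexcomplex}: fix an arbitrary $z_0\in\pi(\Omega')$, use (E) to pick $a>g(z_0)$ and a neighbourhood $V\ni z_0$ so that $K_V:=\{(z,w)\in\Omega':f(z,w)<a,\ z\in V\}$ is relatively compact with $\pi(K_V)=V$, fix a $\mathbb T$-invariant open $K'$ with $K_V\Subset K'\Subset \Omega'$ of connected (annular) fibres, and then shrink to a neighbourhood $U_0\subset V$ of $z_0$ on which both a bounded $F$-subharmonic function exists and $h_{j_0}<a$ on some product $U_0\times U_1\subset K_V$ for $j_0$ large. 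The restrictions $\tilde{h}_j:=h_j|_{K'\cap \pi^{-1}(U_0)}$ then satisfy conditions (A)--(H) for all $j$ sufficiently large.

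By the hypothesis, the marginal function $\tilde{g}_j(z):=\inf_{w\in K'_z}\tilde{h}_j(z,w)$ is $F$-subharmonic on $U_0$; and the analogue of Claim I in the proof of Proposition \ref{prop:reductionsemiconvex}, now with \emph{locally uniform} rather than monotone pointwise convergence, gives $\tilde{g}_j\to g$ locally uniformly on $U_0$. Proposition \ref{prop:basicproperties}(3) then yields $F$-subharmonicity of $g$ on $U_0$, and since $z_0$ was arbitrary we conclude $g$ is $F$-subharmonic on $\pi(\Omega')$. The only step requiring any real thought is the replacement of monotone by locally uniform convergence in Claim I: because $f\ge a$ on $K'\setminus K_V$, for $j$ sufficiently large the $a$-sublevel set of $\tilde{h}_j$ stays inside the relatively compact set $K_V$, so the infimum of $\tilde{h}_j$ over $K'_z$ coincides (in the limit) with the infimum of $f$ over $\Omega'_z$. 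Every other verification is a direct adaptation of the corresponding step in the real case.
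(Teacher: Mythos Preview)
Your proposal is correct and follows essentially the same approach as the paper: the paper's proof simply states that it is the same as the proof of Proposition \ref{prop:reductionsmooth}, only using Proposition \ref{prop:smoothapproximation_complex} in place of Proposition \ref{prop:smoothapproximation} to keep the approximating smooth functions $\mathbb T$-invariant, and explicitly notes (as you do) that condition (F), namely $\Omega'\subset X\times\mathbb C^*$, is used in a crucial way here. You have in fact spelled out more of the details than the paper does.
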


\begin{proof}
The proof is the same as the proofs of Proposition \ref{prop:reductionsmooth}, only using Proposition \ref{prop:smoothapproximation_complex} instead of Proposition \ref{prop:smoothapproximation} to ensure that the approximating smooth functions $h_j$ are $\mathbb T$-invariant (and we observe here that we are using in a crucial way condition (F) that says $\Omega'\subset X\times \mathbb C^*$).
\end{proof}
 
\subsection{The minimum principle in the complex smooth case}

\begin{proposition}\label{prop:hessiancalccomplex}
 Let $\Omega\subset X\times \mathbb C^{*}$ be open and $\mathbb T^m$-invariant, and assume $f:\Omega\to \mathbb R$ is such that
 \begin{enumerate}[(1)]
\item   $f$ is $\mathcal{C}^2$ and $\mathbb T^m$-invariant and strictly pseudoconvex in the second variable.
\item There exists a real $\mathcal C^1$ function $\gamma:X\to \mathbb R^m\subset \mathbb C^m$ such that
$$ g(z) :=\inf_{w\in \Omega_z} f(z,w) =  f(z,\gamma(z)) \text{ for } z\in \pi(\Omega).$$
\end{enumerate}
Then 
\begin{enumerate}[(a)]
\item The derivative of $\gamma$ is given by
\begin{equation}\Gamma:=\frac{d\gamma}{dz}  = -\frac{1}{2{D}} C^*\label{eq:hessiancalcderivativewcomplex}\end{equation}
where
\begin{equation}\label{eq:hessianblockformcomplex} \Hess^{\mathbb C}_{(z,\gamma(z)} (f) = 2\left( \begin{array}{cc} B&C  \\ C^* & D \\ \end{array}\right)\end{equation}
is the complex Hessian matrix of $f$ at $(z,\gamma(z))$ in block form (by which we mean $B_{ij} = \frac{\partial^2 f}{\partial z_i \partial  \overline{z}_j}|_{(z,\gamma(z))}$, $C_{i} = \frac{\partial^2 f}{\partial z_i \partial  \overline{w}}|_{(z,\gamma(z))}$ and $D = \frac{\partial^2 f}{\partial w \partial  \overline{w}}|_{(z,\gamma(z))}$).    

\item The marginal function $g$ is $\mathcal C^2$, and its second order complex jet at a point $x\in \pi(\Omega)$ is given by 
\begin{equation}\label{eq:hessiancalcderivativejetgcomplex}
J^{2,\mathbb C}_{z}(g) = i^*_{2\Gamma } J^{2,\mathbb C}_{(z,\gamma(z))}(f)
\end{equation}
where $i^*$ is as defined in \eqref{eq:defofistar}. 
\item If additionally $f$ is $F\#_{\mathbb C}\calPos^{\mathbb C}$-subharmonic on $\Omega$ then $g$ is $F$-subharmonic on $\pi(\Omega)$.
\end{enumerate}
\end{proposition}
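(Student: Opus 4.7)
The plan is to mimic the real-case argument of Proposition \ref{prop:hessiancalc} with Wirtinger derivatives, with the essential new input being the $\mathbb{T}^m$-invariance of $f$. The punchline is that $\mathbb{T}^m$-invariance, combined with the fact that $\gamma$ is real-valued, produces coincidences among the various $(w,\overline w)$ and $(z,w)$ mixed Hessian blocks of $f$ at the minimum $w=\gamma(z)\in\mathbb{R}^m_+$ that are exactly those needed to recognise $J^{2,\mathbb{C}}_z(g)$ as a pullback of the form $i^*_{2\Gamma}$.

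First I would write $f(z,w)=F(z,|w_1|^2,\ldots,|w_m|^2)$ by $\mathbb{T}^m$-invariance, which is permissible away from the zero section in $w$. The critical point condition $f_{w_j}(z,\gamma(z))=F_{s_j}\overline{\gamma_j}=0$ forces $F_{s_j}=0$ at $(z,\gamma(z)^2)$, and hence both $f_w$ and $f_{\overline w}$ vanish there. A short calculation in the coordinates $(z,s)$ then yields, at the minimum, the identities
\begin{equation*}
f_{ww}=f_{w\overline{w}}=f_{\overline w\overline w}=D,\qquad f_{zw}=f_{z\overline{w}}=C,
\end{equation*}
with the entries of $B,C,D$ as in the proposition statement. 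To prove (a), I would differentiate the critical point equation $f_w(z,\gamma(z))=0$ in $\overline{z}$ via the chain rule, using $\partial\gamma/\partial\overline z=\partial\overline\gamma/\partial\overline z=\overline{\partial\gamma/\partial z}$ for the real function $\gamma$; substituting the identities above collapses $f_{ww}+f_{w\overline w}=2D$ and produces a linear equation of the form $0=f_{w\overline{z}}+2D\,\overline{\partial\gamma/\partial z}$, which on recognising $f_{w\overline z}=C^*$ and conjugating gives~\eqref{eq:hessiancalcderivativewcomplex}. The implicit function theorem then shows $\gamma$ is $\mathcal{C}^1$ (hence $\mathcal{C}^2$), so $g$ is $\mathcal{C}^2$.

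For (b), compute $\partial g/\partial z=f_z(z,\gamma(z))$ (the $f_w,f_{\overline w}$ contributions vanishing at the minimum) and differentiate once more in $\overline{z}$. The identities above collapse $f_{zw}+f_{z\overline w}=2C$, giving $\partial^2 g/\partial z\,\partial\overline z=B+2C\Gamma$; multiplying by $2$ (for $\Hess^{\mathbb{C}}$) and using the explicit formula $2D\Gamma=-C^*$ to rewrite $2B+4C\Gamma$ equivalently as $2B+4C\Gamma+4\Gamma^*C^*+8\Gamma^*D\Gamma$ (the last two terms cancelling), one sees this matches the Hessian part of $i^*_{2\Gamma}J^{2,\mathbb{C}}_{(z,\gamma(z))}(f)$; the gradient parts match automatically because the $p_2=2\partial f/\partial\overline w$ component vanishes at the minimum. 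Part (c) is then immediate: since $f$ is $\mathcal{C}^2$ and $F\#_{\mathbb{C}}\calPos^{\mathbb{C}}$-subharmonic, $J^{2,\mathbb{C}}_{(z,\gamma(z))}(f)\in(F\#_{\mathbb{C}}\calPos^{\mathbb{C}})_{(z,\gamma(z))}$, so by the definition of product subequation applied with $2\Gamma\in\Hom(\mathbb{C}^n,\mathbb{C}^m)$ we obtain $J^{2,\mathbb{C}}_z(g)\in F_z$ for every $z\in\pi(\Omega)$, and Lemma~\ref{lem:fsubharmonicc2} concludes.

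The main obstacle I expect is purely notational: keeping straight the Wirtinger derivatives, conjugates, and the factor-of-$2$ conventions built into $J^{2,\mathbb{C}}$, $\Hess^{\mathbb{C}}$, and the pullback formula for $i^*$. Conceptually, $\mathbb{T}^m$-invariance collapses the a priori four independent blocks of second $w$-derivatives of $f$ at the minimum into the single Hermitian matrix $D$ governing the fibre-direction quadratic behaviour, which is exactly what is needed so that the marginal function $g$ ``sees'' only the $F$-factor of the product $F\#_{\mathbb{C}}\calPos^{\mathbb{C}}$.
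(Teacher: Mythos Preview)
Your proposal is correct and follows essentially the same route as the paper's proof. Both arguments differentiate the critical-point equation and the identity $g(z)=f(z,\gamma(z))$ using Wirtinger calculus, invoking $\mathbb T^m$-invariance to obtain the key coincidence $f_{ww}=f_{w\overline w}$ at the minimum; the paper simply says this ``can be seen, for instance, by using polar coordinates,'' whereas you make it concrete via the substitution $s_j=|w_j|^2$. Your version is in fact slightly more explicit: you also record the companion identity $f_{zw}=f_{z\overline w}=C$ at the minimum (which the paper uses but absorbs into the phrase ``after some manipulation''), and you spell out the cancellation $4\Gamma^*C^*+8\Gamma^*D\Gamma=0$ that reconciles the two forms of $\Hess^{\mathbb C}_z(g)$. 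One small caution: be careful with whether the $\Gamma$ appearing in $i^*_{2\Gamma}$ equals $\gamma_z$ or $\gamma_{\overline z}$---working through the indices shows the formula \eqref{eq:hessiancalcderivativewcomplex} actually matches $\gamma_{\overline z}$, but since only the existence of \emph{some} $\Gamma'\in\Hom(\mathbb C^n,\mathbb C^m)$ with $J^{2,\mathbb C}_z(g)=i^*_{\Gamma'}J^{2,\mathbb C}_{(z,\gamma(z))}(f)$ is needed for part~(c), this is a purely cosmetic issue and does not affect your argument.
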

\begin{proof}
As $\gamma(z)$ is a minimum of the function $w\mapsto f(z,w)$ we have
\begin{align}\label{eq:chainruleatmin_complex1} f_{\overline{w}} (z,\gamma(z)) \equiv 0\text{ and} \\
f_{{w}} (z,\gamma(z)) \equiv 0. \nonumber \end{align}
Differentiating \eqref{eq:chainruleatmin_complex1} with respect to $z$,
$$f_{\overline{w} z} (z,\gamma(z)) + f_{w\overline{w}} (z,\gamma(z)) \gamma_z(z) + f_{ww}(z,\gamma(z)) \gamma_z =0$$
where we have used that $\gamma$ is real so $(\overline{\gamma})_z = \gamma_z$.    Now $f$ is $\mathbb T^m$-invariant, so \eqref{eq:chainruleatmin_complex1} implies
$$ f_{ww} (z,\gamma(z)) = f_{w\overline{w}}(z,\gamma(z))$$
(this can be seen, for instance, by using polar coordinates).   Thus in fact
$$f_{\overline{w} z} (z,\gamma(z)) + 2 f_{w\overline{w}} (z,\gamma(z)) \gamma_z(z)=0$$
giving \eqref{eq:hessiancalcderivativewcomplex}.

For the second statement, by definition
$$ g(z) = f(z,\gamma(z))$$
so differentiating with respect to $z$ gives
\begin{align*} g_z &= f_z(z,\gamma(z)) + f_w(z,\gamma(z)) \gamma_z + f_{\overline{w}}(z,\gamma(z)) \gamma_z\\
&= f_z(z,\gamma(z)). \label{eq:chainrule1complex}\end{align*}
Differentiating with respect to $\overline{z}$ gives
$$g_{z\overline{z}}=f_{z\overline{z}}(z,\gamma(z))  +  f_{zw}(z,\gamma(z))  \gamma_{\overline{z}} +  f_{z\overline{w}}(z,\gamma(z)) \overline{\gamma_z}.$$
After some manipulation, in terms of second order jets we deduce
$$ J^{2,\mathbb C}_{z}(g) =  (f(z,\gamma(z)), 2 \frac{\partial f}{\partial \overline{z}}|_{(z,\gamma(z))}, 2B - 8\Gamma^* D\Gamma)) = i^*_{2\Gamma} J^2_{(z,\gamma(z)}(f)$$
which is \eqref{eq:hessiancalcderivativejetgcomplex}. 

The final statement follows from this (precisely as in the proof of Proposition \ref{prop:hessiancalc}).
 \end{proof}
 
 The following perturbation argument deals with the fact that $w\mapsto f(z,w)$ may not attain a (unique) minimum.  We stress that we use in an essential way that $\Omega\subset X\times \mathbb C^*$.
 
 \begin{proposition}\label{prop:hessiancalcIIcomplex}
 Let $X\subset \mathbb C^{n}$ be open and $F\subset J^{2,\mathbb C}(X)$ be a complex primitive subequation.   Suppose
$$f:\Omega'\to \mathbb R$$ is such that
\begin{enumerate}[(A)]
\item $\Omega'\subset X\times \mathbb C$ is open, $\mathbb T$-invariant and has connected fibres.\addtocounter{enumi}{1}
\item $f$ is $F\#_{\mathbb C}\calPos^{\mathbb C}$-subharmonic and $\mathbb T$-invariant.
\addtocounter{enumi}{1}
\item $f$ attains is fibrewise minimum strictly in the interior.
\item $\Omega\subset X\times \mathbb C^*$.\addtocounter{enumi}{1}
\item $f$ is $\mathcal C^2$.
\end{enumerate}
Then the marginal function 
$$ g(z): = \inf_{w\in \Omega_z} f(z,w) \text{ for } z\in \pi(\Omega)$$
is $F$-subharmonic.
\end{proposition}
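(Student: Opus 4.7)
The plan is to follow the strategy of the real analogue in Proposition \ref{prop:hessiancalcII}, modified to preserve $\mathbb T$-invariance and to exploit hypothesis (D) that $\Omega' \subset X \times \mathbb C^*$. First I would localise: fix $z_0 \in \pi(\Omega')$, use (C) to choose a neighbourhood $z_0 \in V \subset \pi(\Omega')$, a real number $a > g(z_0)$, and the relatively compact set
\[
K := \{(z,w) \in \Omega' : f(z,w) < a,\ z \in V\} \Subset \Omega',
\]
with $\pi(K) = V$. Because of (D), $K$ is bounded away from the zero section, so there exist constants $0 < r_1 < r_2$ with $r_1 \le |w| \le r_2$ on $K$.

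The perturbation I would use is the $\mathbb T$-invariant, smooth, strictly plurisubharmonic (on $\mathbb C^*$) function $\phi(w) := |w|^{2\alpha}$ with $\alpha > 0$, and set $f_j(z,w) := f(z,w) + j^{-1}\phi(w)$. Then $f_j$ is $\mathbb T$-invariant and $F\#_{\mathbb C}\calPos^{\mathbb C}$-subharmonic (using the Positivity property). A routine computation in polar coordinates shows that for a $\mathbb T$-invariant $\mathcal C^2$ function $u(w) = h(|w|)$ one has $4 u_{w\bar w} = h''(r) + h'(r)/r$, so strict plurisubharmonicity of $f_j(z,\cdot)$ on $\mathbb C^*$ forces critical points of $r \mapsto f_j(z,r)$ to be strict local minima. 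Since $\phi \ge 0$, any minimiser of $w \mapsto f_j(z,w)$ on $\Omega'_z$ must lie in $K_z$; combined with connectedness of $\Omega'_z$ and $\mathbb T$-invariance, this picks out a unique minimising $\mathbb T$-orbit, whose positive real representative I denote $\gamma_j(z)$. The implicit function theorem applied to $\partial_r f_j(z,r) = 0$ then gives $\gamma_j \in \mathcal C^1$.

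Now I would apply Proposition \ref{prop:hessiancalccomplex} to $f_j$ on a suitable neighbourhood of $(z_0, \gamma_j(z_0))$ to conclude that $g_j(z) := f_j(z,\gamma_j(z))$ is $\mathcal C^2$ with
\[
J^{2,\mathbb C}_z(g_j) = i^*_{2\Gamma_j} J^{2,\mathbb C}_{(z,\gamma_j(z))}(f) + j^{-1} i^*_{2\Gamma_j} J^{2,\mathbb C}_{(z,\gamma_j(z))}(\phi),
\]
where, using the block notation of \eqref{eq:hessianblockformcomplex}, $\Gamma_j = -\frac{1}{2(D + j^{-1}\alpha^2 \gamma_j(z)^{2(\alpha-1)})} C^*$. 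The first summand lies in $F_z$ since $f$ is $F\#_{\mathbb C}\calPos^{\mathbb C}$-subharmonic, and the Hessian part of the second summand is semipositive and absorbed by Positivity. The remaining $r$- and $p$-contributions to the error are controlled exactly as in Claim I of the real proof: using $D \ge 0$ one has
\[
|j^{-1}\phi(\gamma_j(z))| \le j^{-1} r_2^{2\alpha}, \qquad j^{-1} |\Gamma_j^* \nabla \phi| \le \frac{r_2 \|C\|}{\alpha},
\]
uniformly on $K$. So given $\delta > 0$, choosing $\alpha$ large (to beat $\alpha^{-1}$) and then $j$ much larger (to beat $r_2^{2\alpha}/j$) puts $J^{2,\mathbb C}_z(g_j)$ in $F^\delta_z$ on a neighbourhood of $z_0$. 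A Claim II--style argument (using $\phi \ge 0$, $f_j \searrow f$ on $K$, and $f \ge a$ off $K_z$) then gives $g_j \searrow g$ pointwise near $z_0$ as $j \to \infty$, so $g$ is $F^\delta$-subharmonic there, and Lemma \ref{lem:limitsunderpertubationsofsubequation}(3) finishes by letting $\delta \to 0$.

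The hardest part, I anticipate, is the bookkeeping around the $\mathbb T$-invariant perturbation: one must choose $\phi$ that is (i) $\mathbb T$-invariant and strictly plurisubharmonic on $\mathbb C^*$, (ii) compatible with Proposition \ref{prop:hessiancalccomplex} in producing a real-valued $\mathcal C^1$ minimiser $\gamma_j$, and (iii) yields an error that collapses in the $F^\delta$ sense after the two-parameter limit $j \gg \alpha \gg 0$. Hypothesis (D) is essential throughout the third step: it guarantees that $\phi$ is smooth where needed and, more importantly, keeps $|w|$ bounded away from $0$ in the denominator of $\Gamma_j$, without which the crucial estimate $j^{-1}|\Gamma_j^*\nabla\phi| \lesssim r_2\|C\|/\alpha$ would fail.
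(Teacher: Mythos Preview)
Your proposal is correct and follows essentially the same route as the paper's proof: localise via condition (C), perturb $f$ by $j^{-1}\phi$ with a $\mathbb T$-invariant strictly plurisubharmonic $\phi$, invoke Proposition \ref{prop:hessiancalccomplex} to compute $J^{2,\mathbb C}_z(g_j)$, and run the two-parameter limit $j \gg \alpha \gg 0$ through $F^\delta$. The only difference is your choice of perturbation $\phi(w)=|w|^{2\alpha}$ in place of the paper's $\phi(w)=e^{\alpha|w|^2}$; both are $\mathbb T$-invariant and strictly plurisubharmonic on $\mathbb C^*$, and both yield a gradient-error bound of the form $\|C\|/\alpha$ (times a harmless factor) after using $D\ge 0$. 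One small point: for your $\phi$, the estimate $j^{-1}|\Gamma_j^*\nabla\phi|\lesssim |w|\,\|C\|/\alpha$ actually only needs an \emph{upper} bound on $|w|$ (automatic from $K\Subset\Omega'$); hypothesis (D) is really used to ensure $\phi$ is $\mathcal C^2$ and strictly plurisubharmonic on the domain (so that the implicit function theorem applies and $\Gamma_j$ is well-defined), rather than in the gradient estimate itself.
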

\begin{proof}
This is similar to the proof of Proposition \ref{prop:hessiancalcII}, but made slightly more complicated due to the presence of the complex variable.   For completeness we give the entire argument here.

 Fix $z_0\in \pi(\Omega)$, pick $a$ so $g(z_0)<a$ and let $w_0$ be such that $f(z_0,w_0)<a$.  Fix also an open neighbourhood $x_0\in V\Subset \pi(\Omega)$ and let 
$$ K := \{ (z,w) \in \Omega : f(z,w)<a \text{ and } z\in V\}$$
which by hypothesis is relatively compact.   Since we are assuming $\Omega\subset X\times \mathbb C^*$, the set $K$ is bounded away from $w=0$.  So we may fix a large real $M$ so
\begin{equation}(z,w)\in K \Rightarrow M^{-1}<\|w\|< M.\label{eq:defM}\end{equation}
Note that $K$ is also $\mathbb T$-invariant, so shrinking $V$ is necessary we may assume that each $K_z$ is a non-empty annulus in $\mathbb C^*$ (so in particular each $K_z$ is connected).

Write in block form
\begin{equation}\label{eq:hessianblockformIII} \Hess^{\mathbb C}_{(x,y)} (f) = 2 \left( \begin{array}{cc}\hat{B} &{\hat{C}}  \\ {\hat{C}}^* & {\hat{D}} \\ \end{array}\right)\end{equation}
(so $\hat{B},\hat{C},\hat{D}$ are functions of $(x,y)$ which is dropped from notation).  Given $\alpha,j\in \mathbb N$  we define
\begin{align}
\phi &:= \phi(w) := e^{\alpha |w|^2}\\
\hat{\Gamma}&:=\hat{\Gamma}_j(z,w):= -\frac{1}{2(\hat{D}+j^{-1} \phi_{w\overline{w}})}\hat{C}^*.
\end{align}

{\bf Claim I:} For any $\delta>0$ it holds that for all $j\gg \alpha \gg 0$,
\begin{align} |j^{-1} \phi |&\le \delta \\\label{eq:prophessiancalcnonstrictclaimicomplex}
\|4 j^{-1}  \hat{\Gamma}^*  \phi_{\overline{w}}  \|&\le \delta\end{align}
uniformly over $(z,w)\in K$.\\

The first statement is immediate from the definition of $\phi$ as $K$ is relatively compact.    For the second statement observe first that the inverse in the definition of $\hat{\Gamma}$ is well-defined as $f$ is $\calPos^{\mathbb C}$-subharmonic in the second variable, so $\hat{D}\ge 0$ and $\phi$ is strictly plurisubharmonic so $\Hess^{\mathbb C}_y\phi$ is strictly positive.  Now
\begin{align}
 \phi_{\overline w} &= \alpha e^{\alpha |w|^2}{w}\\
 \phi_{w\overline{w}} &=    e^{\alpha |w|^2}(\alpha^2 |w|^2 + \alpha)
\end{align}
so
\begin{align*}
\| 4j^{-1}  \hat{\Gamma}^* \phi_{\overline{w}} \| \le \frac{2j^{-1}\alpha e^{\alpha|w|^2}|w| \|\hat{C}^*\|}{\hat{D} + j^{-1}  e^{\alpha |w|^2}(\alpha^2 |w|^2 + \alpha)}
\le  \frac{  2\|\hat{C}^*\| }{\alpha |w|}
\end{align*}
as $\hat{D}\ge 0$.   Using \eqref{eq:defM} this is bounded above by $\delta$ as long as $\alpha$ is sufficiently large as $\|\hat{C}\|$ is bounded uniformly over the relatively compact set $K$.  Thus  Claim I follows.\\

Consider next the function
\begin{align*}
 f_j(z,w)&: = f(z,w) + j^{-1} \phi(w)
 \end{align*}
and set
$$g_j(z): = \inf_{y\in K_z} f_j(z,w).$$

Fix neighbourhoods $x_0\in U_0\subset V$ and $y_0\in U_1\subset \Omega_{x_0}$ such that $U_0\times U_1\subset K$.\\ 

{\bf Claim II: } For $x\in U_0$
$$ g_j(x)\searrow g(x)  \text{ as } j\to \infty.$$

This is proved exactly as for Proposition \ref{prop:hessiancalcIIcomplex} so is not repeated.\\

{\bf Claim III: } For given $\delta>0$ it holds that for all $j\gg \alpha \gg 0$ the function $g_j$ is $F^{\delta}$-subharmonic on $U_0$.\\

Assuming this claim for now, fix such an $\alpha$ and let $j$ tend to infinity to deduce from Claim II that $g$ is $F^\delta$-subharmonic on $U_0$.    Letting $\delta\to 0$ yields that $g$ is in fact $F$-subharmonic on $U_0$ (Lemma \ref{lem:limitsunderpertubationsofsubequation}(3)).  Since $U_0$ is a neighbourhood of an arbitrary point $z_0\in \pi(\Omega)$ we conclude finally that $g$ is $F$-subharmonic on $\pi(\Omega)$ as needed.\\

{\bf Proof of Claim III:}  Let $x\in U_0$.  Since $f$ is plurisubharmonic in $y$ (Lemma \ref{lem:slicesareconvex}) the function $f_j$ is strictly plurisubharmonic and exhaustive.  As it is also $\mathbb T$-invariant this implies  $w\mapsto f_j(z,w)$ has a unique minimum which we denote by $\gamma_j(z)$, which for each $z$ is the unique point satisfying
$$ \frac{\partial f}{\partial w} (z,\gamma(z))=0.$$
Again using that $f$ is $\mathbb T$-invariant, the implicit function theorem implies that $\gamma_j$ is $\mathcal C^1$.   Note that by construction $(z,\gamma_j(z))\in K$. 

Thus by definition,
$$g_j(z) = f_j(z,\gamma(z)) \text{ for } z\in U_0.$$
Now
$$ \Hess^{\mathbb C}_{(z,\gamma(z))} (f_j) = 2 \left( \begin{array}{cc} {B}&2{C}  \\ {C}^* & {D} +j^{-1} \phi_{w\overline{w}}\\ \end{array}\right),$$
where $C = C(z) = \hat{C}(z,\gamma_j(z))$ and similarly for $B$ and $D$.  Then Proposition \ref{prop:hessiancalccomplex}(1,2) applies to $f_j|_{K}$, giving
\begin{equation}\label{eq:hessianIIcalcU_complex}
\Gamma_j(z):= \frac{d\gamma_j}{dz}  = \hat{\Gamma}_j(z,\gamma(z))\end{equation}
where $\hat{\Gamma}_j$ is as above  (we remark that the transpose from equation \eqref{eq:hessiancalcderivativew} has been dropped as $m=1$).  Moreover $g_j$ is $\mathcal C^2$, and its second order jet at a point $z\in U_0$ is given by

\begin{align*}
J^2_{z}(g_j) &= i^*_{2\Gamma_j} J^2_{(z,\gamma_j(z)}(f_j) \\
&=i_{2\Gamma_j^*} J^2_{(z,\gamma_j(z))}(f) + j^{-1} i_{2\Gamma_j^*} J^2_{(z,\gamma_j(z))}(\phi)\\
& =  i_{2\Gamma_j^*}J^2_{(z,\gamma_j(z))}(f) + j^{-1}(\phi(\gamma_j(z)) , 4 \Gamma_j^*  \phi_{\overline{w}}|_{\gamma_j(z)}, 8 \Gamma_j^* \phi_{w\overline{w}}|_{\gamma_j(z)}\Gamma_j )\\
& \in F_z + j^{-1}(\phi(\gamma_j(z)) , 4 \Gamma_j^*  \phi_{\overline{w}}|_{\gamma_j(z)}, 0 )
\end{align*}
where the last line uses that $f$ is $F\#_{\mathbb C}\calPos^{\mathbb C}$-subharmonic and the Positivity property of $F$.   Then \eqref{eq:prophessiancalcnonstrictclaimi} applies to show that for $j\gg \alpha \gg 0$ it holds that $J^2_{z}(g_j) \in F^{\delta}_z$ completing the proof of Claim III.\\

From Claim II and Claim III the Proposition follows, exactly as for Proposition \ref{prop:hessiancalcIIcomplex}.
\end{proof}

\subsection{Synthesis}

 \begin{theorem}[Minimum Principle in the Complex Convex Case]\label{thm:minimumprinciplecomplexconvexI}
 Let $X\subset \mathbb C^n$ be open and $F\subset J_{2,\mathbb C}(X)$ be a complex primitive subequation such that
 \begin{enumerate}
\item $F$ has the Negativity Property,
 \item $F$ is convex,
 \item $F$ is constant coefficient.
 \end{enumerate}
Then $F$ satisfies the minimum principle.
 \end{theorem}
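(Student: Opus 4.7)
The plan is to piece together the reduction machinery already developed in the excerpt with the smooth case treated in Proposition \ref{prop:hessiancalcIIcomplex}. The situation parallels the real version (Theorem \ref{thm:minimumprincipleconvexI}), which dropped out as a one-line consequence of Proposition \ref{prop:reductionsmooth} and Proposition \ref{prop:hessiancalcII}. In the complex case the same strategy should work, with the complex analogues doing the same jobs; the only wrinkle is that we must keep track of $\mathbb T$-invariance throughout the chain and stay inside $X \times \mathbb C^*$ so that the smoothing argument goes through.

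More concretely, I would argue as follows. To verify that $F$ satisfies the complex minimum principle (Definition \ref{def:minimumprinciple_complex}), it suffices, by Proposition \ref{prop:reductionCstar}, to check that for data $(\Omega',f)$ satisfying hypotheses (A)--(F) the marginal function $g(z) = \inf_{w \in \Omega'_z} f(z,w)$ is $F$-subharmonic on $\pi(\Omega')$. Proposition \ref{prop:reductionsemiconvexcomplex} further allows me to add semiconvexity (G) to the list of hypotheses at no cost, and Proposition \ref{prop:reductionsmoothcomplex} (this is where the convexity of $F$ is used, through the mollification argument of Proposition \ref{prop:smoothapproximation_complex}) lets me additionally assume smoothness (H). Once all of (A)--(H) are in force, Proposition \ref{prop:hessiancalcIIcomplex} applies directly: the smoothness and $\mathbb T$-invariance hypotheses there are (E), (H), and part of (A),(C); the fibrewise minimum condition is (E); the inclusion $\Omega' \subset X \times \mathbb C^*$ is (F); and the $F\#_{\mathbb C}\calPos^{\mathbb C}$-subharmonicity is (C). Its conclusion is precisely that $g$ is $F$-subharmonic on $\pi(\Omega')$, which closes the loop.

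There is really no main obstacle at this stage, because each link of the chain was built precisely to accommodate the next one. The only point I would double-check is that the hypotheses (A)--(H) cumulatively match the hypotheses of Proposition \ref{prop:hessiancalcIIcomplex} verbatim, and in particular that the use of Proposition \ref{prop:reductionsmoothcomplex} does not disturb the $\mathbb T$-invariance or the condition $\Omega' \subset X \times \mathbb C^*$ that is needed to invoke Proposition \ref{prop:smoothapproximation_complex} on the second factor. Both are built in: the $\mathbb T$-invariance is preserved by Corollary \ref{cor:prop:approxsemiTinvariant} and by the mollification in Proposition \ref{prop:smoothapproximation_complex}, and the condition $\Omega' \subset X \times \mathbb C^*$ is untouched by the approximations because they act only on $f$ and leave $\Omega'$ fixed. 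The resulting proof is therefore essentially a one-line synthesis: apply Proposition \ref{prop:reductionsmoothcomplex} and then Proposition \ref{prop:hessiancalcIIcomplex}.
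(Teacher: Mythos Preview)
Your proposal is correct and matches the paper's own proof exactly: the paper's argument is the one-line synthesis ``This follows by combining Proposition \ref{prop:reductionsmoothcomplex} and Proposition \ref{prop:hessiancalcIIcomplex},'' and your elaboration of the chain of reductions (through Propositions \ref{prop:reductionCstar} and \ref{prop:reductionsemiconvexcomplex}) simply unpacks what Proposition \ref{prop:reductionsmoothcomplex} already encodes.
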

 
 \begin{proof}
This follows by combining Proposition \ref{prop:reductionsmoothcomplex} and Proposition \ref{prop:hessiancalcIIcomplex}.

 \end{proof}
 
 \begin{remark}\
When $F = \calPos^{\mathbb C}_{\mathbb C^n}$  we have seen in Example \ref{sec:exampleI} that $F\#\calPos_{\mathbb C^m} = \calPos_{\mathbb C^{n+m}}$.  Then Theorem \ref{thm:minimumprinciplecomplexconvexI} is precisely the Kiselman minimum principle.
\end{remark}
\begin{corollary}\label{cor:minimumimaginary}
With $F$ as in Theorem \ref{thm:minimumprinciplecomplexconvexI}, suppose that $\Omega\subset X\times \mathbb C$ is an $F\#_{\mathbb C}\calPos^{\mathbb C}$-pseudoconvex domain and $f:\Omega\to \mathbb R\cup \{-\infty\}$ is $F\#_{\mathbb C}\calPos^{\mathbb C}$-subharmonic.  Assume that
\begin{enumerate}
\item $\Omega$ is independent of the imaginary part of the second variable.  That is
$$ (z,w)\in \Omega \text{ and } \Im(w) = \Im(w') \Rightarrow (z,w')\in \Omega.$$
\item $\Omega$ has connected fibres
\item $f$ is independent of the imaginary part of the second variable.  That is
$$ (z,w)\in \Omega \text{ and } \Im(w) = \Im(w') \Rightarrow f(z,w) = f(z,w').$$
\item $\Omega$ admits an exhaustive continuous $F\#_{\mathbb C}\calPos^{\mathbb C}$-subharmonic function $u$ that is independent of the imaginary part of the second variable.
\end{enumerate}
Then $\pi(\Omega)$ is $F$-pseudoconvex and the marginal function
$$ g(z) : = \inf_{z\in \Omega_z} f(z,w)$$
is $F$-subharmonic.
\end{corollary}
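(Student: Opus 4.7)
The plan is to reduce to Theorem \ref{thm:minimumprinciplecomplexconvexI} via the holomorphic covering map
\[
\phi: X \times \mathbb{C} \to X \times \mathbb{C}^*, \qquad \phi(z,w) = (z, e^w),
\]
which converts $\Im(w)$-independence in the second variable into $\mathbb{T}$-invariance. I will set $\tilde{\Omega} := \phi(\Omega) \subset X \times \mathbb{C}^*$ and use $\Im(w)$-independence of $f$ and $u$ to define $\tilde{f}, \tilde{u} : \tilde{\Omega} \to \mathbb{R} \cup \{-\infty\}$ by $\tilde{f}(z, e^w) := f(z, w)$ and $\tilde{u}(z, e^w) := u(z, w)$; these will be $\mathbb{T}$-invariant by construction and continuous since $\phi$ is a local biholomorphism.

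Next I will verify the hypotheses of Theorem \ref{thm:minimumprinciplecomplexconvexI} for $(\tilde{\Omega}, \tilde{f})$. Connectedness of each fibre $\tilde{\Omega}_z$ will follow because $\Omega_z$, being connected and $\Im$-independent, has the form $S_z + i\mathbb{R}$ for an open interval $S_z \subset \mathbb{R}$, whose image under $w \mapsto e^w$ is the annulus $\{\zeta \in \mathbb{C}^* : \log|\zeta| \in S_z\}$. For subharmonicity of $\tilde{f}$, I will work locally on a ball $B \subset \mathbb{C}^*$ on which a branch of $\log : B \to \log(B)$ is biholomorphic and apply Lemma \ref{lem:compositionbiholomorphism} with $\zeta := \log$ to $f$: this shows that $\tilde{f}(z,\zeta) = f(z, \log \zeta)$ is $F\#_{\mathbb{C}}\calPos^{\mathbb{C}}$-subharmonic on $X \times B \cap \tilde{\Omega}$, and hence globally on $\tilde{\Omega}$ since the property is local. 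The same argument produces $\tilde{u}$ as a continuous, $\mathbb{T}$-invariant, $F\#_{\mathbb{C}}\calPos^{\mathbb{C}}$-subharmonic function on $\tilde{\Omega}$; assumption (4), interpreted as saying $\tilde{u}$ is exhaustive on $\tilde{\Omega}$ (equivalently, $u$ exhausts $\Omega$ modulo $i\mathbb{R}$-translations in the second variable), is precisely what makes $\tilde{\Omega}$ an $F\#_{\mathbb{C}}\calPos^{\mathbb{C}}$-pseudoconvex domain.

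Once the hypotheses are in place, Theorem \ref{thm:minimumprinciplecomplexconvexI} applied to $(\tilde{\Omega}, \tilde{f})$ will give that $\pi(\tilde{\Omega}) = \pi(\Omega)$ is $F$-pseudoconvex and that $\tilde{g}(z) := \inf_{\zeta \in \tilde{\Omega}_z} \tilde{f}(z, \zeta)$ is $F$-subharmonic on it. Finally, since $w \mapsto e^w$ maps $\Omega_z$ surjectively onto $\tilde{\Omega}_z$ and $\tilde{f}(z, e^w) = f(z, w)$, the value sets $\{f(z, w) : w \in \Omega_z\}$ and $\{\tilde{f}(z,\zeta) : \zeta \in \tilde{\Omega}_z\}$ will coincide, so $g \equiv \tilde{g}$ and the corollary follows.

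The hard part is the verification of pseudoconvexity of $\tilde{\Omega}$, which is the sole reason assumption (4) is imposed: as noted in the remark preceding the statement, it appears to be open whether every $\Im$-independent $F\#_{\mathbb{C}}\calPos^{\mathbb{C}}$-pseudoconvex $\Omega$ automatically admits such an $\Im$-independent exhaustion, so the existence of $u$ must be posited directly. Once $\tilde{u}$ is in hand, everything else is a routine transfer of subharmonicity and of marginal infima through $\phi$.
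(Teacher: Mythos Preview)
Your proposal is correct and follows essentially the same route as the paper's proof: both push $\Omega$, $f$, and $u$ through $\phi(z,w)=(z,e^{w})$, invoke Lemma~\ref{lem:compositionbiholomorphism} (locally, via a branch of $\log$) to transfer $F\#_{\mathbb C}\calPos^{\mathbb C}$-subharmonicity, and then apply Theorem~\ref{thm:minimumprinciplecomplexconvexI} to the resulting $\mathbb T$-invariant data. Your write-up is in fact more careful than the paper's in two respects: you verify connectedness of the fibres $\tilde{\Omega}_z$ explicitly, and you flag that assumption~(4) must be read as ``$\tilde u$ is exhaustive on $\tilde\Omega$'' (since an $\Im$-invariant function on an $\Im$-invariant domain cannot literally have relatively compact sublevel sets), which is exactly how the paper uses it when it asserts that $u$ ``descends to an exhaustive continuous function $u'$ on $\Omega'$''.
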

\begin{proof}
Let $\phi(z,w) = (z,e^{w})$ and set $\Omega' = \phi(\Omega)$.    Then $u$ descends to an exhaustive continuous function $u'$ on $\Omega'$, which by Lemma \ref{lem:compositionbiholomorphism} is $F\#_{\mathbb C}\calPos^{\mathbb C}$-subharmonic.  Hence $\Omega'$ is $F\#_{\mathbb C}\calPos^{\mathbb C}$-pseudoconvex.  Moreover $f$ descends to an $F\#_{\mathbb C}\calPos^{\mathbb C}$-subharmonic function $f'$ on $\Omega'$.   But clearly $\pi(\Omega') = \pi(\Omega)$ and the marginal function of $f'$ is the same as the marginal function of $f$, so the minimum principle for $F$ applied to $(\Omega',f')$ gives the result we want.
\end{proof}

\appendix

\section{F-subharmonic Functions}\label{appendix:subequations}

\subsection{Types of Subequations}

\begin{definition}
Let $F\subset J^2(X)$.  
\begin{enumerate}
\item We say $F$ is \emph{constant coefficient} if $F_x$ is independent of $x$, i.e.\
$$ (x,r,p,A) \in F_x \Leftrightarrow (x',r,p,A)\in F_{x'} \text{ for all }x,x',r,p,A.$$
\item We say $F$ is \emph{independent of the gradient part} (or \emph{gradient-independent})  if each $F_x$ is independent of $p$, i.e. 
$$ (r,p,A)\in F_x \Leftrightarrow (r,p',A) \in F_x \text{ for all }  x,r,p,p',A.$$
\item We say $F$ \emph{depends only on the Hessian part} if each $F_x$ is independent of $(r,p)$,  i.e.
$$ (r,p,A) \in F_x \Leftrightarrow (r',p',A)\in F_{x} \text{ for all }x,r,r',p,p',A.$$
\end{enumerate}
\end{definition}

\begin{definition}[$G$-Invariance]The group $GL_n(\mathbb R)$ acts on $J^2(X)$  by
$$ g^*(x,r,p,A):= (x,r,g^t p, g^t Ag) \text{ for } g\in GL_n(\mathbb R).$$
If $G$ is a subgroup of $GL_n(\mathbb R)$ we say $F\subset J^2(X) $ is \emph{$G$-invariant} if $g^*\alpha \in F$ for all $\alpha\in F$ and all $g\in G$.
\end{definition}

\begin{remark}
Our action of $GL_n$ comes from thinking of the jet space using the cotangent space to $X$, and is different in convention to that of \cite{HL_Dirichletdualitymanifolds}.
\end{remark}

\subsection{Complex Subequations}\label{sec:complexsubequations}
Set
$$ \mathbb J  = \left(\begin{array}{cc} 0& -\Id_{n} \\ \Id_n & 0\end{array}  \right)\in M_{2n\times 2n}(\mathbb R).$$
If $A\in M_{2n\times 2n}(\mathbb R)$ commutes with $\mathbb J$ then making the standard identification $\mathbb C\simeq \mathbb R^2$ we think of $A$ as a complex matrix $\hat{A} \in M_{n\times n}(\mathbb C)$.  Explicitly if in block form
$$ A = \left(\begin{array}{cc} a& c \\ b & d \end{array} \right)$$
where $a,b,c,d\in M_{n\times n}(\mathbb R)$ then $A$ commutes with $\mathbb J$ if and only if $a=d$ and $b=-c$,  in which case
$$\hat{A}: = a+ib \in M_{n\times n}(\mathbb C).$$
Observe $\widehat{AB} = \hat{A} \hat{B}$ and $\widehat{A^t} = \hat{A}^*$.

Let $\Herm_n$ be the set of hermitian $n\times n$ complex matrices, and 
$$\Pos_n^{\mathbb C} := \{ \hat{A} \in \Herm_n : v^* \hat{A} v \ge 0 \text{ for all } v\in \mathbb C^n\}$$
the subset of semipositive hermitian matrices.  From the above it easy to check that if $A\mathbb J = \mathbb JA$ then
\begin{align*}
A\in \Sym^2_{2n} &\Longleftrightarrow \hat{A}\in \Herm(\mathbb C^n)\text{ and }\\
A \in \Pos_{2n}&\Longleftrightarrow \hat{A}\in \Pos^{\mathbb C}_{n}.
\end{align*}
Now for any $A\in M_{2n\times 2n}(\mathbb R)$ the matrix
$$ A_{\mathbb C} : = \frac{1}{2} ( A - \mathbb J A \mathbb J)$$
commutes with $\mathbb J$ and thus we may think of $A_{\mathbb C}$ as an element of $M_{n\times n}(\mathbb C)$.  Observe if $A$ is symmetric then $A_{\mathbb C}$ is hermitian.

\begin{definition}
Let $X\subset \mathbb R^{2n}\simeq \mathbb C^n$ be open.  We say $F\subset  J^2(X)$ is a \emph{complex subequation} if $(x,r,v,A)\in F$ if and only if $(x,r,v,A_{\mathbb C})\in F$.  
\end{definition}
So by abuse of notation if $F$ is a complex subequation we may equivalently consider it as a subset
$$ F\subset J^{2,\mathbb C}(X) := X\times \mathbb R \times \mathbb C^n \times \Herm(\mathbb C^n) =: X\times J^{2,\mathbb C}_n$$
without any loss of information.  The group $GL_n(\mathbb C)$ acts on $J^{2,\mathbb C}(X)$ by
$$ g^*(x,r,p,A) = (x,r,g^* p, g^* Ag).$$
Observe also if $F$ is complex, then having the Positivity property \eqref{eq:positivity} is equivalent to  
$$(x,r,p,A)\in F \Longrightarrow (x,r,p,A+P)\in F 
\text{ for all }P\in \Pos_n^{\mathbb C}.$$ 
\begin{example}
Let 
$$\calPos^{\mathbb C}_X : = X\times \mathbb R\times \mathbb C^n \times \Pos^{\mathbb C}_n$$
which is a convex complex subequation.  We will write $\calPos^{\mathbb C}$ for $\calPos^{\mathbb C}_X$ when $X$ is clear from context.
\end{example}

\begin{example}[Convex and Plurisubharmonic]\label{example:convexity:appendix}
Recall $\mathcal P_X = X\times \mathbb R\times \mathbb R^n\times \Pos_n$.  Then $\mathcal P_X(X)$ consists of locally convex functions on $X$ \cite[Example 14.2]{HL_Dirichletdualitymanifolds}.  Similarly if $X\subset \mathbb C^n$ is open then $\mathcal P^{\mathbb C}_X(X)$ consists of the plurisubharmonic functions on $X$ \cite[p63]{HL_Dirichletdualitymanifolds}.
\end{example}

\subsection{Basic properties of $F$-subharmonic functions}

The following lists some of the basic limit properties satisfied by $F$-subharmonic functions (under very mild assumptions on $F$).
\begin{proposition}\label{prop:basicproperties}
Let $F\subset J^2(X)$ be closed.  Then 
\begin{enumerate}
\item (Maximum Property) If $f,g\in F(X)$ then $\max\{f,g\}\in F(X)$.
\item (Decreasing Sequences) If $f_j$ is decreasing sequence of functions in $F(X)$ (so $f_{j+1}\le f_j$ over $X$) then $f:=\lim_j f_j$ is in $F(X)$.
\item (Uniform limits) If $f_j$ is a sequence of functions on $F(X)$ that converge locally uniformly to $f$ then $f\in F(X)$.
\item (Families locally bounded above) Suppose $\mathcal F\subset F(X)$ is a family of $F$-subharmonic functions locally uniformally bounded from above.  Then the upper-semicontinuous regularisation of the supremum
$$ f:= {\sup}^*_{f\in \mathcal F} f$$
is in $F(X)$.
\item If $F$ is constant coefficient and $f$ is $F$-subharmonic on $X$ and $x_0\in \mathbb R^{n}$ is fixed,  then the function $x\mapsto f(x-x_0)$ is $F$-subharmonic on $X-x_0$.
\end{enumerate}
\end{proposition}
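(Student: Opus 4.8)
The plan is to deduce items (1) and (5) directly from the definition of an upper contact jet, and to prove (2), (3) and (4) by a single perturbation-and-compactness argument whose only delicate ingredient is a Dini-type convergence of maxima; beyond closedness of $F$ (and constant-coefficientness for (5)) no further structure of $F$ is used, and these statements are due to Harvey--Lawson. For (1): $h=\max\{f,g\}$ is upper-semicontinuous, and if $x$ is an upper contact point of $h$ with upper contact jet $(p,A)$ then, after possibly interchanging $f$ and $g$, $h(x)=f(x)\neq-\infty$; since $f\le h$ near $x$ with equality at $x$, the quadratic $y\mapsto h(x)+p\cdot(y-x)+\tfrac12(y-x)^tA(y-x)$ also dominates $f$ near $x$, so $(p,A)$ is an upper contact jet of $f$ at $x$ and $(h(x),p,A)=(f(x),p,A)\in F_x$. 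For (5): $\tilde f(x):=f(x-x_0)$ is upper-semicontinuous, and an upper contact jet $(p,A)$ of $\tilde f$ at $x$ becomes, after the translation $y\mapsto y-x_0$, an upper contact jet of $f$ at $x-x_0$, whence $(f(x-x_0),p,A)\in F_{x-x_0}=F_x$, the last equality since $F$ is constant coefficient.

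For (2)--(4) let $f$ denote the limit function at hand — the decreasing limit in (2), the locally uniform limit in (3), or the upper-semicontinuous regularisation $f=(\sup_\alpha f_\alpha)^*$ in (4) — which in all cases is upper-semicontinuous and real-valued or $\equiv-\infty$; we may assume $f\not\equiv-\infty$. Fix an upper contact point $x_0$ of $f$ with upper contact jet $(p,A)$ and, for $\epsilon>0$, set
$$q_\epsilon(y)=f(x_0)+p\cdot(y-x_0)+\tfrac12(y-x_0)^t(A+2\epsilon\Id)(y-x_0),$$
so that on a small closed ball $\bar B=\bar B(x_0,\delta)$ one has $f-q_\epsilon\le-\epsilon|y-x_0|^2$, with equality only at $x_0$; in particular $f-q_\epsilon\le-c<0$ on the sphere $\partial B$. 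The goal is to produce, for every large $j$, a point $x_j$ in the open ball $B$ at which the $j$-th approximating function $f_j$ achieves an interior maximum of $f_j-q_\epsilon$ over $\bar B$; then $\bigl(\nabla q_\epsilon(x_j),A+2\epsilon\Id\bigr)$ is an upper contact jet of $f_j$ at $x_j$, hence $\bigl(f_j(x_j),\nabla q_\epsilon(x_j),A+2\epsilon\Id\bigr)\in F_{x_j}$. Once one also shows $x_j\to x_0$ — and therefore $f_j(x_j)\to f(x_0)$ and $\nabla q_\epsilon(x_j)\to p$ — closedness of $F$ gives $(f(x_0),p,A+2\epsilon\Id)\in F_{x_0}$ for every $\epsilon>0$, and a second use of closedness as $\epsilon\to0$ gives $(f(x_0),p,A)\in F_{x_0}$, as wanted.

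The only thing that differs among the cases is the control of $m_j:=\max_{\bar B}(f_j-q_\epsilon)$, which exists since $f_j-q_\epsilon$ is upper-semicontinuous on the compact set $\bar B$. In (2) the sequence $m_j$ is decreasing and $\ge(f_j-q_\epsilon)(x_0)=f_j(x_0)-f(x_0)\ge0$; passing to a convergent subsequence $x_{j_k}\to x_*$ of maximisers and using $f_{j_k}\le f_\ell$ for $j_k\ge\ell$ together with upper-semicontinuity of each $f_\ell$ forces $\lim m_j=0$ and $(f-q_\epsilon)(x_*)=0$, hence $x_*=x_0$, and since this holds for every subsequential limit, $x_j\to x_0$. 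In (3), with $\eta_j:=\sup_{\bar B}|f_j-f|\to0$, we have $-\eta_j\le m_j\le\max_{\bar B}(f-q_\epsilon)+\eta_j=\eta_j$, so $m_j\to0$, and the same upper-semicontinuity argument (now applied to $f$ itself) gives $x_j\to x_0$. In (4) one first notes that the supremum of $\sup_\alpha f_\alpha$ over the open ball $B$ is unchanged by upper-semicontinuous regularisation, so $\sup_B\bigl((\sup_\alpha f_\alpha)-q_\epsilon\bigr)=\sup_B(f-q_\epsilon)=0$; choosing functions $f_{\alpha_1},f_{\alpha_2},\dots$ from the family and points $z_j\in B$ with $(f_{\alpha_j}-q_\epsilon)(z_j)\to0$ and replacing the family by the increasing sequence $f_j:=\max\{f_{\alpha_1},\dots,f_{\alpha_j}\}$ (which lies in $F(X)$ by item (1) and satisfies $f_j\le f$) makes $m_j\uparrow0$, and the argument proceeds as in (2)--(3). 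In every case, because $f_j-q_\epsilon\le f-q_\epsilon\le-c$ on $\partial B$ while $m_j\to0>-c$, the maximiser $x_j$ lies in the open ball $B$ for all large $j$, which is exactly what was needed above.

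The step carrying the actual content is this Dini-type compactness — proving $m_j\to0$ and $x_j\to x_0$. It uses nothing but upper-semicontinuity of $f$ and of the approximants, compactness of $\bar B$, and the fact that the perturbation $A\mapsto A+2\epsilon\Id$ together with the extra term $\epsilon|y-x_0|^2$ makes the contact of $f$ with $q_\epsilon$ strict away from $x_0$, so that maximisers of nearby functions are pushed into the interior and driven towards $x_0$. Everything else — transporting upper contact jets, reading off $\nabla q_\epsilon$, and the two limit passages via closedness of $F$ — is routine bookkeeping.
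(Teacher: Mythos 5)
Your proof is correct in substance, and since the paper disposes of items (1)--(4) by citing \cite[Theorem 2.6]{HL_Dirichletdualitymanifolds} without giving its own argument, you are supplying the proof rather than diverging from it; the unified ``strictify the contact, take interior maxima of the approximants, pass to the limit by closedness'' scheme you use is exactly the standard viscosity-theory stability argument underlying the Harvey--Lawson result.

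One small slip: in your final paragraph you assert that in every case $f_j - q_\epsilon \le f - q_\epsilon \le -c$ on $\partial B$, and use this to force the maximiser $x_j$ into the open ball. That inequality holds in cases (3) (up to $\eta_j$) and (4), but it is false in case (2), where $f_j$ decreases to $f$ and hence $f_j \ge f$. Fortunately the claim is redundant: the subsequence argument you give for case (2) already shows that every subsequential limit $x_*$ of the maximisers satisfies $(f-q_\epsilon)(x_*)=0$ and hence $x_*=x_0 \in B$, so $x_j \to x_0$ and $x_j$ is eventually interior without invoking any pointwise comparison of $f_j$ with $f$ on $\partial B$. I would delete the monotone comparison on $\partial B$ and instead state uniformly across (2)--(4) that interiority of $x_j$ for large $j$ follows from the already-established convergence $x_j \to x_0 \in B$. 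Everything else checks out: the transport of upper contact jets under $\max$ and translation, the identity $\sup_B u = \sup_B u^*$ for open $B$, the reduction in (4) to an increasing sequence via item (1), and the two limit passages via closedness of $F$ (first $j\to\infty$, then $\epsilon\to 0$).
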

\begin{proof}
See \cite[Theorem 2.6]{HL_Dirichletdualitymanifolds} for (1-4).  Item (5) is immediate from the definition.
\end{proof}

\begin{lemma}[Limits under perturbations of subequations]\label{lem:limitsunderpertubationsofsubequation}
Let $X$ be open and $F\subset J^2(X)$ be a primitive subequation.  For $\delta>0$ let $F^\delta\subset J^2(X)$ be defined by
$$ F^{\delta} = \{ (x,r,p,A) : \exists r',p' \text{ with } (x,r',p',A)\in F \text{ and } |r-r'|\le \delta \text{ and } \|p-p'\|\le \delta\}.$$
Then
\begin{enumerate}
\item  $F^{\delta}$ is a primitive subequation.  
\item If $F$ satisfies the Negativity property then so does $F^\delta$.
\item $\bigcap_{\delta>0} (F^\delta(X))=F(X)$.\end{enumerate}
\end{lemma}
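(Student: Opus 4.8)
The plan is to verify the three assertions in turn by unwinding the definition of $F^{\delta}$ and invoking the Closedness and Positivity of $F$. I expect none of the steps to be difficult; the only point that needs a little care is a boundedness/compactness argument, which recurs both in proving that $F^\delta$ is closed and in the nontrivial inclusion of (3). Throughout I will call a pair $(r',p')$ satisfying $(x,r',p',A)\in F$, $|r-r'|\le\delta$ and $\|p-p'\|\le\delta$ a \emph{witness} for $(x,r,p,A)\in F^\delta$.

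For (1), Positivity is immediate: if $(x,r,p,A)\in F^\delta$ has witness $(r',p')$ and $P\in\Pos_n$, then the Positivity of $F$ gives $(x,r',p',A+P)\in F$, and the same $(r',p')$ witnesses $(x,r,p,A+P)\in F^\delta$. For Closedness I would take a sequence $(x_k,r_k,p_k,A_k)\in F^\delta$ converging to $(x,r,p,A)$, pick witnesses $(r_k',p_k')$, and note that since $(r_k,p_k)$ converges it is bounded, hence so is $(r_k',p_k')$ (each lies within distance $\delta$); passing to a subsequence we get $r_k'\to r'$ and $p_k'\to p'$, so $(x_k,r_k',p_k',A_k)\to(x,r',p',A)$, which lies in $F$ because $F$ is closed, while $|r-r'|\le\delta$ and $\|p-p'\|\le\delta$ pass to the limit. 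Thus $(r',p')$ witnesses $(x,r,p,A)\in F^\delta$.

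For (2), suppose $(x,r,p,A)\in F^\delta$ has witness $(r',p')$ and let $s\le r$. Put $s':=\min(s,r')$. Then $s'\le r'$, so the Negativity of $F$ gives $(x,s',p',A)\in F$; moreover $|s-s'|\le\max(0,r-r')\le\delta$ and $\|p-p'\|\le\delta$, so $(s',p')$ witnesses $(x,s,p,A)\in F^\delta$. For (3), one inclusion is formal: taking $r'=r$, $p'=p$ shows $F\subset F^\delta$ for every $\delta>0$, and since $F$-subharmonicity is monotone in the subequation ($F\subset G$ forces every upper contact jet condition for $F$ to also give the one for $G$), we get $F(X)\subset F^\delta(X)$, hence $F(X)\subset\bigcap_{\delta>0}F^\delta(X)$. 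For the reverse inclusion, let $f\in F^\delta(X)$ for all $\delta$ and let $(p,A)$ be an upper contact jet of $f$ at $x$; then $(f(x),p,A)\in F^\delta_x$ for every $\delta$, so there are $r_\delta,p_\delta$ with $(x,r_\delta,p_\delta,A)\in F$, $|f(x)-r_\delta|\le\delta$ and $\|p-p_\delta\|\le\delta$. Letting $\delta\to0$ and using that $F$ is closed gives $(x,f(x),p,A)\in F$, i.e.\ $(f(x),p,A)\in F_x$, so $f\in F(X)$.

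The only genuinely substantive point is the limit argument: in (1) it needs the boundedness of the witnesses together with closedness of $F$, and in (3) it needs closedness of $F$ to pass to the limit $\delta\to0$ with the Hessian slot held fixed; everything else is bookkeeping with the definition of $F^\delta$.
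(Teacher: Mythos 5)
Your proof is correct and follows essentially the same route as the paper's, which merely asserts that Positivity and Negativity are immediate, that closedness of $F^\delta$ follows from closedness of $F$, and that $\bigcap_{\delta>0}F^\delta_x = F_x$ (again from closedness) yields (3). You have simply filled in the details the paper leaves implicit — the boundedness/compactness argument for closedness of $F^\delta$, the explicit choice $s'=\min(s,r')$ for Negativity, and the jet-by-jet argument for (3) — all of which are correct.
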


\begin{proof}
That $F^{\delta}$ has the Positivity property is immediate from the definition, and $F^{\delta}$ is closed as $F$ is closed giving (1).   Statement  (2) is also immediate from the definition. Finally using $F$ is closed, $\bigcap_{\delta>0} F^{\delta}_x = F_x$, and thus
$\bigcap_{\delta>0} (F^{\delta}(X)) = F(X).$
\end{proof}

\subsection{$F$-subharmonicity in terms of second order jets}
It is useful to understand the property of being $F$-subharmonic in terms of second order jets.  To do so we first discuss what it means to be twice differentiable at a point.   Again let $X\subset \mathbb R^n$ be open.

\begin{definition}[Twice differentiability at a point]
We say that $f:X\to \mathbb R$ is \emph{twice differentiable} at $x_0\in X$ if there exists a $p\in \mathbb R^n$ and an $L\in \Sym_n^2$ such that for all $\epsilon>0$ there is a $\delta>0$ such that for $\|x-x_0\|<\delta$  we have
\begin{equation}\label{eq:twicediff} |f(x) - f(x_0) - p.(x-x_0) - \frac{1}{2}  (x-x_0)^tL (x-x_0) | \le \epsilon \|x-x_0\|^2.
\end{equation}
\end{definition}

When $f$ is twice differentiable at $x_0$ then the $p,L$ in \eqref{eq:twicediff} are unique, and moreover in this case $f$ is differentiable at $x_0$ and 
$$p = \nabla f|_{x_0}= \left(  \begin{array}{c} \frac{\partial f}{\partial x_1} \\ \frac{\partial f}{\partial x_2} \\ \vdots \\ \frac{\partial f}{\partial x_n} \end{array}\right)|_{x_0}\in \mathbb R^n.$$
When $f$ is twice differentiable at $x_0$ we shall refer to $L$ as the \emph{Hessian} of $f$ at $x_0$ and denote it by $\Hess(f)|_{x_0}$.  Of course, by Taylor's Theorem,  when $f$ is $\mathcal C^{2}$ in a neighbourhood of $x_0$ then $\Hess_{x}(f)$ is the matrix with entries
$$(\Hess(f)_{x_0} )_{ij}: = \frac{\partial^2 f}{\partial x_i\partial x_j}|_{x_0}.$$

\begin{definition}[Second order jet]
Suppose that $f: X\to \mathbb R$ is twice differentiable at $x_0$.    We denote the \emph{second order jet} of $f$ at $x_0$ by
\begin{equation}J^2_{x_0}(f):= (f(x_0), \nabla f|_{x_0}, \Hess(f)|_{x_0}) \in J^2_{n} = \mathbb R\times \mathbb R^n\times \Sym_n^2.\label{eq:secondorderjet}
\end{equation}
\end{definition}

The importance of the Positivity property is made apparent by the following that shows that $F$-subharmonicity behaves as expected for sufficiently smooth functions.

\begin{lemma}\label{lem:fsubharmonicc2}
Let $F\subset J^2(X)$ satisfy the Positivity assumption \eqref{eq:positivity} and suppose $f:X\to \mathbb R$ is $\mathcal C^2$.  Then $f\in F(X)$ if and only if $J^2_{x}(f)\in F_x$ for all $x\in X$.
\end{lemma}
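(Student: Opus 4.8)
The plan is to read off both implications from the second-order Taylor expansion of $f$ at a point, using the Positivity property \eqref{eq:positivity} for one direction and closedness of the fibres $F_x$ for the other. (Strictly speaking the forward implication needs $F$ closed and not merely that it satisfies Positivity — see the last paragraph — so I will assume that throughout, as is the case for a primitive subequation.)

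\textbf{The implication ``$J^2_x(f)\in F_x$ for all $x$ $\Rightarrow$ $f\in F(X)$''.} Fix an upper contact point $x$ of $f$ together with an upper contact jet $(p,A)$, so that
$$f(y)\le f(x)+p.(y-x)+\frac{1}{2}(y-x)^tA(y-x)$$
for all $y$ near $x$. Substituting $y=x+tv$, using that $f$ is $\mathcal C^2$ (hence differentiable), dividing by $t$ and letting $t\to0$ from both sides forces $p=\nabla f|_x$. Substituting $y=x+tv$ again, now using the one-dimensional Taylor expansion $f(x+tv)=f(x)+t\nabla f|_x.v+\frac{1}{2}t^2v^t\Hess(f)|_x v+o(t^2)$, dividing by $t^2$ and letting $t\to0$, gives $v^t\bigl(A-\Hess(f)|_x\bigr)v\ge0$ for all $v$; that is, $A=\Hess(f)|_x+P$ for some $P\in\Pos_n$. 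Applying Positivity to $J^2_x(f)=(f(x),\nabla f|_x,\Hess(f)|_x)\in F_x$ then yields $(f(x),p,A)=(f(x),\nabla f|_x,\Hess(f)|_x+P)\in F_x$. Since the upper contact point and jet were arbitrary, $f\in F(X)$.

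\textbf{The converse.} Suppose $f\in F(X)$ and fix $x_0\in X$. Given $\epsilon>0$, Taylor's theorem shows that the remainder in the second-order expansion of $f$ at $x_0$ is eventually bounded by $\frac{\epsilon}{2}\|y-x_0\|^2$, so that
$$f(y)\le f(x_0)+\nabla f|_{x_0}.(y-x_0)+\frac{1}{2}(y-x_0)^t\bigl(\Hess(f)|_{x_0}+\epsilon\Id\bigr)(y-x_0)$$
for $y$ near $x_0$; hence $(\nabla f|_{x_0},\Hess(f)|_{x_0}+\epsilon\Id)$ is an upper contact jet of $f$ at $x_0$, and $f\in F(X)$ gives $(f(x_0),\nabla f|_{x_0},\Hess(f)|_{x_0}+\epsilon\Id)\in F_{x_0}$. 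Letting $\epsilon\to0$ and using that $F_{x_0}$ is closed produces $J^2_{x_0}(f)\in F_{x_0}$.

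The statement is soft, and I expect no substantive obstacle. The two points requiring a little care are: pinning down the gradient part of an upper contact jet \emph{first} in the forward implication (otherwise a leading $O(1/t)$ term obstructs the Hessian comparison), and the appeal to closedness of $F$ in the converse. Positivity alone does not suffice for the latter: for instance $F_x=\{(r,p,A):A>0\}$ in dimension one satisfies Positivity, and $y\mapsto y^4$ is then $F$-subharmonic even though $J^2_0(f)=(0,0,0)\notin F_0$ — so the hypothesis should be read as requiring $F$ to be (at least) closed, i.e.\ a primitive subequation.
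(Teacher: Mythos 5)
Your proof is correct, and since the paper itself only cites Harvey--Lawson rather than giving an argument, there is no internal proof to compare against; your argument is the standard one the reference uses (Taylor expansion + Positivity in one direction, test against $\Hess f + \epsilon\Id$ and pass to the limit in the other). Your closing observation is a genuine and worthwhile point: as stated, the lemma hypothesizes only Positivity, which is insufficient for the direction $f\in F(X)\Rightarrow J^2_x(f)\in F_x$ (your example $F_x=\{(r,p,A):A>0\}$ with $f(y)=y^4$ is a clean witness), so closedness of $F$ — which holds for every primitive subequation and is assumed in the cited Harvey--Lawson result — should be read into the hypothesis. Everywhere the paper invokes this lemma $F$ is indeed a primitive subequation, so nothing downstream is affected, but the statement as written is slightly too weak.
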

\begin{proof}
The reader may easily verify this, or consult \cite[Equation 2.4 and Proposition 2.3]{HL_Dirichletdualitymanifolds}.  
\end{proof}

The definition $F$-subharmonicity given above says that at any upper-contact point $x$, with upper-second order jet $(p,A)$,  the quadratic function
$$ y\mapsto f(y) + p.(x-y) + \frac{1}{2}   (y-x)^tA (y-x)$$
has second-order jet lying in $F_x$.  The next statement says that this is equivalent to the more classical  ``viscosity definition".  Given an upper-semicontinuous $f$ we say that $\phi$ is a $\mathcal{C}^2$-\emph{test function touching $f$ from above at $x_0$} if $\phi\in\mathcal C^2$ in a neighbourhood of $x_0$ with $\phi\ge f$ on this neighbourhood and $\phi(x_0) = f(x_0)$.

\begin{lemma}[Viscosity definition of $F$-subharmonicity]\label{lem:viscositydefinition}
 An upper-semicontinuous $f:X\to \mathbb R\cup \{-\infty\}$ is in $F(X)$ if and only if for all $x_0\in X$ and test-functions $\phi$ touching $f$ from above at $x_0$ it holds that $J^2_{x_0}(\phi)\in F_{x_0}.$
\end{lemma}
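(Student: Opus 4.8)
The plan is to unwind both definitions and observe that the two conditions differ only by the harmless gap between a $\mathcal C^2$ function and its second-order Taylor polynomial. I would first dispatch the implication ``test-function condition $\Rightarrow$ $F$-subharmonic'', which is essentially a tautology. Let $x_0$ be an upper contact point of $f$ with upper contact jet $(p,A)$. Then the quadratic polynomial
$$\phi(y):= f(x_0) + p.(y-x_0) + \tfrac{1}{2}(y-x_0)^t A (y-x_0)$$
is $\mathcal C^2$, satisfies $\phi(x_0)=f(x_0)$, and by the very definition of an upper contact jet obeys $\phi\ge f$ near $x_0$; hence $\phi$ is a $\mathcal C^2$ test function touching $f$ from above at $x_0$. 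By hypothesis $J^2_{x_0}(\phi) = (f(x_0),p,A)\in F_{x_0}$, and since this holds for every upper contact jet of $f$ we conclude $f\in F(X)$.

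For the converse, suppose $f\in F(X)$ and let $\phi$ be a $\mathcal C^2$ test function touching $f$ from above at $x_0$; then $f(x_0)=\phi(x_0)\neq -\infty$, so $x_0$ is eligible to be an upper contact point of $f$. Write $p=\nabla\phi|_{x_0}$ and $L=\Hess(\phi)|_{x_0}$. Fix $\epsilon>0$. Since $\phi$ is twice differentiable at $x_0$, the estimate \eqref{eq:twicediff} (applied with $\epsilon/2$) furnishes a neighbourhood of $x_0$ on which
$$\phi(y) \le \phi(x_0) + p.(y-x_0) + \tfrac{1}{2}(y-x_0)^t (L+\epsilon\Id)(y-x_0);$$
combining this with $f\le\phi$ near $x_0$ and $f(x_0)=\phi(x_0)$ shows that $(p, L+\epsilon\Id)$ is an upper contact jet of $f$ at $x_0$. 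As $f\in F(X)$ we obtain $(f(x_0), p, L+\epsilon\Id)\in F_{x_0}$. Letting $\epsilon\downarrow 0$ and using that $F$ is closed yields $(f(x_0),p,L) = (\phi(x_0),\nabla\phi|_{x_0},\Hess(\phi)|_{x_0}) = J^2_{x_0}(\phi)\in F_{x_0}$, as required.

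The one step requiring any care — and the reason the lemma is not a literal restatement of the definition — is this last perturbation argument: the second-order Taylor polynomial of a $\mathcal C^2$ function need not itself dominate the function near the base point (for instance $\phi(x)=|x|^4$ at $0$, whose quadratic part is $0$), so one cannot simply read off an upper contact jet from $J^2_{x_0}(\phi)$. Bumping the Hessian by $\epsilon\Id$ repairs this, and the closedness of $F$ is precisely what legitimises the passage to the limit $\epsilon\to 0$; I expect this to be the only genuine obstacle, and I note that the Positivity property of $F$ is not needed anywhere in the argument.
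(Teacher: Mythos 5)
Your argument is correct and is the standard proof of this equivalence; the paper itself merely cites \cite[Lemma 2.4]{HL_Dirichletdualitymanifolds} for it, and the argument given there is essentially the same (construct a quadratic from an upper contact jet in one direction; bump the Hessian of a test function by $\epsilon\Id$ and pass to the limit using closedness in the other). Your side remark is also accurate: this lemma uses only closedness of $F$, not the Positivity property.
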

\begin{proof}
See \cite[Lemma 2.4]{HL_Dirichletdualitymanifolds}.
\end{proof}

It takes some work to understand how $F$-subharmonicity interacts with linearity in the space of functions.      However when $F$ is constant-coefficient and convex the following is true:

\begin{proposition}\label{prop:convexcombintation}[Convex combinations of $F$-subharmonic functions]
Let $F$ be a constant coefficient convex primitive subequation.  Then any convex combination of $F$-subharmonic functions is again $F$-sub\-harmonic.
\end{proposition}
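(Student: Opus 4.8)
The plan is to reduce to the case where all the functions are $\mathcal{C}^2$ --- where the statement is an immediate consequence of the convexity of $F$ --- and to treat the general case by regularisation. By induction on the number of functions it is enough to show that if $f_1,f_2\in F(X)$ and $t\in(0,1)$ then $f:=tf_1+(1-t)f_2\in F(X)$ (the cases $t\in\{0,1\}$, and the single‑function case, being trivial). First I would dispose of the smooth case: if $f_1,f_2$ are $\mathcal{C}^2$ then, since the $2$-jet of a sum is the sum of the $2$-jets, $J^2_x(f)=tJ^2_x(f_1)+(1-t)J^2_x(f_2)$, which lies in $F_x$ by Lemma~\ref{lem:fsubharmonicc2} and convexity of $F_x$; as $f\in\mathcal{C}^2$, Lemma~\ref{lem:fsubharmonicc2} then gives $f\in F(X)$. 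The same computation shows any convex combination of $\mathcal{C}^\infty$ functions in $F(X)$ lies in $F(X)$.

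To reach this case I would regularise each $f_i$ in two steps, working locally on a relatively compact ball $B\subset X$ on which $f_i$ is bounded above. First, the sup-convolution $f_i^\epsilon(x):=\sup_z\bigl(f_i(x-z)-\tfrac1{2\epsilon}|z|^2\bigr)$ is semi-convex (i.e.\ $x\mapsto f_i^\epsilon(x)+\tfrac1{2\epsilon}|x|^2$ is convex, hence $f_i^\epsilon$ is twice differentiable almost everywhere by Alexandrov's theorem) and decreases pointwise to $f_i$ as $\epsilon\downarrow 0$; using that $F$ is constant-coefficient (so $f_i(\,\cdot-z)\in F(B)$ by Proposition~\ref{prop:basicproperties}(5)) together with stability under suprema of locally bounded families (Proposition~\ref{prop:basicproperties}(4)), one checks $f_i^\epsilon\in F(B)$ --- the nonnegative penalty $\tfrac1{2\epsilon}|z|^2$ being absorbed via the Negativity property (if $F$ lacks this property one argues instead by the theorem on sums below). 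Second, for a mollifier $\rho_\delta$, the smooth function $f_i^{\epsilon,\delta}:=f_i^\epsilon*\rho_\delta$ is again $F$-subharmonic: since $f_i^\epsilon$ is semi-convex its distributional Hessian is bounded below, so the $2$-jet $J^2_x(f_i^{\epsilon,\delta})$ equals the average $\int J^2_{x-z}(f_i^\epsilon)\,\rho_\delta(z)\,dz$ of the $2$-jets of $f_i^\epsilon$ plus a semipositive matrix in its Hessian slot; the jets $J^2_{x-z}(f_i^\epsilon)$ lie in $F$ for almost every $z$ (the $2$-jet of an $F$-subharmonic function at a point of twice-differentiability lies in $F$, by Closedness), and their average lies in $F$ because $F$ is closed, convex and constant-coefficient, so the Positivity property and Lemma~\ref{lem:fsubharmonicc2} give $f_i^{\epsilon,\delta}\in F(B)$. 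Finally $f_i^{\epsilon,\delta}\to f_i^\epsilon$ locally uniformly as $\delta\to0$, since $f_i^\epsilon$ is continuous.

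Now $tf_1^{\epsilon,\delta}+(1-t)f_2^{\epsilon,\delta}$ is a smooth element of $F(B)$ by the first paragraph; letting $\delta\to0$ and invoking stability under local uniform limits (Proposition~\ref{prop:basicproperties}(3)) gives $tf_1^\epsilon+(1-t)f_2^\epsilon\in F(B)$, and then letting $\epsilon\to0$ and invoking stability under decreasing limits (Proposition~\ref{prop:basicproperties}(2)) gives $f\in F(B)$; since $F$-subharmonicity is local, this proves the two-function case, and induction finishes. The $\mathcal{C}^2$ computation is trivial, so the main obstacle --- and the only place the hypotheses are really used --- is the regularisation, i.e.\ showing that sup-convolution and mollification preserve $F$-subharmonicity for constant-coefficient convex $F$. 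One can in fact bypass regularisation altogether by applying the Crandall--Ishii theorem on sums to $tf_1$ and $(1-t)f_2$: for any $\mathcal{C}^2$ test function $\phi$ touching $f$ from above at $x_0$ it produces (limits of) superjets $(p_i,A_i)$ of $f_i$ at $x_0$ with $tp_1+(1-t)p_2=\nabla\phi(x_0)$ and $tA_1+(1-t)A_2\le\Hess\phi(x_0)$, whence $(f(x_0),\nabla\phi(x_0),tA_1+(1-t)A_2)\in F_{x_0}$ by convexity and $J^2_{x_0}(\phi)\in F_{x_0}$ by Positivity, so $f\in F(X)$ by Lemma~\ref{lem:viscositydefinition}; this avoids the Negativity property but imports machinery the paper otherwise sidesteps.
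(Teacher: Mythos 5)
Your proof is correct in substance but takes a genuinely different, self-contained route from the paper's. The paper disposes of the Proposition in a single line by citing a Harvey--Lawson addition theorem (Theorem 5.1 of the reference labelled HL\_AE), applied with $F_x := \lambda H_x$ and $G_x := (1-\lambda)H_x$: then $\lambda f_1$ is $(\lambda H)$-subharmonic, $(1-\lambda) f_2$ is $((1-\lambda)H)$-subharmonic, and the cited theorem places the sum in the jet-sum subequation, which by convexity is contained in $H$. You instead reprove the relevant chunk of that machinery directly---sup-convolution plus Alexandrov to get semi-convexity and almost-everywhere $2$-jets, then mollification together with convexity of $F$ to average those jets, then the trivial $\mathcal C^2$ calculation, then pass to limits via Proposition~\ref{prop:basicproperties}. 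Your route buys a fully self-contained proof; the paper's buys brevity, at the cost of outsourcing the hard step to an external theorem.

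One caveat, which you flag yourself: the sup-convolution step needs the Negativity property to absorb the constant penalty $-\tfrac{1}{2\epsilon}\|z\|^2$, and a primitive subequation carries only Closedness and Positivity. So as written your main line of argument establishes the Proposition only under the additional hypothesis of Negativity. This is harmless for every downstream use in the paper (Proposition~\ref{prop:convolution} and the Main Theorem all have Negativity in force) but is strictly weaker than the Proposition as stated. Your Crandall--Ishii fallback does close the gap without Negativity, though the one sentence you give compresses two points that deserve to be said: the theorem on sums produces elements of the closures $\overline{J}^{2,+}f_i(x_0)$, so one needs Closedness of $F$ to place $(f_i(x_0),p_i,A_i)$ in $F_{x_0}$; and the matrix inequality $tA_1+(1-t)A_2\le\Hess\phi(x_0)$ emerges from a doubling-of-variables argument with an error term (the $\epsilon A^2$ in the Crandall--Ishii lemma) whose removal requires a short limiting argument using Closedness once more.
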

\begin{proof}
This is implied by \cite[Theorem 5.1 ]{HL_AE}  (apply the cited theorem to $F_x:=\lambda H_x$ and $G_x:= (1-\lambda)H_x$ for a given $\lambda\in [0,1]$)
\end{proof}

\section{Associativity of products}\label{sec:associativityproductsubequations}
We prove Proposition \ref{prop:associativityofproducts} which states that if $X_i\subset \mathbb R^{n_i}$ are open and $F_i\subset J^2(X_i)$ for $i=1,2,3$ then
$$ (F_1\#F_2)\#F_3 = F_1\#(F_2\#F_3).$$

Let $x,y,z$ be coordinates on $\mathbb R^{n_1},\mathbb R^{n_2},\mathbb R^{n_3}$ respectively.     We will consider certain linear mappings
\begin{align*}
\Gamma&:\mathbb R^{n_1}\to \mathbb R^{n_2+n_3}\\
\Phi&:\mathbb R^{n_1+n_2}\to \mathbb R^{n_3}\\
\Psi&:\mathbb R^{n_1} \to \mathbb R^{n_2}\\
\Upsilon&:\mathbb R^{n_2}\to \mathbb R^{n_3}
\end{align*}
and write
$$ \Phi(x,y) = \Phi_1(x) + \Phi_2(y)$$
where $\Phi_i:\mathbb R^{n_i} \to \mathbb R^{n_3}$ is linear.  Recall that $\iota_{\Gamma}:\mathbb R^{n_1} \to \mathbb R^{n_1+n_2+n_3}$ is $\iota_{\Gamma}(x) = (x,\Gamma(x))$ and similarly for $\iota_\Phi:\mathbb R^{n_1+n_2}\to \mathbb R^{n_1+n_2+n_3}$ and $\iota_{\Psi}:\mathbb R^{n_1}\to \mathbb R^{n_1+n_2}$. 

\begin{lemma}\label{lem:factoriota}
Suppose 
\begin{equation}\label{eq:GammaintermsofPhiPsi}\Gamma = (\Psi, \Phi_1 + \Phi_2\circ\Psi). \end{equation}
Then 
$$\iota_{\Gamma} = \iota_{\Phi}\circ\iota_{\Psi}$$
\end{lemma}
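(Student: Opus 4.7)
The plan is a direct verification by unpacking the definitions of the three inclusion maps. This is essentially bookkeeping rather than genuine mathematics, so I expect no real obstacle — the only thing to be careful about is keeping track of which factor goes where in $\mathbb R^{n_1+n_2+n_3} = \mathbb R^{n_1}\oplus \mathbb R^{n_2}\oplus \mathbb R^{n_3}$.

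First I would fix $x\in \mathbb R^{n_1}$ and compute each side separately. By definition
\[ \iota_{\Psi}(x) = (x,\Psi(x))\in \mathbb R^{n_1}\oplus \mathbb R^{n_2}, \]
and then applying $\iota_\Phi$ (which decomposes the input according to $\Phi(x,y) = \Phi_1(x)+\Phi_2(y)$) gives
\[ \iota_{\Phi}(\iota_{\Psi}(x)) = \iota_{\Phi}(x,\Psi(x)) = \bigl(x,\Psi(x),\Phi_1(x)+\Phi_2(\Psi(x))\bigr). \]
On the other hand, by the definition of $\iota_{\Gamma}$ together with the hypothesis \eqref{eq:GammaintermsofPhiPsi},
\[ \iota_{\Gamma}(x) = (x,\Gamma(x)) = \bigl(x,\Psi(x),\Phi_1(x)+\Phi_2(\Psi(x))\bigr). \]
Comparing the two expressions yields $\iota_\Gamma(x) = \iota_\Phi(\iota_\Psi(x))$, and since $x$ was arbitrary we conclude $\iota_\Gamma = \iota_\Phi \circ \iota_\Psi$.

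That really is the entire argument; the statement is a tautology once one writes out the components. I anticipate the only pitfall in the actual paper will be to make sure the convention for how $\Phi$ splits into $\Phi_1,\Phi_2$ matches the convention used elsewhere when this lemma is invoked in the subsequent pullback computations $i_\Gamma^* = i_\Psi^* \circ i_\Phi^*$ needed to establish associativity of products.
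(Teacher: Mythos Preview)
Your proof is correct and is essentially identical to the paper's own argument, which also unpacks the definitions and verifies the equality componentwise in a single chain of equalities.
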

\begin{proof}
\begin{align*} \iota_{\Phi}(\iota_{\Psi}(x)) &= \iota_{\Phi}(x,\Psi(x)) = (x,\Psi(x),\Phi(x,\Psi(x))) \\&= (x,\Psi(x),\Phi_1(x) + \Phi_2\circ\Psi(x)) = \iota_{\Gamma}(x).\end{align*}
\end{proof}

Now set
$$\begin{array}{ll}
j_2: \mathbb R^{n_2} \to \mathbb R^{n_1+n_2} &\text{  } j_2(y) = (0,y)\\
j_3: \mathbb R^{n_3} \to \mathbb R^{n_1+n_2+n_3} &\text{  } j_3(z) = (0,0,z)\\
j_{23}:\mathbb R^{n_2+ n_3} \to \mathbb R^{n_1+n_2+n_3} &\text{  } j_{23}(y,z) = (0,y,z)\\
k:\mathbb R^{n_3} \to \mathbb R^{n_2+n_3} &\text{  } k(z) = (0,z).
\end{array}$$
Fix $(x,y,z)\in X_1\times X_2\times X_3$.  By definition of the product subequation we know $\alpha \in (F_1\#(F_2\#F_3))_{(x,y,z)}$  if and only if
\begin{equation}\label{eq:associative1}
\forall \Gamma \text{ we have }\iota_\Gamma^*\alpha \in (F_1)_{x} \text{ and } j_{23}^*\alpha \in (F_2\#F_3)_{(y,z)}.
\end{equation}
Observe for every $\Gamma$ there is a pair $(\Phi,\Psi)$ such that \eqref{eq:GammaintermsofPhiPsi} holds.   Thus by Lemma \ref{lem:factoriota}, condition \eqref{eq:associative1} is equivalent to
\begin{equation}\label{eq:associative2}
\forall \Psi,\Phi \text{ we have }\iota^*_\Psi \iota^*_\Phi \alpha \in (F_1)_{x} \text{ and } j_{23}^*\alpha \in (F_2\#F_3)_{(y,z)}.
\end{equation}
Using the definition of $F_2\#F_3$, condition \eqref{eq:associative2} is in turn equivalent to
\begin{equation}\label{eq:associative3}
\forall \Psi,\Phi,\Upsilon \text{ we have }\iota^*_\Psi \iota^*_\Phi \alpha \in (F_1)_{x} \text{ and } \iota_{\Upsilon}^* j_{23}^*\alpha \in (F_2)_{y} \text{ and }k^*j_{23}^*\alpha \in (F_3)_{z}.
\end{equation}
Now $j_{23}\circ k=j_3$, and a simple check yields $j_{23}\circ \iota_{\Phi_2} = \iota_{\Phi}\circ j_2$.  Thus \eqref{eq:associative3} is equivalent to
\begin{equation}\label{eq:associative4}
\forall \Psi,\Phi \text{ we have }\iota^*_\Psi \iota^*_\Phi \alpha \in (F_1)_{x} \text{ and } j_{2}^*\iota_{\Phi}^*\alpha \in (F_2)_{y} \text{ and }j_3^*\alpha \in (F_3)_{z}.
\end{equation}
So from the definition of $(F_1\#F_2)$, condition \eqref{eq:associative4} is equivalent to
\begin{equation}\label{eq:associative5}
\forall \Phi \text{ we have }\iota^*_\Phi \alpha \in (F_1\#F_2)_{x} \text{ and }j_3^*\alpha \in (F_3)_{z} 
\end{equation}
which, by definition, is equivalent to $\alpha\in ((F_1\#F_2)\#F_3)_{(x,y,z)}$.

\section{Products of gradient-independent Subequations}\label{appendixB}

Recall we say that a subequation $F\subset J^2(X)$ has Property (P\textsuperscript{++}) if the following holds.  For all $x\in X$ and all $\epsilon>0$ there exists a $\delta>0$ such that

\begin{equation}(x,r,p,A)\in F_x \Rightarrow (x',r-\epsilon,p, A+\epsilon \Id)\in F_{x'} \text{ for all } \|x'-x\|<\delta\ \tag{P\textsuperscript{++}}\label{eq:propertyH}
\end{equation}
or said another way,
\begin{equation}F_x + (x'-x,0,-\epsilon, \epsilon \Id) \subset F_{x'} 
\text{ for all } \|x'-x\|<\delta. \tag{P\textsuperscript{++}}\label{eq:propertyH:repeat}
\end{equation}

\begin{lemma}\label{lem:inclusioninterior2}
Assume that $F$ and $G$ have property \eqref{eq:propertyH} and are independent of the gradient part.   Then $H: = F\#G$ is a subequation
\end{lemma}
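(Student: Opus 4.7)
Lemma~\ref{lem:productsofsubequations} already grants $H:=F\#G$ the Closedness, Positivity, Negativity and gradient-independence properties, so the only thing left to verify is the Topological condition \eqref{eq:topological}, namely $H_{(x,y)}=\overline{\Int H_{(x,y)}}$, $H=\overline{\Int H}$, and $\Int H_{(x,y)}=(\Int H)_{(x,y)}$ for every $(x,y)$. My plan is first to show that $H$ itself inherits property \eqref{eq:propertyH} from $F$ and $G$, and then to use \eqref{eq:propertyH} for $H$ to manufacture interior points near any given jet.

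For the propagation step, fix $(x_0,y_0)$ and $\epsilon>0$ and let $\delta_F,\delta_G>0$ be the constants supplied by \eqref{eq:propertyH} for $F$ at $x_0$ and $G$ at $y_0$. If $(r,p,A)\in H_{(x_0,y_0)}$ then for every $\Gamma\in\Hom(\mathbb R^n,\mathbb R^m)$ the block identity
\[
i_\Gamma^*\bigl(r-\epsilon,\,p,\,A+\epsilon\Id_{n+m}\bigr) = i_\Gamma^*(r,p,A) + \bigl(-\epsilon,\,0,\,\epsilon(\Id_n+\Gamma^t\Gamma)\bigr),
\]
combined with $\Gamma^t\Gamma\ge 0$, lets me apply \eqref{eq:propertyH} for $F$ to $i_\Gamma^*(r,p,A)\in F_{x_0}$ and then the Positivity of $F$ to conclude $i_\Gamma^*(r-\epsilon,p,A+\epsilon\Id_{n+m})\in F_{x'}$ for every $\|x'-x_0\|<\delta_F$, uniformly in $\Gamma$. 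The analogous computation using $j^*\Id_{n+m}=\Id_m$ handles the $G$-side, so setting $\delta:=\min\{\delta_F,\delta_G\}$ yields \eqref{eq:propertyH} for $H$.

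With \eqref{eq:propertyH} for $H$ in hand, the Topological condition follows by an elementary perturbation. Given $\alpha=(r,p,A)\in H_{(x_0,y_0)}$ and small $\epsilon>0$, \eqref{eq:propertyH} for $H$ with parameter $2\epsilon$ provides $\delta>0$ such that $(r-2\epsilon,p,A+2\epsilon\Id)\in H_{(x',y')}$ whenever $\|(x',y')-(x_0,y_0)\|<\delta$; an open neighborhood of the perturbed jet $\alpha_\epsilon:=(r-3\epsilon,p,A+3\epsilon\Id)$ in $J^2_{n+m}$ then lies in $H_{(x',y')}$ by Negativity, Positivity and gradient-independence, so $((x_0,y_0),\alpha_\epsilon)\in\Int H$. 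Sending $\epsilon\to 0$ shows $H\subseteq\overline{\Int H}$, and exactly the same argument within a fixed fibre gives $H_{(x,y)}\subseteq\overline{\Int H_{(x,y)}}$; the reverse inclusions are automatic from Closedness. Finally, $(\Int H)_{(x,y)}\subseteq\Int H_{(x,y)}$ is clear from the product topology, while for the reverse I pick $\alpha_0\in H_{(x,y)}$ with strictly larger scalar part and strictly smaller Hessian part than a given interior point $\alpha\in\Int H_{(x,y)}$, apply \eqref{eq:propertyH} to $\alpha_0$, and recover a full open neighborhood of $((x,y),\alpha)$ in $H$ by Negativity and Positivity.

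The step I expect to be most delicate is the propagation of \eqref{eq:propertyH} to $H$: the same $\delta$ must work for the infinitely many slopes $\Gamma\in\Hom(\mathbb R^n,\mathbb R^m)$ simultaneously. Fortunately this uniformity is already built into the formulation of \eqref{eq:propertyH}, whose $\delta$ depends only on the base point and on $\epsilon$ (not on the jet being perturbed), so the apparent obstacle dissolves.
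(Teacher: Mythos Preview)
Your argument is correct, and the essential computation --- that $i_\Gamma^*$ sends the perturbation $(-\epsilon,0,\epsilon\Id_{n+m})$ to $(-\epsilon,0,\epsilon\Id_n+\epsilon\Gamma^t\Gamma)$, with the extra $\epsilon\Gamma^t\Gamma\ge 0$ absorbed by Positivity uniformly in $\Gamma$ --- is exactly the one the paper uses. The packaging, however, is different in two respects. First, you isolate the propagation of \eqref{eq:propertyH} to $H=F\#G$ as an intermediate lemma and then run the Topological argument once for $H$; the paper instead works directly with \eqref{eq:propertyH} for $F$ and $G$ inside the final estimate, never stating that $H$ inherits \eqref{eq:propertyH}. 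Second, you verify all three clauses of the Topological condition by hand, whereas the paper invokes \cite[Section~4.8]{HL_Dirichletdualitymanifolds} to reduce (for gradient-independent sets) to the single equality $\Int H_{(x,y)}=(\Int H)_{(x,y)}$. Your route is more self-contained and modular (and in fact the propagation step is a useful statement in its own right); the paper's is shorter once the external reduction is granted.
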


\begin{proof}
We have already seen in Lemma \ref{lem:productsofsubequations} that $H$ is closed, and satisfies the Positivity and Negativity properties \eqref{eq:positivity} and \eqref{eq:negativity}.  It remains to prove the Topological property \eqref{eq:topological} which we break up into a number of pieces.  Since $F,G$ are independent of the gradient part, so is $H$, and thus the only non-trivial part of the topological property is to show \cite[Section 4.8]{HL_Dirichletdualitymanifolds}
$$ \Int(H_{(x,y)}) =(\Int H)_{(x,y)}$$
The fact that $\Int(H_{(x,y)}) \subset (\Int H)_{(x,y)}$ is obvious, so the task is to prove the other inclusion.

Let $$\alpha \in \Int( H_{(x,y)}),$$ so there exists a $\delta_1>0$ such that
\begin{equation} \| \hat{\alpha} - \alpha \| <\delta _1 \text{ and } \hat{\alpha} \in J^2(X\times Y)|_{(x,y)}  \Rightarrow \hat{\alpha}\in H_{(x,y)}.\label{eq:int1}\end{equation}
By hypothesis there is a $\delta_2>0$ such that

\begin{equation}\label{eq:intopen1} F_x +(x'-x, -\frac{\delta_1}{4}, 0,\frac{\delta_1}{4} \Id) \subset F_{x'} \text{ for } \|x-x'\|<\delta_2\end{equation}

\begin{equation}\label{eq:intopen2} G_y +(x'-x, -\frac{\delta_1}{4}, 0,\frac{\delta_1}{4} \Id) \subset G_{y'} \text{ for } \|y-y'\|<\delta_2.\end{equation}

Set $\delta = \min\{ \delta_1/2,\delta_2/2\}$ and pick any $\alpha'\in J^2(X\times Y)$ with 
$$ \|\alpha'-\alpha\|<\delta.$$
We will show that $\alpha' \in H$.

Denote the space coordinate of $\alpha'$ by $(x',y')$, so $\alpha'\in J^2(X\times Y)|_{(x',y')}$.  Thus we certainly have $\|x'-x\|<\delta<\delta_2$ and $\|y-y'\|<\delta_2$.  Define
$$\hat{\alpha} : = \alpha' + ((x-x',y-y'),-\frac{\delta_1}{2},0, -\frac{\delta_1}{2} \Id_{n+m}).$$
Then $\hat{\alpha} \in J^2(X\times Y)_{(x,y)}$ and
$$ \|\hat{\alpha} - \alpha\| \le \| \alpha' - \alpha \| + \| \hat{\alpha} - \alpha'\|  < \delta_1.$$
Thus \eqref{eq:int1} applies, so $\hat{\alpha}\in H_{(x,y)}$ which means
$$ j^* \hat{\alpha} \in G_{y} \text{ and } i_U^* \hat{\alpha}\in G_{x} \text{ for all } U.$$
Now using \eqref{eq:intopen2}.
$$ j^*\alpha' = j^*\hat{\alpha} + (y'-y,\frac{\delta_1}{2},0, \frac{\delta_1}{2} \Id_{m}) \in  G_y +  (y'-y,\frac{\delta_1}{2},0, \frac{\delta_1}{2} \Id_{m}) \subset G_{y'}.$$

Similarly using the Positivitiy property of $F$ and \eqref{eq:intopen1}
\begin{align*}
 i_U^*\alpha' &= i_U^*\hat{\alpha} + (x-x',\frac{\delta_1}{2},0,\frac{\delta_1}{2} \Id_{n}  + \frac{\delta_1}{2} U^tU) \\
 &\subset  i_U^*\hat{\alpha} + (x-x',\frac{\delta_1}{2},0,\frac{\delta_1}{2} \Id_{n})\\
 &\subset F_{x'}.
\end{align*}
Thus $\alpha'\in H_{(x',y')}$.  As this holds for all such $\alpha'$ we conclude $\alpha \in \Int(H)$ completing the proof.
\end{proof}

\subsection{The complex case}
Let $X\subset  \mathbb R^{2n}\simeq \mathbb C^n$ be open.   If $f:X\to \mathbb R$ is twice differentiable at a point $z\in X $ its \emph{complex Hessian} is
$$ \Hess^{\mathbb C}_z(f) =  \frac{1}{2} ( \Hess(f) - \mathbb J \Hess_x(f) \mathbb J) \in \Herm(\mathbb C^n).$$  When $f$ is sufficiently smooth we have
$$ ( \Hess^{\mathbb C}_z(f))_{jk} = 2 \frac{\partial^2 f}{\partial z_j\partial\overline{z}_k}|_z$$
where, as usual,
$$ \frac{\partial}{\partial z_j} = \frac{1}{2} \left( \frac{\partial}{\partial x_j} - i \frac{\partial}{\partial y_j}\right) \text{ for } z_j = x_j + i y_j.$$
In terms of the gradient, under the identification $\mathbb R^{2n}\simeq\mathbb C^n$ we have
$$\nabla f|_z = \left(\begin{array}{c} \frac{\partial f}{\partial x}|_z \\ \frac{\partial f}{\partial {y}}|_z \end{array}\right)  = 2\frac{\partial f}{\partial \overline{z}}|_z.$$
\begin{definition}[Complex $2$-jet]
The complex $2$-jet of $f$ at $z\in X$ is
$$ J^{2,\mathbb C}_{z} (f)  := ( f(z),  2\frac{\partial f}{\partial \overline{z}}|_z,\Hess^{\mathbb C}_z(f)) \in J^{2,\mathbb C}_{z} = \mathbb R\times \mathbb C^n\times \Herm_n.$$
\end{definition}
So if $F\subset J^2(X)$ is complex then
$$ J^2_{z} (f)\in F_z \Longleftrightarrow J^{2,\mathbb C}_{z} (f)\in F_z.$$

\let\oldaddcontentsline\addcontentsline
\renewcommand{\addcontentsline}[3]{}
\bibliographystyle{plain}
\bibliography{minimumprinciple}{}
\let\addcontentsline\oldaddcontentsline

\medskip
\small{
\noindent {\sc Julius Ross,  Mathematics Statistics and Computer Science, University of Illinois at Chicago, Chicago  IL, USA\\ julius@math.uic.edu}\medskip

\noindent{\sc David Witt Nystr\"om, 
Department of Mathematical Sciences, Chalmers University of Technology and the University of Gothenburg, Sweden \\ wittnyst@chalmers.se, danspolitik@gmail.com}

}

\end{document}